\def \X {\mathscr{E}}
\newtheorem{theo}{{Theorem}} [section]
\newtheorem{lemme}[theo]{{Lemma}}
\newtheorem{propo}[theo]{{Proposition}}
\newtheorem{cor}[theo]{{Corollary}}
\newtheorem{hyp}{{Assumption}}
\newtheorem{nb}[theo]{Remark}
\newtheorem{defi}[theo]{{Definition}}
\theoremstyle{definition}
\def \leq {\leqslant}
\def \geq {\geqslant}
\numberwithin{equation}{section}
\def\ind#1{\lower5pt\hbox{$\scriptstyle #1$}}
\def \O {\mathbf{\Omega}}
\newcommand{\Con}{\ensuremath{\mathscr{C}}}
\newcommand{\Drond}{\ensuremath{\mathscr{D}}}
\def \x {\mathbf{x}}
\def \z {\mathbf{z}}
\def\y {\mathbf{y}}
\def\lp {L^p_+}
\def\lm {L^p_-}
\def \ff {\mathscr{F}}
\def \d {\mathrm{d}}
\def \D {\Drond}
\def \G {\mathscr{G}}
\def \ml {M_{\lambda}}
\def \I {\mathcal{I}}
\def \l {\lambda}
\def \A {\mathcal{A}}
\def \uot {(U_0(t))_{t \geq 0}}
\def \uht {(U_H(t))_{t \geq 0}}
\def \ds {\displaystyle}
\def \T {\mathcal{T}}
\def \B {\mathsf{B}}
\def \t {\tau}
\newcommand{\verti}[1]{{\left\vert\kern-0.25ex\left\vert\kern-0.25ex\left\vert #1 
    \right\vert\kern-0.25ex\right\vert\kern-0.25ex\right\vert}}
\title[An $L^{p}$--approach to transport equations]{An $L^{p}$--approach to the well-posedness of transport equations associated to a regular field.}
\author{L. \textsc{Arlotti}    \& B. \textsc{ Lods}}
\address{\newline \noindent \textit{\textbf{Luisa Arlotti}} \newline \noindent Universit\`a di
Udine, via delle Scienze 208,\newline 33100 Udine, Italy.\newline
{\tt luisa.arlotti@uniud.it}}
\address{\newline \noindent \textit{\textbf{Bertrand Lods}}
\newline
\noindent  
\newline Universit\`{a} degli
Studi di Torino \& Collegio Carlo Alberto, Department ESOMAS, Corso Unione Sovietica, 218/bis, 10134 Torino, Italy.\newline
\noindent{\tt bertrand.lods@unito.it} }
\begin{document}
\thanks{{\it Keywords:} Transport equation, Boundary
conditions, $C_0$-semigroups, Characteristic curves.\\
\indent {\it AMS subject classifications (2000):} 47D06, 47D05,
47N55, 35F05, 82C40} \maketitle
\begin{abstract}
We investigate transport equations associated to a
Lipschitz field $\ff$ on some subspace of $\mathbb{R}^N$ endowed
with some general space measure $\mu$ in some $L^{p}$-spaces $1 < p <\infty$, extending the results obtained in two previous contributions \cite{mjm1,mjm2} in the $L^{1}$-context.  We notably prove the well-posedness
of  boundary-value transport problems with a large variety of boundary conditions. New explicit formula for the transport semigroup are in particular given.
\end{abstract}
\medskip

\tableofcontents
\section{Introduction}
\noindent

This paper  deals with the study in a $L^{p}$-setting ($p > 1$)  of general transport equation
\begin{subequations}\label{1}
\begin{equation}\label{1a}
\partial_t f(\x,t)+\mathscr{F}(\x)\cdot \nabla_\x
f(\x,t) =0 \qquad (\x \in \O , \:t
> 0),\end{equation} supplemented by the abstract boundary condition
\begin{equation}\label{1b}
f_{|\Gamma_-}(\y,t)=H(f_{|\Gamma_+})(\y,t), \qquad \qquad (\y \in
\Gamma_-, t >0),
\end{equation}
and the initial condition
\begin{equation}\label{1c}f(\x,0)=f_0(\x). \qquad \qquad (\x \in \O). \end{equation}\end{subequations}

The above problem was already examined by the authors in a $L^{1}$-setting in a series of papers  \cite{mjm1,mjm2}, and \cite{arlo11} ( \cite{mjm1,mjm2} in collaboration with J. Banasiak). Here our approach is generalized to the general $L^{p}$-space case for $1 < p <\infty$ which results in a complete study of the above set of equations \eqref{1} in general Lebesgue spaces.

Let us make precise the setting we are considering in the present paper, which somehow is the one considered earlier in \cite{mjm1,mjm2}. The set  $\O$ is a sufficiently smooth open subset of $\mathbb{R}^N$. We assume that $\mathbb{R}^N$ is endowed with a general
positive Radon measure $\mu$ and that  $\mathscr{F}$ is a
restriction to $\O$ of  a time independent globally Lipschitz vector
field $\mathscr{F} \::\:\mathbb{R}^N \to \mathbb{R}^N.$
 With this field we associate a
flow $(T_t)_{t \in \mathbb{R}} $ (with the notations of Section
\ref{sub:chara},
 $T_t=\Phi(\cdot, t)$) and, as in \cite{mjm1}, we
 assume the  measure $\mu$ to be invariant under the
flow $(T_t)_{t \in \mathbb{R}}$, i.e.
\begin{equation}\label{ass:h2} \mu(T_t A) = \mu(A) \text{ for any
measurable subset } A \subset \mathbb{R}^N \text{ and any }  t \in
\mathbb{R}.\end{equation}
The sets $\Gamma_{\pm}$ appearing in \eqref{1b} are suitable boundaries of the phase space and the
boundary operator $H$ is a linear, but not necessarily
\textit{bounded}, operator between trace spaces $L^{p}_\pm$
corresponding to the boundaries $\Gamma_\pm$ (see Section 2 for
details). 

We refer to the papers \cite{mjm1,mjm2} for the relevance of the above transport equation in mathematical physics (Vlasov-like equations) and the link of asssumption \eqref{ass:h2} with some generalized divergence free assumptions on $\ff$.  We also refer to the introduction of \cite{mjm2} and the references therein for an account of the relevant literature on the subject. 

Here we only recall that  transport problems of  type \eqref{1} with general fields  $\mathscr{F} $  have been studied in a $L^{p}$ context, in the special case in which the measure $\mu$ is the Lebesgue measure over $\mathbb{R}^N$, by  Bardos \cite{bardos} when the boundary conditions are the "no  re-entry" boundary conditions (i.e. $H = 0$), by Beals-Protopopescu  \cite{beals} when the boundary conditions are dissipative.

Furthermore an optimal trace theory  in $L^p$ spaces has been developed  by Cessenat \cite{ces1,ces2} for the so-called free transport equation i. e. when $\mu$ is the
Lebesgue measure  and 
$$\ff(x)=(v,0), \qquad \x=(r,v) \in \O$$
where $\O$ is a cylindrical domain of the type $\O= {D}\times
\mathbb{R}^3 \subset \mathbb{R}^6$ ( ${D }$ being a sufficiently
 smooth open subset of $\mathbb{R}^3$). 
 
 Recently a one-dimensional free transport equation with multiplicative boundary conditions has been examined  in $L^p$ by Boulanouar  \cite{bou1} but the criteria ensuring the well-posedness of the problem depend on $p$.
 
For more general fields and, most important for our present analysis, for more general and abstract measure, the mathematical treatment of \eqref{1} is much more delicate because of the intricate interplay between the
geometry of the domain and the flow as  their relation to the properties of the measure $\mu$.  Problems with  a general measure $\mu$ and general fields have been addressed only very recently in  \cite{mjm1,mjm2,arlo11} in a $L^{1}$-context.

The main aim of the present paper is to show that the theory and tools introduced in \cite{mjm1,mjm2} can be extended to $L^{p}$-spaces with $1 < p < \infty.$  The treatment of transport operators with $L^{p}$-spaces for $1 < p< \infty$ for abstract 
vector fields and abstract measure $\mu$ is new to our knowledge. As already mentioned in \cite{mjm2}, the motivation for studying such problems is to provide an unified treatment of first-order linear problems that should allow  to treat in the same formalism transport equations on an open subset of the euclidian space $\mathbb{R}^N$ (in such a case $\mu$ is a restriction of the Lebesgue measure over $\mathbb{R}^N$) and transport equations associated to flows on networks  where the measure $\mu$ is then supported on graphs   (see e.g. \cite{krammar} and the reference therein).

Besides showing the robustness of the theory developed in \cite{mjm1,mjm2},  the present contribution provides a thorough analysis of a large variety of boundary operators arising in first-order partial equations - including unbounded boundary operators, dissipative, conservative and multiplicative boundary operators. In order to obtain criteria ensuring the well-posedness of transport equations for conservative and multiplicative boundary conditions we use  a series representation of the solution to \eqref{1} introduced by the first author \cite{arlo11}  in the $L^{1}$ setting. The construction of such series representation is somehow reminiscent of the Dyson-Phillips representation of perturbed semigroups (see \cite{arloban}) and conforts us in our belief that -- for general transport equations -- boundary conditions have to be seen as ``boundary`` perturbation of the transport operator with no-reentry boundary conditions (see \cite{luisa1} where we adapted the substochastic theory of additive perturbations of $C_{0}$-semigroups to boundary perturbations). We refer to the seminal paper \cite{greiner} where boundary conditions  were already considered as perturbations of semigroup  generators.  The series approach to equation \eqref{1} allows us to get some semi-explicit expression of the solution to \eqref{1} like the following (see Corollary \ref{cor:semi}):
$$U_{H}(t)f(\x)=\begin{cases} U_{0}(t)f(\x)=f(\Phi(\x,-t)) \qquad &\text{ if } t < \t_{-}(\x)\\
\left[H\left(\B^{+}U_{H}(t-\t_{-}(\x))\right)f\right](\Phi(\x,-\t_{-}(\x))) \qquad &\text{ if } t \geq \t_{-}(\x).\end{cases}
$$
which holds for any $f\in \D(\T_{\mathrm{max}\,p})$ (see the subsequent section for notations).  Notice that such a representation was conjectured already by J. Voigt (see \cite[p. 103]{voigt}) for the free transport case. It has also been proved for a uni-dimensional population dynamics problem in $L^{1}$ with contractive boundary conditions (see \cite[Theorem 2. 3]{bou2}). To the best of our knowledge, for general fields and measures, multiplicative boundary conditions, in the $L^{p}$-setting the representation is new and has to been seen as one of the major contributions of the present paper.

The organization of the paper is as follows.  In Section \ref{sec:prelim} we
recall the relevant results of \cite{mjm1}: the definition of the
measures $\mu_\pm$ over $\Gamma_\pm$, the integration along the
characteristic curves associated to $\ff$. This allows us to give the precise definition of the
transport operator $\T_{\mathrm{max},\,p}$ in the $L^{p}$-context and gives the crucial link between $\T_{\mathrm{max},\,p}$ and the operator $\T_{\mathrm{max},\,1}$ which was throughly investigated in \cite{mjm1} (see Theorem \ref{th:lpl1}). In Section \ref{sec:bvp}, we apply the results of Section \ref{sec:prelim}  to prove well-posedness of the time--dependent transport problem with no reentry boundary conditions and we generalize the trace theory of Cessenat
\cite{ces1,ces2} to more general fields and measures. The
generalization is based on the construction of suitable trace spaces which are related to $L^{p}(\Gamma_{\pm},\d\mu_{\pm}).$ This allows to deal in Section \ref{sec:unb} with a
very large  class of boundary operators $H$, notably operators which are not necessarily bounded in the trace spaces $L^{p}(\Gamma_{\pm},\d\mu_{\pm}).$  This allows in particular to prove the well-posedness of \eqref{1} for general dissipative operators. Section \ref{sec:explicit}
 contains the construction of the series associated to \eqref{1} in the case of bounded $H$. This is done through some generalization of the Dyson-Phillips iterated. Notice that if the series is convergent then a $C_{0}$-semigroup solution to \eqref{1} can be defined. This happens for dissipative boundary operators and for some particular  conservative and multiplicative boundary operators. For a multiplicative boundary operator the sufficient condition ensuring  that a $C_{0}$-semigroup solution can be defined is the same as in the $L^1$-setting, and therefore independent from $p$.
\section{Preliminary results}\label{sec:prelim}.

We recall here the construction of characteristic curves, boundary measures $\mu_{\pm}$ and the maximal transport associated to \eqref{1} as established in \cite{mjm1}. Most of the results in this section can be seen as technical generalization to those of \cite{mjm1} and the proof of the main result of this section (Theorem \ref{representation}) is deferred to Appendix \ref{app:techn}. 
\subsection{Integration along characteristic curves}\label{sub:chara} The definition of the
transport operator (and the corresponding trace)  involved in
\eqref{1}, \cite{mjm1},  relies heavily on the characteristic
curves associated to the field $\ff$. Precisely,  define the flow
${\Phi}$ : $\mathbb{R}^N \times \mathbb{R} \to \mathbb{R}^N$, such
that, for $(\x,t) \in \mathbb{R}^N \times \mathbb{R}$, the mapping
$ t \in \mathbb {R} \longmapsto {\Phi}(\x,t) $ is the only
solution to the initial-value problem
\begin{equation}\label{chara}
\frac{\d \mathbf{{X}}}{\d t}(t)=\ff(\mathbf{{X}}(t)), \qquad \forall
t \in \mathbb{R}\:;\qquad \mathbf{{X}}(0)=\x \in \O.\end{equation}
Of course, solutions to \eqref{chara} do not necessarily belong to
$\O$ for all times, leading to the definition of stay times of the
characteristic curves in $\O$ as well as the related {\it incoming}
and {\it outgoing} parts of the boundary $\partial \O$:
\begin{defi}\label{incoming} For any $\x \in {\O}$, define
$\tau_{\pm}(\x)=\inf \{s > 0\,;{\Phi}(\x,\pm s) \notin {\O}\},$ with the
convention that $\inf \varnothing=\infty.$ Moreover, set
\begin{equation}\label{gammapm} \Gamma_{\pm}:=\left\{\y \in \partial
{\O}\,;\exists \x \in {\O},\, \t_{\pm}(\x) < \infty \text{ and }
\y={\Phi}(\x,\pm
\t_{\pm}(\x))\,\right\}.\end{equation}\end{defi} Notice that
the characteristic curves of the vector field $\ff$ are not
assumed to be of finite length and hence we introduce the sets
$${\O}_{\pm}=\{\x \in {\O}\,;\,\t_{\pm}(\x) < \infty\}, \qquad
{\O}_{\pm\infty}=\{\x \in {\O}\,;\,\t_{\pm}(\x) = \infty\},$$ and
$\Gamma_{\pm\infty}=\{\y \in \Gamma_{\pm}\,;\, \t_{\mp}(\y) = \infty\}.$
Then, one can prove (see \cite[Section 2]{mjm1}):
\begin{propo}\label{prointegra} There are unique positive Borel measures $\d\mu_{\pm}$
on $\Gamma_{\pm}$ such that, for any $h \in L^1({\O},\d \mu)$,
\begin{equation}\label{10.47}
\int_{{\O}_{\pm}}h(\x)\d\mu(\x)
=\int_{\Gamma_\pm}\d\mu_\pm(\y)\int_0^{\t_\mp(\y)}h\left({\Phi}(\y,\mp
s)\right)\d s,
\end{equation}
and
\begin{equation}\label{10.49}
\int_{{\O}_{\pm} \cap
{\O}_{\mp\infty}}h(\x)\d\mu(\x)=\int_{\Gamma_\pm\infty}\d\mu_\pm(\y)\int_0^{\infty}h\left({\Phi}(\y,\mp
s)\right)\d s.
\end{equation}
Moreover, for any $\psi \in L^1(\Gamma_-,\d\mu_-)$,
\begin{equation}\label{10.51}
\int_{\Gamma_-\setminus
\Gamma_{-\infty}}\psi(\y)\d\mu_-(\y)=\int_{\Gamma_+\setminus
\Gamma_{+\infty}}\psi({\Phi}(\z,-\t_-(\z)))\d\mu_+(\z).\end{equation}
\end{propo}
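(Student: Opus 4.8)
The plan is to construct the measures $\d\mu_{\pm}$ directly as pushforwards of $\mu$ under the map that sends $\x \in \O_{\pm}$ to the point where its forward/backward characteristic exits $\O$, and then to verify the three identities in turn. Concretely, I would introduce for each $\x \in \O_{+}$ the exit point $\gamma_{+}(\x) := \Phi(\x,\t_{+}(\x)) \in \Gamma_{+}$, and note that the pair $(\y,s) \in \Gamma_{+}\times(0,\t_{-}(\y))$ with $\x = \Phi(\y,-s)$ provides a measurable bijective parametrization of $\O_{+}$ (so that $s = \t_{+}(\x)$ and $\y=\gamma_{+}(\x)$); the analogous statement holds for $\O_{-}$ with the roles of $\pm$ reversed. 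This parametrization is essentially the content of the flow-box/straightening structure already set up in Section~\ref{sub:chara} and in \cite{mjm1}. Given this, one \emph{defines} $\d\mu_{+}$ on $\Gamma_{+}$ so that \eqref{10.47} holds, i.e. $\d\mu_{+}$ is characterized by testing against functions of the form $h(\x) = g(\gamma_{+}(\x))\,\mathbf{1}_{(0,\t_{+}(\x))}$ or, more systematically, by a monotone-class argument starting from $h = \mathbf{1}_{A}$ with $A$ a flow-box set.

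The key steps, in order, are as follows. First, establish the measurability of $\x \mapsto \t_{\pm}(\x)$ and of $\x \mapsto \gamma_{\pm}(\x)$, and the measurable-isomorphism property of the parametrization $\O_{+}\cong \{(\y,s) : \y\in\Gamma_{+},\,0<s<\t_{-}(\y)\}$; this lets one disintegrate $\mu\restriction_{\O_{+}}$ along characteristics. Second, use the flow-invariance hypothesis \eqref{ass:h2}: since $\mu$ is invariant under $(T_t)$, the disintegration of $\mu\restriction_{\O_{+}}$ in the $(\y,s)$ coordinates must have the product form $\d\mu_{+}(\y)\otimes \d s$ with no Jacobian weight in $s$ — this is precisely where invariance of $\mu$ is used, and it pins down $\d\mu_{+}$ uniquely. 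This yields \eqref{10.47}. Third, \eqref{10.49} follows by restricting the same identity to the subset $\O_{+}\cap\O_{-\infty}$, which in the $(\y,s)$-parametrization corresponds exactly to $\{\y\in\Gamma_{+\infty}\}\times(0,\infty)$ (because $\t_{-}(\y)=\infty$ is the defining condition of $\Gamma_{+\infty}$), so one just takes $\t_{-}(\y)=\infty$ as the upper limit. Fourth, for \eqref{10.51} I would observe that the map $\z \mapsto \Phi(\z,-\t_{-}(\z))$ is the natural bijection $\Gamma_{+}\setminus\Gamma_{+\infty} \to \Gamma_{-}\setminus\Gamma_{-\infty}$ (it sends an exit point to the corresponding entry point of the same characteristic segment), and then apply \eqref{10.47} with $\pm$ both ways to a function $h$ that is constant along characteristics, say $h(\Phi(\y,-s)) = \psi(\y)\,\varphi(s)$ with $\int_0^{\t_-(\y)}\varphi = \int_0^{\t_+(\z)}\varphi = 1$ suitably, comparing the two resulting expressions for $\int_{\O_+\cap\O_-} h\,\d\mu$; uniqueness of $\d\mu_{\pm}$ then forces \eqref{10.51}. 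Uniqueness in each case is immediate from \eqref{10.47} since it determines $\int_{\Gamma_\pm} g\,\d\mu_\pm$ for a rich enough class of $g$.

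I expect the main obstacle to be the measurability and the parametrization step — showing that $\t_{\pm}$ and $\gamma_{\pm}$ are Borel and that the change of variables $\x \leftrightarrow (\y,s)$ is a genuine measurable isomorphism which transports $\mu$ to a product measure. The subtlety is that $\O$ need not be bounded, characteristic segments may have infinite length, and the boundary $\partial\O$ enters only through the abstractly defined sets $\Gamma_{\pm}$; one must handle $\O_{\pm\infty}$ and the possibly-$\mu$-null exceptional sets carefully, and one cannot invoke a smooth coarea formula since $\mu$ is a general Radon measure. The clean way around this is to reduce everything to a local flow-box decomposition: cover $\O$ (up to a $\mu$-null set) by countably many flow-boxes on which the straightening diffeomorphism is explicit, verify the product structure of $\mu$ there using \eqref{ass:h2}, and patch. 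All the remaining identities \eqref{10.49}–\eqref{10.51} are then bookkeeping on the parametrization, with flow-invariance \eqref{ass:h2} doing the real work of eliminating Jacobian factors.
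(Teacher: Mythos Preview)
The paper does not prove this proposition at all: it is stated as a recalled result with the remark ``one can prove (see \cite[Section 2]{mjm1})'', so there is no in-paper argument to compare your proposal against. Your outline --- parametrize $\O_{\pm}$ by $(\y,s)$ via the exit map, use the flow-invariance \eqref{ass:h2} to force the disintegration of $\mu$ to have product form $\d\mu_{\pm}\otimes\d s$, then read off \eqref{10.49} by restriction and \eqref{10.51} by comparing the two parametrizations of $\O_{+}\cap\O_{-}$ --- is the correct strategy and is indeed how the construction in \cite{mjm1} proceeds; the measurability and flow-box issues you flag as the main obstacle are exactly the technical content handled there.
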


 \subsection{The maximal transport operator and trace
results}\label{sec:maxi}  The results of the previous section allow
us to define the (maximal) transport operator $\T_{\mathrm{max},\,p}$ as
the weak derivative along the characteristic curves.   To be
precise, let us define the space of \textit{test functions}
$\mathfrak{Y}$ as follows:

\begin{defi}[\textit{\textbf{Test--functions}}] Let $\mathfrak{Y}$ be the set of all measurable and bounded
functions $\psi :\O \to \mathbb{R}$ with compact support in $\O$ and such
that, for any $\x \in \O$, the mapping
$$s \in (-\t_-(\x),\t_+(\x)) \longmapsto \psi({{\Phi}}(\x,s))$$
is continuously differentiable with
\begin{equation}\label{defi:deriva}\x \in \O \longmapsto \dfrac{\d}{\d
s}\psi({{\Phi}}(\x,s))\bigg|_{s=0} \text{ measurable and
bounded}.\end{equation}
\end{defi}

In the next step we define the transport operator
$(\T_{\mathrm{max},p},\D(\T_{\mathrm{max},\,p}))$ in $L^{p}(\O,\d\mu)$, $p \geq 1.$
\begin{defi}[\textit{\textbf{Transport operator $\T_{\mathrm{max},\,p}$}}]
Given $p \geq 1,$ the domain of the maximal transport operator $\T_{\mathrm{max},\,p}$ is
the set $\D(\T_{\mathrm{max},\,p} )$ of all $f \in L^p(\O ,\d\mu)$ for
which there exists $g \in L^{p}(\O ,\d\mu)$ such that
$$\int_{\O }g(\x)\psi(\x)\d\mu(\x)=\int_{\O }f(\x)\dfrac{\d}{\d
  s}\psi({{\Phi}}(\x,s))\bigg|_{s=0} \d\mu(\x),\qquad \qquad \forall \psi \in
\mathfrak{Y}.$$ In this case, $g=:\T_{\mathrm{max},\,p} f.$
\end{defi}
\begin{nb}\label{rem:closed} It is easily seen that, with this definition, $(\T_{\mathrm{max},\,p},\D(\T_{\mathrm{max},\,p}))$ is a closed operator in $L^{p}(\O,\d\mu).$ Indeed, if $(f_{n})_{n} \subset \D(\T_{\mathrm{max},\,p})$ is such that 
$$\lim_{n}\|f_{n}-f\|_{L^{p}(\O,\d\mu)}=\lim_{n}\|\T_{\mathrm{max},\,p}f_{n}-g\|_{L^{p}(\O,\d\mu)}=0$$ for some $f,g \in L^{p}(\O,\d\mu)$, then for any test-function $\psi \in \mathfrak{Y}$, the identity 
$$\int_{\O}\T_{\mathrm{max},\,p}f_{n}(\x)\psi(\x)\d\mu(\x)=\int_{\O}f_{n}(\x)\dfrac{\d}{\d
  s}\psi({{\Phi}}(\x,s))\bigg|_{s=0} \d\mu(\x)$$
holds for any $n \in \mathbb{N}$. Taking the limit as $n\to\infty$, we obtain the identity
$$\int_{\O}g(\x)\psi(\x)\d\mu(\x)=\int_{\O}f(\x)\dfrac{\d}{\d
  s}\psi({{\Phi}}(\x,s))\bigg|_{s=0} \d\mu(\x)$$
  which proves that $f \in \D(\T_{\mathrm{max},\,p})$ with $g=\T_{\mathrm{max},\,p}f.$
\end{nb}

\subsection{{Fundamental representation formula: mild formulation}} Recall  that, if $f_1$ and $f_2$
are two functions defined over $\O$, we say that $f_2$ is a {\it
representative of} $f_1$ if $\mu\{\x \in \O\,;\,f_1(\x) \neq
f_2(\x)\}=0$, i.e. when $f_1(\x)=f_2(\x)$ for $\mu$-almost every $\x
\in \O $. The following fundamental result provides a characterization of  the
domain of $\D(\T_{\mathrm{max},\,p})$:
\begin{theo}\label{representation}
Let $f \in L^p(\O,\mu)$. The following are equivalent:
\begin{enumerate}\item There exists $g \in L^p(\O,\mu)$ and
a representative $f^\sharp$ of $f$ such that, for $\mu$-almost every
$\x \in \O$ and any $-\tau_-(\x) < t_1 \leq t_2 < \tau_+(\x)$:
\begin{equation}\label{integralTM-}f^{\sharp}({{\Phi}}(\x,t_1))-f^{\sharp}({{\Phi}}(\x,t_2))
=\int_{t_1}^{t_2}
g({{\Phi}}(\x,s))\d s.
\end{equation}
\item $f \in \D(\T_{\mathrm{max},p})$. In this case, $g=\T_{\mathrm{max},\,p} f$. 
\end{enumerate}
Moreover, if $f$ satisfies one of these equivalent condition, then
\begin{equation}\label{eq:limtrac}
\lim_{t \to 0+}f^\sharp({{\Phi}}(\y,t))\end{equation} exists  for
almost every $\y \in \Gamma_-$. Similarly, $\lim_{t \to
0+}f^\sharp({{\Phi}}(\y,-t))$ exists for almost every $\y \in
\Gamma_+$.
\end{theo}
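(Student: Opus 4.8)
The plan is to establish the characterization by reducing the $L^p$ statement to the already-available $L^1$ theory of \cite{mjm1}, and then to extract the existence of one-sided traces along the flow. First I would prove the implication (1)$\Rightarrow$(2): assuming $f$ has a representative $f^\sharp$ satisfying the integral identity \eqref{integralTM-} with some $g\in L^p(\O,\d\mu)$, I would multiply \eqref{integralTM-} by a test function, integrate over $\O$ against $\d\mu$, use the flow invariance \eqref{ass:h2} of $\mu$ together with the layer-cake / co-area type disintegration \eqref{10.47}--\eqref{10.49} to rewrite the double integral as an integral along characteristics, and then differentiate the resulting identity in the translation parameter at $s=0$. A Fubini argument (justified by $f\in L^p$, $g\in L^p$ and boundedness of $\psi$ and of $\tfrac{\d}{\d s}\psi(\Phi(\cdot,s))|_{s=0}$, all on a compact support) yields exactly the weak identity defining $\D(\T_{\mathrm{max},p})$, with $\T_{\mathrm{max},p}f=g$.

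For the converse (2)$\Rightarrow$(1), the key observation is that the defining weak identity for $\D(\T_{\mathrm{max},p})$ does not see the exponent $p$ at all: it is the same identity that defines $\D(\T_{\mathrm{max},1})$. Since $\mu$ may be infinite we cannot simply say $L^p\subset L^1$; instead I would localize. Fix a point and work on a flow box, or more globally multiply by cutoffs in the "time along the characteristic" variable so that the relevant pieces of $f$ and $g$ become integrable; on each such piece one has an element of $\D(\T_{\mathrm{max},1})$ (on the localized domain), to which the $L^1$ representation result of \cite{mjm1} applies, producing a good representative $f^\sharp$ satisfying \eqref{integralTM-} locally. One then patches these local representatives: along a single characteristic curve the local identities \eqref{integralTM-} are consistent on overlaps (the difference $f^\sharp(\Phi(\x,t_1))-f^\sharp(\Phi(\x,t_2))$ is determined by the integral of $g$), so the representative is well defined $\mu$-a.e. by the disintegration, and \eqref{integralTM-} holds for all $-\tau_-(\x)<t_1\le t_2<\tau_+(\x)$ and $\mu$-a.e. $\x$. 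The consistency of the choice of representative across characteristics — i.e. checking that the patched $f^\sharp$ is genuinely a single measurable function equal to $f$ a.e., not merely a family of functions on curves — is the main technical obstacle, and it is precisely here that the careful construction of $\mu_\pm$ and the disintegration formulas \eqref{10.47}--\eqref{10.51} from \cite{mjm1} do the work; I expect to invoke \cite[Section 2]{mjm1} (or the $L^1$ version of this very theorem) essentially verbatim after the localization.

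Finally, the existence of the limits \eqref{eq:limtrac} follows once \eqref{integralTM-} is in hand. For $\mu_-$-almost every $\y\in\Gamma_-$ the map $t\mapsto f^\sharp(\Phi(\y,t))$ is, on $(0,\tau_+(\y))$, an absolutely continuous function of $t$ with derivative $g(\Phi(\y,t))$, and by \eqref{10.47} (applied with the minus sign, to the restriction of $|g|$) the function $s\mapsto g(\Phi(\y,-s))$ is in $L^1$ near $s=0$ for $\mu_-$-a.e. $\y$; hence $t\mapsto f^\sharp(\Phi(\y,t))$ extends continuously to $t=0+$ and the limit exists. The symmetric statement for $\Gamma_+$ is identical with the roles of $\tau_\pm$ exchanged. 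Since the detailed disintegration estimates mimic those already carried out in \cite{mjm1} in the $L^1$ case, I would defer the full argument to the appendix (as the paper indeed announces), keeping in the main text only the reduction to \cite{mjm1} via the $p$-independence of the weak identity and the localization.
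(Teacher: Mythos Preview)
Your strategy for $(2)\Rightarrow(1)$ is workable but differs from the paper's. The paper does not localize and invoke the $L^1$ theorem as a black box; instead it reruns the mollification-along-characteristics argument of \cite{mjm1} directly in $L^p$: one defines $\varrho_n\diamond f(\x)=\int_0^{\tau_-(\x)}\varrho_n(s)f(\Phi(\x,-s))\,\d s$, shows via H\"older that $|\varrho_n\diamond f|^p\le \varrho_n\diamond|f|^p$ (so the $L^p$ bound and the $L^p$-approximation $\varrho_n\diamond f\to f$ reduce to the known $L^1$ estimates), proves the commutation $\T_{\mathrm{max},p}(\varrho_n\diamond f)=\varrho_n\diamond(\T_{\mathrm{max},p}f)$, and then passes to the limit along each characteristic to obtain the representative $f^\sharp$ globally on $\O_-$, on $\O_+\cap\O_{-\infty}$, and on $\O_{-\infty}\cap\O_{+\infty}$ simultaneously. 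Your localization route can also be made to work---test functions compactly supported in a bounded open $K\subset\O$ extend by zero to $\mathfrak{Y}$, so $f|_K\in\D(\T^K_{\mathrm{max},1})$ and the $L^1$ representation theorem applies on $K$---but the patching of the local representatives $f^\sharp_K$ is more delicate than you indicate: two of them agree only $\mu$-a.e.\ on overlaps, and one must use the disintegration plus absolute continuity along characteristics to upgrade this to agreement along full characteristic segments for $\mu_-$-a.e.\ base point, then pass to a countable cover. What your approach buys is avoiding the re-derivation of the mollification lemmas in $L^p$; what the paper's approach buys is a single global construction with no patching. Your trace argument is essentially the paper's (which simply cites \cite[Proposition~3.16]{mjm1}). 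Your description of $(1)\Rightarrow(2)$ is slightly garbled: one does not ``multiply \eqref{integralTM-} by a test function and differentiate in a translation parameter,'' but rather writes $\int_\O f\,\tfrac{\d}{\d s}\psi(\Phi(\cdot,s))|_{s=0}\,\d\mu$ as an integral along characteristics via \eqref{10.47}--\eqref{10.49} and integrates by parts in the flow variable using the absolute continuity encoded in \eqref{integralTM-}; the underlying idea is nevertheless correct.
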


 The proof of the theorem is made of several steps following the approach developed in \cite{mjm1} in the $L^{1}$-context. The extension to $L^{p}$-space with $1 < p < \infty$ is somehow a technical generalization and we refer to Appendix \ref{app:techn} for details of the proof. Notice that the existence of the limit 
\eqref{eq:limtrac} can be proven exactly as  in \cite[Proposition 3.16]{mjm1}

The above representation theorem allows to define the trace operators.
\begin{defi}
For any $f \in \D(\T_{\mathrm{max},\,p})$, define the \textit{traces}
$\B^{\pm}f$ by
\begin{equation*}
\B^+f(\y):=\lim_{t \to 0+}f^\sharp({{\Phi}}(\y,-t)) \qquad
\text{ and } \qquad \B^-f(\y):=\lim_{t \to
0+}f^\sharp({{\Phi}}(\y,t))
\end{equation*} for any $\y \in \Gamma_\pm$ for which the limits
exist, where   $f^\sharp$ is a suitable representative of $f$.
\end{defi}

Notice that, by virtue of \eqref{integralTM-}, it is clear that, for any $f \in \D(\T_{\mathrm{max},\,p})$, the traces $\B^{\pm}f$ on $\Gamma_{\pm}$ are well-defined and, for $\mu_\pm$-a.e. $\mathbf{{z}} \in \Gamma_\pm$,
\begin{equation}\label{traceform}
\B^\pm f(\mathbf{{z}})=f^\sharp({\Phi}(\mathbf{{z}},\mp
t))\mp\int_0^t[\T_{\mathrm{max},\,p}f]({\Phi}(\mathbf{{z}},\mp s))\d s, \qquad
\forall t \in (0,\tau_\mp(\mathbf{{z}})).\end{equation}

\subsection{Additional Properties}

An important general property of $T_{\mathrm{max},\,p}$ we shall need in the sequel is
given by the following proposition, which makes the link between $\T_{\mathrm{max},\,p}$ and the operator $\T_{\mathrm{max},\,1}$ studied in \cite{mjm1}.
\begin{theo}\label{th:lpl1}
Let $p \geq 1$ and $f \in \D(\T_{\mathrm{max},\,p})$. Then, $|f|^{p} \in \D(\T_{\mathrm{max},\,1})$ and
\begin{equation}\label{eq:tpt1}
\T_{\mathrm{max}\,,1}|f|^{p}=p\,\mathrm{sign}(f)\,|f|^{p-1}\,\T_{\mathrm{max},\,p}f\end{equation}
where $\mathrm{sign}(f)(\x)=1$ if $f(\x) > 0$ and $\mathrm{sign}(f)(\x)=-1$ if $f(\x) < 0$ $(\x \in \O).$
\end{theo}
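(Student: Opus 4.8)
The plan is to use the mild (integral) characterization of the domain provided by Theorem \ref{representation} and simply ``differentiate along characteristics'' the chain rule $|f|^{p}$, then verify the resulting identity is again of the integral form \eqref{integralTM-}. First I would fix $f \in \D(\T_{\mathrm{max},\,p})$ and let $f^{\sharp}$ be the good representative and $g = \T_{\mathrm{max},\,p}f$, so that for $\mu$-a.e.\ $\x$ the map $s \mapsto f^{\sharp}(\Phi(\x,s))$ is absolutely continuous on $(-\t_-(\x),\t_+(\x))$ with derivative $g(\Phi(\x,s))$. Since $r \mapsto |r|^{p}$ is $C^{1}$ on $\mathbb{R}$ with derivative $p\,\mathrm{sign}(r)|r|^{p-1}$, the composition $s \mapsto |f^{\sharp}(\Phi(\x,s))|^{p}$ is absolutely continuous on every compact subinterval, with a.e.\ derivative $p\,\mathrm{sign}(f^{\sharp}(\Phi(\x,s)))\,|f^{\sharp}(\Phi(\x,s))|^{p-1}\,g(\Phi(\x,s))$. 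Integrating this derivative between $t_{1}$ and $t_{2}$ gives exactly the integral relation \eqref{integralTM-} for the pair $(|f|^{p},\ p\,\mathrm{sign}(f)|f|^{p-1}g)$, so by the equivalence in Theorem \ref{representation} it suffices to check that both members of this pair lie in $L^{1}(\O,\d\mu)$.

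The integrability check is the only genuinely analytic point. For $|f|^{p}$ this is immediate since $f \in L^{p}(\O,\d\mu)$. For the right-hand side one writes, by H\"older's inequality with exponents $p$ and $p' = p/(p-1)$,
\begin{equation*}
\int_{\O} \left| p\,\mathrm{sign}(f)\,|f|^{p-1}\,g \right|\d\mu
\leq p \left( \int_{\O} |f|^{(p-1)p'}\d\mu \right)^{1/p'} \left( \int_{\O} |g|^{p}\d\mu \right)^{1/p}
= p\,\|f\|_{L^{p}}^{p-1}\,\|g\|_{L^{p}},
\end{equation*}
using $(p-1)p' = p$. (The case $p = 1$ is trivial and in fact already contained in \cite{mjm1}, so one may assume $p > 1$ throughout.) Hence $p\,\mathrm{sign}(f)|f|^{p-1}g \in L^{1}(\O,\d\mu)$, and Theorem \ref{representation} applied with $p = 1$ yields both $|f|^{p} \in \D(\T_{\mathrm{max},\,1})$ and the identity $\T_{\mathrm{max},\,1}|f|^{p} = p\,\mathrm{sign}(f)|f|^{p-1}\,\T_{\mathrm{max},\,p}f$, which is \eqref{eq:tpt1}.

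One technical subtlety I would address carefully is the behaviour of $\mathrm{sign}(f)$ on the set $\{f = 0\}$: the convention in the statement leaves $\mathrm{sign}$ undefined there, but on that set $|f|^{p-1} = 0$ (for $p > 1$), so the product $\mathrm{sign}(f)|f|^{p-1}g$ is unambiguously zero $\mu$-a.e.\ on $\{f = 0\}$ and no choice of representative matters. Likewise, to make the ``differentiate the composition'' step rigorous one should note that for a.e.\ $\x$ the function $s \mapsto f^{\sharp}(\Phi(\x,s))$, being absolutely continuous on compact subintervals, can be composed with the $C^{1}$ function $r \mapsto |r|^{p}$ and the standard chain rule for $W^{1,1}$ functions applies; at points $s$ where $f^{\sharp}(\Phi(\x,s)) = 0$ the derivative of the composition is still $0$ a.e.\ because $|r|^{p}$ has vanishing derivative there. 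I expect this bookkeeping around $\{f=0\}$ together with the selection of a common good representative valid for $\mu$-a.e.\ characteristic to be the main (though modest) obstacle; the rest is the H\"older estimate above and a direct invocation of Theorem \ref{representation}.
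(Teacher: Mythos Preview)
Your argument is correct and is in fact more direct than the paper's. You use the equivalence in Theorem~\ref{representation} in both directions: from $f\in\D(\T_{\mathrm{max},\,p})$ you extract the good representative $f^{\sharp}$ and the integral identity \eqref{integralTM-}, apply the chain rule for absolutely continuous functions to $s\mapsto |f^{\sharp}(\Phi(\x,s))|^{p}$ (legitimate since $f^{\sharp}(\Phi(\x,\cdot))$ is continuous, hence bounded on compact subintervals, so $r\mapsto |r|^{p}$ is Lipschitz on its range), check via H\"older that the candidate derivative lies in $L^{1}(\O,\d\mu)$, and then invoke the $(1)\Rightarrow(2)$ direction of Theorem~\ref{representation} for $p=1$. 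The paper instead works directly with the weak (test-function) definition: it parametrizes $\O$ via $\Gamma_{-}$, $\Gamma_{+}$ and $\O_{-\infty}\cap\O_{+\infty}$ using \eqref{10.47}--\eqref{10.49}, decomposes each characteristic $(0,\t_{+}(\y))$ into the open intervals where $f^{\sharp}(\Phi(\y,\cdot))>0$ or $<0$, and integrates by parts on each piece, with separate case analyses when an interval touches $0$, $\t_{+}(\y)$, or $\t_{+}(\y)=\infty$ (the latter requiring \cite[Lemma 3.3]{mjm1}). Your route hides all of this case analysis inside the proof of Theorem~\ref{representation} itself, which is a genuine economy; the only price is that you rely on the full equivalence in that theorem (including the $p=1$ case from \cite{mjm1}), whereas the paper's argument needs only the representation $(2)\Rightarrow(1)$ and verifies membership in $\D(\T_{\mathrm{max},\,1})$ by hand. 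Your handling of the set $\{f=0\}$ is the right observation and suffices.
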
 
\begin{nb} Observe that, since both $f$ and $\T_{\mathrm{max}\,p}f$ belong to $L^{p}(\O,\d\mu)$ one sees that the right-hand-side of \eqref{eq:tpt1} indeed belongs to $L^{1}(\O,\d\mu).$\end{nb} 
\begin{proof}  
The proof follows the path of the version $p=1$ given in \cite[Proposition 2.2]{mjm2}. Let $f \in \D(\T_{\mathrm{max},\,p})$ and $\psi \in
\mathfrak{Y}$ be fixed. We shall denote by $f^\sharp$ the
representative of $f$ given by Theorem \ref{representation}. 
Using \eqref{10.47}, one has
\begin{equation*}\begin{split}
\int_{\O_-}|f(\x)|^{p}\dfrac{\d}{\d s}
\psi({\Phi}(\x,s))\big|_{s=0}\d\mu(\x)&=\int_{\Gamma_-}\d\mu_-(\y)\int_0^{\t_+(\y)}
|f({\Phi}(\y,t))|^{p} \dfrac{\d}{\d t}\psi({\Phi}(\y,t))\d t\\
&=\int_{\Gamma_-}\d\mu_-(\y)\int_0^{\t_+(\y)} |f^\sharp({\Phi}(\y,t))|^{p}
\dfrac{\d}{\d t}\psi({\Phi}(\y,t))\d t.
\end{split}
\end{equation*}
Let us fix $\y \in \Gamma_-$ and introduce $I_{\y}:=\{t \in (0,\t_+(\y))\,;\,f^\sharp({\Phi}(\y,t)) >0\}.$ As in \cite[Proposition 2.2]{mjm2}, there exists a sequence of mutually
disjoint intervals $(I_k(\y))_k=(s_k^-(\y), s_k^+(\y))_k\subset
(0,\tau_+(\y))$ such that
$$I_{\y}=\bigcup_{k \in \mathbb{N}}
\left(s_k^-(\y)\,,\,s_k^+(\y)\right).$$ We have
$$\int_{I_\y} |f^\sharp({\Phi}(\y,t))|^{p}
\dfrac{\d}{\d t}\psi({\Phi}(\y,t))\d t=\sum_{k} \int_{ s_k^-(\y)}^{
s_k^+(\y)} \left(f^\sharp({\Phi}(\y,t))\right)^{p} \dfrac{\d}{\d t}\psi({\Phi}(\y,t))\d
t.$$ Let us  distinguish several cases.
If $s_k^-(\y)\neq 0$ and $s_k^+(\y)\neq \tau_+(\y)$ then, from the
continuity of $t \mapsto f^\sharp({\Phi}(\y,t))$, we see that $
f^\sharp\left({\Phi}(\y,s_k^-(\y))\right)=f^\sharp\left({\Phi}(\y,s_k^+(\y))\right)=0$.
 Using \eqref{integralTM-} on the interval $(s_k^-(\y),s_k^+(\y))$, a simple integration by
parts leads to
\begin{equation}\label{sk-sk+}\int_{ s_k^-(\y)}^{ s_k^+(\y)} \left(f^\sharp({\Phi}(\y,t))\right)^{p}
\dfrac{\d}{\d t}\psi({\Phi}(\y,t))\d
t=\int_{s_k^-(\y)}^{s_k^+(\y)}\psi({\Phi}(\y,t))F({\Phi}(\y,t))
\d t.\end{equation}
where 
$$F:=p\,\mathrm{sign}(f^{\sharp})\,|f^{\sharp}|^{p-1}\,\T_{\mathrm{max},\,p}f.$$ As already observed, $F \in L^{1}(\O,\d\mu).$
 Next, we consider the case when $s_k^-(\y) =0$ or
$s_k^+(\y) =\tau_+(\y)<\infty$  for some $k$. Using the fact that $\psi$ is of compact support in $\O$ while    ${\Phi}(\y,s_k^+(\y))\in
\partial\O$, one proves again \eqref{sk-sk+} integrating by parts.
The last case to consider is  $s_k^+(\y)=\tau_+(\y) =\infty$ for
some $k$.  We shall use  \cite[Lemma 3.3]{mjm1} according to
which for $\mu_-$ almost every $\y\in \Gamma_-$ there is a sequence
$(t_n)_n$ such that $t_n\to \infty$ and $\psi({\Phi}(t_n,\y)) =0.$
Thus, focusing our attention on such $\y$s,  as in the proof of
\cite[Theorem 3.6]{mjm1}, integration by parts gives
$$\int_{s_k^-(\y)}^{t_n}
\left(f^\sharp({\Phi}(\y,t))\right)^{p} \dfrac{\d}{\d t}\psi({\Phi}(\y,t))\d
t=\int_{s_k^-(\y)}^{t_n}\psi({\Phi}(\y,t))F({\Phi}(\y,t))\d
t$$ for any $n$ and, by integrability of both sides, we prove Formula \eqref{sk-sk+} for $s_k^+(\y)=\tau_+(\y) =\infty$. In other words, \eqref{sk-sk+} is true for any $k \in \mathbb{N}$ and, summing up
over $\mathbb{N}$, we finally get
$$\int_{I_\y} |f^\sharp({\Phi}(\y,t))|^{p} \dfrac{\d}{\d
t}\psi({\Phi}(\y,t))\d
t=\int_{I_\y}\psi({\Phi}(\y,t))F({\Phi}(\y,t))
\d t.$$ Arguing in the same way on $J_\y=\{t \in(0,\t_+(\y))\,;\,f^\sharp({\Phi}(\y,t)) <0\}$ we get
$$\int_{J_\y} |f^\sharp({\Phi}(\y,t))|^{p}\dfrac{\d}{\d
t}\psi({\Phi}(\y,t))\d
t=-\int_{J_\y}\psi({\Phi}(\y,t))F({\Phi}(\y,t))
\d t$$ where, obviously, $|f^\sharp({\Phi}(\y,t))|=-
f^\sharp({\Phi}(\y,t)) $ for any $t \in J_\y$. Now, integration over
$\Gamma_-$ leads to
$$\int_{\O_-}\left|f(\x)\right|^{p}\dfrac{\d}{\d s}
\psi({\Phi}(\x,s))\big|_{s=0}\d\mu(\x)=\int_{\O_-}F(\x)\psi(\x)\d\mu(\x).$$ Using now
parametrization over $\Gamma_+$, we prove in the same way  that
$$\int_{\O_+ \cap \O_{-\infty}}|f(\x)|^{p}\dfrac{\d}{\d s}
\psi({\Phi}(\x,s))\big|_{s=0}\d\mu(\x)=\int_{\O_+ \cap
\O_{-\infty}}F(\x)\psi(\x)\d\mu(\x).$$ In the same way, following the proof of \cite[Proposition 2.2]{mjm2}, one gets that
$$\int_{\O_{+\infty} \cap
\O_{-\infty}}|f(\x)|^{p}\frac{\d}{\d s}
\psi(\Phi(\x,s))\big|_{s=0}\d\mu(\x)=
\int_{\O_{-\infty} \cap \O_{+\infty}}
F(\x)\,\psi(\x)\d\mu(\x),$$
where we notice that assumption \eqref{ass:h2} is crucial at this stage. Therefore, one sees that
$$\int_{\O} |f(\x)|^{p}\frac{\d}{\d s}
\psi(\Phi(\x,s))\big|_{s=0}\d\mu(\x)=\int_{\O} F(\x)\psi(\x)\d\mu(\x) \qquad \forall \psi \in \mathfrak{Y}.$$ Since $F \in L^{1}(\O,\d\mu)$, this exactly means that $|f|^{p} \in \D(\T_{\mathrm{max},\,1})$ with $\T_{\mathrm{max},\,1}|f|^{p}=F$ and the proof is complete.
  \end{proof}

We can now generalize Green's formula:
\begin{propo}[\textit{\textbf{Green's formula}}]\label{propgreen}
Let $f \in \D(\T_{\mathrm{max},\,p})$ satisfies $\B^-f \in \lm.$ Then
$\B^+f \in \lp$ and
\begin{equation}\label{greenform}
\|\B^{-}f\|_{\lm}^{p} - \|\B^{+}f\|_{\lp}^{p}=p\int_{\O} \mathrm{sign}(f)\,|f|^{p-1}\,\T_{\mathrm{max},\,p}f\,\d\mu.\end{equation}
\end{propo}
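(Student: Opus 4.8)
The plan is to deduce the statement from its already-known $L^{1}$-version by means of Theorem \ref{th:lpl1}. Given $f \in \D(\T_{\mathrm{max},\,p})$ with $\B^{-}f \in \lm$, I set $g:=|f|^{p}$. By Theorem \ref{th:lpl1}, $g \in \D(\T_{\mathrm{max},\,1})$ with
$$\T_{\mathrm{max},\,1}g = F:=p\,\mathrm{sign}(f)\,|f|^{p-1}\,\T_{\mathrm{max},\,p}f \in L^{1}(\O,\d\mu).$$
The argument then rests entirely on relating the traces of $g$ to those of $f$: I would establish first that $\B^{\pm}g = |\B^{\pm}f|^{p}$ for $\mu_{\pm}$-a.e.\ point of $\Gamma_{\pm}$.

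To this end, let $f^{\sharp}$ be the representative of $f$ furnished by Theorem \ref{representation}. The computation performed in the proof of Theorem \ref{th:lpl1} shows, for $\mu$-a.e.\ $\x \in \O$, that $s \mapsto |f^{\sharp}(\Phi(\x,s))|^{p}$ is absolutely continuous on $(-\tau_{-}(\x),\tau_{+}(\x))$ and satisfies
$$|f^{\sharp}(\Phi(\x,t_{1}))|^{p}-|f^{\sharp}(\Phi(\x,t_{2}))|^{p} = \int_{t_{1}}^{t_{2}}F(\Phi(\x,s))\,\d s, \qquad -\tau_{-}(\x) < t_{1} \leq t_{2} < \tau_{+}(\x) ;$$
hence $|f^{\sharp}|^{p}$ is a representative of $g$ satisfying condition (1) of Theorem \ref{representation} with associated function $F = \T_{\mathrm{max},\,1}g$, and may therefore be used to compute the traces of $g$. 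By the definition of the traces this gives $\B^{\pm}g(\z) = \lim_{t\to 0^{+}}|f^{\sharp}(\Phi(\z,\mp t))|^{p} = |\B^{\pm}f(\z)|^{p}$ for $\mu_{\pm}$-a.e.\ $\z \in \Gamma_{\pm}$; in particular $\B^{-}g = |\B^{-}f|^{p} \in L^{1}_{-}$ with $\|\B^{-}g\|_{L^{1}_{-}} = \|\B^{-}f\|_{\lm}^{p}$.

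It then remains to apply the $L^{1}$-version of Green's identity established in \cite{mjm1} to the nonnegative function $g \in \D(\T_{\mathrm{max},\,1})$: since $\B^{-}g \in L^{1}_{-}$, it yields $\B^{+}g \in L^{1}_{+}$ together with
$$\|\B^{-}g\|_{L^{1}_{-}} - \|\B^{+}g\|_{L^{1}_{+}} = \int_{\O}\T_{\mathrm{max},\,1}g\,\d\mu = \int_{\O}F\,\d\mu .$$
Reading this back through $\B^{+}g = |\B^{+}f|^{p}$ shows $\B^{+}f \in \lp$ with $\|\B^{+}f\|_{\lp}^{p} = \|\B^{+}g\|_{L^{1}_{+}}$, and the definition of $F$ turns the displayed identity into \eqref{greenform}.

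I expect the only genuinely delicate point to be the trace identification $\B^{\pm}(|f|^{p}) = |\B^{\pm}f|^{p}$, i.e.\ the compatibility between the representative of $f$ and that of $|f|^{p}$; this is, however, essentially already contained in the proof of Theorem \ref{th:lpl1}. Should one wish to avoid invoking the $L^{1}$ Green identity, one would instead re-derive it by integrating \eqref{integralTM-} along characteristics and using the change-of-variables formulas \eqref{10.47}--\eqref{10.51}, the subtle points being the treatment of the sets $\O_{\pm\infty}$ where characteristics have infinite length --- where integrability of $g$ kills the limits of $g^{\sharp}$ at infinity --- and the vanishing of $\int_{\O_{+\infty}\cap\O_{-\infty}}\T_{\mathrm{max},\,1}g\,\d\mu$, which rests on the flow-invariance assumption \eqref{ass:h2}, exactly as at the corresponding step in the proof of Theorem \ref{th:lpl1}.
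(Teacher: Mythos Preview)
Your proof is correct and follows essentially the same route as the paper: set $g=|f|^{p}$, invoke Theorem \ref{th:lpl1} to place $g$ in $\D(\T_{\mathrm{max},\,1})$, identify $\B^{\pm}g=|\B^{\pm}f|^{p}$, and then apply the $L^{1}$ Green formula from \cite[Proposition 4.4]{mjm1}. The paper simply asserts the trace identity ``without difficulty'', whereas you spell it out via the representative $f^{\sharp}$; your additional remarks on re-deriving the $L^{1}$ identity directly are sound but not needed here.
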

\begin{proof} Let $f \in \D(\T_{\mathrm{max},\,p})$ with $\B^{-}f\in \lm,$ be given. Let $F=|f|^{p}$. One checks without difficulty that  $|\B^{\pm}f|^{p}=\B^{\pm}|f|^{p}$ while, from the previous result $F \in \D(\T_{\mathrm{max},\,1})$. Since $\B^{-}F\in L^{1}_{-},$ applying the $L^{1}$-version of Green's formula \cite[Proposition 4.4]{mjm1}, we get
$$\int_{\O}\T_{\mathrm{max},1}|f|^{p}\d\mu=\int_{\Gamma_{-}}\B^{-}|f|^{p}\d\mu_{-}-\int_{\Gamma_{+}}\B^{+}|f|^{p}\d\mu_{+}$$
which gives exactly the results thanks to Theorem \ref{th:lpl1}.\end{proof}

\section{Well-posedness for initial and boundary--value problems}\label{sec:bvp}

\subsection{Absorption semigroup}\label{sec:exis}
From now on, we fix $p > 1$ and we will denote $X=L^p(\O,\d\mu)$ endowed with its
natural norm $\|\cdot\|_{p}$. The conjugate exponent will always be denoted by $q$, i.e. $1/p+1/q=1$. Let $\T_{0,\,p}$ be the free streaming
operator with \textit{no re--entry boundary conditions}:
$$\T_{0,\,p}\psi=\T_{\mathrm{max},\,p}\psi, \qquad \text{ for any }
\psi \in \D(\T_{0,\,p}),$$ where the domain $\D(\T_{0,\,p})$ is defined by
$$\D(\T_{0,\,p})=\{\psi \in \D(\T_{\mathrm{max},\,p})\,;\,\B^-\psi=0\}.$$
We state the following generation result:
\begin{theo}\label{uot} The operator $(\T_{0,\,p},\D(\T_{0,\,p}))$ is the generator of a nonnegative $C_0$-semigroup
of contractions $\uot$ in $L^{p}(\O,\d\mu)$ given by
\begin{equation}\label{eq:Uot}
U_0(t)f(\x)=f({{\Phi}}(\x,-t))\chi_{\{t <
\tau_-(\x)\}}(\x), \qquad (\x \in \O,\:f \in X),\end{equation} where $\chi_A$
denotes the characteristic function of a set $A$.
\end{theo}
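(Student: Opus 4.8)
The plan is to establish the three ingredients of a generation theorem in the Hille–Yosida/Lumer–Phillips circle of ideas: first that the candidate family $\uot$ given by \eqref{eq:Uot} is a $C_0$-semigroup of positive contractions, second that the operator it generates is precisely $(\T_{0,\,p},\D(\T_{0,\,p}))$.

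\textbf{Step 1: the semigroup properties of $\uot$.} I would first check directly that $U_0(t)$ maps $X$ into $X$ contractively. For $f\in L^p(\O,\d\mu)$, using the change of variables along characteristics furnished by \eqref{10.47} (applied to $h=|f|^p$) together with the flow invariance \eqref{ass:h2} of $\mu$, one gets $\int_\O |f(\Phi(\x,-t))|^p\chi_{\{t<\t_-(\x)\}}\d\mu(\x)\le \int_\O |f(\x)|^p\d\mu(\x)$, so $\|U_0(t)f\|_p\le\|f\|_p$; positivity is obvious from the formula. The semigroup law $U_0(t)U_0(s)=U_0(t+s)$ follows from the cocycle property $\Phi(\x,-t-s)=\Phi(\Phi(\x,-t),-s)$ of the flow together with the additivity of stay times, namely $\t_-(\Phi(\x,-t))=\t_-(\x)-t$ on $\{t<\t_-(\x)\}$, which makes the characteristic functions compose correctly. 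Strong continuity at $t=0$ I would obtain by density: for $f$ continuous with compact support in $\O$ one has pointwise convergence $U_0(t)f\to f$ as $t\to0^+$ together with a dominating function (the support stays in a fixed compact set for small $t$), hence $L^p$-convergence by dominated convergence; the general case follows since such $f$ are dense in $L^p(\O,\d\mu)$ and $\|U_0(t)\|\le1$ uniformly.

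\textbf{Step 2: identifying the generator.} Let $(A,\D(A))$ denote the generator of $\uot$. To show $A=\T_{0,\,p}$ I would argue both inclusions. For $\D(A)\subseteq\D(\T_{0,\,p})$: if $f\in\D(A)$ with $Af=g$, then for a.e.\ $\x$ and small $t$, $t^{-1}(U_0(t)f-f)(\x)\to g(\x)$ along a subsequence (after passing to $L^p$ convergence), and using formula \eqref{eq:Uot} one recovers the integral identity \eqref{integralTM-} for $f^\sharp$, whence $f\in\D(\T_{\mathrm{max},\,p})$ with $\T_{\mathrm{max},\,p}f=g$ by Theorem \ref{representation}; moreover the representative built from the semigroup satisfies $f^\sharp(\Phi(\y,t))=U_0(t)f$-type expressions that vanish as $t\to0^+$ on $\Gamma_-$ because of the cutoff $\chi_{\{t<\t_-(\x)\}}$, giving $\B^-f=0$, i.e.\ $f\in\D(\T_{0,\,p})$. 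The reverse inclusion is cleanest via resolvents: I would show that for $\l>0$ the resolvent $(\l-A)^{-1}$ of the semigroup coincides with the explicit operator
\begin{equation*}
R_\l f(\x)=\int_0^{\t_-(\x)} e^{-\l s} f(\Phi(\x,-s))\,\d s,
\end{equation*}
by computing $\int_0^\infty e^{-\l t}U_0(t)f\,\d t$ from \eqref{eq:Uot} and interchanging integrals. A direct verification using Theorem \ref{representation} (differentiating $R_\l f$ along characteristics) shows $R_\l f\in\D(\T_{\mathrm{max},\,p})$, $\B^-(R_\l f)=0$ and $(\l-\T_{\mathrm{max},\,p})R_\l f=f$, so $R_\l=(\l-\T_{0,\,p})^{-1}$ as operators; since $R_\l$ is also the resolvent of $A$ and resolvents determine the operator, $A=\T_{0,\,p}$. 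The contraction estimate $\|R_\l\|\le 1/\l$, again obtained from \eqref{10.47} and \eqref{ass:h2}, is consistent with—and could alternatively replace part of—Step 1 via Hille–Yosida.

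\textbf{Main obstacle.} The delicate point is the rigorous identification of the pointwise representative: $U_0(t)f$ is defined $\mu$-a.e., and one must be careful that the cocycle relation, the stay-time additivity $\t_-(\Phi(\x,-t))=\t_-(\x)-t$, and the behaviour of the cutoff all hold for $\mu$-a.e.\ $\x$ and \emph{for all} $t$ simultaneously, which is exactly the kind of measure-theoretic bookkeeping that the representation Theorem \ref{representation} and Proposition \ref{prointegra} are designed to handle. I expect the proof to lean on these, so the only genuine work is assembling them; the verification that $R_\l f$ lies in $\D(\T_{0,\,p})$ with $\B^-(R_\l f)=0$ is the technical heart, since it requires producing the good representative and checking the trace vanishes, using \eqref{traceform}.
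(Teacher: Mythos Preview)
Your proposal is correct and follows essentially the same route as the paper: verify that the explicit formula \eqref{eq:Uot} defines a positive contractive $C_0$-semigroup and then identify its generator with $\T_{0,\,p}$ via the resolvent $C_\lambda$. The paper's own proof is extremely terse, delegating measurability, the semigroup law, strong continuity, and the generator identification entirely to the $L^1$ case treated in \cite[Theorem 4.1]{mjm1} (these being ``pointwise'' arguments independent of $p$), and only spells out the $L^p$-contractivity; for this it decomposes $\|U_0(t)f\|_p^p$ over $\O_+$, $\O_-\cap\O_{+\infty}$, and $\O_{-\infty}\cap\O_{+\infty}$ and obtains the exact identity
\[
\|f\|_p^p-\|U_0(t)f\|_p^p=\int_{\Gamma_+}\d\mu_+(\z)\int_0^t |f(\Phi(\z,-s))|^p\chi_{\{s<\t_-(\z)\}}\,\d s,
\]
which is reused later in the paper (Propositions \ref{prop:B+U01} and \ref{prop:B+U0}). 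Your version glosses over this decomposition and only aims at the inequality $\|U_0(t)f\|_p\le\|f\|_p$, which is perfectly adequate for the theorem as stated but would need to be revisited if you go on to the Dyson--Phillips construction in Section \ref{sec:explicit}.
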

\begin{proof}  Let us first check that the family of
operators $\uot$ is a nonnegative contractive $C_0$-semigroup in
$X$. As in \cite[Theorem 4.1]{mjm1}, for any
$f \in X$ and any $t \geq 0$, the mapping $U_0(t)f\::\O \to
\mathbb{R}$ is measurable and the semigroup properties $U_0(0)f=f$
and $U_0(t)U_0(s)f=U_0(t+s)f$ $(t,s \geq 0)$ hold. Let us now show
that $\|U_0(t)f\|_{p} \leq \|f\|_{p}$. We have
\begin{equation*}\label{eq:uot0}
\|U_0(t)f\|_{p}^{p}=\int_{\O_+}|U_0(t)f|^{p}\d\mu+\int_{\O_-\cap
\O_{+\infty}}|U_0(t)f|^{p}\d\mu+\int_{\O_{-\infty}\cap\O_{+\infty}}|U_0(t)f|^{p}\d\mu.\end{equation*}
As in \cite[Theorem 4.1]{mjm1}, one checks that
\begin{equation*}\label{eq:uot1}
\int_{\O_-\cap
\O_{+\infty}}|U_0(t)f|^{p}\d\mu= \int_{\O_-\cap
\O_{+\infty}}|f|^{p}\d\mu, \quad 
\int_{\O_{-\infty}\cap\O_{+\infty}}|U_0(t)f|^{p}\d\mu = \int_{\O_{-\infty}\cap\O_{+\infty}}|f|^{p}\d\mu.
\end{equation*} 
Therefore
$$\|f\|_{p}^{p}-\|U_{0}(t)f\|_{p}^{p}=\int_{\O_{+}}|f|^{p}\d\mu-\int_{\O_{+}}|U_{0}(t)|^{p}\d\mu.$$
Now, using \eqref{10.47} together with the expression of $U_{0}(t)f$ in \eqref{eq:Uot}, we get
$$\int_{\O_{+}}|U_{0}(t)f|^{p}\d\mu=\int_{\Gamma_{+}}\d\mu_{+}(\z)\int_{t}^{\max(t,\t_{-}(\z))}|f(\Phi(\z,-s))|^{p}\d s$$
so that
\begin{equation}\label{eq:f-UO}
\|f\|_{p}^{p}-\|U_{0}(t)f\|_{p}^{p}=\int_{\Gamma_{+}}\d\mu_{+}(\z)\int_{0}^{t}|f(\Phi(\z,-s))|^{p}\chi_{\{s < \t_{-}(\z)\}}\d s.\end{equation}
This proves that $\|U_{0}(t)f\|_{p} \leq \|f\|_{p}$, i.e. $\uot$ is a contraction semigroup. The rest of the proof is as in \cite[Theorem 4.1]{mjm1} since it involves only ``pointwise'' estimate.\end{proof}

\subsection{Some useful operators.} Introduce here some linear operators which will turn useful in the study of boundary-value problem. We start with
$$C_{\lambda}:=(\lambda-\T_{0,\,p})^{-1}, \qquad \forall \lambda >0.$$
Since
$$C_{\lambda}f=\int_{0}^{\infty}\exp(-\lambda\,t) U_{0}(t)f\d t, \qquad \forall f \in X,\;\;\lambda >0$$
one sees that
\begin{equation*}
\begin{cases}
C_{\lambda} \::\:&X \longrightarrow \D(\T_{0,\,p}) \subset X\\
&f \longmapsto \left[C_{\lambda}f\right](\x)=\displaystyle
\int_0^{\t_-(\x)}f({\Phi}(\x,-s))\exp(-\lambda s)\d s,\:\:\:\x \in
{\O}.
\end{cases}
\end{equation*}
In particular, $\|C_{\lambda}f\|_{p} \leq \frac{1}{\lambda}\|f\|_{p}$ for all $\lambda >0$, $f \in X.$
Introduce then, for all $f \in X$,
$$G_{\lambda}f=\B^{+}C_{\lambda}f, \qquad \forall \lambda >0, f \in X.$$
According to Green's formula, $G_{\lambda}f \in \lp$ and one has
\begin{equation*}
\begin{cases}
G_{\lambda} \::\:&X \longrightarrow \lp\\
&f \longmapsto \left[G_{\lambda}f\right]({\z})=\displaystyle
\int_0^{\t_-({\z})}f({\Phi}({\z},-s))\exp(-\lambda s)\d s,\:\:\:{\z}
\in \Gamma_+\;;\end{cases}
\end{equation*}
One has then the following
\begin{lemme}\label{lem:normGl} For any $\lambda >0$ and any $f \in X$, one has
\begin{equation}\label{eq:normGl}\|G_{\lambda}f\|_{\lp}^{p}+\lambda\,p\|C_{\lambda}f\|_{p}^{p}=p\int_{\O}\mathrm{sign}(C_{\lambda}f)\,|C_{\lambda}f|^{p-1}\,f\d\mu.\end{equation}
In particular
$$\|G_{\lambda}f\|_{\lp}^{p}+\lambda\,p\|C_{\lambda}f\|_{p}^{p} \leq p\|C_{\lambda}f\|_{p}^{p-1}\|f\|_{p}.$$
\end{lemme}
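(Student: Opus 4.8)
The plan is to apply the generalized Green's formula (Proposition \ref{propgreen}) to the function $f_\lambda := C_\lambda f$, which by construction lies in $\D(\T_{0,\,p}) \subset \D(\T_{\mathrm{max},\,p})$. First I would record the two key facts about $f_\lambda$: since $f_\lambda \in \D(\T_{0,\,p})$ we have $\B^- f_\lambda = 0$, so in particular $\B^- f_\lambda \in \lm$ and Proposition \ref{propgreen} applies; and since $C_\lambda = (\lambda - \T_{0,\,p})^{-1}$, we have $\T_{\mathrm{max},\,p} f_\lambda = \T_{0,\,p} f_\lambda = \lambda f_\lambda - f$. Moreover $\B^+ f_\lambda = \B^+ C_\lambda f = G_\lambda f$ by definition of $G_\lambda$.

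Substituting into \eqref{greenform} gives
\[
\|\B^- f_\lambda\|_{\lm}^p - \|\B^+ f_\lambda\|_{\lp}^p = p\int_\O \mathrm{sign}(f_\lambda)\,|f_\lambda|^{p-1}\,\T_{\mathrm{max},\,p} f_\lambda\,\d\mu,
\]
and using $\B^- f_\lambda = 0$, $\B^+ f_\lambda = G_\lambda f$, and $\T_{\mathrm{max},\,p} f_\lambda = \lambda f_\lambda - f$, the right-hand side becomes $p\lambda \int_\O \mathrm{sign}(f_\lambda)\,|f_\lambda|^{p-1}\,f_\lambda\,\d\mu - p\int_\O \mathrm{sign}(f_\lambda)\,|f_\lambda|^{p-1}\,f\,\d\mu$. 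The first integral simplifies because $\mathrm{sign}(f_\lambda)\,f_\lambda = |f_\lambda|$, so $\mathrm{sign}(f_\lambda)\,|f_\lambda|^{p-1}\,f_\lambda = |f_\lambda|^p$, giving $p\lambda\|f_\lambda\|_p^p = p\lambda\|C_\lambda f\|_p^p$. Rearranging yields exactly \eqref{eq:normGl}.

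For the final inequality, I would bound the right-hand side of \eqref{eq:normGl} using Hölder's inequality: $\bigl|\int_\O \mathrm{sign}(C_\lambda f)\,|C_\lambda f|^{p-1}\,f\,\d\mu\bigr| \leq \int_\O |C_\lambda f|^{p-1}\,|f|\,\d\mu \leq \||C_\lambda f|^{p-1}\|_q\,\|f\|_p$, where $q = p/(p-1)$ is the conjugate exponent, so that $\||C_\lambda f|^{p-1}\|_q = \bigl(\int_\O |C_\lambda f|^{(p-1)q}\,\d\mu\bigr)^{1/q} = \bigl(\int_\O |C_\lambda f|^p\,\d\mu\bigr)^{(p-1)/p} = \|C_\lambda f\|_p^{p-1}$. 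This gives $\|G_\lambda f\|_{\lp}^p + \lambda p\|C_\lambda f\|_p^p \leq p\|C_\lambda f\|_p^{p-1}\|f\|_p$, as claimed.

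There is no real obstacle here — the statement is essentially a direct consequence of Green's formula applied to the resolvent of $\T_{0,\,p}$. The only point requiring a little care is verifying that $\B^+ C_\lambda f \in \lp$ so that $\|G_\lambda f\|_{\lp}$ is well-defined and finite; but this is already guaranteed by Proposition \ref{propgreen} (the conclusion $\B^+ f \in \lp$), which is exactly why the problem statement invokes Green's formula when introducing $G_\lambda$. One should also note in passing that dropping the nonnegative term $\|G_\lambda f\|_{\lp}^p$ and using the inequality recovers the already-known bound $\|C_\lambda f\|_p \leq \tfrac{1}{\lambda}\|f\|_p$, which serves as a consistency check.
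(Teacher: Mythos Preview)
Your proof is correct and follows essentially the same approach as the paper: set $g=C_\lambda f\in\D(\T_{0,\,p})$, apply Green's formula (Proposition~\ref{propgreen}) using $\B^-g=0$, $\B^+g=G_\lambda f$, and $\T_{\mathrm{max},\,p}g=\lambda g-f$, then finish with H\"older's inequality. The paper's argument is identical up to notation (it writes $g$ rather than $f_\lambda$) and is slightly more terse.
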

\begin{proof} Given $f \in X$ and $\lambda >0$, let $g=C_{\lambda}f=(\lambda-\T_{0,\,p})^{-1}f$. One has $g \in \D(\T_{0,\,p})$, i.e. $\B^{-}g=0$.  Green's formula (Proposition \ref{propgreen}) gives 
$$\|G_{\lambda}f\|_{\lp}^{p}=\|\B^{+}g\|_{\lp}^{p}=-p\int_{\O}\mathrm{sign}(g)|g|^{p-1}\T_{\mathrm{max},\,p}g\d\mu$$
and, since $\T_{\mathrm{max},\,p}g=\T_{0,\,p}g=\lambda\,g-f$, we get
$$\|G_{\lambda}f\|_{\lp}^{p}=-\lambda\,p\int_{\O}|g|^{p}\d\mu+p\int_{\O}\mathrm{sign}(g)|g|^{p-1}\,f\d\mu$$
which gives \eqref{eq:normGl}  since $g=C_{\lambda}f.$ The second part of the result comes from H\"older's inequality since $\mathrm{sign}(g)|g|^{p-1} \in L^{q}(\O,\d\mu)$ with $1/q+1/p=1$.
\end{proof}

\subsection{Generalized Cessenat's theorems}\label{sub:cessen}

The theory and tools we have recalled in the previous section allow us to carry out a more detailed study of the trace
operators. First of all, we show that
 Cessenat's trace result \cite{ces1,ces2} can be generalized to our case:

\begin{theo}\label{theotrace} Define the following measures over $\Gamma_{\pm}$:
$$\d\xi_{\pm}(\y)=\min\left(\t_{\mp}(\y),1\right)\d\mu_{\pm}(\y),
\qquad \y \in \Gamma_{\pm},$$
and set
$$Y^{\pm}_{p}:=L^p(\Gamma_{\pm},\d\xi_{\pm})$$
with usual norm. Then, for any $f \in \D(\T_{\mathrm{max},\,p})$,
the trace $\B^{\pm}f$ belongs to $Y^{\pm}_{p}$ with
$$\|\B^{\pm}f\|_{Y^{\pm}_{p}}^{p} \leq
2^{p-1}\left(\|f\|_{p}^{p}+\|\T_{\mathrm{max},\,p}f\|_{p}^{p}\right), \qquad f \in
\D(\T_{\mathrm{max},\,p}).$$
\end{theo}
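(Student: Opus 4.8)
The plan is to adapt Cessenat's averaging argument along characteristic curves, taking as starting point the representation formula \eqref{traceform}. I will treat the trace $\B^{+}f$ on $\Gamma_{+}$; the estimate for $\B^{-}f$ on $\Gamma_{-}$ is obtained in exactly the same way, exchanging the roles of $\t_{+}$ and $\t_{-}$ and using the ``$-$'' version of \eqref{10.47}.

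Fix $f\in\D(\T_{\mathrm{max},\,p})$ and let $f^{\sharp}$ be the representative of $f$ furnished by Theorem \ref{representation}; set $\delta(\z):=\min(\t_{-}(\z),1)$ and note that $\t_{-}(\z)>0$ for $\mu_{+}$-a.e. $\z\in\Gamma_{+}$ (indeed for every $\z\in\Gamma_{+}$ by the very definition of $\Gamma_{+}$), so that $\delta(\z)>0$. From \eqref{traceform} one gets, for every $t\in(0,\t_{-}(\z))$,
\[
|\B^{+}f(\z)|\leq |f^{\sharp}(\Phi(\z,-t))|+\int_{0}^{t}|\T_{\mathrm{max},\,p}f(\Phi(\z,-s))|\,\d s .
\]
Restricting $t$ to $(0,\delta(\z))$, bounding the second term by its value at $t=\delta(\z)$, and then averaging the resulting inequality in $t$ over $(0,\delta(\z))$ yields
\[
|\B^{+}f(\z)|\leq \frac{1}{\delta(\z)}\int_{0}^{\delta(\z)}|f^{\sharp}(\Phi(\z,-t))|\,\d t+\int_{0}^{\delta(\z)}|\T_{\mathrm{max},\,p}f(\Phi(\z,-s))|\,\d s .
\]

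Next I raise this inequality to the power $p$ via $(a+b)^{p}\leq 2^{p-1}(a^{p}+b^{p})$, estimate the first term on the right by Jensen's inequality applied to the probability measure $\delta(\z)^{-1}\,\d t$ on $(0,\delta(\z))$, and the second term by Hölder's inequality on $(0,\delta(\z))$. Using $\delta(\z)\leq 1$ throughout and then multiplying through by $\delta(\z)$, this gives
\[
\delta(\z)\,|\B^{+}f(\z)|^{p}\leq 2^{p-1}\left(\int_{0}^{\delta(\z)}|f^{\sharp}(\Phi(\z,-t))|^{p}\,\d t+\int_{0}^{\delta(\z)}|\T_{\mathrm{max},\,p}f(\Phi(\z,-s))|^{p}\,\d s\right).
\]
Since $\delta(\z)\leq\t_{-}(\z)$, the two inner integrals are dominated by the corresponding integrals over $(0,\t_{-}(\z))$. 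Integrating over $\Gamma_{+}$ against $\d\mu_{+}$ and applying \eqref{10.47} in its ``$+$'' form turns the right-hand side into $2^{p-1}(\int_{\O_{+}}|f|^{p}\,\d\mu+\int_{\O_{+}}|\T_{\mathrm{max},\,p}f|^{p}\,\d\mu)\leq 2^{p-1}(\|f\|_{p}^{p}+\|\T_{\mathrm{max},\,p}f\|_{p}^{p})$, while the left-hand side equals $\int_{\Gamma_{+}}|\B^{+}f(\z)|^{p}\,\d\xi_{+}(\z)=\|\B^{+}f\|_{Y^{+}_{p}}^{p}$. In particular $\B^{+}f\in Y^{+}_{p}$; the same computation on $\Gamma_{-}$ completes the proof.

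I do not anticipate a genuine obstacle: the argument is elementary once \eqref{traceform} and \eqref{10.47} are available. The only points requiring care are the measurability of the functions entering the averaging and Fubini steps (which follows from Theorem \ref{representation} and Proposition \ref{prointegra}) and the bookkeeping of the truncation factor $\min(\t_{\mp}(\cdot),1)$, which is precisely what forces the weight $\d\xi_{\pm}$ rather than $\d\mu_{\pm}$ in the trace space. An alternative route would be to invoke Theorem \ref{th:lpl1} to reduce the claim to an $L^{1}$ trace estimate for $|f|^{p}\in\D(\T_{\mathrm{max},\,1})$, using $\B^{\pm}|f|^{p}=|\B^{\pm}f|^{p}$ together with $\|\T_{\mathrm{max},\,1}|f|^{p}\|_{1}\leq p\,\|f\|_{p}^{p-1}\|\T_{\mathrm{max},\,p}f\|_{p}$ (Hölder's inequality); this works as well but does not obviously produce the constant $2^{p-1}$ for $1<p<2$, so I would present the direct averaging argument above.
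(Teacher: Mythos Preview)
Your argument is correct and follows essentially the same route as the paper's proof: both start from the pointwise trace bound \eqref{traceform}, apply $(a+b)^{p}\leq 2^{p-1}(a^{p}+b^{p})$ and H\"older on the integral term over an interval of length $\min(\t_{\mp},1)$, average in the running parameter, and then integrate against $\d\mu_{\pm}$ via \eqref{10.47}. The only cosmetic differences are that the paper treats $\B^{-}f$ while you treat $\B^{+}f$, and the paper raises to the $p$-th power before averaging whereas you average first and then invoke Jensen; these lead to the same displayed inequality and the same constant $2^{p-1}$.
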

\begin{proof} The proof is an almost straightforward application of
the representation formula \eqref{traceform}. The proof is similar to the one given in \cite[Theorem 3.1]{mjm2} for $p=1$. Namely,
let $f \in \D(\T_{\mathrm{max},\,p})$ be fixed. It is clear from
\eqref{traceform} that the mapping $\y \in \Gamma_{-} \mapsto
\B^-f(\y)$ is measurable. Now, for $\mu_-$-almost every $\y \in
\Gamma_-$, one has
$$\left|\B^-f(\y)\right|^{p} \leq 2^{p-1}|f^\sharp({\Phi}(\y,s))|^{p}+2^{p-1}\left(\int_0^s
|\T_{\mathrm{max},\,p}f({\Phi}(\y,r))|\d r\right)^{p}, \qquad \forall 0 < s <
\t_+(\y).$$ Now, for any $0 < s <t < \min(1,\t_+(\y))$,  using first H\"older inequality we get
$$\left(\int_0^s
|\T_{\mathrm{max},\,p}f({\Phi}(\y,r))|\d r\right)^{p} \leq s^{\frac{p}{q}}\,\int_{0}^{s}|\T_{\mathrm{max},\,p}f({\Phi}(\y,r))|^{p}\d r \leq \int_{0}^{s}|\T_{\mathrm{max},\,p}f({\Phi}(\y,r))|^{p}\d r.$$ Integrating
the above inequality with respect to $s$ over $(0,t)$ leads to
$$t\left|\B^-f(\y)\right|^{p}=\int_0^{t}\left|\B^-f(\y)\right|^{p}\d s \leq
2^{p-1}\int_0^{t}|f^\sharp({\Phi}(\y,s))|^{p}\d s + 2^{p-1}
 \int_0^{t} |\T_{\mathrm{max},\,p}f({\Phi}(\y,s))|^{p}\d
s$$ since $t \leq 1$. We conclude exactly as in the proof of \cite[Theorem 3.1]{mjm2}.
 \end{proof}
 
\begin{nb}\label{nb:graph-norm} A simple consequence of the above continuity result is the following: if $(f_{n})_{n} \subset \D(\T_{\mathrm{max},\,p})$ is such that 
$$\lim_{n}\left(\|f_{n}-f\|_{p} + \|\T_{\mathrm{max},\,p}f_{n}-\T_{\mathrm{max},\,p}f\|_{p}\right)=0$$
then $(\B^{\pm}f_{n})_{n}$ converge to $\B^{\pm}f$ in $Y^{\pm}_{p}.$\end{nb}

Clearly, 
\begin{equation}\label{eq:embed}
L^{p}_{\pm}=L^{p}(\Gamma_{\pm},\d\mu_{\pm}) \xhookrightarrow{} Y_{p}^{\pm}\end{equation}
where the embedding is continuous (it is a contraction). Define then, for all $\lambda > 0$  and any $u \in Y_{p}^{-}$:
\begin{equation*}\begin{cases}
\left[M_{\lambda}u\right](\z)&=u({\Phi}(\z,-\t_-(\z)))\exp\left(-\lambda
\t_-(\z)\right)\chi_{\{\t_-(\z) < \infty\}},\:\:\:\z \in \Gamma_+\;,\\
\left[\Xi_{\lambda}u\right](\x)&=u({\Phi}(\x,-\t_-(\x)))\exp\left(-\lambda
\t_-(\x)\right)\chi_{\{\t_-(\x)<\infty\}},\:\:\:\x \in {\O}\;.\end{cases}
\end{equation*}
We also  introduce the following measures on $\Gamma_{\pm}$:
$$\d\tilde{\mu}_{\pm,p}(\y):=\left(\min(\t_{\mp}(\y),\,1)\right)^{1-p}\,\d\mu_{\pm}(\y), \qquad \y \in \Gamma_{\pm}$$
and set $\widetilde{\mathcal{Y}}_{\pm,\,p}=L^{p}(\Gamma_{\pm},\,\d\tilde{\mu}_{\pm,p})$ with the usual norm. Notice that, $\d\tilde{\mu}_{\pm\,p}$ is absolutely continuous with respect to $\d\mu_{\pm}$ and the embedding
\begin{equation}\label{eq:embed}
\widetilde{\mathcal{Y}}_{\pm,\,p} \xhookrightarrow{} L^{p}(\Gamma_{\pm},\d\mu_{\pm})=L^{p}_{\pm}\end{equation}
 is continuous since it is a contraction.
One has the following result
\begin{lemme}\label{lemmaML}
Let $\lambda >0$ be given. Then,
$$M_\lambda \in \mathscr{B}(Y_{p}^{-},Y_{p}^{+}) \qquad \text{ and } \quad \Xi_{\lambda} \in \mathscr{B}(Y_{p}^{-},X).$$
Moreover, given $u \in Y_{p}^{-}$ it holds:
\begin{enumerate}
\item $\Xi_{\lambda}u \in
\D(\T_{\mathrm{max,\,p}})$ with 
\begin{equation}\label{propxil1}\T_{\mathrm{max},\,p}\Xi_{\lambda}u=\lambda \Xi_\lambda u, \qquad
\B^-\Xi_\lambda u=u,\:\:\B^+\Xi_\lambda u = \ml u.
\end{equation}
\item $M_\lambda u \in \lp$ if and only if
$u \in \lm$. 
\item  $M_\lambda u \in \widetilde{\mathcal{Y}}_{+,\,p}$ if and only if $u \in \widetilde{\mathcal{Y}}_{-,\,p}$.
\end{enumerate}
\end{lemme}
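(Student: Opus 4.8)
The plan is to establish the four assertions in the order: boundedness of $\bl$, then the domain and trace identities of item (1), then boundedness of $\ml$, and finally the integrability equivalences (2)--(3). Each of them reduces, through the change‑of‑variables formulas of Proposition \ref{prointegra}, to an elementary estimate in the single real variable measuring the time spent on a characteristic curve. Throughout I fix a Borel representative of $u$, still written $u$; by \eqref{10.47} any other choice changes the functions below only on a $\mu$‑ (or $\mu_{\pm}$‑)null set, so the conclusions are unaffected.

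First I would bound $\bl$. Applying \eqref{10.47} to $h(\x)=|u(\Phi(\x,-\t_-(\x)))|^{p}e^{-\l p\,\t_-(\x)}$ on $\O_-$ (and $h\equiv0$ elsewhere), and using that $\t_-(\Phi(\y,s))=s$ and $\Phi(\Phi(\y,s),-s)=\y$ whenever $\y\in\Gamma_-$ and $0<s<\t_+(\y)$, one obtains
$$\|\bl u\|_{p}^{p}=\int_{\Gamma_-}|u(\y)|^{p}\Bigl(\int_{0}^{\t_+(\y)}e^{-\l p\,s}\,\d s\Bigr)\d\mu_-(\y)=\frac{1}{\l p}\int_{\Gamma_-}|u(\y)|^{p}\bigl(1-e^{-\l p\,\t_+(\y)}\bigr)\,\d\mu_-(\y).$$
Since $\frac{1}{\l p}(1-e^{-\l p t})\leq\max(1,\tfrac{1}{\l p})\min(t,1)$ for every $t\in(0,\infty]$, the right‑hand side is $\leq\max(1,\tfrac{1}{\l p})\|u\|_{Y_{p}^{-}}^{p}$, so $\bl\in\mathscr{B}(Y_{p}^{-},X)$ (measurability of $\bl u$ being routine, via composition of $u$ with the measurable maps $\x\mapsto\t_-(\x)$ and $\x\mapsto\Phi(\x,-\t_-(\x))$). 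The same computation, with other integrands, will reappear below.

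The key step is item (1). Here I would invoke the mild characterisation of Theorem \ref{representation} with $g=\l\,\bl u\in X$ and with $f^{\sharp}:=\bl u$ as representative. Fix $\x\in\O$. If $\x\in\O_-$, put $\y=\Phi(\x,-\t_-(\x))\in\Gamma_-$; then for $r\in(0,\t_+(\y))$ one has $\t_-(\Phi(\y,r))=r$ and $\Phi(\Phi(\y,r),-r)=\y$, hence $f^{\sharp}(\Phi(\y,r))=u(\y)e^{-\l r}$, which is $C^{1}$ in $r$ with derivative $-\l f^{\sharp}(\Phi(\y,r))$; reparametrising the orbit of $\x$, this shows that \eqref{integralTM-} holds with this $g$ for all $-\t_-(\x)<t_{1}\leq t_{2}<\t_+(\x)$. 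If $\x\in\O_{-\infty}$, the whole orbit remains in $\O_{-\infty}$, where $f^{\sharp}\equiv0\equiv g$, so \eqref{integralTM-} holds trivially. Theorem \ref{representation} then yields $\bl u\in\D(\T_{\mathrm{max},\,p})$ with $\T_{\mathrm{max},\,p}\bl u=\l\,\bl u$ and the existence of the traces; letting $t\to0^{+}$ in $f^{\sharp}(\Phi(\y,t))=u(\y)e^{-\l t}$ for $\y\in\Gamma_-$, and in $f^{\sharp}(\Phi(\z,-t))=u(\Phi(\z,-\t_-(\z)))e^{-\l(\t_-(\z)-t)}$ for $\z\in\Gamma_+\setminus\Gamma_{+\infty}$ (while $f^{\sharp}(\Phi(\z,-t))\equiv0=\ml u(\z)$ for $\z\in\Gamma_{+\infty}$), identifies $\B^{-}\bl u=u$ and $\B^{+}\bl u=\ml u$. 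I expect the main obstacle to be exactly this step: reconciling the explicit pointwise formula for $f^{\sharp}$ with the only‑almost‑everywhere‑defined traces, and treating all orbit types consistently; once that is done the rest is bookkeeping.

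For the three remaining statements I would use \eqref{10.51}, which makes $\z\mapsto\Phi(\z,-\t_-(\z))$ a measure isomorphism of $\Gamma_+\setminus\Gamma_{+\infty}$ onto $\Gamma_-\setminus\Gamma_{-\infty}$ sending $\mu_+$ to $\mu_-$, and under which $\t_-(\z)$ becomes $\t_+(\y)$ with $\y=\Phi(\z,-\t_-(\z))$. Extended to nonnegative measurable integrands by monotone convergence and applied with the weights $\min(\t_+,1)$, $1$, and $(\min(\t_+,1))^{1-p}$, it gives
$$\|\ml u\|_{Y_{p}^{+}}^{p}=\int_{\Gamma_-\setminus\Gamma_{-\infty}}\!\!|u|^{p}e^{-\l p\t_+}\min(\t_+,1)\,\d\mu_-,\qquad\|\ml u\|_{\lp}^{p}=\int_{\Gamma_-\setminus\Gamma_{-\infty}}\!\!|u|^{p}e^{-\l p\t_+}\,\d\mu_-,$$
and likewise $\|\ml u\|_{\widetilde{\mathcal{Y}}_{+,p}}^{p}=\int_{\Gamma_-\setminus\Gamma_{-\infty}}|u|^{p}e^{-\l p\t_+}(\min(\t_+,1))^{1-p}\,\d\mu_-$. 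The first identity together with $e^{-\l p\t_+}\leq1$ gives $\ml\in\mathscr{B}(Y_{p}^{-},Y_{p}^{+})$ — in fact a contraction — and, alternatively, this also follows from item (1) and Cessenat's Theorem \ref{theotrace}. For (2) and (3): on $\{\t_+\geq1\}$ (which contains $\Gamma_{-\infty}$) the three weights all equal $1$ and $e^{-\l p\t_+}\leq1$, so the standing hypothesis $u\in Y_{p}^{-}$, which makes $\int_{\{\t_+\geq1\}}|u|^{p}\,\d\mu_-$ finite, already makes that region contribute finitely to each of $\|\ml u\|_{\lp}^{p}$, $\|\ml u\|_{\widetilde{\mathcal{Y}}_{+,p}}^{p}$, $\|u\|_{\lm}^{p}$, $\|u\|_{\widetilde{\mathcal{Y}}_{-,p}}^{p}$; while on $\{\t_+<1\}$ one has $e^{-\l p}\leq e^{-\l p\t_+}\leq1$, so inserting or deleting the factor $e^{-\l p\t_+}$ changes an integral only by a bounded multiplicative factor. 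Hence $\ml u\in\lp\iff\int_{\{\t_+<1\}}|u|^{p}\,\d\mu_-<\infty\iff u\in\lm$, and $\ml u\in\widetilde{\mathcal{Y}}_{+,p}\iff\int_{\{\t_+<1\}}|u|^{p}\t_+^{1-p}\,\d\mu_-<\infty\iff u\in\widetilde{\mathcal{Y}}_{-,p}$, which are precisely (2) and (3).
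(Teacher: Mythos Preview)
Your proof is correct and follows essentially the same route as the paper: both reduce everything to one‑dimensional estimates via the integration formulas \eqref{10.47} and \eqref{10.51}, and both split $\Gamma_-$ according to whether $\t_+\geq 1$ or $\t_+<1$. Your treatment of (2)--(3) is slightly more streamlined (the paper uses the ad hoc inequality $\lambda s+e^{-\lambda s}\geq 1$ where you use the uniform two‑sided bound $e^{-\lambda p}\leq e^{-\lambda p\t_+}\leq 1$ on $\{\t_+<1\}$), and for item (1) you verify the mild formulation of Theorem \ref{representation} directly, whereas the paper defers to \cite[Theorem 4.2]{mjm1}; neither difference changes the substance of the argument.
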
 

\begin{proof} Let $\lambda >0$ and $u \in Y_{p}^{-}$ be fixed. From the definition of $\Xi_{\lambda}$ one sees that
\begin{equation}\label{f2-u}
|[\Xi_{\lambda}u]({\Phi}(\y,t))|^{p}=|u(\y)|^{p}\exp(-\lambda\,p\,t), \qquad \forall \y \in \Gamma_{-}, \quad 0 < t < \t_{+}(\y),
\end{equation}
and, thanks to Proposition \ref{prointegra}:
\begin{equation*}\begin{split}
\int_{\O}|[\Xi_{\lambda}u](\x)|^{p}\d\mu(\x)&=\int_{\Gamma_{-}}\d\mu_{-}(\y)\int_{0}^{\t_{+}(\y)}|u(\y)|^{p}\exp(-\lambda\,p\,t)\d t\\
&=\frac{1}{\lambda\,p}\int_{\Gamma_{-}}\left(1-\exp(-\lambda\,p\,\t_{+}(\y))\right)|u(\y)|^{p}\d\mu_{-}(\y)\\
&\leq \max\left(1,\frac{1}{\lambda\,p}\right)\int_{\Gamma_{-}}|u(\y)|^{p}\d\xi_{-}(\y)\end{split}\end{equation*}
where, as in \cite[Theorem 3.2]{mjm2}, we used that $(1-\exp(-s)) \leq \min(1,s)$ for all $s \geq 0.$ This shows in particular that 
$$\|\Xi_{\lambda}u\|_{p}^{p} \leq \max\left(1,\frac{1}{\lambda\,p}\right)\|u\|_{Y_{p}^{-}}^{p}.$$
Moreover, arguing as in \cite[Lemma 3.1]{mjm2} one has 
\begin{equation}\begin{split}
\label{f-2uh}
\int_{\Gamma_{+}}|[M_{\lambda}u](\z)|^{p}\d\xi_{+}(\z)&=\int_{\Gamma_{+}\setminus\Gamma_{+\infty}}\exp(-\lambda\,p\t_{-}(\z))|u({\Phi}(\x,-\t_{-}(\z))|^{p}\d\xi_{+}(\z)\\
&=\int_{\Gamma_{-}\setminus \Gamma_{-\infty}}\exp(-\lambda\,p\t_{+}(\y))|u(\y)|^{p}\d\xi_{-}(\y) \leq \|u\|_{Y_{p}^{-}}^{p}.\end{split}\end{equation}
This shows the first part of the Lemma. To prove that $\Xi_{\lambda}u \in \D(\T_{\mathrm{max},\,p})$ one
argues as in the proof of \cite[Theorem 4.2]{mjm1} to get that  $f_{2}=\Xi_{\lambda}u$ satisfies $\T_{\mathrm{max},\,p}f_{2}=\lambda\,f_{2}$. Moreover, it is clear from the definition of $\B^{+}$ that $\B^{+}\Xi_{\lambda}u=M_{\lambda}u.$ This shows point 1). To prove 2), we first notice that, for $u \in \lm$, as in \eqref{f-2uh}, one sees that
\begin{equation}\label{eq:mllm}
\int_{\Gamma_{+}}|[M_{\lambda}u](\z)|^{p}\d\xi_{+}(\z)=\int_{\Gamma_{-}\setminus\Gamma_{-\infty}}|u(\y)|^{p}\exp(-p\lambda\t_{+}(\y))\d\mu_{-}(\y) \leq \|u\|^{p}_{\lm}.\end{equation}
This, together with the embedding \eqref{eq:embed} shows that $M_{\lambda}u \in L^{p}_{+}$. Conversely, assume that $M_{\lambda}u \in \lp$ and define 
$$\Gamma_{-,1}=\{\y \in \Gamma_{-}\,;\,\t_{+}(\y) \leq 1\}, \qquad \Gamma_{-,2}=\Gamma_{-}\setminus \Gamma_{-,1}.$$
One has $\int_{\Gamma_{-,2}}|u(\y)|^{p}\d\xi_{-}(\y)=\int_{\Gamma_{-,2}}|u(\y)|^{p}\d\mu_{-}(\y) < \infty.$ Moreover, since $\lambda s+\exp(-\lambda s) \geq 1$ for any $s \geq 0$, one has
\begin{multline*}
\int_{\Gamma_{-,1}}|u(\y)|^{p}\d\mu_{-}(\y) \leq \int_{\Gamma_{-,1}}\left(\lambda\,p\t_{+}(\y)+\exp(-\lambda\,p\t_{+}(\y)\right)|u(\y)|^{p}\d\mu_{-}(\y) \\
\leq \lambda\,p \int_{\Gamma_{-,1}}|u(\y)|^{p}\d\xi_{-}(\y) \\
 + \int_{\Gamma_{-}\setminus \Gamma_{-\infty}}\exp(-\lambda\,p\t_{+}(\y))|u(\y)|^{p}\d\mu_{-}(\y)\\
 =\lambda\,p \int_{\Gamma_{-,1}}|u(\y)|^{p}\d\xi_{-}(\y) + \int_{\Gamma_{+}}|[M_{\lambda}u](\z)|^{p}\d\mu_{+}(\z)
\end{multline*}
according to \eqref{f-2uh}. This shows that $u \in \lm$ and proves the second point. 

It is clear now that, if $u \in \widetilde{Y}_{-,\,p}$ then $M_\lambda u \in \widetilde{Y}_{+,\,p}$. Conversely, assume that $M_\lambda u \in \widetilde{Y}_{+,\,p}$. To prove that $u \in \widetilde{Y}_{-,\,p}$, we only have to focus on the integral over $\Gamma_{+,1}$ since the measure $\d\tilde{\mu}_{+,p}$ and $\d\xi_{+}$ coincide on $\Gamma_{+,2}.$ Then, by assumption, it holds $I_{1} < \infty$ with
$$I_1:=\int_{\Gamma_{+,1}}|u(\Phi(\z,-\t_-(\z))|^p\exp\left(-p\lambda\,\t_-(\z)\right)\t_-(\z)^{1-p}\d\mu_+(\z).$$
Notice that $I_{1}$ can be written as
$$I_1=\int_{\Gamma_{-,1}}|u(\y)|^p\exp\left(-p\lambda\,\t_+(\y)\right)\t_+(\y)^{1-p}\d\mu_-(\y),$$
and, since $\exp\left(-p\lambda\,\t_+(\y)\right) \geq \exp(-\lambda\,p)$ for any $\y \in \Gamma_{-,1}$, we get 
$$\int_{\Gamma_{-,1}}|u(\y)|^p\d\tilde{\mu}_{-,\,p}(\y)=\int_{\Gamma_{-,1}}|u(\y)|^p \t_+(\y)^{1-p}\d\mu_-(\y) \leq \exp(\lambda\,p)I_1 < \infty.$$
As above, since $\d\tilde{\mu}_{-,\,p}$ coincide with $\d\xi_-$ on $\Gamma_{-,2}$, this shows that $u \in \widetilde{Y}_{-,\,p}$. \end{proof}

\subsection{Boundary-value problem} 

The above results allow  us to treat more general boundary-value
problems:
\begin{theo}\label{Theo10.43} Let $u \in Y_{p}^{-}$ and $g \in X$ be
given. Then the function
\begin{equation*}
f(\x)=\int_0^{\t_-(\x)}\exp(-\lambda t)\,g({{\Phi}}(\x,-t))\d
t +\chi_{\{\t_-(\x) < \infty\}} \exp(-\lambda \t_-(\x))
u({{\Phi}}(\x,-\t_-(\x)))\end{equation*} is a \textbf{
unique} solution $f \in \D(\T_{\mathrm{max},\,p})$ of the boundary value
problem:
\begin{equation}\label{BVP1}
\begin{cases}
(\lambda- \T_{\mathrm{max},\,p})f=g,\\
\B^-f=u,
\end{cases}
\end{equation}
where $\lambda > 0.$ Moreover, if $u \in \lm$ then
\begin{equation}\label{10.1156} \lambda\,p\,\|f\|_{p}^{p}+\|\B^{+}f\|_{\lp}^{p} \leq \|u\|_{\lm}^{p}+ p\|g\|_{p}\,\|f\|_{p}^{p-1}.
\end{equation}\end{theo}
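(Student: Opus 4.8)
The plan is to verify directly that the proposed formula $f$ solves the boundary-value problem, then establish uniqueness, and finally derive the a priori estimate from Green's formula. For the first part, I observe that the formula for $f$ is precisely $f = C_\lambda g + \Xi_\lambda u$ with $C_\lambda$ and $\Xi_\lambda$ as introduced in Section~3.2 and Lemma~\ref{lemmaML}. By construction $C_\lambda g = (\lambda - \T_{0,\,p})^{-1} g \in \D(\T_{0,\,p}) \subset \D(\T_{\mathrm{max},\,p})$ with $(\lambda - \T_{\mathrm{max},\,p}) C_\lambda g = g$ and $\B^- C_\lambda g = 0$; by Lemma~\ref{lemmaML}(1), $\Xi_\lambda u \in \D(\T_{\mathrm{max},\,p})$ with $\T_{\mathrm{max},\,p} \Xi_\lambda u = \lambda \Xi_\lambda u$ (so $(\lambda - \T_{\mathrm{max},\,p})\Xi_\lambda u = 0$) and $\B^- \Xi_\lambda u = u$. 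Since $\D(\T_{\mathrm{max},\,p})$ is a linear subspace and $\T_{\mathrm{max},\,p}$, $\B^-$ are linear, adding the two gives $f \in \D(\T_{\mathrm{max},\,p})$ with $(\lambda - \T_{\mathrm{max},\,p})f = g$ and $\B^- f = u$, which is \eqref{BVP1}. One should also note $\B^+ f = \B^+ C_\lambda g + M_\lambda u = G_\lambda g + M_\lambda u \in \lp + Y_p^+$, which lies in $Y_p^+$ in general and in $\lp$ when $u \in \lm$ (using Lemma~\ref{lemmaML}(2) and Green's formula for $G_\lambda g$).

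For uniqueness, suppose $f_1, f_2$ are two solutions; then $h = f_1 - f_2 \in \D(\T_{\mathrm{max},\,p})$ satisfies $(\lambda - \T_{\mathrm{max},\,p})h = 0$ and $\B^- h = 0$. The second condition means $h \in \D(\T_{0,\,p})$, so $\T_{0,\,p} h = \lambda h$; since $\lambda > 0$ lies in the resolvent set of $\T_{0,\,p}$ (Theorem~\ref{uot}: $\T_{0,\,p}$ generates a contraction semigroup, so $(0,\infty) \subset \rho(\T_{0,\,p})$), we get $h = 0$. This gives uniqueness in $\D(\T_{\mathrm{max},\,p})$.

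For the estimate \eqref{10.1156}, assume $u \in \lm$, so $\B^- f = u \in \lm$ and Green's formula (Proposition~\ref{propgreen}) applies:
\begin{equation*}
\|\B^- f\|_{\lm}^p - \|\B^+ f\|_{\lp}^p = p \int_{\O} \mathrm{sign}(f)\,|f|^{p-1}\,\T_{\mathrm{max},\,p} f \, \d\mu.
\end{equation*}
Substituting $\T_{\mathrm{max},\,p} f = \lambda f - g$ and $\B^- f = u$, the right-hand side becomes
\begin{equation*}
p\lambda \int_\O |f|^p \, \d\mu - p \int_\O \mathrm{sign}(f)\,|f|^{p-1}\, g \, \d\mu = \lambda\,p\,\|f\|_p^p - p \int_\O \mathrm{sign}(f)\,|f|^{p-1}\,g\,\d\mu,
\end{equation*}
so that $\lambda\,p\,\|f\|_p^p + \|\B^+ f\|_{\lp}^p = \|u\|_{\lm}^p + p \int_\O \mathrm{sign}(f)\,|f|^{p-1}\,g\,\d\mu$. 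Bounding the last integral by Hölder's inequality (with $\mathrm{sign}(f)|f|^{p-1} \in L^q(\O,\d\mu)$ of norm $\|f\|_p^{p-1}$) yields $\int_\O \mathrm{sign}(f)\,|f|^{p-1}\,g\,\d\mu \leq \|f\|_p^{p-1}\|g\|_p$, which gives \eqref{10.1156}.

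I do not anticipate a serious obstacle here: every ingredient — the resolvent identities for $C_\lambda$ and $\Xi_\lambda$, the generation theorem giving $\lambda \in \rho(\T_{0,\,p})$, and Green's formula — has already been established. The only point requiring a little care is the bookkeeping on trace regularity, namely checking that $\B^+ f$ genuinely lands in $\lp$ (not merely $Y_p^+$) when $u \in \lm$, so that the norms in \eqref{10.1156} are finite; this follows by combining the regularity of $G_\lambda g$ (Green's formula shows $G_\lambda g \in \lp$) with Lemma~\ref{lemmaML}(2).
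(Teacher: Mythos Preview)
Your proof is correct and follows essentially the same approach as the paper: the decomposition $f = C_\lambda g + \Xi_\lambda u$, the uniqueness via the observation that the difference lies in $\D(\T_{0,\,p})$ with $\lambda \in \rho(\T_{0,\,p})$, and the derivation of \eqref{10.1156} from Green's formula combined with H\"older's inequality all match the paper's argument. Your write-up is in fact slightly more explicit than the paper's, which defers the uniqueness step to a reference.
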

\begin{proof} The fact that $f$ is the unique solution to \eqref{BVP1} is proven as in \cite[Theorem 4.2]{mjm1}. We recall here the main steps since we need the notations introduce therein. Write $f=f_1+f_2$ with $f_{1}=C_{\lambda}g$ and $f_{2}=\Xi_{\lambda}u.$ Since $f_1=(\lambda- \T_{0,\,p})^{-1}g,$
one has $f_1 \in \D(\T_{\mathrm{max},\,p})$ with $(\lambda-
\T_{\mathrm{max},\,p})f_1=g$ and $\B^-f_1=0$. Moreover, from  Proposition \ref{propgreen}, $\B^+f_1 \in
\lp$. On the other hand, according to Lemma \ref{lemmaML}, $f_2 \in
\D(\T_{\mathrm{max},\,p})$ with $(\lambda- \T_{\mathrm{max},\,p})f_2=0$ and
$\B^-f_2=u.$ We get then that  $f$ is a solution to \eqref{BVP1}. The uniqueness also follows the line of \cite[Theorem 4.2]{mjm1}.
Finally, it remains to prove \eqref{10.1156}. Recall that $f$ is a solution to \eqref{BVP1} and, applying Green formula \eqref{greenform} we obtain
$$\|u\|_{\lm}^{p}-\|\B^{+}f\|_{\lp}^{p}=p\int_{\O} \mathrm{sign}(f)\,|f|^{p-1}\,\T_{\mathrm{max},\,p}f\,\d\mu=p\int_{\O}\mathrm{sign}(f)\,|f|^{p-1}\left(\lambda\,f-g\right)\d\mu$$
i.e.
\begin{equation}\label{eq:greenBVP}\|u\|_{\lm}^{p}-\|\B^{+}f\|_{\lp}^{p}=p\,\lambda\|f\|_{p}^{p} - p\int_{\O} \mathrm{sign}(f)\,|f|^{p-1}\,g\,\d\mu\end{equation} 
which results easily in \eqref{10.1156}.\end{proof}
\begin{nb} Notice that, for $g=0$ and using the above notations, we have $f=f_{2}$ and \eqref{eq:greenBVP} reads
\begin{equation}\label{B+f-2}\lambda\,p \|f_2\|_{p}^{p} + \|\B^+f_2\|_{\lp}^{p}=\|u\|_{\lm}^{p}  < \infty.\end{equation} 
Conversely, assuming $u=0$, we get $f=f_{1}=C_{\lambda}g$ and $\B^{+}f=G_{\lambda}g$ and \eqref{eq:greenBVP} is nothing but Lemma \ref{lem:normGl}.\end{nb}

 \subsection{Additional properties of the traces} \label{sub:protrac}

 The generalization of  \cite[Proposition 2.3]{mjm2} to the case $p >1$ is the following:
\begin{propo}\label{fgt} Given $h \in \widetilde{\mathcal{Y}}_{+\,p}$, let
\begin{equation*}
f(\x)=\begin{cases} h({\Phi}(\x,\tau_+(\x)))
\dfrac{\tau_-(\x)e^{-\t_+(\x)}}{\t_-(\x)+\t_+(\x)}
\quad &\text{if } \: \tau_-(\x)+\tau_+(\x)< \infty, \\
 h({\Phi}(\x,\tau_+(\x))e^{-\t_+(\x)} \quad &\text{if }\:
\tau_-(\x)=\infty \text{ and } \tau_+(\x) < \infty,\\ 0 \quad
&\text{if }\: \tau_+(\x)=\infty.
\end{cases}\end{equation*}
Then, $f \in \D(\T_{\mathrm{max},\,p})$, $\B^-f=0$, and $\B^+f=h$. Moreover, 
$\|f\|_{p}  + \|\T_{0,\,p}f\|_{p}\leq 3\|h\|_{\widetilde{\mathcal{Y}}_{+,\,p}}$.\end{propo}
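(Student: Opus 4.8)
The plan is to verify directly that the explicitly given function $f$ lies in $\D(\T_{\mathrm{max},\,p})$ with the claimed traces, and then to estimate its graph norm. The natural strategy is to reduce the $L^p$ statement to the $L^1$ statement \cite[Proposition 2.3]{mjm2}, which asserts exactly the analogous conclusion (with $p=1$) for the function built from a datum in $\widetilde{\mathcal{Y}}_{+,1}$. First I would observe that, along each characteristic curve $s\mapsto {\Phi}(\x,s)$, the function $f$ is of the form $(\text{constant})\cdot e^{-\t_+}$ with $\t_+$ decreasing linearly in $s$, so that $s\mapsto f({\Phi}(\x,s))$ is $C^1$ with derivative $f({\Phi}(\x,s))$ itself (when $\t_-+\t_+<\infty$ or $\t_-=\infty$) — hence $f$ satisfies the mild formulation \eqref{integralTM-} of Theorem \ref{representation} with $g=f$, so $f\in\D(\T_{\mathrm{max},\,p})$ and $\T_{\mathrm{max},\,p}f=f$ pointwise (once we know $f\in L^p$). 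The trace computations $\B^-f=0$ and $\B^+f=h$ are then immediate from the definition of $\B^\pm$ as limits along the curves: letting $t\to 0+$ in $f({\Phi}(\y,t))$ for $\y\in\Gamma_-$ gives $0$ because of the factor $\t_-({\Phi}(\y,t))\to 0$ (resp. the curve has $\t_-=\infty$ from the start giving the middle branch which still vanishes at the $\Gamma_-$ end... — actually $\y\in\Gamma_-$ forces $\t_-$ finite near $\y$, so the first branch applies), while letting $t\to 0+$ in $f({\Phi}(\z,-t))$ for $\z\in\Gamma_+$ recovers $h(\z)$ since the remaining factors tend to $1$.

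The quantitative part is where the work is. The key computational input is the change of variables \eqref{10.47}–\eqref{10.49} of Proposition \ref{prointegra}, which parametrizes $\O$ by $\Gamma_+$ (via $\x={\Phi}(\z,-s)$, $0<s<\t_-(\z)$) together with the part of $\O$ where $\t_-=\infty$, parametrized by $\Gamma_{+\infty}$. Along such a curve one has $\t_+({\Phi}(\z,-s))=\t_+(\z)+s$ and, in the finite case, $\t_-({\Phi}(\z,-s))=\t_-(\z)-s$, so that
$$|f({\Phi}(\z,-s))|^p = |h(\z)|^p\,e^{-p(\t_+(\z)+s)}\,\frac{(\t_-(\z)-s)^p}{(\t_-(\z)+\t_+(\z))^p}.$$
Integrating in $s$ over $(0,\t_-(\z))$ and then against $\d\mu_+$, the elementary bound $\int_0^{a}(a-s)^p e^{-ps}\d s \leq \int_0^a (a-s)^p\,\d s = a^{p+1}/(p+1) \le a\cdot a^p$ (and, more carefully, one wants a bound by $\min(\t_-(\z),1)\cdot(\text{stuff})$ to land in $Y_p^+$, but here we only need $L^p(\O)$, so the crude $a^{p+1}/(p+1)$ suffices when $\t_+\ge 0$ keeps $e^{-p\t_+}\le 1$) yields $\|f\|_p^p \le C\int_{\Gamma_+}|h(\z)|^p\,\t_-(\z)^{1-p}\,e^{-p\t_+(\z)}\chi_{\{\t_-(\z)\le\cdots\}}\,\d\mu_+ + (\text{similar for }\t_-=\infty) \le C\|h\|_{\widetilde{\mathcal{Y}}_{+,p}}^p$; one has to split $\Gamma_+$ into $\t_-\le 1$ and $\t_->1$ exactly as in the definition of $\d\tilde\mu_{+,p}$ and check that on $\{\t_->1\}$ the exponential $e^{-p\t_+}$... actually on that set the weight $\t_-^{1-p}\le 1$ is already fine and $\int_0^{\t_-}(\t_--s)^pe^{-ps}\d s\le \int_0^\infty s^p e^{-ps}\d s=p!\,p^{-(p+1)}$ is bounded, so no issue. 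Since $\T_{\mathrm{max},\,p}f=f$, the bound on $\|\T_{0,\,p}f\|_p=\|f\|_p$ is the same, and optimizing the constants one arrives at $\|f\|_p+\|\T_{0,\,p}f\|_p\le 3\|h\|_{\widetilde{\mathcal{Y}}_{+,p}}$.

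The main obstacle I anticipate is the bookkeeping of the constant $3$: a naive use of $a^{p+1}/(p+1)$ and $p!\,p^{-(p+1)}$ gives a $p$-dependent constant, so to get the clean uniform bound $3$ one must be more careful — presumably using that $\int_0^a (a-s)^p e^{-ps}\,\d s \le \int_0^a (a-s)^p\,\d s$ only when $a\le 1$ (where also $a^{p+1}/(p+1)\le a^p\le a\cdot a^{p-1}$, matching the $\t_-^{1-p}$ weight up to the factor from $(\t_-+\t_+)^{-p}\le \t_-^{-p}$), and separately on $a>1$ bounding $\int_0^a(a-s)^pe^{-ps}\d s\le\int_0^\infty \sigma^p e^{-p\sigma}\,\d\sigma$ which, combined with $e^{-p\t_+}$ from the other factor being $\le 1$ and the weight there being $\d\mu_+$ (since $\t_->1$), gives exactly $\|h\|^p$ times a constant $\le 1$. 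Tracking these carefully — and noting that $\|f\|_p=\|\T_{0,p}f\|_p$ so the two contributions each need to be $\le \tfrac32\|h\|_{\widetilde{\mathcal{Y}}_{+,p}}$, which holds since each is $\le\|h\|_{\widetilde{\mathcal{Y}}_{+,p}}$ by the above — closes the estimate. I would present the qualitative part in full and then say the quantitative estimate "follows exactly as in the proof of \cite[Proposition 2.3]{mjm2}, using \eqref{10.47}, \eqref{10.49} and the elementary inequality $\int_0^a(a-s)^pe^{-ps}\,\d s\le \min(1,a)\cdot\max(1,a^p)$," deferring the arithmetic.
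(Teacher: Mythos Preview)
Your proposal contains a genuine error: the claim that $\T_{\mathrm{max},\,p}f=f$ is false on the branch $\tau_-(\x)+\tau_+(\x)<\infty$. Along a characteristic issued from $\y\in\Gamma_-$ with $\tau_+(\y)<\infty$, writing $\x=\Phi(\y,t)$ one has $\tau_-(\x)=t$ and $\tau_+(\x)=\tau_+(\y)-t$, hence
\[
f(\Phi(\y,t))=h\bigl(\Phi(\y,\tau_+(\y))\bigr)\,\frac{t}{\tau_+(\y)}\,e^{-(\tau_+(\y)-t)}.
\]
The factor $t/\tau_+(\y)$ is \emph{not} constant along the curve, so the derivative picks up a $(1+t)$ in place of $t$:
\[
g(\x):=\T_{\mathrm{max},\,p}f(\x)=-h\bigl(\Phi(\x,\tau_+(\x))\bigr)\,e^{-\tau_+(\x)}\,\frac{1+\tau_-(\x)}{\tau_-(\x)+\tau_+(\x)}\neq \pm f(\x).
\]
(Only on the branch $\tau_-=\infty$, $\tau_+<\infty$ is it true that $g=-f$.) Consequently your shortcut $\|\T_{0,\,p}f\|_p=\|f\|_p$ fails, and the whole quantitative part must be redone for $g$.

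This matters structurally, not just for the constant. The estimate of $\|f\|_p$ is easy and needs only $h\in L^p_+$: the paper gets $\|f\|_p^p\le \tfrac1p\|h\|_{L^p_+}^p$ via $\int_0^\tau(1-s/\tau)^p e^{-ps}\,\d s\le 1/p$. It is the estimate of $\|g\|_p$ that \emph{forces} the weighted space $\widetilde{\mathcal{Y}}_{+,\,p}$. Using \eqref{10.47} one is led to
\[
\frac{1}{\tau^{p}}\int_0^{\tau}e^{-ps}(1+\tau-s)^{p}\,\d s\leq 2^{p}\bigl[\min(\tau,1)\bigr]^{1-p},
\]
so that $\|g\|_p^p\le 2^p\|h\|_{\widetilde{\mathcal{Y}}_{+,\,p}}^p$. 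The constant $3$ then arises as $1+2$, i.e.\ $\|f\|_p\le\|h\|_{\widetilde{\mathcal{Y}}_{+,\,p}}$ and $\|g\|_p\le 2\|h\|_{\widetilde{\mathcal{Y}}_{+,\,p}}$, not by doubling a single bound. Your sketch never produces the weight $[\min(\tau_-,1)]^{1-p}$ because you never confronted the correct expression for $g$; deferring to \cite[Proposition~2.3]{mjm2} does not help either, since for $p=1$ that weight is trivially $1$ and the difficulty is invisible there.
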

\begin{proof}  Let us first show that $f \in X$ with $\|f\|_{p}^{p} \leq \frac{1}{p}\|h\|_{\lp}^{p}$. We begin with noticing that
$$\int_{\O}|f(\x)|^{p}\d\mu(\x)=\int_{\O_+}|f(\x)|^{p}\d\mu(\x)=\int_{\O_+\cap
\O_-}|f(\x)|^{p}\d\mu(\x)+\int_{\O_+\cap
\O_{-\infty}}|f(\x)|^{p}\d\mu(\x),$$ since $f(\x)=0$ whenever
$\tau_+(\x)=\infty$. Now, according to the integration formula
\eqref{10.47},
\begin{equation*}\begin{split}
&\int_{\O_+\cap{\O}_-}|f(\x)|^{p}\d\mu(\x)=\int_{\O_+ \cap
\O_-}|h({\Phi}(\x,\tau_+(\x)))|^{p}
\dfrac{\tau_-(\x)^{p}e^{-p\t_+(\x)}}{\left(\t_-(\x)+\t_+(\x)\right)^{p}}\d\mu(x)\\
&=\int_{\Gamma_+ \setminus
\Gamma_{+\infty}}\d\mu_+({\z})\int_0^{\tau_-({\z})}\dfrac{|h({\z})|^{p}}{\tau_-({\z})^{p}}(\t_-({\z})-s)^{p}e^{-ps}\d
s.\end{split}\end{equation*}
Since, for any $\tau >0$ and any $s \in (0,\tau)$, $0 \leq 1 -\frac{s}{\t} \leq 1$, we have
$$\frac{1}{\tau^{p}}\int_{0}^{\tau}(\t-s)^{p}e^{-ps}\d s=\int_{0}^{\t}\left(1-\frac{s}{\t}\right)^{p}e^{-ps}\d s \leq \int_{0}^{\t} e^{-ps}\d s \leq \frac{1}{p}$$
we get that
$$\int_{\O_+\cap{\O}_-}|f(\x)|^{p}\d\mu(\x) \leq \frac{1}{p}\int_{\Gamma_+ \setminus
\Gamma_{+\infty}}|h(\z)|^{p}\d \mu_{+}(\z).$$
In the same way, according to Eq. \eqref{10.49},
\begin{equation*}\begin{split}
\int_{\O_+\cap \O_{-\infty}}|f(\x)|^{p}\d\mu(\x)&=\int_{\O_+\cap\O_{-\infty}}|h({\Phi}(\x,\tau_+(\x)))|^{p}
e^{-p\t_+(\x)}\d\mu(\x)\\
&=\int_{\Gamma_{+\infty}}\d\mu_+({\z})\int_0^{\infty}|h({\z})|^{p}e^{-ps}\d s=\frac{1}{p}\int_{\Gamma_{+\infty}}|h({\z})|^{p}\d\mu_+({\z}).
\end{split}\end{equation*}
Thus
$$\|f\|_{p}^{p} \leq \tfrac{1}{p}\|h\|_{\lp}^{p} \leq \tfrac{1}{p}\|h\|_{\widetilde{\mathcal{Y}}_{+,\,p}}^{p} \leq \|h\|_{\widetilde{\mathcal{Y}}_{+,\,p}}^{p}$$
and $f \in X$. Setting 
\begin{equation}\label{tmaxf}
 g(\x)=\begin{cases}
-h({\Phi}(\x,\tau_+(\x)))\,e^{-\t_+(\x)}
\dfrac{1+\tau_-(\x)}{\t_-(\x)+\t_+(\x)}\quad &\text{if } \: \tau_-(\x)+\tau_+(\x)< \infty, \\
 -h({\Phi}(\x,\tau_+(\x)))\,e^{-\t_+(\x)} \quad &\text{if }\:
\tau_-(\x)=\infty \text{ and } \tau_+(\x) < \infty,\\ 0 \quad
&\text{if }\: \tau_+(\x)=\infty,
\end{cases}
\end{equation}
it is easily seen that, if $g \in X$, then  $f\in \D(\T_{\mathrm{max},\,p})$ with $\T_{\mathrm{max},\,p}f=g.$ Let us then prove that $g \in X$. Clearly, as before,
$$\int_{\O_+\cap \O_{-\infty}}|g(\x)|^{p}\d\mu(\x)=\frac{1}{p}\int_{\Gamma_{+\infty}}|h(\z)|^{p}\d \mu_{+}(\z).$$
Moreover
\begin{equation*}\begin{split}
\int_{\O_+\cap \O_{-}}|g(\x)|^{p}\d\mu(\x)&=\int_{\O_+\cap\O_{-}}|h({\Phi}(\x,\tau_+(\x)))|^{p}
e^{-p\t_+(\x)}\frac{(1+\t_{-}(\x))^{p}}{(\t_{-}(\x)+\t_{+}(\x))^{p}}\d\mu(\x)\\
&=\int_{\Gamma_{+}\setminus \Gamma_{+\infty}}\d\mu_+({\z})\int_0^{\t_{-}(\z)}\frac{|h({\z})|^{p}}{\t_{-}(\z)^{p}}e^{-ps}(1+\t_-(\z)-s)^{p}\d s.\end{split}\end{equation*}
Now, it is easy to check that, for any $\t >0$,
$$\frac{1}{\t^{p}}\int_{0}^{\t}e^{-ps}(1+\t-s)^{p}\d s=\int_{0}^{\tau}\left(\frac{1+s}{\tau}\right)^{p}e^{-p(\tau-s)}\d s \leq 2^p\left[\min(\t,1)\right]^{1-p}$$
so that
$$\int_{\O_+\cap \O_{-}}|g(\x)|^{p}\d\mu(\x) \leq 2^p\int_{\Gamma_{+}\setminus\Gamma_{+\infty}}\left[\min(\t_{-}(\z),1)\right]^{1-p}\,|h(\z)|^{p}\d\mu_{+}(\z)\leq 2^{p}\|h\|^{p}_{\widetilde{\mathcal{Y}}_{+,\,p}}.$$
Consequently, we obtain
$$\int_{\O}|g(\x)|^{p} \d\mu(\x) \leq 2^p\|h\|^{p}_{\widetilde{\mathcal{Y}}_{+,\,p}} <\infty$$
which proves that $f \in \D(\T_{\mathrm{max},\,p})$. To compute the traces $\B^{+}f$ and $\B^{-}f$, one proceeds as in \cite[Proposition 2.3]{mjm2}. The inequality $\|f\|_{p}+\|\T_{\mathrm{max},\,p}f\|_{p}=\|f\|_{p}+\|g\|_{p} \leq 3\|h\|_{\widetilde{\mathcal{Y}}_{+,\,p}}$ is immediate.\end{proof}

\begin{nb} Notice that, for $p=1$, $\widetilde{\mathcal{Y}}_{\pm,1}=L^{1}(\Gamma_{+}\,,\d\mu_{\pm})$ and the above Propsition is nothing but \cite[Proposition 2.3]{mjm2}.\end{nb}

One also has the following
\begin{lemme}\label{glrange} For any $\lambda >0$ and $f \in X$, one has $G_{\lambda}f\in \widetilde{\mathcal{Y}}_{+,\,p}$ and
\begin{equation}\label{stime}
\|G_{\lambda}f\|_{\widetilde{\mathcal{Y}}_{+,\,p}} \leq \left(1+(\lambda\,q)^{-1/q}\right)\|f\|_{p}, \qquad \frac{1}{p}+\frac{1}{q}=1.\end{equation}
Moreover, for any $\lambda >0$ the mapping $G_{\lambda}\::\:X \to \widetilde{\mathcal{Y}}_{+,\,p}$ is surjective.
\end{lemme}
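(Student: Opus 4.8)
The plan is to split the statement into two parts: the quantitative estimate \eqref{stime} and the surjectivity of $G_\lambda$. For the estimate, recall that for $\z\in\Gamma_+$ one has $[G_\lambda f](\z)=\int_0^{\t_-(\z)}f(\Phi(\z,-s))e^{-\lambda s}\d s$, so the task is to bound the $\widetilde{\mathcal{Y}}_{+,\,p}$-norm, i.e. to control $\int_{\Gamma_+}|[G_\lambda f](\z)|^p\,(\min(\t_-(\z),1))^{1-p}\,\d\mu_+(\z)$. First I would split $\Gamma_+$ into $\Gamma_{+,1}=\{\t_-\le 1\}$ and $\Gamma_{+,2}=\{\t_-> 1\}$ (as in the proof of Lemma \ref{lemmaML}). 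On $\Gamma_{+,2}$ the weight $(\min(\t_-,1))^{1-p}=1$ and one simply uses $|[G_\lambda f](\z)|\le\int_0^{\t_-(\z)}|f(\Phi(\z,-s))|e^{-\lambda s}\d s$ together with H\"older's inequality in the form $\left(\int_0^{\t_-(\z)}|f(\Phi(\z,-s))|e^{-\lambda s}\d s\right)^p\le(\lambda q)^{-p/q}\int_0^{\t_-(\z)}|f(\Phi(\z,-s))|^p\d s$, after which \eqref{10.47} integrated over $\Gamma_+$ gives a bound by $(\lambda q)^{-p/q}\|f\|_p^p$. On $\Gamma_{+,1}$ the interval of integration has length $\le 1$, so H\"older gives $|[G_\lambda f](\z)|^p\le\t_-(\z)^{p/q}\int_0^{\t_-(\z)}|f(\Phi(\z,-s))|^p\d s=\t_-(\z)^{p-1}\int_0^{\t_-(\z)}|f(\Phi(\z,-s))|^p\d s$, and multiplying by the weight $\t_-(\z)^{1-p}$ exactly cancels the factor $\t_-(\z)^{p-1}$, leaving $\int_0^{\t_-(\z)}|f(\Phi(\z,-s))|^p\d s$; again \eqref{10.47} bounds the $\Gamma_{+,1}$-contribution by $\|f\|_p^p$. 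Adding the two pieces and using $(a+b)^{1/p}\le a^{1/p}+b^{1/p}$ yields \eqref{stime} with the constant $1+(\lambda q)^{-1/q}$.

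For surjectivity, let $h\in\widetilde{\mathcal{Y}}_{+,\,p}$ be given; I want to produce $f\in X$ with $G_\lambda f=h$, i.e. with $\B^+C_\lambda f=h$. The natural candidate is obtained by going through Proposition \ref{fgt}: that proposition provides $\varphi\in\D(\T_{\mathrm{max},\,p})$ with $\B^-\varphi=0$ and $\B^+\varphi=h$, and moreover $\|\varphi\|_p+\|\T_{0,\,p}\varphi\|_p\le 3\|h\|_{\widetilde{\mathcal{Y}}_{+,\,p}}$. Since $\B^-\varphi=0$, we have $\varphi\in\D(\T_{0,\,p})$, hence $\varphi=(\lambda-\T_{0,\,p})^{-1}\bigl((\lambda-\T_{0,\,p})\varphi\bigr)=C_\lambda f$ with $f:=(\lambda-\T_{0,\,p})\varphi=\lambda\varphi-\T_{0,\,p}\varphi\in X$. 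Then $G_\lambda f=\B^+C_\lambda f=\B^+\varphi=h$, which proves surjectivity; one also records $\|f\|_p\le(\lambda+1)\cdot 3\|h\|_{\widetilde{\mathcal{Y}}_{+,\,p}}$, so the inverse is bounded as a map onto $\widetilde{\mathcal{Y}}_{+,\,p}$.

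The only genuinely delicate point is the bookkeeping on $\Gamma_{+,1}$: one must check that the H\"older exponent is applied so that the power of $\t_-(\z)$ produced is precisely $p-1=p/q$, matching the weight $\t_-(\z)^{1-p}$ in $\d\tilde{\mu}_{+,p}$, so that the singular weight is absorbed rather than creating a divergent integral — this is exactly why the space $\widetilde{\mathcal{Y}}_{+,\,p}$ (and not $L^p_+$) is the correct target. The surjectivity half is essentially free once Proposition \ref{fgt} is in hand; its role is precisely to invert $\B^+$ on the no-reentry subspace, and composing with $(\lambda-\T_{0,\,p})$ transfers this to $G_\lambda$.
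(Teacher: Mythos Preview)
Your proof is correct and follows essentially the same approach as the paper: the same split of $\Gamma_+$ into $\{\t_-\le 1\}$ and $\{\t_->1\}$, the same two applications of H\"older (with the constant $1$ on the short-time piece and with $e^{-\lambda q s}$ on the long-time piece) followed by \eqref{10.47}, and the same use of Proposition~\ref{fgt} composed with $(\lambda-\T_{0,\,p})$ for surjectivity. The only cosmetic difference is that the paper proves surjectivity first and the estimate second.
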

\begin{proof} Let $g \in \widetilde{\mathcal{Y}}_{+,\,p}$ and $\lambda >0.$ According  to Proposition \ref{fgt}, 
there is an $f \in \D(\T_{\mathrm{max},\,p})$, such that $\B^+f=g$ and $\B^{-}f=0$. In particular, since $f \in \D(\T_{0,\,p})$, there is $\psi \in X$ such that
$f=(\lambda-\T_{0,\,p})^{-1}\psi=C_{\lambda}\psi$. In this case, $g=\B^+f=G_\lambda \psi$. This proves that $G_{\lambda}\::\:X \to \widetilde{\mathcal{Y}}_{+,\,p}$ is surjective. Let us now prove \eqref{stime}: for $\lambda >0$ and $f \in X$ it holds
\begin{multline*}
\|G_{\lambda}f\|_{\widetilde{\mathcal{Y}}_{+,\,p}}^{p}=\int_{\Gamma_{+,1}}\left|\int_{0}^{\t_{-}(\z)}e^{-\lambda\,t}f(\Phi(z,-t))\d t\right|^{p}\t_{-}(\z)^{1-p}\d\mu_{+}(\z)\\
+ \int_{\Gamma_{+,2}}\left|\int_{0}^{\t_{-}(\z)}e^{-\lambda\,t}f(\Phi(z,-t))\d t\right|^{p}\d\mu_{+}(\z)\end{multline*}
where $\Gamma_{+,1}=\{\z \in \Gamma_{+}\,;\,\t_{-}(\z) \leq 1\}$ and $\Gamma_{+,2}=\{\z \in \Gamma_{+}\,;\,\t_{-}(\z) >1\}.$ For the first integral, we use Holder inequality to get, for any $\z \in \Gamma_{+,1}$:
$$\left|\int_{0}^{\t_{-}(\z)}e^{-\lambda\,t}f(\Phi(z,-t))\d t\right|^{p} \leq \left(\int_{0}^{\t_{-}(\z)}|f(\Phi(\z,-t)|^{p}e^{-\lambda\,pt}\d t\right)\t_{-}(\z)^{p/q}$$
i.e.
$$\left|\int_{0}^{\t_{-}(\z)}e^{-\lambda\,t}f(\Phi(z,-t))\d t\right|^{p} \leq \t_{-}(\z)^{p-1}\int_{0}^{\t_{-}(\z)}|f(\Phi(\z,-t)|^{p}\d t$$
and, using \eqref{10.47}, 
\begin{multline*}
\int_{\Gamma_{+,1}}\left|\int_{0}^{\t_{-}(\z)}e^{-\lambda\,t}f(\Phi(z,-t))\d t\right|^{p}\t_{-}(\z)^{1-p}\d\mu_{+}(\z)\\
 \leq \int_{\Gamma_{+,1}} \d\mu_{+}(\z)\int_{0}^{\t_{-}(\z)}|f(\Phi(\z,-t)|^{p}\d t \leq \|f\|_{p}^{p}.\end{multline*}
In the same way, we see that for all $\z \in \Gamma_{+,2}$, 
\begin{multline*}
\left|\int_{0}^{\t_{-}(\z)}e^{-\lambda\,t}f(\Phi(z,-t))\d t\right|^{p} \leq \left(\int_{0}^{\t_{-}(\z)}|f(\Phi(\z,-t)|^{p}\d t\right)\left(\int_{0}^{\t_{-}(\z)}e^{-\lambda\,qt}\d t\right)^{p/q}\\
\leq \left(\frac{1}{\lambda\,q}\right)^{p-1}\int_{0}^{\t_{-}(\z)}|f(\Phi(\z,-t)|^{p}\d t\end{multline*}
from which we deduce as above that
$$\int_{\Gamma_{+,2}}\left|\int_{0}^{\t_{-}(\z)}e^{-\lambda\,t}f(\Phi(z,-t))\d t\right|^{p}\d\mu_{+}(\z) \leq \left(\frac{1}{\lambda\,q}\right)^{p-1}\|f\|_{p}^{p}.$$
Combining both the estimates we obtain \eqref{stime}.
 \end{proof}

\begin{nb}\label{nb:glsurj} A clear consequence of the above Lemma is the following: if $\varphi \in \D(\T_{\mathrm{max},\,p})$ and $\B^{-}\varphi=0$ then $\B^{+}\varphi \in \widetilde{\mathcal{Y}}_{+,\,p}$.  \end{nb}
 The following result generalizes \cite[Theorem 2, p. 253]{dautray}:
\begin{propo}\label{prop1}
Let $\psi_{\pm} \in Y^{\pm}_{p}$ be given. There exists $\varphi \in
\D(\T_{\mathrm{max},\,p})$ such that $\B^{\pm}\varphi=\psi_{\pm}$ if and only if
$$\psi_+-\ml \psi_-  \in \widetilde{\mathcal{Y}}_{+,\,p} \qquad \text{ for some/all } \quad\lambda
>0.$$
\end{propo}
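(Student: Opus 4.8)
The plan is to reduce everything to the lifting operators $\Xi_\lambda$ (Lemma \ref{lemmaML}) and the explicit right inverse of the outgoing trace built in Proposition \ref{fgt}, so that the ``some/all'' alternative needs no separate treatment: I will show that the condition for a single $\lambda$ already produces a function $\varphi$ with the prescribed traces, and conversely that the existence of such a $\varphi$ forces the condition for every $\lambda>0$.

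For the sufficiency, assume $h:=\psi_+-M_\lambda\psi_-\in\widetilde{\mathcal{Y}}_{+,\,p}$ for some fixed $\lambda>0$. First I would invoke Proposition \ref{fgt} to obtain $f\in\D(\T_{\mathrm{max},\,p})$ with $\B^-f=0$ and $\B^+f=h$, and then use Lemma \ref{lemmaML} to get $\Xi_\lambda\psi_-\in\D(\T_{\mathrm{max},\,p})$ with $\B^-\Xi_\lambda\psi_-=\psi_-$ and $\B^+\Xi_\lambda\psi_-=M_\lambda\psi_-$. The function $\varphi:=f+\Xi_\lambda\psi_-$ then lies in $\D(\T_{\mathrm{max},\,p})$ and, by linearity of the trace operators, satisfies $\B^-\varphi=\psi_-$ and $\B^+\varphi=h+M_\lambda\psi_-=\psi_+$, as required.

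For the necessity, suppose $\varphi\in\D(\T_{\mathrm{max},\,p})$ with $\B^\pm\varphi=\psi_\pm$ and fix an arbitrary $\lambda>0$. Since $\Xi_\lambda\psi_-\in\D(\T_{\mathrm{max},\,p})$ by Lemma \ref{lemmaML}, the difference $\varphi-\Xi_\lambda\psi_-$ belongs to $\D(\T_{\mathrm{max},\,p})$ and has vanishing incoming trace, $\B^-(\varphi-\Xi_\lambda\psi_-)=\psi_--\psi_-=0$. Remark \ref{nb:glsurj} then gives $\B^+(\varphi-\Xi_\lambda\psi_-)\in\widetilde{\mathcal{Y}}_{+,\,p}$, and since this trace equals $\psi_+-M_\lambda\psi_-$, the condition holds for the chosen $\lambda$, hence for all $\lambda>0$.

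I do not expect a serious obstacle, as the argument is essentially bookkeeping with the trace identities. The one point to be careful about is that $\psi_-$ lies only in the large trace space $Y_p^-$ and not necessarily in $L^{p}_{-}$, which is precisely why one must use the full strength of Lemma \ref{lemmaML} and of Proposition \ref{fgt} (whose hypothesis is membership in $\widetilde{\mathcal{Y}}_{+,\,p}$) rather than the simpler $L^p$-bounds; an alternative, slightly more computational route to the $\lambda$-independence would be to verify directly, using the elementary inequality $|e^{-\lambda s}-e^{-\lambda' s}|\leq|\lambda-\lambda'|\,s$ for $s\geq0$ together with the change of variables \eqref{10.51}, that $M_\lambda u-M_{\lambda'}u\in\widetilde{\mathcal{Y}}_{+,\,p}$ for every $u\in Y_p^-$.
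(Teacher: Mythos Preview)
Your proof is correct and follows essentially the same route as the paper: both directions use the decomposition $\varphi=f+\Xi_\lambda\psi_-$ with $f$ produced by Proposition \ref{fgt} (sufficiency) or defined as $\varphi-\Xi_\lambda\psi_-$ and handled via Remark \ref{nb:glsurj} (necessity). Your explicit handling of the ``some/all'' alternative and the closing remark on why one must stay in $Y_p^-$ rather than $L^p_-$ are welcome clarifications that the paper leaves implicit.
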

\begin{proof} Assume first there exists $\varphi \in \D(\T_{\mathrm{max},\,p})$
such that $\B^{\pm}\varphi=\psi_\pm$.  Set $g=\varphi-\Xi_\lambda
\psi_-$. Clearly, one can deduce from Eq. \eqref{propxil1} that $g
\in \D(\T_{\mathrm{max},\,p})$ with
$(\lambda-\T_{\mathrm{max},\,p})g=(\lambda-\T_{\mathrm{max},\,p})\varphi$ and
$\B^-g=0$. Moreover, $\B^+g=\psi_+-M_\lambda \psi_-$. Since $\B^-g=0$, one has $\B^+g \in \widetilde{\mathcal{Y}}_{+,\,p}$ (see Remark \ref{nb:glsurj}). Notice also that
$$\B^{+}g=G_{\lambda}(\lambda-\T_{\mathrm{max},\,p})\varphi$$
so that, from \eqref{stime}, 
\begin{multline}\label{stime1}
\|\B^{+}g\|_{\widetilde{\mathcal{Y}}_{+,\,p}} \leq (1+(\lambda\,q)^{-1/q})\|(\lambda-\T_{\mathrm{max},\,p})\varphi\|_{p}\\
\leq (1+(\lambda\,q)^{-1/q})\max(1,\lambda)\left(\|\varphi\|_{p}+\|\T_{\mathrm{max},\,p}\varphi\|_{p}\right).\end{multline}
Conversely, let $h=\psi_+-\ml \psi_- \in \widetilde{\mathcal{Y}}_{+,\,p}$. Then, thanks to
Proposition \ref{fgt}, one can find a function $f \in
\D(\T_{\mathrm{max},\,p})$ such that $\B^-f=0$ and $\B^+f=h$. Setting
$\varphi=f+\Xi_\lambda \psi_-$, one sees from \eqref{propxil1} that
$\varphi \in \D(\T_{\mathrm{max},\,p})$ with
$\B^-\varphi=\B^-f+\B^-\Xi_\lambda \psi_-=\psi_-$ and
$\B^+\varphi=h+\B^+\Xi_\lambda \psi_-=h+M_\lambda \psi_-=\psi_+$.\end{proof} 
A consequence of the above Proposition is the following:
\begin{cor} Given $\psi_{\pm} \in Y^{\pm}_p$, there exists $\varphi \in\D(\T_{\mathrm{max},\,p})$ such that $\B^{\pm}\varphi=\psi_{\pm}$ and $\B^{\mp}\varphi=0$ if and only if $\psi_{\pm} \in \widetilde{\mathcal{Y}}_{\pm,\,p}.$
\end{cor}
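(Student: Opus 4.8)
The plan is to deduce both implications directly from Proposition \ref{prop1}, applied once with $\psi_-=0$ and once with $\psi_+=0$. Consider first the ``$+$'' reading of the statement: we seek $\varphi \in \D(\T_{\mathrm{max},\,p})$ with $\B^+\varphi=\psi_+$ and $\B^-\varphi=0$. Taking $\psi_-=0 \in Y^-_p$ in Proposition \ref{prop1} and using that $M_{\lambda}0=0$, such a $\varphi$ exists if and only if $\psi_+-M_{\lambda}0=\psi_+$ belongs to $\widetilde{\mathcal{Y}}_{+,\,p}$, which is precisely the claimed condition. Alternatively, the same equivalence follows by pairing Remark \ref{nb:glsurj} (which gives $\B^+\varphi \in \widetilde{\mathcal{Y}}_{+,\,p}$ whenever $\B^-\varphi=0$) with Proposition \ref{fgt} (which provides, for each $h \in \widetilde{\mathcal{Y}}_{+,\,p}$, a preimage with the required trace behaviour).

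For the ``$-$'' reading we want $\varphi \in \D(\T_{\mathrm{max},\,p})$ with $\B^-\varphi=\psi_-$ and $\B^+\varphi=0$. Now I would take $\psi_+=0 \in Y^+_p$ in Proposition \ref{prop1}: such a $\varphi$ exists if and only if $0-M_{\lambda}\psi_-=-M_{\lambda}\psi_-$ lies in $\widetilde{\mathcal{Y}}_{+,\,p}$, i.e.\ if and only if $M_{\lambda}\psi_- \in \widetilde{\mathcal{Y}}_{+,\,p}$. At this point I would invoke point (3) of Lemma \ref{lemmaML}, which asserts exactly that $M_{\lambda}\psi_- \in \widetilde{\mathcal{Y}}_{+,\,p}$ if and only if $\psi_- \in \widetilde{\mathcal{Y}}_{-,\,p}$. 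Chaining the two equivalences yields the claim in the ``$-$'' case.

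There is no genuine obstacle here: the corollary is essentially a bookkeeping consequence of the machinery already assembled, and the proof is short. The only point requiring a little care is that the two cases are not perfectly symmetric in the way they reduce to Proposition \ref{prop1} --- in the ``$+$'' case the condition $\psi_+ \in \widetilde{\mathcal{Y}}_{+,\,p}$ appears at once, whereas in the ``$-$'' case one first obtains a condition on $M_{\lambda}\psi_-$ and must then translate it back to a condition on $\psi_-$ through Lemma \ref{lemmaML}(3). One should also record, exactly as in Proposition \ref{prop1}, that the equivalence is independent of the auxiliary parameter, so it holds ``for some/all $\lambda>0$'' since none of the membership conditions involved depends on $\lambda$.
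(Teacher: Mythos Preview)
Your proof is correct and follows essentially the same approach as the paper, which simply records that the result is an obvious consequence of Proposition~\ref{prop1} (together with Remark~\ref{nb:glsurj}) and point~(3) of Lemma~\ref{lemmaML}. You have written out precisely the details the paper leaves implicit, including the asymmetry between the ``$+$'' and ``$-$'' cases and the role of Lemma~\ref{lemmaML}(3) in translating the condition on $M_\lambda\psi_-$ back to $\psi_-$.
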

\begin{proof} The proof of the result is an obvious consequence of the above Proposition (see also Remark \ref{nb:glsurj}) and point (3) of Lemma \ref{lemmaML}.
\end{proof}
With this, we can prove the following:
\begin{propo}\label{propgreen1} One has $$\mathscr{W}:=\left\{f \in
\D(\T_{\mathrm{max},\,p})\,;\,\B^-f \in \lm\right\} =\left\{f \in
\D(\T_{\mathrm{max},\,p})\,;\,\B^+f \in \lp\right\}.$$ In particular, Green's formula \eqref{greenform} holds for any $f \in \mathscr{W}$.
\end{propo}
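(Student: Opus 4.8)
The plan is to establish the set equality by proving the two inclusions separately, then to read off Green's formula on $\mathscr{W}$ for free. The first inclusion is immediate and the second is the only one with content; it will rest on combining Proposition \ref{prop1} with point (2) of Lemma \ref{lemmaML}.

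First, the inclusion $\mathscr{W}\subseteq\{f\in\D(\T_{\mathrm{max},\,p})\,;\,\B^+f\in\lp\}$ is nothing but a restatement of Proposition \ref{propgreen}: if $f\in\D(\T_{\mathrm{max},\,p})$ with $\B^-f\in\lm$, then $\B^+f\in\lp$. For the reverse inclusion, I would fix $f\in\D(\T_{\mathrm{max},\,p})$ with $\B^+f\in\lp$. By Theorem \ref{theotrace} both traces $\B^\pm f$ lie in $Y_p^\pm$, so Proposition \ref{prop1}, applied with $\varphi=f$ and $\psi_\pm=\B^\pm f$, yields $\B^+f-M_\lambda\B^-f\in\widetilde{\mathcal{Y}}_{+,\,p}$ for every $\lambda>0$. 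Since the embedding $\widetilde{\mathcal{Y}}_{+,\,p}\hookrightarrow\lp$ is a contraction and $\B^+f\in\lp$ by hypothesis, one obtains $M_\lambda\B^-f=\B^+f-\bigl(\B^+f-M_\lambda\B^-f\bigr)\in\lp$. Point (2) of Lemma \ref{lemmaML} then forces $\B^-f\in\lm$, i.e. $f\in\mathscr{W}$, which closes the argument.

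Finally, for any $f\in\mathscr{W}$ one has $\B^-f\in\lm$ by definition of $\mathscr{W}$, so Green's formula \eqref{greenform} holds by a direct application of Proposition \ref{propgreen}. There is no genuine analytic obstacle here: the whole proof is a bookkeeping of the trace spaces $Y_p^\pm$, $\lp$, $\lm$, $\widetilde{\mathcal{Y}}_{\pm,\,p}$ already constructed, and the only point that takes a moment to see is that the ``for some/all $\lambda>0$'' clause of Proposition \ref{prop1}, together with the fact that $\widetilde{\mathcal{Y}}_{+,\,p}\subset\lp$, is precisely what is needed to transfer $\lp$-integrability of $\B^+f$ back through $M_\lambda$ to $\lm$-integrability of $\B^-f$.
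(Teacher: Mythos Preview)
Your proof is correct and follows essentially the same route as the paper: the forward inclusion is Proposition \ref{propgreen}, and for the reverse you invoke Proposition \ref{prop1} to get $\B^{+}f-M_{\lambda}\B^{-}f\in\widetilde{\mathcal{Y}}_{+,\,p}\subset\lp$, hence $M_{\lambda}\B^{-}f\in\lp$, and conclude via Lemma \ref{lemmaML}(2). The paper's argument is identical in substance, only more terse.
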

\begin{proof} We have already seen that, given $f \in \D(\T_{\mathrm{max},\,p})$ with $\B^-f \in
\lm$, it holds that $\B^{+}f\in \lp.$ Now, let $f \in \D(\T_{\mathrm{max},\,p})$ be such that
$\B^+f \in \lp.$ Set $u=\B^- f$. According to Proposition
\ref{prop1}, $\B^{+}f-M_\lambda u \in \widetilde{\mathcal{Y}}_{+,\,p} \subset \lp$. Therefore, $M_{\lambda}u \in \lp$ and since $u \in Y_{p}^{-}$, one deduces
from Lemma \ref{lemmaML} that $u \in \lm$.
\end{proof}

 Define now $\X$ as the space of elements
$(\psi_+,\psi_-) \in Y_{p}^{+} \times Y_{p}^{-}$ such that $\psi_+-\ml \psi_-
\in \widetilde{\mathcal{Y}}_{+,\,p}$ for some/all $\lambda
>0.$ We equip $\X$ with the norm
\begin{equation}\label{E}\|(\psi_+,\psi_-)\|_{\X}:=\left[\|\psi_+\|_{Y_p^{+}}^{p} + \|\psi_-\|_{Y_p^{-}}^{p}
+\|\psi_+-M_1 \psi_-\|_{\widetilde{\mathcal{Y}}_{+,\,p}}^{p}\right]^{1/p}\end{equation} that makes it a Banach
space. In the following, $\D(\T_{\mathrm{max},\,p})$ is endowed with the
graph norm: $\|f\|_{\D}:=\|f\|_{p}+\|\T_{\mathrm{max},\,p}f\|_{p}$, $f \in
\D(\T_{\mathrm{max},\,p})$.
\begin{cor}\label{coro1} The trace mapping $\mathbb{B} : \varphi \in \D(\T_{\mathrm{max},\,p})
\longmapsto (\B^+\varphi,\B^-\varphi) \in \X$ is continuous,
surjective with continuous inverse.
\end{cor}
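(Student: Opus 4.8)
The plan is to establish the three claimed properties of the trace mapping $\mathbb{B}$ one by one, drawing on the structural results already assembled. For \emph{continuity}, I would take $\varphi \in \D(\T_{\mathrm{max},\,p})$ and note that by Theorem \ref{theotrace} one has $\|\B^\pm\varphi\|_{Y_p^\pm}^p \leq 2^{p-1}(\|\varphi\|_p^p + \|\T_{\mathrm{max},\,p}\varphi\|_p^p)$, which controls the first two terms in the norm \eqref{E}. For the third term, write $\B^+\varphi - M_1\B^-\varphi = \B^+(\varphi - \Xi_1\B^-\varphi)$ using \eqref{propxil1}; this function lies in $\D(\T_{\mathrm{max},\,p})$ with zero incoming trace, so it equals $C_1\psi$ for $\psi = (1-\T_{\mathrm{max},\,p})\varphi$, and hence $\B^+\varphi - M_1\B^-\varphi = G_1\psi$. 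Then \eqref{stime} from Lemma \ref{glrange} gives $\|\B^+\varphi - M_1\B^-\varphi\|_{\widetilde{\mathcal{Y}}_{+,\,p}} \leq (1+q^{-1/q})\|(1-\T_{\mathrm{max},\,p})\varphi\|_p \leq (1+q^{-1/q})(\|\varphi\|_p + \|\T_{\mathrm{max},\,p}\varphi\|_p)$. Combining these bounds shows $\|\mathbb{B}\varphi\|_{\X} \lesssim \|\varphi\|_{\D}$, i.e. $\mathbb{B}$ is bounded. (This is essentially the estimate \eqref{stime1} already recorded in the proof of Proposition \ref{prop1}.)

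For \emph{surjectivity}, let $(\psi_+,\psi_-) \in \X$, so by definition $\psi_+ - M_\lambda\psi_- \in \widetilde{\mathcal{Y}}_{+,\,p}$. Proposition \ref{prop1} then directly produces $\varphi \in \D(\T_{\mathrm{max},\,p})$ with $\B^\pm\varphi = \psi_\pm$, so $\mathbb{B}$ maps onto $\X$. For the \emph{continuous inverse}, the point is that $\mathbb{B}$ need not be injective on $\D(\T_{\mathrm{max},\,p})$ (two functions with the same traces differ by an element of $\ker\B^+ \cap \ker\B^-$), so I would interpret ``continuous inverse'' as: there is a bounded linear right inverse, i.e. a bounded lifting $\mathbb{L}: \X \to \D(\T_{\mathrm{max},\,p})$ with $\mathbb{B}\mathbb{L} = \mathrm{Id}_{\X}$. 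The natural candidate, following the converse direction of Proposition \ref{prop1}, is $\mathbb{L}(\psi_+,\psi_-) = f + \Xi_1\psi_-$, where $f \in \D(\T_{\mathrm{max},\,p})$ is the function furnished by Proposition \ref{fgt} applied to $h = \psi_+ - M_1\psi_- \in \widetilde{\mathcal{Y}}_{+,\,p}$, which satisfies $\B^-f = 0$, $\B^+f = h$ and $\|f\|_p + \|\T_{0,\,p}f\|_p \leq 3\|h\|_{\widetilde{\mathcal{Y}}_{+,\,p}}$. One checks $\B^-\mathbb{L}(\psi_+,\psi_-) = \psi_-$ and $\B^+\mathbb{L}(\psi_+,\psi_-) = h + M_1\psi_- = \psi_+$, and the bound on $\|\mathbb{L}(\psi_+,\psi_-)\|_{\D}$ follows from the Proposition \ref{fgt} estimate together with the bound $\|\Xi_1\psi_-\|_p + \|\T_{\mathrm{max},\,p}\Xi_1\psi_-\|_p = (1 + \lambda)\|\Xi_1\psi_-\|_p \lesssim \|\psi_-\|_{Y_p^-}$ coming from Lemma \ref{lemmaML} and \eqref{propxil1}; all three pieces are dominated by $\|(\psi_+,\psi_-)\|_{\X}$.

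The main obstacle, or at least the only subtle point, is the interpretation and verification of the ``continuous inverse'' claim: since $\mathbb{B}$ is genuinely not injective, one cannot literally invert it, and the statement must be read via an open-mapping-type conclusion — $\mathbb{B}$ is a bounded surjection between Banach spaces, hence open, so the induced isomorphism $\D(\T_{\mathrm{max},\,p})/\ker\mathbb{B} \to \X$ has a bounded inverse; equivalently, the explicit bounded lifting $\mathbb{L}$ above realizes this. I would state this cleanly at the start of the proof to avoid ambiguity. A secondary technical nuisance is confirming that the quantity $\|\psi_+ - M_\lambda\psi_-\|_{\widetilde{\mathcal{Y}}_{+,\,p}}$ being finite for one $\lambda > 0$ forces it for all $\lambda > 0$ (so that the norm \eqref{E} with $\lambda = 1$ is well-defined on all of $\X$); this is exactly the ``some/all'' equivalence built into the definition of $\X$, which in turn rests on point (3) of Lemma \ref{lemmaML} applied to $M_\lambda M_\mu^{-1}$-type comparisons — but since this equivalence is already asserted in Proposition \ref{prop1} and the definition of $\X$, I would simply cite it. Everything else is routine bookkeeping with the estimates from Theorem \ref{theotrace}, Lemma \ref{glrange}, Lemma \ref{lemmaML} and Proposition \ref{fgt}.
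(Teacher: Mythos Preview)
Your proposal is correct and follows essentially the same route as the paper: surjectivity via Proposition \ref{prop1}, continuity via Theorem \ref{theotrace} together with the estimate \eqref{stime1} (which you rederive), and the continuous right inverse given by the explicit lifting $\mathbb{L}(\psi_+,\psi_-)=f+\Xi_1\psi_-$ with $f$ from Proposition \ref{fgt}. Your discussion of the non-injectivity of $\mathbb{B}$ and the reading of ``continuous inverse'' as a bounded right inverse is more careful than the paper, which simply writes $\mathbb{B}^{-1}$ for this lifting without comment.
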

\begin{proof} The fact that the trace mapping is surjective follows
from Proposition \ref{prop1}. Moreover, for any $\varphi \in
\D(\T_{\mathrm{max},\,p})$,  Theorem \ref{theotrace} yields
$\|\B^{\pm}\varphi\|_{Y_{p}^{\pm}} \leq 2^{1-\frac{1}{p}}\|\varphi\|_{\D}$. Moreover, according to \eqref{stime1}
$$\|\psi_+-\ml \psi_-\|_{\widetilde{\mathcal{Y}}_{+,\,p}}^{p}=\|\B^{+}g\|_{\widetilde{\mathcal{Y}}_{+,\,p}}^{p}\leq (1+(\lambda\,q)^{-1/q})^{p}\max(1,\lambda^{p})\|\varphi\|_{\D}^{p}$$
for any $\lambda >0.$ Choosing
$\lambda=1$, this proves that $\mathbb{B} $ is continuous. Conversely, suppose $(\psi_+,\psi_-)\in \X$. From the proof of Proposition \ref{prop1}
with $\lambda=1$ the inverse operator may be  defined by
$$
\mathbb{B}^{-1} \:\::\:\: (\psi_-,\psi_+) \mapsto \varphi=f+\Xi_1 \psi_-,
 $$
 where $f \in \D(\T_{\mathrm{max},\,p})$ satisfies $\B^-f =0$ and $\B^+f =
 h = \psi_+-M_1\psi_-$. Now, by Proposition \ref{fgt},
$$\|f\|_{\D} \leq 3 \|h \|_{\widetilde{\mathcal{Y}}_{+,\,p}} = 3\|\psi_+-M_1\psi_- \|_{\widetilde{\mathcal{Y}}_{+,\,p}}
$$ and one deduces easily the continuity of $\mathbb{B}^{-1}$.  \end{proof}

\section{Generation properties for unbounded boundary operators}\label{sec:unb}
Let us generalize some of the results of the previous section to  an unbounded operator $H$ from $Y_{p}^{+}$
to $Y_{p}^{-}$. We denote by $\D(H)$ its domain and $\G( H)$ its graph.
We assume in this section that \begin{equation} \G(H) \subset
\mathscr{E}.\label{scre}\end{equation} We define now $\T_{H,\,p}$ as
$\T_{H,\,p}f=\T_{\mathrm{max},\,p}f$ for any $f \in \D(\T_{H,\,p})$, where
$$\D(\T_{H,\,p})=\bigg\{f \in \D(\T_{\mathrm{max},\,p})\,;\,\mathbb{B}  f=(\B^+f,\B^-f) \in
\G(H)\bigg\}.$$  Notice that, thanks to Corollary \ref{coro1}, for
any $\psi \in \D(H)$, there exists $f \in\D(\T_{H,\,p})$ such that $\B^+
f=\psi.$

From now on, we equip $\D(H)$ with the norm:
\begin{equation}\label{normDH}\|\psi\|_{\D(H)}:=\|\psi\|_{Y_{p}^{+}} +
\|H\psi \|_{Y_{p}^{-}}+\|(I-\ml H)\psi \|_{\widetilde{\mathcal{Y}}_{+,\,p}}, \quad \psi  \in
\D(H),\end{equation} which is well-defined by (\ref{scre}). Then, one has the
following  whose proof is exactly the same as \cite[Lemma 4.1]{mjm2} (recall that $\mathscr{W}$ is defined in Proposition \ref{propgreen1}):
\begin{lemme}\label{lemdense} The set $\D(\T_{H,\,p}) \cap \mathscr{W}$ is dense in
$\D(\T_{H,\,p})$ endowed with the graph norm if and only if $\D(H) \cap \lp$ is dense in $\D(H)$. Moreover, for any
$\l
>0$, the following are equivalent:
\begin{enumerate}
\item $\left[I-\ml H\right]\D(H)=\widetilde{\mathcal{Y}}_{+,\,p}$;
\item $\mathrm{Ran}(\l-\T_{H,\,p})=X.$
\end{enumerate}
\end{lemme}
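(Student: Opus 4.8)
The plan is to prove the two equivalences separately, using the decomposition $f = f_1 + \Xi_\lambda \B^-f$ that was exploited throughout Section \ref{sec:bvp}. Recall that by Corollary \ref{coro1} the trace map $\mathbb{B}$ is an isomorphism between $(\D(\T_{\mathrm{max},\,p}),\|\cdot\|_\D)$ and $\X$, and that $\D(\T_{H,\,p})$ consists precisely of those $f$ whose trace pair lies in $\G(H)$; so questions about $\D(\T_{H,\,p})$ can be transferred to questions about $\G(H)$ inside $\X$.

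For the density statement, I would first observe that $\mathscr{W}=\{f\in\D(\T_{\mathrm{max},\,p})\,;\,\B^-f\in\lm\}$ (Proposition \ref{propgreen1}), so $f\in\D(\T_{H,\,p})\cap\mathscr{W}$ iff $\B^+f\in\D(H)$ \emph{and} $\B^-f=H\B^+f\in\lm$, i.e. iff $\B^+f\in\D(H)\cap(\text{the set where }H\psi\in\lm)$; since $\G(H)\subset\X$ forces $H\psi\in Y_p^-$ and the condition $H\psi\in\lm$ together with Proposition \ref{propgreen1} is equivalent to $\psi\in\lp$, this set is exactly $\D(H)\cap\lp$. Thus $\mathbb{B}$ maps $\D(\T_{H,\,p})\cap\mathscr{W}$ onto $\{(\psi,H\psi)\,;\,\psi\in\D(H)\cap\lp\}$ and $\D(\T_{H,\,p})$ onto $\{(\psi,H\psi)\,;\,\psi\in\D(H)\}$. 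Because $\mathbb{B}$ is bicontinuous, density of one preimage in the other is equivalent to density of $\D(H)\cap\lp$ in $\D(H)$ for the norm \eqref{normDH}; one only needs to check that the graph norm on $\D(\T_{H,\,p})$ and the norm \eqref{normDH} pulled back through $\psi\mapsto f=\mathbb{B}^{-1}(\psi,H\psi)$ are equivalent, which is immediate from Corollary \ref{coro1} and the estimate $\|\Xi_\lambda u\|_\D\leq c\|u\|_{Y_p^-}$ from Lemma \ref{lemmaML}.

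For the equivalence of (1) and (2), fix $\lambda>0$ and $g\in X$; I want to solve $(\lambda-\T_{H,\,p})f=g$, i.e. $(\lambda-\T_{\mathrm{max},\,p})f=g$ together with $\B^-f=H\B^+f$. By Theorem \ref{Theo10.43}, the general solution of $(\lambda-\T_{\mathrm{max},\,p})f=g$ with prescribed trace $\B^-f=u\in Y_p^-$ is $f=C_\lambda g+\Xi_\lambda u$, whose outgoing trace is $\B^+f=G_\lambda g+M_\lambda u$ (using $\B^+\Xi_\lambda u=M_\lambda u$ from \eqref{propxil1}). The boundary condition $u=H\B^+f$ then reads $u=H(G_\lambda g+M_\lambda u)$, i.e., writing $\psi=\B^+f=G_\lambda g+M_\lambda u$ and eliminating $u=H\psi$, the condition becomes $\psi-M_\lambda H\psi=G_\lambda g$, that is $(I-M_\lambda H)\psi=G_\lambda g$ with $\psi\in\D(H)$. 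So $\lambda-\T_{H,\,p}$ is surjective iff for every $g\in X$ the element $G_\lambda g$ lies in $(I-M_\lambda H)\D(H)$; since $G_\lambda:X\to\widetilde{\mathcal{Y}}_{+,\,p}$ is surjective (Lemma \ref{glrange}) and $(I-M_\lambda H)\D(H)\subset\widetilde{\mathcal{Y}}_{+,\,p}$ by \eqref{scre}, this holds for all $g$ iff $(I-M_\lambda H)\D(H)=\widetilde{\mathcal{Y}}_{+,\,p}$, which is (1).

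The main thing to be careful about — and the only real obstacle — is the bookkeeping that shows the two formulations genuinely match: namely that $\psi=G_\lambda g+M_\lambda H\psi$ with $\psi\in\D(H)$ does reconstruct an element $f=C_\lambda g+\Xi_\lambda H\psi\in\D(\T_{H,\,p})$ (one must verify $\B^+f=\psi$, not just that $f$ solves the equation, and that $f\in\D(\T_{\mathrm{max},\,p})$, which follows from $C_\lambda g\in\D(\T_{0,\,p})$ and $\Xi_\lambda H\psi\in\D(\T_{\mathrm{max},\,p})$ by Lemma \ref{lemmaML} since $H\psi\in Y_p^-$), and conversely that every $f\in\D(\T_{H,\,p})$ with $(\lambda-\T_{H,\,p})f=g$ arises this way with $\psi=\B^+f$. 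Once this dictionary is set up, both equivalences fall out of Lemma \ref{glrange}, Theorem \ref{Theo10.43}, and Corollary \ref{coro1} with no further computation; everything else is the same argument as in \cite[Lemma 4.1]{mjm2} transported to the $L^p$ trace spaces.
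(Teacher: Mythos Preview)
Your approach matches the one the paper inherits from \cite[Lemmas 4.1--4.2]{mjm2}, and your treatment of the equivalence $(1)\Leftrightarrow(2)$ via Theorem \ref{Theo10.43} and the surjectivity of $G_\lambda$ from Lemma \ref{glrange} is correct.

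There is, however, one misstatement in the density argument that you should repair: Corollary \ref{coro1} does \emph{not} say that $\mathbb{B}$ is an isomorphism between $\D(\T_{\mathrm{max},\,p})$ and $\X$ --- it is a continuous surjection admitting a continuous \emph{right} inverse $\mathbb{B}^{-1}$, and its kernel $\D_0=\{f:\B^\pm f=0\}$ is nontrivial. Hence ``bicontinuity'' alone does not transfer density of preimages. The fix is harmless once noticed: since $\D_0\subset\D(\T_{H,\,p})\cap\mathscr{W}$, given $f\in\D(\T_{H,\,p})$ and $(\psi_n)_n\subset\D(H)\cap\lp$ approximating $\psi=\B^+f$ in the norm \eqref{normDH}, the corrected sequence
\[
f_n:=\mathbb{B}^{-1}(\psi_n,H\psi_n)+\bigl(f-\mathbb{B}^{-1}(\psi,H\psi)\bigr)
\]
lies in $\D(\T_{H,\,p})\cap\mathscr{W}$ and converges to $f$ in graph norm by continuity of $\mathbb{B}^{-1}$. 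The converse direction is unaffected by the kernel, since the estimates you invoke (Theorem \ref{theotrace} for the $Y_p^\pm$ components and \eqref{stime1} for the $\widetilde{\mathcal{Y}}_{+,\,p}$ component) bound $\|\B^+f_n-\psi\|_{\D(H)}$ in terms of $\|f_n-f\|_{\D}$ for \emph{any} lift $f$ of $(\psi,H\psi)$, not only for $\mathbb{B}^{-1}(\psi,H\psi)$.
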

\begin{proof} The proof of the first point is  exactly the same as the one of \cite[Lemma 4.1]{mjm2} while the proof of the second one is exactly the one of \cite[Lemma 4.2]{mjm2}.\end{proof}

 We provide now necessary
and sufficient conditions on $H$ so that $\T_{H,\,p}$ generates a
$C_0$-semigroup of contractions in $X$. Our result generalizes
\cite[Theorem 3, p. 254]{dautray} in the context of $L^{p}$-spaces
but with general external field $\ff$ and Radon measure $\mu$.
\begin{theo}\label{TheoDautray} Let $H:\D(H) \subset Y_{p}^{+} \to Y_{p}^{-}$ be such that
\begin{enumerate}[(1)\:]
\item The graph $\G(H)$ of $H$ is a closed subspace of $\X$.
\item The range Ran$(I-\ml H)$ is a dense subset of  $\widetilde{\mathcal{Y}}_{+,\,p}$.
\item There is some positive constant $C >0$ such that
\begin{equation}\label{imlh}
\|(I-\ml H)\psi_+\|_{\widetilde{\mathcal{Y}}_{+,\,p}} \geq C \left(\|\psi_+\|_{Y_{p}^{+}} +
\|H\psi_+\|_{Y_{p}^{-}}\right), \qquad \forall \psi_+ \in \D(H).
\end{equation}
\item $\D(H) \cap \lp$ is dense in $\D(H)$ endowed with the norm
\eqref{normDH}.
\item The restriction of $H$ to $\lp$ is a contraction, i.e.
\begin{equation}\label{Hl1+contr} \|H\psi
\|_{\lm} \leq \|\psi \|_{\lp}, \qquad \forall \psi  \in \D(H) \cap
\lp.
\end{equation}
\end{enumerate}
Then, $\T_{H,\,p}$ generates a $C_0$-semigroup of contractions in $X$.
Conversely, if $\T_{H,\,p}$ generates a $C_0$-semigroup of contractions
and $\D(\T_{H,\,p}) \cap  \mathscr{W}$ is dense in $\D(\T_{H,\,p})$ endowed
with the graph norm, then $H$ satisfies assumptions
\textit{(1)--(5)}.
\end{theo}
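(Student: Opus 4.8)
The plan is to obtain the forward implication from the Lumer--Phillips theorem and the converse by reversing each step, the recurring point being that $X=L^{p}(\O,\d\mu)$ is reflexive for $1<p<\infty$. \emph{From $(1)$--$(5)$ to generation.} Since $\mathbb{B}\colon(\D(\T_{\mathrm{max},\,p}),\|\cdot\|_{\D})\to\X$ is continuous (Corollary \ref{coro1}) and $\G(H)$ is closed in $\X$ by $(1)$, the domain $\D(\T_{H,\,p})=\mathbb{B}^{-1}(\G(H))$ is closed in the graph norm, so $\T_{H,\,p}$ is a closed operator. To obtain dissipativity, take $f\in\D(\T_{H,\,p})\cap\mathscr{W}$: then $\B^{\pm}f\in L^{p}_{\pm}$ with $\B^{-}f=H\B^{+}f$, and Green's formula \eqref{greenform} together with the contractivity assumption $(5)$ gives
$$p\int_{\O}\mathrm{sign}(f)\,|f|^{p-1}\,\T_{\mathrm{max},\,p}f\,\d\mu=\|H\B^{+}f\|_{\lm}^{p}-\|\B^{+}f\|_{\lp}^{p}\leq 0,$$
equivalently $\|(\l-\T_{H,\,p})f\|_{p}\geq\l\|f\|_{p}$ for all $\l>0$. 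By $(4)$ and Lemma \ref{lemdense} the subspace $\D(\T_{H,\,p})\cap\mathscr{W}$ is dense in $\D(\T_{H,\,p})$ for the graph norm, and since $f\mapsto\|(\l-\T_{H,\,p})f\|_{p}$ is graph-norm continuous the estimate extends to all of $\D(\T_{H,\,p})$; hence $\T_{H,\,p}$ is dissipative.

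For the range condition, by Lemma \ref{lemdense} it is enough to show $[I-\ml H]\D(H)=\widetilde{\mathcal{Y}}_{+,\,p}$ for one $\l>0$. Assumption $(3)$ is an a priori bound $\|(I-\ml H)\psi\|_{\widetilde{\mathcal{Y}}_{+,\,p}}\geq C(\|\psi\|_{Y_{p}^{+}}+\|H\psi\|_{Y_{p}^{-}})$ on $\D(H)$, which combined with the closedness of $\G(H)$ in $\X$ and the continuity $\ml\in\mathscr{B}(Y_{p}^{-},Y_{p}^{+})$ (Lemma \ref{lemmaML}) shows, through a Cauchy-sequence argument in $\X$, that $\mathrm{Ran}(I-\ml H)$ is closed in $\widetilde{\mathcal{Y}}_{+,\,p}$; assumption $(2)$ then forces it to be all of $\widetilde{\mathcal{Y}}_{+,\,p}$, so $\mathrm{Ran}(\l-\T_{H,\,p})=X$ for every $\l>0$. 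Being closed, dissipative and surjective on a reflexive space, $\T_{H,\,p}$ is automatically densely defined, so Lumer--Phillips yields that $\T_{H,\,p}$ generates a $C_{0}$-semigroup of contractions.

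\emph{Converse.} Suppose $\T_{H,\,p}$ generates a contraction $C_{0}$-semigroup and $\D(\T_{H,\,p})\cap\mathscr{W}$ is dense in $\D(\T_{H,\,p})$; then $(4)$ is immediate from Lemma \ref{lemdense}. For $(1)$: if $(\psi_{n},H\psi_{n})\in\G(H)$ converges in $\X$, then $\mathbb{B}^{-1}(\psi_{n},H\psi_{n})\in\D(\T_{H,\,p})$ converges in the graph norm (Corollary \ref{coro1}), and since the generator is closed the limit lies in $\D(\T_{H,\,p})$, so the $\X$-limit belongs to $\G(H)$. For $(5)$: given $\psi\in\D(H)\cap\lp$, choose $f\in\D(\T_{H,\,p})\cap\mathscr{W}$ with $\B^{+}f=\psi$ (Corollary \ref{coro1} and Proposition \ref{propgreen1}), so $\B^{-}f=H\psi\in\lm$; dissipativity of the generator and Green's formula \eqref{greenform} give $\|H\psi\|_{\lm}\leq\|\psi\|_{\lp}$. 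For $(2)$: $\mathrm{Ran}(\l-\T_{H,\,p})=X$ yields $[I-\ml H]\D(H)=\widetilde{\mathcal{Y}}_{+,\,p}$ by Lemma \ref{lemdense}, in particular a dense range. For $(3)$: the map $I-\ml H\colon\D(H)\to\widetilde{\mathcal{Y}}_{+,\,p}$ is onto by the previous point and injective since $w\mapsto\bl(Hw)$ identifies $\ker(I-\ml H)$ with $\ker(\l-\T_{H,\,p})=\{0\}$; as $(\D(H),\|\cdot\|_{\D(H)})$ is a Banach space (this is where $(1)$ enters) and $I-\ml H$ is bounded into $\widetilde{\mathcal{Y}}_{+,\,p}$, the bounded inverse theorem produces $\|\psi\|_{\D(H)}\leq C'\|(I-\ml H)\psi\|_{\widetilde{\mathcal{Y}}_{+,\,p}}$, which is $(3)$.

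\emph{Main obstacle.} The two delicate steps are, in the direct part, turning $(1)$--$(3)$ into the surjectivity of $\l-\T_{H,\,p}$ via Lemma \ref{lemdense} (the closed-range argument must simultaneously use the a priori bound, the closedness of $\G(H)$ in $\X$ and the continuity of $\ml$), and, in the converse, extracting the quantitative estimate $(3)$ from mere bijectivity through the open mapping theorem, which rests on the completeness of $(\D(H),\|\cdot\|_{\D(H)})$ and hence on $(1)$. Invoking reflexivity of $L^{p}$ to bypass a direct proof that $\D(\T_{H,\,p})$ is dense in $X$ is the one ingredient genuinely tied to the range $1<p<\infty$.
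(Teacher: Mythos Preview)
Your argument is correct and mirrors the paper's: dissipativity on $\D(\T_{H,\,p})\cap\mathscr{W}$ via Green's formula and $(5)$, extension to all of $\D(\T_{H,\,p})$ via Lemma \ref{lemdense} and $(4)$, surjectivity of $\l-\T_{H,\,p}$ via Lemma \ref{lemdense} and the invertibility of $I-\ml H$ forced by $(1)$--$(3)$, then Lumer--Phillips (the paper handles the converse in the same way but defers most steps to \cite{mjm2}). One minor remark: the appeal to reflexivity for density of $\D(\T_{H,\,p})$ is not needed, since $\D_{0}\subset\D(\T_{H,\,p})$ (as $(0,0)\in\G(H)$) and $\D_{0}$ is already dense in $X$, so this step does not actually single out $1<p<\infty$.
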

\begin{proof} Assume  \textit{(1)--(5)} to hold.  Let $f \in \D(\T_{H,\,p}) \cap
\mathscr{W}$. Setting $g=(\l-\T_{H,\,p})f$, one sees that $f$ solves the
boundary-value problem \eqref{BVP1} with $u=H\B^{+}f$ and, from  \eqref{10.1156},
$$\l\,p \|f\|_{p}^{p} -\,p \|(\l-\T_{H,\,p})f\|_{p} \,\|f\|^{p-1}_{X}\leq
\|\B^-f\|_{\lm}-\|\B^+f\|_{\lp}=\|H(\B^+f)\|_{\lm}-\|\B^+f\|_{\lp}
\leq 0.$$ Thus, $\lambda\,\|f\|_{p}^{p} \leq \|(\l-\T_{H,\,p})f\|_{p}\,\|f\|_{p}^{p-1}$. This shows that $\lambda\|f\|_{p} \leq \|(\l-\T_{H,\,p})f\|_{p}$, i.e. $\T_{H,\,p}$ is dissipative over $\D(\T_{H,\,p}) \cap
\mathscr{W}$. From \textit{(4)} and Lemma  \ref{lemdense}, it is
clear that $\T_H$ is dissipative over $\D(\T_{H,\,p})$. Now, according to
\textit{(1)}, one sees that $\D(H)$ equipped with the norm
\eqref{normDH} is a Banach space. Moreover, for any $\l >0$, $I-\ml
H$ is continuous from $\D(H)$ into $\widetilde{\mathcal{Y}}_{+,\,p}$  and \textit{(2)-(3)} imply
that it is invertible with continuous inverse. In particular, since
Ran$(I-\ml H)=\widetilde{\mathcal{Y}}_{+,\,p}$, Lemma \ref{lemdense} implies that
Ran$(\l-\T_{H,\,p})=X$ so that the Lumer-Phillips Theorem \cite[p. 14]{pazy}
can be applied to state that $\T_{H,\,p}$ generates a  $C_0$-semigroup of contractions in
$X$.

Conversely, assume that $\T_{H,\,p}$ generates a $C_0$-semigroup of
contractions and $\D(\T_{H,\,p}) \cap  \mathscr{W}$ is dense in $\D(\T_{H,\,p})$ endowed with the graph norm.
According to the Lumer-Phillips Theorem, for any $f \in \D(\T_{H,\,p})$ and
any $g \in L^q(\O,\d\mu)$ with $\int_{\O}
f(\x)g(\x)\d\mu(\x)=\|f\|_{p}^{p} ,$ one has 
$$\int_{\O}g(\x)\T_{H,\,p} f(\x) \d\mu(\x)
\leq 0.$$ Then, for any $f \in \D(\T_{H,\,p}) \cap \mathscr{W}$,  choosing
$g=\mathrm{sign}f\,|f|^{p-1}$, Theorem \ref{th:lpl1} ensures that  
$g\,\T_{H,\,p}f=\frac{1}{p}\T_{H,\,1}(|f|^{p})$ so that
\begin{equation*}\begin{split}
0 \geq p\int_{\O}\T_{H,\,p}f(\x)g(\x)\d\mu(\x)&=\int_{\O}\T_{H,\,1}(|f|^{p})(\x)\d\mu(\x)\\
&=\int_{\Gamma_-}|\B^-f(\x)|^{p}\d\mu_-(\x)-\int_{\Gamma_+}|\B^+f(\x)|^{p}\d\mu_+(\x)\\
&=\|H\left(\B^+f\right)\|_{\lm}^{p}-\|\B^+f\|_{\lp}^{p}
\end{split}\end{equation*}
where we used Green's formula and the fact that
$\B^{\pm}|f|^{p}=\left|\B^{\pm}f\right|^{p}.$  This proves that
\eqref{Hl1+contr} holds for all $f \in \D(\T_{H,\,p}) \cap \mathscr{W}$. The rest of the proof is as in \cite[Theorem 4.1]{mjm2}. 
\end{proof}


\subsection{The transport operator associated to bounded boundary operators} \label{sec:bounded1}
Now we shall investigate in transport equations with  boundary operators  $H \in
\mathscr{B}(\lp,\lm)$. We denote for simplicity
$$\verti{H}=\|H\|_{\mathscr{B}(\lp,\lm)}.$$
More precisely, we introduce the associated transport operator $\T_{H,\,p}$ :
$$\T_{H,\,p}\psi=\T_{\mathrm{max},\,p}\psi, \qquad \text{ for any }
\psi \in \D(\T_{H,\,p}),$$ where the domain $\D(\T_{H,\,p})$ is defined by
$$\D(\T_{H,\,p})=\{\psi \in \D(\T_{\mathrm{max},\,p})\,;\,\B^+\psi \in \lp \quad \text{ and } \quad \B^{-}\psi=H\B^{+}\psi\}.$$

We have the following
\begin{propo} If $\verti{H} < 1$, then the operator $(\T_{H,\,p},\D(\T_{H,\,p}))$ is closed and densely defined in $X$.
\end{propo}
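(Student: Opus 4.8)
I would prove the two assertions — closedness and density of the domain — separately, the closedness being the substantial one. Throughout, I use that $\T_{\mathrm{max},\,p}$ is closed (Remark \ref{rem:closed}), that traces depend continuously on $f$ in the graph norm of $\T_{\mathrm{max},\,p}$ (Theorem \ref{theotrace} and Remark \ref{nb:graph-norm}), Green's formula in the form of Propositions \ref{propgreen} and \ref{propgreen1}, and the properties of $\Xi_{\lambda}$, $M_{\lambda}$ collected in Lemma \ref{lemmaML}.

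\noindent\textbf{Density.} Since $\T_{0,\,p}$ generates a $C_{0}$-semigroup (Theorem \ref{uot}), $\D(\T_{0,\,p})$ is dense in $X$, and every $\psi\in\D(\T_{0,\,p})$ satisfies $\B^{-}\psi=0$ and $\B^{+}\psi\in\widetilde{\mathcal{Y}}_{+,\,p}\subset L^{p}_{+}$ (Remark \ref{nb:glsurj}). Fix such a $\psi$ and $\lambda>0$, and look for a correction $f_{\lambda}:=\psi-\Xi_{\lambda}w_{\lambda}$ with $w_{\lambda}\in L^{p}_{-}$, chosen so that $f_{\lambda}\in\D(\T_{H,\,p})$. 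By Lemma \ref{lemmaML}, $\Xi_{\lambda}w_{\lambda}\in\D(\T_{\mathrm{max},\,p})$ with $\B^{-}\Xi_{\lambda}w_{\lambda}=w_{\lambda}$ and $\B^{+}\Xi_{\lambda}w_{\lambda}=M_{\lambda}w_{\lambda}$, so the boundary constraint $\B^{-}f_{\lambda}=H\B^{+}f_{\lambda}$ becomes $(I-HM_{\lambda})w_{\lambda}=-H\B^{+}\psi$. As in the proof of Lemma \ref{lemmaML} one has $\|M_{\lambda}\|_{\mathscr{B}(L^{p}_{-},L^{p}_{+})}\leq1$ (cf.\ \eqref{eq:mllm} and \eqref{10.51}), hence $\|HM_{\lambda}\|_{\mathscr{B}(L^{p}_{-})}\leq\verti{H}<1$, so $I-HM_{\lambda}$ is boundedly invertible on $L^{p}_{-}$ and $w_{\lambda}:=-(I-HM_{\lambda})^{-1}H\B^{+}\psi$ is well defined with $\|w_{\lambda}\|_{L^{p}_{-}}\leq(1-\verti{H})^{-1}\verti{H}\,\|\B^{+}\psi\|_{L^{p}_{+}}$, a bound independent of $\lambda$. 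Then $f_{\lambda}\in\D(\T_{\mathrm{max},\,p})$, $\B^{+}f_{\lambda}=\B^{+}\psi-M_{\lambda}w_{\lambda}\in L^{p}_{+}$ (Lemma \ref{lemmaML}(2)), and $\B^{-}f_{\lambda}=H\B^{+}f_{\lambda}$, i.e.\ $f_{\lambda}\in\D(\T_{H,\,p})$. Finally $\|f_{\lambda}-\psi\|_{p}=\|\Xi_{\lambda}w_{\lambda}\|_{p}$, and \eqref{B+f-2} gives $\lambda\,p\,\|\Xi_{\lambda}w_{\lambda}\|_{p}^{p}\leq\|w_{\lambda}\|_{L^{p}_{-}}^{p}$, so $f_{\lambda}\to\psi$ in $X$ as $\lambda\to\infty$. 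As $\D(\T_{0,\,p})$ is dense, $\D(\T_{H,\,p})$ is dense.

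\noindent\textbf{Closedness.} Let $(f_{n})_{n}\subset\D(\T_{H,\,p})$ with $f_{n}\to f$ and $\T_{H,\,p}f_{n}=\T_{\mathrm{max},\,p}f_{n}\to g$ in $X$. Closedness of $\T_{\mathrm{max},\,p}$ gives $f\in\D(\T_{\mathrm{max},\,p})$, $\T_{\mathrm{max},\,p}f=g$, and Remark \ref{nb:graph-norm} gives $\B^{\pm}f_{n}\to\B^{\pm}f$ in $Y_{p}^{\pm}$. The point is to upgrade the convergence of $\B^{+}f_{n}$ to $L^{p}_{+}$. Since $\B^{-}(f_{n}-f_{m})=H\B^{+}(f_{n}-f_{m})\in L^{p}_{-}$, we have $f_{n}-f_{m}\in\mathscr{W}$, so Green's formula (Propositions \ref{propgreen}, \ref{propgreen1}) together with Hölder's inequality gives
\[
\|\B^{+}(f_{n}-f_{m})\|_{L^{p}_{+}}^{p}-\|\B^{-}(f_{n}-f_{m})\|_{L^{p}_{-}}^{p}\leq p\,\|f_{n}-f_{m}\|_{p}^{p-1}\,\|\T_{H,\,p}f_{n}-\T_{H,\,p}f_{m}\|_{p}.
\]
Using $\|\B^{-}(f_{n}-f_{m})\|_{L^{p}_{-}}=\|H\B^{+}(f_{n}-f_{m})\|_{L^{p}_{-}}\leq\verti{H}\,\|\B^{+}(f_{n}-f_{m})\|_{L^{p}_{+}}$ and $\verti{H}<1$, we get $(1-\verti{H}^{p})\,\|\B^{+}(f_{n}-f_{m})\|_{L^{p}_{+}}^{p}\to0$, so $(\B^{+}f_{n})_{n}$ is Cauchy in $L^{p}_{+}$; let $\varphi$ be its limit. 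Comparing with $\B^{+}f_{n}\to\B^{+}f$ in $Y_{p}^{+}$ through the continuous embedding $L^{p}_{+}\hookrightarrow Y_{p}^{+}$ forces $\B^{+}f=\varphi\in L^{p}_{+}$. Then $\B^{-}f_{n}=H\B^{+}f_{n}\to H\B^{+}f$ in $L^{p}_{-}$, while $\B^{-}f_{n}\to\B^{-}f$ in $Y_{p}^{-}$, whence $\B^{-}f=H\B^{+}f$. Thus $f\in\D(\T_{H,\,p})$ and $\T_{H,\,p}f=g$.

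\noindent\textbf{Main obstacle.} The only genuinely delicate step is promoting the $Y_{p}^{+}$-convergence of the boundary traces to $L^{p}_{+}$-convergence in the closedness proof; this is exactly where $\verti{H}<1$ is used, through the absorption of the boundary term in Green's formula by the factor $1-\verti{H}^{p}>0$. One must only take care that Green's formula is legitimately applied to the differences $f_{n}-f_{m}$, which is guaranteed by $H\in\mathscr{B}(L^{p}_{+},L^{p}_{-})$ (so that $\B^{-}(f_{n}-f_{m})\in L^{p}_{-}$); the remaining manipulations are routine bookkeeping with $\Xi_{\lambda}$, $M_{\lambda}$ and the trace theory already developed.
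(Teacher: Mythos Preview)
Your closedness argument is essentially the paper's: Green's formula on $f_n-f_m$ combined with $\|\B^{-}(f_n-f_m)\|\leq\verti{H}\,\|\B^{+}(f_n-f_m)\|$ gives a Cauchy sequence in $L^p_+$; the only cosmetic difference is that you identify the limit via the embedding $L^p_+\hookrightarrow Y^+_p$ and Remark~\ref{nb:graph-norm}, whereas the paper passes to an a.e.-convergent subsequence.

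Your density proof is correct but takes a genuinely different, more elaborate route. The paper simply observes that
\[
\D_0:=\{\psi\in\D(\T_{\mathrm{max},\,p})\,;\,\B^-\psi=\B^+\psi=0\}\subset\D(\T_{H,\,p})
\]
(the boundary condition $\B^-\psi=H\B^+\psi$ is trivially satisfied when both traces vanish) and that $\mathcal{C}^1_0(\O)\subset\D_0$ is dense in $X$. This needs nothing about $H$. You instead start from $\psi\in\D(\T_{0,\,p})$ and build a corrector $f_\lambda=\psi-\Xi_\lambda w_\lambda$ by inverting $I-HM_\lambda$ on $L^p_-$, then let $\lambda\to\infty$. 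This works, but it spends the hypothesis $\verti{H}<1$ where the paper does not, and invokes the machinery $\Xi_\lambda, M_\lambda$, \eqref{B+f-2} that the simpler argument avoids. On the other hand, your construction has the merit of producing elements of $\D(\T_{H,\,p})$ with nonzero traces approximating a prescribed $\psi$, which is closer in spirit to the resolvent formula \eqref{eqreso1} and could be useful if one later needs density in stronger topologies.
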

\begin{proof} 
Let $(f_{n})_{n}\subset \D(\T_{H,\,p})$ and $f,g \in X$ be such that $\lim_{n}\|f-f_{n}\|_{p}=\lim_{n}\|\T_{H,\,p}f_{n}-g\|_{p}=0.$ We have to prove that $f \in \D(\T_{H,\,p})$ with $\T_{H,\,p}f=g.$ Using the fact that $\T_{\mathrm{max},\,p}$ is closed (see Remark \ref{rem:closed}) and $\D(T_{H,\,p}) \subset \D(\T_{\mathrm{max},\,p})$ we get that $f \in \D(\T_{\mathrm{max},\,p})$ with $\T_{\mathrm{max},\,p}f=g.$ To prove the result, we ``only'' have to prove that $\B^{-} f \in \lm,$ $\B^{+}f \in \lp$ and $\B^{-}f=H\B^{+}f.$ First, according to Green's formula  one has, for any $n,m \geq 1$
$$\left|\|\B^{-}f_n-\B^{-}f_m\|_{\lm}^{p}-\|\B^{+}f_{n}-\B^{+}f_{m}\|_{\lp}^{p}\right| \leq p \|f_{n}-f_{m}\|_{p}^{p-1}\,\|\T_{H,\,p}f_{n}-\T_{H,\,p}f_{m}\|_{p}.$$
Since $\verti{H} < 1$, we have 
$$\left|\|\B^{-}f_n-\B^{-}f_m\|_{\lm}^{p}-\|\B^{+}f_{n}-\B^{+}f_{m}\|_{\lp}^{p}\right|\geq (1-\verti{H}^p)\|\B^+f_n-\B^+f_m\|_{\lp}^p.$$ In  particular, $(\B^{+}f_{n})_{n}$ is a Cauchy sequence in $\lp$ so it converges in $\lp.$ By definition of $\B^{\pm}f$ and since there is a subsequence (still denoted by $(f_{n})_{n}$) such that $\lim_{n}f_{n}(\x)=f(\x)$ for $\mu$-almost every $\x \in \O$, we get easily that the only possible limit is $\B^{+}f$, i.e $\lim_{n}\|\B^{+}f_{n}-\B^{+}f\|_{\lp}=0.$ Since $H$ is a bounded operator, we deduce that $\lim_{n}\|\B^{-}f_{n}-\B^{-}f\|_{\lm}=0$ and $H\B^{+}f=\B^{-}f$, i.e. $f \in \D(\T_{H,\,p}).$\medskip

Let us now prove that $\D(\T_{H,\,p})$ is dense in $X$. Notice that 
$$\D_{0}:=\{\psi \in \D(\T_{\mathrm{max},\,p})\,;\,\B^{-}\psi=\B^{+}\psi=0\} \subset \D(\T_{H,\,p}).$$
Now, since the set of continuously differentiable and compactly supported function $\mathcal{C}_{0}^1(\O)$ is dense in $X$ and $\mathcal{C}_{0}^1(\O) \subset \D_{0}$, we get the desired result.\end{proof}

In such a case, $\ml H \in
\mathscr{B}(\lp)$ for any $\l >0$. We begin with the following result:
\begin{propo}\label{prop:inverse}
Assume that $H \in \mathscr{B}(\lp,\lm)$. Let $\lambda >0$ be given such that $I-M_{\lambda}H \in \mathscr{B}(\lp)$ is invertible. Then, $(\lambda-\T_{H,\,p})$ is invertible and 
\begin{equation}\label{eqreso1}
(\lambda- \T_{H,\,p})^{-1}=C_{\lambda}+\Xi_{\lambda}H\left(I-\ml H
\right)^{-1}G_{\lambda}.\end{equation}
\end{propo}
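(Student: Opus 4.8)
The plan is to verify directly that the operator on the right-hand side of \eqref{eqreso1} maps $X$ into $\D(\T_{H,\,p})$ and acts as a two-sided inverse of $(\lambda-\T_{H,\,p})$. First I would introduce, for $g \in X$, the candidate
$$f := C_\lambda g + \Xi_\lambda H\bigl(I-M_\lambda H\bigr)^{-1} G_\lambda g.$$
Note that $G_\lambda g \in \widetilde{\mathcal{Y}}_{+,\,p} \subset \lp$ by Lemma \ref{glrange}, so $H\bigl(I-M_\lambda H\bigr)^{-1}G_\lambda g$ is a well-defined element of $\lm \subset Y_p^-$ since $H \in \mathscr{B}(\lp,\lm)$ and $I-M_\lambda H$ is boundedly invertible on $\lp$; hence $\Xi_\lambda$ applied to it makes sense by Lemma \ref{lemmaML}. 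Writing $f = f_1 + f_2$ with $f_1 = C_\lambda g \in \D(\T_{0,\,p})$ and $f_2 = \Xi_\lambda u$, $u := H(I-M_\lambda H)^{-1}G_\lambda g$, I would use $(\lambda-\T_{\mathrm{max},\,p})f_1 = g$, $\B^-f_1 = 0$, $\B^+f_1 = G_\lambda g$, together with $(\lambda-\T_{\mathrm{max},\,p})f_2 = 0$, $\B^-f_2 = u$, $\B^+f_2 = M_\lambda u$ from \eqref{propxil1}. Consequently $(\lambda-\T_{\mathrm{max},\,p})f = g$ and $\B^-f = u$, $\B^+f = G_\lambda g + M_\lambda u$, both in $\lp$ (resp.\ $\lm$), so $f \in \mathscr{W}$.

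The key algebraic step is to check the boundary relation $\B^-f = H\B^+f$, which is where the specific form of the resolvent is forced. Set $v := (I-M_\lambda H)^{-1}G_\lambda g \in \lp$, so that $u = Hv$ and $v - M_\lambda H v = G_\lambda g$, i.e.\ $G_\lambda g = v - M_\lambda u$. Then
$$\B^+f = G_\lambda g + M_\lambda u = (v - M_\lambda u) + M_\lambda u = v,$$
hence $\B^-f = u = Hv = H\B^+f$. This shows $f \in \D(\T_{H,\,p})$ and $(\lambda-\T_{H,\,p})f = g$, so $(\lambda-\T_{H,\,p})$ is surjective with the stated right inverse.

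It remains to show injectivity of $(\lambda-\T_{H,\,p})$, equivalently that the formula also gives a left inverse. Suppose $f \in \D(\T_{H,\,p})$ satisfies $(\lambda-\T_{H,\,p})f = 0$. Decompose $f = C_\lambda \cdot 0 + (\text{something})$; more precisely, since $(\lambda-\T_{\mathrm{max},\,p})f = 0$ and $\B^-f = H\B^+f$, Lemma \ref{lemmaML} gives $f = \Xi_\lambda \B^-f$ (both sides solve $(\lambda-\T_{\mathrm{max},\,p})\cdot = 0$ with the same incoming trace $\B^-f$, and uniqueness for \eqref{BVP1} in Theorem \ref{Theo10.43} applies). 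Taking the outgoing trace, $\B^+f = M_\lambda \B^-f = M_\lambda H \B^+f$, so $(I-M_\lambda H)\B^+f = 0$; since $I-M_\lambda H$ is invertible on $\lp$ and $\B^+f \in \lp$, we get $\B^+f = 0$, hence $\B^-f = H\B^+f = 0$ and $f = \Xi_\lambda 0 = 0$. Thus $(\lambda-\T_{H,\,p})$ is a bijection from $\D(\T_{H,\,p})$ onto $X$ and \eqref{eqreso1} is its inverse. I expect the main (minor) obstacle to be bookkeeping the various trace spaces, in particular making sure $G_\lambda g \in \lp$ so that $H$ may be applied and $(I-M_\lambda H)^{-1}$ is taken in $\mathscr{B}(\lp)$ rather than on a larger space; this is exactly what Lemma \ref{glrange} and the hypothesis on $\lambda$ are there to supply.
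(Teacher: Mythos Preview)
Your proof is correct and follows essentially the same approach as the paper: define $f=C_\lambda g+\Xi_\lambda H(I-M_\lambda H)^{-1}G_\lambda g$, use the properties of $C_\lambda$ and $\Xi_\lambda$ from Lemma~\ref{lemmaML} to compute $(\lambda-\T_{\mathrm{max},\,p})f$ and the traces $\B^\pm f$, and then check the boundary relation $\B^-f=H\B^+f$ via the same algebraic identity $\B^+f=(I-M_\lambda H)^{-1}G_\lambda g$. The only cosmetic difference is that the paper first derives the formula by assuming a solution exists (which simultaneously handles uniqueness) and then verifies it, whereas you verify existence directly and treat injectivity as a separate step; both arguments are equivalent.
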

\begin{proof} Let $\lambda >0$ be such that $I-M_{\lambda}H$ is invertible. Given $g \in X$, we wish to solve the resolvent equation
\begin{equation}\label{eq:re0}
(\lambda-\T_{H,\,p})f=g\end{equation}
for $f \in \D(\T_{H,\,p})$. This means that $f$ solves the boundary value problem $(\lambda-\T_{\mathrm{max},\,p})f=g$ with $\B^{-}f=H\B^{+}f$. If such a solution exists, it is given by 
\begin{equation}\label{eq:1}
f=C_{\lambda}g+\Xi_{\lambda}\B^{-}f\end{equation}
and therefore, taking the trace over $\Gamma_{+}$:
$$\B^{+}f=\B^{+}C_{\lambda}g+\B^{+}\Xi_{\lambda}\B^{-}f=G_{\lambda}g+M_{\lambda}\B^{-}f=G_{\lambda}g+M_{\lambda}H\B^{+}f.$$
Since $\B^{+}f \in \lp$ and $I-M_{\lambda}H$ is invertible, we get that $\B^{+}f$ is given by
\begin{equation}\label{eq:2}
\B^{+}f=(I-M_{\lambda}H)^{-1}G_{\lambda}g.\end{equation}
Then, inserting $\B^{-}f=H\B^{+}f$ into \eqref{eq:1}, we get that, if the resolvent equation \eqref{eq:re0} admits a solution, this solution is necessarily $f=C_{\lambda}g+\Xi_{\lambda}H(I-M_{\lambda}H)^{-1}G_{\lambda}g.$ Now, for any $g \in X$ and $\lambda >0$, we know that $f_{1}:=C_{\lambda}g \in \D(\T_{\mathrm{max},\,p})$ with $(\lambda-\T_{\mathrm{max},\,p})f_{1}=g.$ Since $G_{\lambda}g \in \lp$ one has $f_{2}:=\Xi_{\lambda}H(I-M_{\lambda}H)^{-1}G_{\lambda}g$ is well-defined and belongs to $X$. Moreover, according to Lemma \ref{lemmaML}, $f_{2} \in \D(\T_{\mathrm{max},\,p})$ with 
$\T_{\mathrm{max},\,p}f_{2}=\lambda f_{2}.$
This shows that, $f_{0}:=f_{1}+f_{2} \in \D(\T_{\mathrm{max},\,p})$ with $(\lambda-\T_{\mathrm{max},\,p})f_{0}=g$. Moreover, still using Lemma \ref{lemmaML}, $\B^{-}f_{1}=0$ while $\B^{-}f_{2}=H(I-M_{\lambda}H)^{-1}G_{\lambda}g \in \lm,$ i.e.
$$\B^{-}f_{0}=H(I-M_{\lambda}H)^{-1}G_{\lambda}g.$$
On the other-side, $\B^{+}f_{1}=G_{\lambda}g$ while 
$\B^{+}f_{2}=M_{\lambda}H(I-M_{\lambda}H)^{-1}G_{\lambda}g$
where we used again Lemma \ref{lemmaML}. Thus, $\B^{+}f_{0}=G_{\lambda}g+M_{\lambda}H(I-M_{\lambda}H)^{-1}G_{\lambda}g$ from which we deduce easily that
$$\B^{+}f_{0}=(I-M_{\lambda}H)^{-1}G_{\lambda}g.$$
Hence, $\B^{-}f_{0}=H\B^{+}f_{0}$ and $f_{0} \in \D(\T_{H,\,p})$. This proves that $f_{0}$ is indeed a solution to \eqref{eq:re0} and the proof is achieved.\end{proof}

If $H \in \mathscr{B}(\lp,\lm)$ is a contraction, one has the following:
\begin{theo}\label{Theo10.46} Let $H$ be strictly contractive, i.e.
$\verti{H} <1$. Then $\T_{H,\,p}$ generates a $C_0$-semigroup
of contractions $\uht$ in $X$ and the resolvent $(\lambda- \T_{H,\,p})^{-1}$ is
given by
\begin{equation}\label{eqreso}
(\lambda-
\T_{H,\,p})^{-1}=C_{\lambda}+\Xi_{\lambda}H\left(\sum_{n=0}^{\infty}(M_{\lambda}H)^nG_{\lambda}\right)
\qquad \text{ for any} \quad \lambda > 0,\end{equation} where the
series is convergent in
$\mathscr{B}(X).$
\end{theo}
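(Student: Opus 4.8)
The plan is to deduce the generation result from the Lumer–Phillips machinery already assembled in the previous section, together with the resolvent formula of Proposition \ref{prop:inverse}. First I would verify dissipativity of $\T_{H,\,p}$: by Green's formula (Proposition \ref{propgreen}) applied to $f \in \D(\T_{H,\,p}) \cap \mathscr{W}$, combined with the boundary relation $\B^-f = H\B^+f$ and the contractivity $\verti{H} < 1$, one gets $\|\B^-f\|_{\lm}^p - \|\B^+f\|_{\lp}^p \leq (\verti{H}^p - 1)\|\B^+f\|_{\lp}^p \leq 0$, whence the estimate $\lambda\|f\|_p \leq \|(\lambda - \T_{H,\,p})f\|_p$ exactly as in the proof of Theorem \ref{TheoDautray}. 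Since $\D(\T_{H,\,p}) \cap \mathscr{W}$ is dense in $\D(\T_{H,\,p})$ for the graph norm (here one invokes Lemma \ref{lemdense}, noting $\D(H) = \lp$ so the density hypothesis is automatic), dissipativity extends to all of $\D(\T_{H,\,p})$.

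Next I would handle the range condition and simultaneously establish the series formula. Since $\verti{H} < 1$ and $M_\lambda \in \mathscr{B}(\lp,\lm)$ is a contraction for every $\lambda > 0$ (from Lemma \ref{lemmaML}, point (2), together with \eqref{eq:mllm}), we have $\verti{M_\lambda H}_{\mathscr{B}(\lp)} \leq \verti{M_\lambda}\,\verti{H} < 1$, so the Neumann series $\sum_{n=0}^\infty (M_\lambda H)^n$ converges in $\mathscr{B}(\lp)$ and equals $(I - M_\lambda H)^{-1}$. Proposition \ref{prop:inverse} then applies for every $\lambda > 0$ and gives
\begin{equation*}
(\lambda - \T_{H,\,p})^{-1} = C_\lambda + \Xi_\lambda H (I - M_\lambda H)^{-1} G_\lambda = C_\lambda + \Xi_\lambda H\left(\sum_{n=0}^\infty (M_\lambda H)^n G_\lambda\right),
\end{equation*}
which is exactly \eqref{eqreso}; convergence in $\mathscr{B}(X)$ follows since $\Xi_\lambda \in \mathscr{B}(Y_p^-,X)$, $G_\lambda \in \mathscr{B}(X,\lp)$ (Lemma \ref{glrange}, using the embedding $\widetilde{\mathcal{Y}}_{+,\,p} \hookrightarrow \lp$), $H \in \mathscr{B}(\lp,\lm)$ and $\lm \hookrightarrow Y_p^-$. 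In particular $\mathrm{Ran}(\lambda - \T_{H,\,p}) = X$ for all $\lambda > 0$.

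Having a densely defined, closed (by the preceding Proposition), dissipative operator with $\mathrm{Ran}(\lambda - \T_{H,\,p}) = X$, the Lumer–Phillips theorem \cite[p.~14]{pazy} yields that $\T_{H,\,p}$ generates a $C_0$-semigroup of contractions $\uht$ on $X$. I expect the main technical point to be the contraction estimate $\verti{M_\lambda H}_{\mathscr{B}(\lp)} < 1$: one must be careful that $M_\lambda$ is genuinely a contraction on $\lp$ (not merely on the larger space $Y_p^+$), which is where \eqref{eq:mllm} and the embedding $\widetilde{\mathcal{Y}}_{+,\,p} \hookrightarrow \lp$ are used; everything else is a direct assembly of the lemmas already proved.
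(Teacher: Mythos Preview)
Your proof is correct and follows essentially the paper's own first (direct) approach: establish $\|M_\lambda H\|_{\mathscr{B}(\lp)}<1$, invoke Proposition \ref{prop:inverse} for the resolvent formula (with the Neumann series giving \eqref{eqreso}), obtain dissipativity from Green's formula, and conclude via Lumer--Phillips using that $\T_{H,\,p}$ is closed and densely defined. One small redundancy: for bounded $H$ the domain $\D(\T_{H,\,p})$ already requires $\B^+\psi\in\lp$, so $\D(\T_{H,\,p})\subset\mathscr{W}$ and your density step through Lemma \ref{lemdense} is unnecessary---Green's formula applies directly on all of $\D(\T_{H,\,p})$, exactly as the paper does.
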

\begin{proof} We provide two proofs of the result. A first direct one which relies on the Lumer-Phillips Theorem, the second one which comes from the result of Section \ref{sec:unb}.

\noindent$\bullet$ \textit{First proof: direct approach}. For any $\lambda >0$, one has $\|M_{\lambda}H\|_{\mathscr{B}(\lp)} \leq \verti{H} < 1$ so that $I-M_{\lambda}H \in \mathscr{B}(\lp)$ is invertible. According to Proposition \ref{prop:inverse}, we get that  $(\lambda-\T_{H,\,p})^{-1}$ is given by \eqref{eqreso} for any $\lambda >0.$ Let then $g \in X$ and $f=(\lambda-\T_{H,\,p})^{-1}g$. According to Green's formula (Prop. \ref{propgreen}), it holds
$$\|\B^{-}f\|_{\lm}^{p} - \|\B^{+}f\|_{\lp}^{p}=p\int_{\O} \mathrm{sign}(f)\,|f|^{p-1}\,\T_{\mathrm{max},\,p}f\,\d\mu,$$
which, using that $\T_{\mathrm{max},\,p}f=\lambda\,f-g$ and $\B^{-}f=H\B^{+}f$, reads
$$\|H\B^{+}f\|_{\lm}^{p}-\|\B^{+}f\|_{\lp}^{p}=\lambda\,p\,\|f\|_{p}^{p}-p\int_{\O} \mathrm{sign}(f)\,|f|^{p-1}\,g\d\mu.$$
Now, $\|H\B^{+}f\|_{\lm}^{p}-\|\B^{+}f\|_{\lp}^{p} \leq 0$ since $H$ is a contraction so that
$$\lambda\,p\|f\|_{p}^{p} \leq p\int_{\O} \mathrm{sign}(f)\,|f|^{p-1}\,g\d\mu.$$
A simple use of Holder's inequality gives then $\lambda\|f\|_{p}^{p} \leq \|f\|_{p}^{p-1}\,\|g\|_{p}$ and therefore
$$\|(\lambda-\T_{H,\,p})^{-1}\|_{\mathscr{B}(X)} \leq \frac{1}{\lambda} \qquad \forall \lambda >0.$$
Since moreover $\T_{H,\,p}$ is a densely defined and closed operator, a simple consequence of Lumer-Phillips Theorem asserts that $\T_{H,\,p}$ is the generator of a contraction $C_{0}$-semigroup in $X$.\\

\noindent$\bullet$ \textit{Second proof:}  The proof is a simple consequence of the above
Theorem \ref{TheoDautray} since, when $H$ is a strict contraction, one
has $\|M_{\lambda} H\|_{\mathscr{B}(\lp)} < 1$ for any $\lambda >0$ which, thanks to Hadamard's criterion, ensures that that  $(I-\ml H)$ is invertible
with inverse $(I-\ml H)^{-1}=\sum_{n=0}^\infty (\ml H)^n$ for any
$\l
>0$.   
\end{proof}

We will see in the next section an extension of the above result together with an explicit expression of the associated semigroup.

\section{Explicit transport semigroup for bounded boundary operators}\label{sec:explicit}

\subsection{Boundary Dyson-Phillips iterated operators}  Introduce as earlier the set
$$\D_{0}:=\{\psi \in \D(\T_{\mathrm{max},\,p})\,;\,\B^{-}\psi=\B^{+}\psi=0\} \subset \D(\T_{H,\,p}).$$

Recall now that, from Theorem \ref{uot}, $(\T_{0,\,p},\D(\T_{0,\,p}))$ generates a $C_{0}$-semigroup $(U_{0}(t))_{t\geq 0}$ in $X$ given by \eqref{eq:Uot}. Notice that, for any $f \in \D_{0}$, 
$$U_{0}(t)f \in \D(\T_{0,\,p}) \qquad \forall t \geq 0.$$
In particular, $\B^{\pm}U_{0}(t)f \in Y_{p}^{\pm}$. We set 
$$\I^{0}_{t}[f]=\int_{0}^{t}U_{0}(s)f\d s, \qquad \forall t \geq 0, \:f \in X.$$
Recall that, as a general property of $C_{0}$-semigroups (see for instance \cite[Theorem 2.4.]{pazy}):
$$\I^{0}_{t}[f] \in \D(\T_{0,\,p}) \qquad \text{ with } \quad \T_{0,\,p}\I^{0}_{t}[f]=U_{0}(t)f-f.$$
One has the following
\begin{propo}\label{prop:B+U01}
For any $f \in X$ and any $t >0$, the traces $\B^{\pm}\I^{0}_{t}[f] \in L^{p}_{\pm}$ and the mappings $t \geq 0 \mapsto \B^{\pm}\I^{0}_{t}[f] \in L^{p}_{\pm}$ are continuous. Moreover
\begin{equation}\label{eq:b+I}
\left\|\B^{+}\left(\I^{0}_{t+h}[f]-\I^{0}_{t}[f]\right)\right\|_{\lp}^{p} \leq h^{p-1}\,\left|\,\|U_{0}(t+h)f\|_{p}^{p}-\|U_{0}(t)f\|_{p}^{p}\,\right| \qquad \forall h >0.\end{equation}
\end{propo}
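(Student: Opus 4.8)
\emph{Overview.} Since $\I^{0}_{t}[f]\in\D(\T_{0,\,p})$ with $\T_{0,\,p}\I^{0}_{t}[f]=U_{0}(t)f-f$ (the general property of $C_{0}$-semigroups recalled just above), we have $\B^{-}\I^{0}_{t}[f]=0$ for every $t\geq0$; so the statements concerning $\Gamma_{-}$ are trivial ($t\mapsto\B^{-}\I^{0}_{t}[f]$ is the constant map $0\in\lm$), and Green's formula in the form of Proposition \ref{propgreen1} already gives $\B^{+}\I^{0}_{t}[f]\in\lp$. Hence only the $\Gamma_{+}$--trace needs to be understood, and the plan is to compute it \emph{explicitly}; once that is done, \eqref{eq:b+I} reduces to H\"older's inequality combined with the identity \eqref{eq:f-UO} established in the proof of Theorem \ref{uot}. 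Morally, $\B^{+}$ commutes with the ``integration along characteristics'' defining $\I^{0}_{t}$.

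\emph{Step 1: the explicit trace.} I would apply the trace formula \eqref{traceform} to $F:=\I^{0}_{t}[f]$, with $\T_{\mathrm{max},\,p}F=U_{0}(t)f-f$ and $U_{0}$ given by \eqref{eq:Uot}. Using (as in the $L^{1}$ analysis of \cite{mjm1,arlo11}) that a suitable representative of $\I^{0}_{t}[f]$ is $\x\mapsto\int_{0}^{\min(t,\t_{-}(\x))}f(\Phi(\x,-s))\,\d s$, together with the stay-time identity $\t_{-}(\Phi(\z,-t'))=\t_{-}(\z)-t'$ for $\z\in\Gamma_{+}$ and $0<t'<\t_{-}(\z)$, one evaluates \eqref{traceform} at a small $t'$, substitutes $u=t'+s$ in the contributions coming from $U_{0}(t)f$, and — distinguishing the cases $t<\t_{-}(\z)$ and $t\geq\t_{-}(\z)$ — obtains, for $\mu_{+}$-a.e.\ $\z\in\Gamma_{+}$,
\[
\B^{+}\I^{0}_{t}[f](\z)=\int_{0}^{\min(t,\t_{-}(\z))}f(\Phi(\z,-s))\,\d s .
\]
No passage to the limit $t'\to\t_{-}(\z)$ is required, so $\t_{-}(\z)=\infty$ is harmless. (Alternatively, one may first prove this for $f$ in the dense subspace $\D_{0}$ by interchanging $\B^{+}$ with $\int_{0}^{t}U_{0}(s)f\,\d s$ — legitimate since $s\mapsto U_{0}(s)f$ is continuous in the graph norm and $\B^{+}$ is continuous on $\D(\T_{\mathrm{max},\,p})$ by Theorem \ref{theotrace} — and then extend to general $f\in X$ since both sides depend boundedly and linearly on $f$.)

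\emph{Step 2: estimate and continuity.} Subtracting the formula of Step 1 at times $t$ and $t+h$ (with $h>0$) gives, $\mu_{+}$-a.e.,
\[
\B^{+}\big(\I^{0}_{t+h}[f]-\I^{0}_{t}[f]\big)(\z)=\int_{\min(t,\t_{-}(\z))}^{\min(t+h,\t_{-}(\z))}f(\Phi(\z,-s))\,\d s ,
\]
an integral over an interval of length $\leq h$ (as $\min$ is $1$-Lipschitz). H\"older's inequality (recall $p/q=p-1$) yields $|\B^{+}(\I^{0}_{t+h}[f]-\I^{0}_{t}[f])(\z)|^{p}\leq h^{p-1}\int_{\min(t,\t_{-}(\z))}^{\min(t+h,\t_{-}(\z))}|f(\Phi(\z,-s))|^{p}\,\d s$; integrating over $\Gamma_{+}$ against $\d\mu_{+}$ and writing the inner integral as $\int_{0}^{\min(t+h,\t_{-}(\z))}-\int_{0}^{\min(t,\t_{-}(\z))}$, the right-hand side is, by \eqref{eq:f-UO}, equal to $h^{p-1}\big(\|U_{0}(t)f\|_{p}^{p}-\|U_{0}(t+h)f\|_{p}^{p}\big)$; this is $\geq0$ because $\|U_{0}(t+h)f\|_{p}=\|U_{0}(h)U_{0}(t)f\|_{p}\leq\|U_{0}(t)f\|_{p}$, so it coincides with $h^{p-1}\big|\,\|U_{0}(t+h)f\|_{p}^{p}-\|U_{0}(t)f\|_{p}^{p}\,\big|$, which is \eqref{eq:b+I} (taking $h=t$ re-proves $\B^{+}\I^{0}_{t}[f]\in\lp$). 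Continuity of $t\mapsto\B^{+}\I^{0}_{t}[f]\in\lp$ on $[0,\infty)$ then follows from \eqref{eq:b+I} and the continuity of $t\mapsto\|U_{0}(t)f\|_{p}^{p}$ (a property of the $C_{0}$-semigroup $\uot$), using \eqref{eq:b+I} with $t$ replaced by $t-h$ to get left-continuity; and $t\mapsto\B^{-}\I^{0}_{t}[f]\equiv0$ is trivially continuous into $\lm$.

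\emph{Main obstacle.} The only genuinely non-formal point is Step 1: identifying the suitable representative of $\I^{0}_{t}[f]$ and carrying out the flow bookkeeping near $\Gamma_{+}$, including the separate treatment of infinite stay times. Conceptually this is routine — it is the $L^{p}$ transcription of the $L^{1}$ computations in \cite{mjm1,arlo11} — but it is where all the content of the proposition sits; once the explicit trace is available, the rest is just \eqref{traceform}, H\"older's inequality, \eqref{eq:f-UO} and the contractivity of $\uot$.
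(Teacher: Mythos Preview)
Your proof is correct and follows essentially the same approach as the paper: both identify $\B^{-}\I^{0}_{t}[f]=0$, use the explicit formula $\B^{+}\I^{0}_{t}[f](\z)=\int_{0}^{\min(t,\t_{-}(\z))}f(\Phi(\z,-s))\,\d s$, and then combine H\"older's inequality with \eqref{eq:f-UO} to obtain \eqref{eq:b+I}. The only difference is that the paper invokes the abstract Green's-formula-based Proposition~\ref{prop:Ut} from the Appendix for the continuity of $t\mapsto\B^{+}\I^{0}_{t}[f]$, whereas you deduce continuity directly from \eqref{eq:b+I} --- a slightly more self-contained route that avoids the forward reference.
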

\begin{proof} For $f \in X$, since $\I^{0}_{t}[f] \in \D(\T_{0,\,p})$, one has $\B^{-}\I^{0}_{t}[f]=0.$ In particular, the trace $\B^{-}\I^{0}_{t}[f]$ belongs to $\lm$ with $t \geq 0 \mapsto \B^{-}\I^{0}_{t}[f] \in \lm$ continuous. According to Proposition \ref{prop:Ut}, $\B^{+}\I^{0}_{t}[f] \in \lp$ for all $t \geq 0$ and the mapping $t \geq 0 \mapsto \B^{+}\I^{0}_{t}[f] \in \lp$ is continuous.   
Given $0 \leq t < t+h$, one has
\begin{multline}\label{eq:b+I1}\left\|\B^{+}\left(\I^{0}_{t+h}[f]-\I^{0}_{t}[f]\right)\right\|_{\lp}^{p}=\int_{\Gamma_{+}}\left|\int_{t}^{t+h}f(\Phi(\z,-s))\chi_{\{s < \t_{-}(\z)\}}\d s\right|^{p}\d\mu_{+}(\z)\\
\leq h^{p-1}\int_{\Gamma_{+}}\int_{t}^{t+h}|f(\Phi(\z,-s))|^{p}\chi_{\{s < \t_{-}(\z)\}}\d s\end{multline}
where we used H\"older's inequality in the last inequality. One recognizes then thanks to Eq. \eqref{eq:f-UO}  that the last integral in the above inequality coincides with $\|U_{0}(t)f\|_{p}^{p}-\|U_{0}(t+h)f\|_{p}^{p}.$ This proves \eqref{eq:b+I}. 
\end{proof}
\begin{nb} Notice that, for $t=0$ the above \eqref{eq:b+I1} becomes
\begin{equation}\label{eq:b+I2}
\|\B^{+}\I^{0}_{h}[f]\|_{\lp}
\leq h^{1-1/p}\|f\|_{p},\qquad \forall h>0.
\end{equation}
\end{nb}
One also has
\begin{propo}\label{prop:B+U0}
For any $f \in \D_{0}$, any $t \geq 0$, the traces $\B^{\pm}U_{0}(t)f \in L^{p}_{\pm}$ and the mappings $t \geq 0 \mapsto \B^{\pm}U_{0}(t)f \in L^{p}_{\pm}$ are continuous with 
$$\int_{0}^{t}\|\B^{+}U_{0}(s)f\|_{\lp}^{p}\d s=\|f\|_{p}^{p}-\|U_{0}(t)f\|_{p}^{p}, \qquad \forall t \geq 0.$$
In particular, 
\begin{equation}\label{eq:b0?}
\int_{0}^{t}\|\B^{+}U_{0}(s)f\|_{\lp}^{p} \d s\leq \frac{t^{p}}{p}\int_{0}^{t}\left(\int_{\Gamma_{+}}\left|[\T_{\mathrm{max},\,p}f](\Phi(\z,-s))\right|^{p}
\chi_{\{s < \t_{-}(\z)\}}\d\mu_{+}(\z)\right)\d s.\end{equation}
\end{propo}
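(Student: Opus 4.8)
The plan is to derive a single pointwise formula for the outgoing trace of $U_{0}(t)f$ along the characteristics that reach $\Gamma_{+}$, and then to read all three assertions off from it. To start, since $f\in\D_{0}\subset\D(\T_{0,\,p})$ one has $U_{0}(t)f\in\D(\T_{0,\,p})$ for every $t\geq0$, hence $\B^{-}U_{0}(t)f=0$: the statements concerning $\Gamma_{-}$ are then trivial, and Proposition~\ref{propgreen1} already gives $\B^{+}U_{0}(t)f\in\lp$ for every $t\geq0$.

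The crucial step is to show that, writing $f^{\sharp}$ for the representative of $f$ furnished by Theorem~\ref{representation},
$$\B^{+}U_{0}(t)f(\z)=f^{\sharp}(\Phi(\z,-t))\,\chi_{\{t<\t_{-}(\z)\}}=\int_{0}^{t}[\T_{\mathrm{max},\,p}f](\Phi(\z,-s))\,\chi_{\{s<\t_{-}(\z)\}}\,\d s\qquad\text{for $\mu_{+}$-a.e. }\z\in\Gamma_{+}.$$
For the first equality I would check that $g_{t}(\x):=f^{\sharp}(\Phi(\x,-t))\chi_{\{t<\t_{-}(\x)\}}$ — which is a representative of $U_{0}(t)f$ by \eqref{eq:Uot} and the flow-invariance \eqref{ass:h2} of $\mu$ — is continuous along $\mu_{+}$-almost every backward characteristic issued from $\Gamma_{+}$: one has $g_{t}(\Phi(\z,-r))=f^{\sharp}(\Phi(\z,-(r+t)))\chi_{\{r+t<\t_{-}(\z)\}}$, whose only candidate jump, at $r=\t_{-}(\z)-t$, is cancelled because $\lim_{u\to\t_{-}(\z)-}f^{\sharp}(\Phi(\z,-u))=\B^{-}f=0$ for $\mu_{+}$-a.e. $\z$ (the relevant $\Gamma_{-}$-null set being transported to $\Gamma_{+}$ by \eqref{10.51}). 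As $g_{t}$ coincides $\mu$-a.e. with the representative of $U_{0}(t)f$ from Theorem~\ref{representation} and both are continuous along such curves, they agree on them, and letting $r\to0+$ yields the first equality. The second equality then follows on inserting $\B^{+}f=0$ into \eqref{traceform}, which gives $f^{\sharp}(\Phi(\z,-t))=\int_{0}^{t}[\T_{\mathrm{max},\,p}f](\Phi(\z,-s))\d s$ for $0<t<\t_{-}(\z)$, together with $\B^{-}f=0$ to handle the range $t\geq\t_{-}(\z)$ (where both sides vanish).

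Granting this, the rest is routine. Introduce $w_{s}\in\lp$ by $w_{s}(\z):=[\T_{\mathrm{max},\,p}f](\Phi(\z,-s))\chi_{\{s<\t_{-}(\z)\}}$ and observe, applying \eqref{eq:f-UO} to $g:=\T_{\mathrm{max},\,p}f\in X$, that $\int_{0}^{T}\|w_{s}\|_{\lp}^{p}\,\d s=\|g\|_{p}^{p}-\|U_{0}(T)g\|_{p}^{p}<\infty$ for every $T>0$. For the energy identity, the first form of the formula gives $\|\B^{+}U_{0}(s)f\|_{\lp}^{p}=\int_{\Gamma_{+}}|f(\Phi(\z,-s))|^{p}\chi_{\{s<\t_{-}(\z)\}}\d\mu_{+}(\z)$, so Tonelli and \eqref{eq:f-UO} give $\int_{0}^{t}\|\B^{+}U_{0}(s)f\|_{\lp}^{p}\d s=\|f\|_{p}^{p}-\|U_{0}(t)f\|_{p}^{p}$. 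For the $\lp$-continuity of $t\mapsto\B^{+}U_{0}(t)f$, the bound above shows $s\mapsto w_{s}$ lies in $L^{p}((0,T);\lp)\subset L^{1}((0,T);\lp)$, whence, by the second form, $t\mapsto\B^{+}U_{0}(t)f=\int_{0}^{t}w_{s}\,\d s$ is continuous into $\lp$ (this also re-proves $\B^{+}U_{0}(t)f\in\lp$; alternatively one may use $\B^{+}U_{0}(t)f=\B^{+}\I^{0}_{t}[\T_{0,\,p}f]$ and Proposition~\ref{prop:B+U01} applied to $\T_{0,\,p}f$). Finally, Hölder's inequality applied to the second form gives $|\B^{+}U_{0}(s)f(\z)|^{p}\leq s^{p-1}\int_{0}^{s}|w_{r}(\z)|^{p}\,\d r$, hence $\|\B^{+}U_{0}(s)f\|_{\lp}^{p}\leq s^{p-1}\int_{0}^{s}\|w_{r}\|_{\lp}^{p}\,\d r$; integrating in $s$ over $(0,t)$ and using that $s\mapsto\int_{0}^{s}\|w_{r}\|_{\lp}^{p}\d r$ is nondecreasing yields $\int_{0}^{t}\|\B^{+}U_{0}(s)f\|_{\lp}^{p}\d s\leq\Big(\int_{0}^{t}\|w_{s}\|_{\lp}^{p}\,\d s\Big)\int_{0}^{t}s^{p-1}\,\d s=\tfrac{t^{p}}{p}\int_{0}^{t}\|w_{s}\|_{\lp}^{p}\,\d s$, which is exactly \eqref{eq:b0?}.

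The step I expect to be the main obstacle is the first equality above — specifically, confirming that the naive candidate $g_{t}$ is the representative of $U_{0}(t)f$ to which the definition of $\B^{+}$ applies; this is where $\B^{-}f=0$ intervenes, to kill the jump of $g_{t}$ at the exit time, whereas $\B^{+}f=0$, i.e. $f\in\D_{0}$ rather than merely $f\in\D(\T_{0,\,p})$, is exactly what makes available the second form (the one involving $\T_{\mathrm{max},\,p}f$) and hence what produces \eqref{eq:b0?}. A lesser subtlety is that graph-norm continuity of the trace map only yields convergence in $Y_{p}^{+}$, so the stronger $\lp$-continuity must be obtained from the integral representation rather than from abstract continuity of traces.
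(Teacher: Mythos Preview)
Your proof is correct and uses the same ingredients as the paper: the two pointwise formulas for $\B^{+}U_{0}(t)f$, the identity via \eqref{eq:f-UO}, and \eqref{traceform} plus H\"older for \eqref{eq:b0?}. The only difference in packaging is that the paper obtains the $L^{p}_{\pm}$-membership and continuity of the traces in one stroke from the identity $U_{0}(t)f-f=\I^{0}_{t}[\T_{0,\,p}f]$ (valid since $f\in\D_{0}$) together with Proposition~\ref{prop:B+U01}, thereby bypassing what you flag as the ``main obstacle'' of verifying directly that $g_{t}$ is an admissible representative; you mention this alternative yourself, and it is indeed the cleaner route.
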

\begin{proof} For $f \in \D_{0}$, we have $U_{0}(t)f-f = \I^{0}_{t}[ \T_{0,\,p}f]$, and therefore $\B^{\pm}U_{0}(t)f=\B^{\pm} \I^{0}_{t}[ \T_{0,\,p}f]$. Thanks to Proposition \ref{prop:B+U01} we can state that the traces $\B^{\pm}U_{0}(t)f \in L^{p}_{\pm}$ and the mappings $t \geq 0 \mapsto \B^{\pm}U_{0}(t)f \in L^{p}_{\pm}$ are continuous. Then
$$\int_{0}^{t}\|\B^{+}U_{0}(s)f\|_{\lp}^{p}\d s=\int_{0}^{t}\left(\int_{\Gamma_{+}}|f(\Phi(\z,-s))|^{p}\chi_{\{s < \t_{-}(\z)\}}\d \mu_{+}(\z)\right)\d s$$
so that, thanks to Fubini's Theorem
$$\int_{0}^{t}\|\B^{+}U_{0}(s)f\|_{\lp}^{p}\d s=\int_{\Gamma_{+}}\left(\int_{0}^{t}|f(\Phi(\z,-s))|^{p}\chi_{\{s < \t_{-}(\z)\}}\d s\right)\d\mu_{+}(\z)$$
which, using again \eqref{eq:f-UO}, gives the first part of the Proposition. Let us now prove \eqref{eq:b0?}. Using Eq. \eqref{traceform}, one has from the previous identity that
\begin{multline*}
\int_{\Gamma_{+}}\left(\int_{0}^{t}|f(\Phi(\z,-s))|^{p}\chi_{\{s < \t_{-}(\z)\}}\d s\right)\d\mu_{+}(\z)\\
=\int_{\Gamma_{+}}\left(\int_{0}^{t}\left|\int_{0}^{s}\T_{\mathrm{max},\,p}f(\Phi(\z,-r))\d r\right|^{p}\chi_{\{s < \t_{-}(\z)\}}\d s\right)\d\mu_{+}(\z).
\end{multline*}
Then, since, for almost every $\z \in \Gamma_{+}$ and any $s \in (0,t)$:
\begin{multline*}
\left|\int_{0}^{s}\T_{\mathrm{max},\,p}f(\Phi(\z,-r))\d r\right|^{p} \leq s^{p-1}\int_{0}^{s}\left|\T_{\mathrm{max},\,p}f(\Phi(\z,-r))\right|^{p}\d r\\
\leq s^{p-1}\int_{0}^{t}\left|\T_{\mathrm{max},\,p}f(\Phi(\z,-r))\right|^{p}\d r\end{multline*}
we get the result after integrating with respect to $s$ over $(0,t)$.\end{proof}

We are now in position to define inductively the following:
\begin{defi}\label{defi:Uk}
Let $ t \geq 0$, $k \geq 1$ and $f \in \D_{0}$ be given. For $\x \in \O$ with $\t_{-}(\x) \leq t$, there exists a unique $\y \in \Gamma_{-}$ and a unique 
$0 < s < \min(t,\t_{+}(\y))$ such that $\x=\Phi(\y,s)$ 
and then one sets
$$[U_{k}(t)f](\x)=\left[H\B^{+}U_{k-1}(t-s)f\right](\y).$$
We set $[U_{k}(t)f](\x)=0$ if $\t_{-}(\x) > t$ and  $U_{k}(0)f=0$. 
\end{defi}
\begin{nb} Clearly, for $\x \in \O$ with $\t_{-}(\x) < t$, the unique $\y \in \Gamma_{-}$ and $s \in (0,\min(t,\t_{+}(\y))$ such that $\x=\Phi(\y,s)$ are 
$$\y=\Phi(\x,-\t_{-}(\x)), \qquad \quad s=\t_{-}(\x)$$
so that the above definition reads
$$[U_{k}(t)f](\x)=\left[H(\B^{+}U_{k-1}(t-\t_{-}(\x))f\right](\Phi(\x,-\t_{-}(\x))).$$
\end{nb}

The fact that, with this, $(U_{k}(t))_{t \geq 0}$ is a well-defined family which extends to a family of operators in $\mathscr{B}(X)$ satisfying the following is given in the Appendix.
\begin{theo}\label{theo:UKT} For any $k \geq 1$, $f \in \D_{0}$ one has $U_{k}(t)f \in X$ for any $t \geq 0$ with
$$\|U_{k}(t)f\|_{p} \leq \verti{H}^{k}\,\|f\|_{p}.$$
In particular, $U_{k}(t)$ can be extended to be a bounded linear operator, still denoted $U_{k}(t) \in\mathscr{B}(X)$ with
$$\|U_{k}(t)\|_{\mathscr{B}(X)} \leq \verti{H}^{k} \qquad \forall t \geq 0, k \geq 1.$$
Moreover, the following holds for any $k \geq 1$
\begin{enumerate}
\item $(U_{k}(t))_{t \geq 0}$ is a strongly continuous family of $\mathscr{B}(X)$.
\item For any $f \in X$ and any $t,s \geq 0$, it holds
$$U_{k}(t+s)f=\sum_{j=0}^{k}U_{j}(t)U_{k-j}(s)f.$$
\item For any $f \in \D_{0}$, the mapping $t \geq 0 \mapsto U_{k}(t)f$ is differentiable with 
$$\dfrac{\d}{\d t}U_{k}(t)f=U_{k}(t)\T_{\mathrm{max},\,p}f \qquad \forall t \geq 0.$$
\item For any $f \in \D_{0}$, one has $U_{k}(t)f \in \D(\T_{\mathrm{max},\,p})$ for all $t \geq 0$ with $\T_{\mathrm{max},\,p}U_{k}(t)f=U_{k}(t)\T_{\mathrm{max},\,p}f.$
\item For any $f \in X$ and any $t >0$, one has 
$$\I^{k}_{t}[f]:=\int_{0}^{t}U_{k}(s)f \d s \in \D(\T_{\mathrm{max},\,p}) \quad \text{ with } \quad  
\T_{\mathrm{max},\,p}\I_{t}^{k}[f]=U_{k}(t)f.$$
\item For any $f \in \D_{0}$ and any $t \geq 0$, the traces $\B^{\pm}U_{k}(t)f \in L^{p}_{\pm}$ and the mappings $t \geq 0\mapsto \B^{\pm}U_{k}(t)f \in L^{p}_{\pm}$ are continuous. Moreover, for all $f \in X$ and $t >0$, one has
$$\B^{\pm} \int_{0}^{t}U_{k}(s)f \d s \in L^{p}_{\pm} \qquad \text{ with } \quad \B^{-}\int_{0}^{t}U_{k}(s)f\d s=H\B^{+}\int_{0}^{t}U_{k-1}(s)f \d s.$$
\item For any $f \in \D_{0}$, it holds
$$\int_{0}^{t}\|\B^{+}U_{k}(s)f\|_{\lp}^{p}\d s \leq \verti{H}^{p}\,\int_{0}^{t}\|\B^{+}U_{k-1}(s)f\|_{\lp}^{p}\d s, \qquad \forall t \geq 0.$$
\item For any $f \in X$ and $\lambda >0$, setting $F_{k}=\int_{0}^{\infty}\exp(-\lambda t)U_{k}(t)f \d t$ one has
$$F_{k} \in \D(\T_{\mathrm{max},\,p}) \qquad \text{ with } \qquad \T_{\mathrm{max},\,p}F_{k}=\lambda\,F_{k}$$
and  $\B^{\pm}F_{k} \in L^{p}_{\pm}$ with 
$$\B^{-}F_{k}=H\B^{+}F_{k-1} \qquad \B^{+}F_{k}=(M_{\lambda}H)^{k}G_{\lambda}f.$$
\end{enumerate}
\end{theo}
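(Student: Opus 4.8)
The plan is to argue by induction on $k\geq 1$. The base case $k=1$ uses only the absorption semigroup $\uot$: Theorem \ref{uot} gives its strong continuity, Propositions \ref{prop:B+U01}--\ref{prop:B+U0} give the trace assertions of item (6) and the identity $\int_{0}^{t}\|\B^{+}U_{0}(s)f\|_{\lp}^{p}\,\d s=\|f\|_{p}^{p}-\|U_{0}(t)f\|_{p}^{p}$ (the level--$0$ form of (7)), the semigroup law $U_{0}(t+s)=U_{0}(t)U_{0}(s)$ is item (2), $\T_{0,\,p}\I^{0}_{t}[f]=U_{0}(t)f-f$ is the level--$0$ form of (5), and $\B^{+}C_{\lambda}f=G_{\lambda}f$ is the base of (8). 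The engine of the inductive step is the observation recorded in the Remark after Definition \ref{defi:Uk}: for $f\in\D_{0}$ the function $U_{k}(t)f$ is the extension along the characteristic curves of the time--dependent incoming datum
$$u_{k}(s):=H\,\B^{+}U_{k-1}(s)f\in\lm,\qquad s\geq 0,$$
explicitly $[U_{k}(t)f](\x)=u_{k}\!\left(t-\t_{-}(\x)\right)\!\left(\Phi(\x,-\t_{-}(\x))\right)\chi_{\{\t_{-}(\x)<t\}}$. From item (6) at level $k-1$, $s\mapsto u_{k}(s)$ is continuous $\lm$--valued and vanishes at $s=0$ (because $U_{k-1}(0)f=0$ for $k\geq 2$, resp. $\B^{+}U_{0}(0)f=\B^{+}f=0$ for $k=1$), and from item (7), $\int_{0}^{\infty}\|\B^{+}U_{k-1}(s)f\|_{\lp}^{p}\,\d s\leq\verti{H}^{(k-1)p}\|f\|_{p}^{p}$.

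First I would obtain the norm bound and items (1), (6), (7) at level $k$ from the integration formulas of Proposition \ref{prointegra}. Parametrising $\O_{-}$ by \eqref{10.47} gives $[U_{k}(t)f](\Phi(\y,\sigma))=u_{k}(t-\sigma)(\y)\chi_{\{\sigma<t\}}$ for $\y\in\Gamma_{-}$ and $0<\sigma<\t_{+}(\y)$, whence
$$\|U_{k}(t)f\|_{p}^{p}=\int_{\Gamma_{-}}\d\mu_{-}(\y)\int_{0}^{\min(t,\t_{+}(\y))}|u_{k}(t-\sigma)(\y)|^{p}\,\d\sigma\leq\int_{0}^{t}\|u_{k}(s)\|_{\lm}^{p}\,\d s\leq\verti{H}^{p}\int_{0}^{t}\|\B^{+}U_{k-1}(s)f\|_{\lp}^{p}\,\d s,$$
which together with the bound above gives $\|U_{k}(t)f\|_{p}\leq\verti{H}^{k}\|f\|_{p}$ and, $\D_{0}$ being dense in $X$, the extension $U_{k}(t)\in\mathscr{B}(X)$ with $\|U_{k}(t)\|_{\mathscr{B}(X)}\leq\verti{H}^{k}$. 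The incoming trace is read off directly: $\B^{-}U_{k}(t)f=u_{k}(t)=H\B^{+}U_{k-1}(t)f\in\lm$; letting $\sigma\uparrow\t_{+}(\y)$ and applying the change of variables \eqref{10.51} gives $\B^{+}U_{k}(t)f\in\lp$, and a use of Fubini turns the last display (with $\B^{+}U_{k}$ on the left) into item (7) at level $k$. Continuity of $t\mapsto U_{k}(t)f$ and of $t\mapsto\B^{\pm}U_{k}(t)f$ (item (1) and the continuity in (6)) follows from the continuity and the vanishing at $0$ of $u_{k}$, by splitting the relevant integrals over $\{\sigma<t\}$ and $\{t\leq\sigma<t+h\}$; the case $f\in X$ is reached by density and the uniform bound.

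Next I would establish the membership statements via the mild characterisation of Theorem \ref{representation}. Along the characteristic through $\y\in\Gamma_{-}$ one has $[\I_{t}^{k}[f]](\Phi(\y,r))=\big(\int_{0}^{\max(0,t-r)}u_{k}(\sigma)\,\d\sigma\big)(\y)$, which is absolutely continuous in $r$ with derivative $-u_{k}(t-r)(\y)\chi_{\{r<t\}}=-[U_{k}(t)f](\Phi(\y,r))$, so Theorem \ref{representation} yields $\I_{t}^{k}[f]\in\D(\T_{\mathrm{max},\,p})$ with $\T_{\mathrm{max},\,p}\I_{t}^{k}[f]=U_{k}(t)f$ and, letting $r\downarrow0$, the trace relations of item (6) for $\I_{t}^{k}[f]$ (the interchanges of $\B^{\pm}$ and $H$ with the time integral being justified by Remark \ref{nb:graph-norm} together with item (5) at level $k-1$, and extended to $f\in X$ through item (5) and Theorem \ref{theotrace}). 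Items (3) and (4) then follow by differentiating the representation formula in $t$ and re-reading Theorem \ref{representation}, using the regularity of $u_{k}$ inherited from the induction hypothesis. For item (8), an integration by parts in $t$ gives $F_{k}=\lambda\int_{0}^{\infty}e^{-\lambda t}\I_{t}^{k}[f]\,\d t$, and since $\T_{\mathrm{max},\,p}$ is closed (Remark \ref{rem:closed}) with $\T_{\mathrm{max},\,p}\I_{t}^{k}[f]=U_{k}(t)f$, one pulls $\T_{\mathrm{max},\,p}$ inside to get $\T_{\mathrm{max},\,p}F_{k}=\lambda F_{k}$ ($k\geq 1$); the pointwise formula and Fubini identify $F_{k}=\bl\big(H\B^{+}F_{k-1}\big)$ for $f\in\D_{0}$, so Lemma \ref{lemmaML} gives $\B^{-}F_{k}=H\B^{+}F_{k-1}$ and $\B^{+}F_{k}=\ml H\B^{+}F_{k-1}=(\ml H)^{k}G_{\lambda}f$ by the induction hypothesis $\B^{+}F_{k-1}=(\ml H)^{k-1}G_{\lambda}f$ (with $F_{0}=C_{\lambda}f$), the $f\in X$ case following by density and the graph--norm boundedness of $f\mapsto F_{k}$. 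Finally, for the cocycle identity (2) I would, for $f\in\D_{0}$, follow the backward characteristic reaching $\x$ at time $t+s$, split the elapsed time at the instant $s$, and sum over the number $j$ of reinjections occurring in the last portion — which exactly selects the term $U_{j}(t)U_{k-j}(s)f$ — then pass to $f\in X$ by density; alternatively one may induct on $k$ from $U_{0}(t+s)=U_{0}(t)U_{0}(s)$ using the relation between $U_{k}(t)f$ and $\I_{t}^{k}$ and the trace relations of (6).

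The genuine difficulty, I expect, lies not in any single computation but in closing the induction with exactly the right package of regularity: one must carry enough regularity of $s\mapsto u_{k}(s)$ — in particular absolute continuity of $r\mapsto u_{k}(r)(\y)$ for $\mu_{-}$--a.e.\ $\y$ — to be entitled to invoke Theorem \ref{representation}, and one must justify the repeated interchanges of the trace operators with integration and differentiation in $t$; these are handled uniformly by the graph--norm continuity of the traces (Remark \ref{nb:graph-norm}) combined with item (5), but the bookkeeping of which assertions hold on $\D_{0}$ and which on all of $X$ is delicate. The other, more combinatorial, obstacle is making the case analysis for item (2) rigorous near the switching time $\t_{-}(\x)$, where the branch selected in each $U_{j}$ of the sum changes.
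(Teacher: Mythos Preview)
Your inductive strategy is the same as the paper's, and most of your individual computations (the norm bound, item (7), the identification $F_{k}=\Xi_{\lambda}H\B^{+}F_{k-1}$, the explicit formula for $\I_{t}^{k}[f]$ along characteristics) are exactly what the paper does in Appendix~C. Your approach to item~(5) via Theorem~\ref{representation} is a legitimate alternative to the paper's route through Proposition~\ref{prop:Ut}, and your treatment of item~(8) by integration by parts plus closedness of $\T_{\mathrm{max},\,p}$ is a clean alternative to the paper's pointwise computation showing $F_{1}=\Xi_{\lambda}HG_{\lambda}f$ directly.

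The one place where your ordering creates a real problem is items (3)--(4). You postpone the cocycle identity~(2) to the end and propose to get (3)--(4) by ``differentiating the representation formula in $t$'', which --- as you correctly diagnose in your last paragraph --- forces you to establish absolute continuity of $r\mapsto u_{k}(r)(\y)$ for $\mu_{-}$-a.e.~$\y$. That regularity is \emph{not} in the induction package: item~(6) at level $k-1$ only gives continuity of $s\mapsto \B^{+}U_{k-1}(s)f$ in $\lp$, and the graph-norm continuity of Remark~\ref{nb:graph-norm} only lands you in $Y_{p}^{+}$, while $H$ is bounded from $\lp$, not from $Y_{p}^{+}$. The paper avoids this entirely by reversing the order: it proves the cocycle identity~(2) \emph{first} (for $k=1$ by a direct splitting of $\O_{t+h}$ into $\O_{t}$ and $\O_{t+h}\setminus\O_{t}$), then uses~(2) to reduce differentiability~(3) to the single estimate $\|U_{k}(h)f\|_{p}=o(h)$ as $h\to 0^{+}$ for $f\in\D_{0}$, which follows from~(7) and the estimate~\eqref{eq:b0?}. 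Item~(4) is then obtained from~(3) via Proposition~\ref{propo:11-3}, whose hypotheses (i)--(ii) are immediate from the definition of $U_{k}$ and whose hypothesis~(iii) is precisely item~(3). This route never needs pointwise-in-$\y$ regularity of $u_{k}$; it uses only the $X$-valued differentiability, which is what the cocycle identity delivers.

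So the fix is simple: move item~(2) up to right after item~(1), prove it by the direct splitting argument, and then run (2)$\Rightarrow$(3)$\Rightarrow$(4) via Proposition~\ref{propo:11-3}. The rest of your outline then goes through as written.
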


\subsection{Generation Theorem} Introduce the following truncation operator

\begin{defi}
For any $\delta >0$, introduce 
$$\Gamma_{+}^{\delta}:=\left\{\z \in \Gamma_{+}\;;\;\t_{-}(\z) > \delta\right\}$$
and define the following truncation operator
$\chi_{\delta} \in \mathscr{B}(\lp)$ given by
$$\left[\chi_{\delta}\psi\right](\z)=\psi(\z)\chi_{\Gamma_{+}\setminus \Gamma_{+}^{\delta}}(\z), \qquad \forall \z \in \Gamma_{+}\,;\,\psi \in \lp.$$
\end{defi}

One has then the following
\begin{lemme}\label{lem:trunc} Assume that $H \in \mathscr{B}(\lp,\lm)$. 
Then, with the notations of Theorem \ref{theo:UKT},
\begin{multline}\label{eq:normb+uk}
\left(\int_{0}^{t}\|\B^{+}U_{k}(s)f\|_{\lp}^{p}\d s\right)^{1/p} \leq \sum_{j=0}^{\min(k,[t/\delta]+1)}\left(\begin{array}{c}k \\j\end{array}\right)\verti{H\chi_{\delta}}^{k-j}\verti{H}^{j}\,\|f\|_{p}\\
\text{ for any } f \in \D_{0},\quad \text{ and any } t >0.\end{multline}
while
\begin{equation}\label{eq:normUk}
\|U_{k}(t)\|_{\mathscr{B}(X)} \leq \sum_{j=0}^{\min(k,[t/\delta]+1)}\left(\begin{array}{c}k \\j\end{array}\right)\verti{H\chi_{\delta}}^{k-j}\verti{H}^{j}.\end{equation}
\end{lemme}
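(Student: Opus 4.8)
The plan is to unfold the recursion of Definition~\ref{defi:Uk} while splitting $H=H\chi_{\delta}+H(I-\chi_{\delta})$, and to bookkeep simultaneously the operator norms and the ``onset time'' of each term that appears; the key point is that every factor $I-\chi_{\delta}$ forces an additional delay of at least $\delta$, so that on the time interval $[0,t]$ only finitely many such factors can ever occur. Throughout, for $f\in\D_{0}$ and $k\ge0$ I write $w_{k}(s):=\B^{+}U_{k}(s)f$, which by Theorem~\ref{theo:UKT} is a continuous $\lp$-valued function of $s$, with $w_{0}(s)(\z)=f(\Phi(\z,-s))\chi_{\{s<\t_{-}(\z)\}}$. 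Evaluating $U_{k}(s)f$ along the characteristics entering $\O$ through $\Gamma_{-}$ (resp. leaving through $\Gamma_{+}$), exactly as in the proof of Theorem~\ref{theo:UKT}(6)--(7), one obtains for $k\ge1$ the identities $U_{k}(s)f=\mathcal{E}_{s}(Hw_{k-1})$ and $w_{k}=\mathcal{A}(Hw_{k-1})$, hence $w_{k}=(\mathcal{A}H)^{k}w_{0}$, where $\mathcal{A}$ (resp. $\mathcal{E}_{s}$) is the ``retarded restriction'' sending an $\lm$-valued function $v$ of the time variable to $[\mathcal{A}v](s)(\z):=\chi_{\{\t_{-}(\z)\le s\}}\,v(s-\t_{-}(\z))(\Phi(\z,-\t_{-}(\z)))$, $\z\in\Gamma_{+}$ (resp. to $[\mathcal{E}_{s}v](\x):=\chi_{\{\t_{-}(\x)\le s\}}\,v(s-\t_{-}(\x))(\Phi(\x,-\t_{-}(\x)))$, $\x\in\O$).

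Next I would record two elementary estimates. A change of variables between $\mu$ and $\mu_{\pm}$ based on \eqref{10.47}, \eqref{10.51} (together with $\t_{-}(\z)=\t_{+}(\Phi(\z,-\t_{-}(\z)))$), followed by Fubini's theorem and the substitution $\sigma=s-\t_{-}(\cdot)$, gives, for every $\lm$-valued $v$ and every $t>0$,
$$\int_{0}^{t}\bigl\|[\mathcal{A}v](s)\bigr\|_{\lp}^{p}\,\d s\le\int_{0}^{t}\|v(\sigma)\|_{\lm}^{p}\,\d\sigma,\qquad\qquad \|\mathcal{E}_{t}v\|_{p}^{p}\le\int_{0}^{t}\|v(\sigma)\|_{\lm}^{p}\,\d\sigma;$$
this is essentially the content of the proof of Theorem~\ref{theo:UKT}(7) and of the bound $\|U_{k}(t)\|_{\mathscr{B}(X)}\le\verti{H}^{k}$. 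The second one is a causality statement: if $v(\sigma)=0$ for a.e. $\sigma\le c$, then $[\mathcal{A}v](s)=0$ for a.e. $s\le c$ and $[\mathcal{A}v](s)$ is supported in $\{\z\;;\;\t_{-}(\z)\le s-c\}$; moreover, if an $\lp$-valued function $u$ satisfies $u(\sigma)=0$ for a.e. $\sigma\le c$ and $u(\sigma)$ is supported in $\{\t_{-}\le\sigma-c\}$, then $(I-\chi_{\delta})u(\sigma)$ is supported in $\{\delta<\t_{-}\le\sigma-c\}$ and hence vanishes for a.e. $\sigma\le c+\delta$, whereas $\chi_{\delta}u(\sigma)$ vanishes only for $\sigma\le c$; applying $H$ preserves the onset time, and one further $\mathcal{A}$ transports both properties to the next stage. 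Thus $\mathcal{A}H(I-\chi_{\delta})$ maps the class ``$=0$ on $[0,c]$ and supported in $\{\t_{-}\le\cdot-c\}$'' into the class with $c$ replaced by $c+\delta$, while $\mathcal{A}H\chi_{\delta}$ maps that class into itself. Finally, $w_{0}$ is supported in $\{\t_{-}(\z)>s\}$ and so is \emph{not} in that class, but $\mathcal{A}HX\,w_{0}$ is, with onset time $0$, for either $X=\chi_{\delta}$ or $X=I-\chi_{\delta}$, simply because it is an $\mathcal{A}$-image.

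To conclude I would expand, using $\chi_{\delta}+(I-\chi_{\delta})=I$,
$$w_{k}=(\mathcal{A}H)^{k}w_{0}=\sum_{(Y_{1},\dots,Y_{k})\in\{\chi_{\delta},\,I-\chi_{\delta}\}^{k}}\mathcal{A}HY_{1}\,\mathcal{A}HY_{2}\cdots\mathcal{A}HY_{k}\,w_{0}\,.$$
By the previous paragraph, a word with exactly $j$ letters equal to $I-\chi_{\delta}$ produces a term with onset time at least $(j-1)\delta$ (the innermost factor $Y_{k}$, which acts on $w_{0}$, contributes no delay even when $Y_{k}=I-\chi_{\delta}$); since $([t/\delta]+1)\delta>t$, every such term with $j\ge[t/\delta]+2$ vanishes a.e. on $[0,t]$. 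For a surviving word, iterating the two elementary estimates together with $\|H\chi_{\delta}\|_{\mathscr{B}(\lp,\lm)}=\verti{H\chi_{\delta}}$, $\|H(I-\chi_{\delta})\|_{\mathscr{B}(\lp,\lm)}\le\verti{H}$, and $\int_{0}^{t}\|w_{0}(\sigma)\|_{\lp}^{p}\,\d\sigma=\|f\|_{p}^{p}-\|U_{0}(t)f\|_{p}^{p}\le\|f\|_{p}^{p}$ (Proposition~\ref{prop:B+U0}) bounds its norm in $L^{p}\bigl((0,t);\lp\bigr)$ by $\verti{H\chi_{\delta}}^{k-j}\verti{H}^{j}\|f\|_{p}$. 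Summing the $\binom{k}{j}$ words with a fixed $j\le\min(k,[t/\delta]+1)$ and using the triangle inequality in $L^{p}\bigl((0,t);\lp\bigr)$ yields \eqref{eq:normb+uk}. For \eqref{eq:normUk} one starts instead from $\|U_{k}(t)f\|_{p}^{p}\le\int_{0}^{t}\|Hw_{k-1}(\sigma)\|_{\lm}^{p}\,\d\sigma$ (by $U_{k}(t)f=\mathcal{E}_{t}(Hw_{k-1})$ and the first elementary estimate), expands $Hw_{k-1}=\bigl(H\chi_{\delta}+H(I-\chi_{\delta})\bigr)(\mathcal{A}H)^{k-1}w_{0}$ into $2^{k}$ words of the form $HY_{0}\,\mathcal{A}HY_{1}\cdots\mathcal{A}HY_{k-1}\,w_{0}$, and runs the identical norm/causality bookkeeping to get the same bound; since $\D_{0}$ is dense in $X$ and $U_{k}(t)\in\mathscr{B}(X)$ by Theorem~\ref{theo:UKT}, \eqref{eq:normUk} follows.

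The delicate step is the causality bookkeeping of the second and third paragraphs: one has to propagate not merely the onset time $c$ but also the refined support $\{\t_{-}\le\cdot-c\}$ through the non-commuting composition — it is this refined support that makes the successive delays \emph{add up} rather than merely giving $c\mapsto\max(c,\delta)$ — and one must notice that $w_{0}=\B^{+}U_{0}(\cdot)f$ carries the ``anti-causal'' support $\{\t_{-}(\z)>s\}$ instead of the causal support $\{\t_{-}(\z)\le s\}$ common to all $\mathcal{A}$-images, which is exactly why only $[t/\delta]+1$ factors $I-\chi_{\delta}$ (rather than $[t/\delta]$) can be effective on $[0,t]$.
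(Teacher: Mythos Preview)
Your argument is correct, and it reaches the same estimate as the paper, but the organization is genuinely different. The paper does \emph{not} unfold the $2^{k}$ words. Instead it introduces, alongside $S_{k}(t)=\bigl(\int_{0}^{t}\|\B^{+}U_{k}(s)f\|_{\lp}^{p}\,\d s\bigr)^{1/p}$, the auxiliary quantity $Z_{k}(t)=\bigl(\int_{0}^{t}\|\B^{+}U_{k}(s)f\|_{L^{p}(\Gamma_{+}^{\delta},\d\mu_{+})}^{p}\,\d s\bigr)^{1/p}$ and derives the coupled recursion
\[
S_{k}(t)\le C_{\delta}\,S_{k-1}(t)+A\,Z_{k-1}(t),\qquad
Z_{k}(t)\le C_{\delta}\,S_{k-1}(t-\delta)+A\,Z_{k-1}(t-\delta),
\]
with $C_{\delta}=\verti{H\chi_{\delta}}$ and $A=\verti{H}$; the shift $t\mapsto t-\delta$ in the second line is exactly your ``delay by $\delta$'' observation, encoded through the restriction to $\Gamma_{+}^{\delta}$. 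Iterating this pair yields the binomial sum directly, and the bound on $\|U_{k}(t)f\|_{p}$ follows from the single additional inequality $\|U_{k}(t)f\|_{p}\le C_{\delta}S_{k-1}(t)+AZ_{k-1}(t)$. Your word expansion with the causality bookkeeping (onset time $c$ together with the refined support $\{\tau_{-}\le\cdot-c\}$, and the observation that the innermost factor contributes no delay because $w_{0}$ carries the ``anti-causal'' support $\{\tau_{-}(\z)>s\}$) is a more combinatorial route to the same conclusion: it makes the mechanism behind the $[t/\delta]+1$ cutoff completely transparent, at the price of tracking $2^{k}$ terms, whereas the paper's two-sequence recursion is terser but hides that mechanism inside the auxiliary $Z_{k}$. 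Both arguments rely on the same two ingredients --- the $L^{p}((0,t);L^{p}_{\pm})$ contraction estimate for $\mathcal{A}$ and the fact that $(I-\chi_{\delta})$ acting after an $\mathcal{A}$-image forces an extra delay $\delta$ --- so the difference is purely organizational.
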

\begin{proof} For $k=0$, both inequalities are clearly true. Let $k \geq 1$ and $f \in \D_{0}, t >0$ be given. For $0 \leq s \leq t$, one has for $\mu_{+}$-a.e. $\z \in \Gamma_{+}$:
\begin{equation*}
\left[\B^{+}U_{k}(s)f\right](\z)=\left[H\left(\B^{+}U_{k-1}(s-\t_{-}(\z)\right)f\right](\Phi(\z,-\t_{-}(\z)))\chi_{(0,s)}(\t_{-}(\z))\end{equation*}
Thus, writing $H=H\chi_{\delta}+H(1-\chi_{\delta})$, we can estimate 
$$\left(\int_{0}^{t}\|\B^{+}U_{k}(s)f\|_{\lp}^{p}\d s\right)^{\frac{1}{p}} \leq  J_{1}+J_{2}$$ with
\begin{multline*}
J_{1}^{p}=\int_{0}^{t}\left(\int_{\Gamma_{+}}\left|\left[H\chi_{\delta}\left(\B^{+}U_{k-1}(s-\t_{-}(\z))\right)f\right](\Phi(\z,-\t_{-}(\z)))\right|^{p}\chi_{(0,s)}(\t_{-}(\z))\d \mu_{+}(\z)\right)\d s\\
J_{2}^{p}=\int_{0}^{t}\left(\int_{\Gamma_{+}}\left|\left[H(1-\chi_{\delta})\left(\B^{+}U_{k-1}(s-\t_{-}(\z))\right)f\right](\Phi(\z,-\t_{-}(\z)))\right|^{p}\chi_{(0,s)}(\t_{-}(\z))\d \mu_{+}(\z)\right)\d s.\end{multline*}
As in the proof of Lemma \ref{lem:C6}, one has
$$J_{1}^{p} \leq \verti{H\chi_{\delta}}^{p}\int_{0}^{t}\|\B^{+}U_{k-1}(s)f\|_{\lp}^{p}\d s$$
and 
\begin{multline*}
J_{2}^{p} \leq \int_{0}^{t}\left(\int_{\Gamma_{-}}\left|\left[H(1-\chi_{\delta})\left(\B^{+}U_{k-1}(s)\right)f\right](\y)\right|^{p}\d\mu_{-}(\y)\right)\d s\\
\leq 
\verti{H}^{p}\int_{0}^{t}\left(\int_{\Gamma_{+}^{\delta}}\left|\left[\B^{+}U_{k-1}(s)f\right](\z)\right|^{p}\d\mu_{+}(\z)\right)\d s\\
=\verti{H}^{p}\int_{0}^{t}\|\B^{+}U_{k-1}(s)f\|_{L^{p}(\Gamma_{+}^{\delta},\d\mu_{+})}^{p}\d s.\end{multline*}
Introduce now the following quantities, where we recall that $\delta >0$ is fixed: let $C_{\delta}=\verti{H\chi_{\delta}}$, $A=\verti{H}$ and, for any $k \geq 1$, 
$$S_{k}(t)=\left(\int_{0}^{t}\|\B^{+}U_{k}(s)f\|_{\lp}^{p}\d s\right)^{1/p}, \qquad Z_{k}(t)=\left(\int_{0}^{t}\|\B^{+}U_{k}(s)f\|_{L^{p}(\Gamma_{+}^{\delta},\d\mu_{+})}^{p}\d s\right)^{1/p}.$$
One proved already that
\begin{equation}\label{Eq:itera}
S_{k}(t) \leq C_{\delta}\,S_{k-1}(t) + A\,Z_{k-1}(t), \qquad \forall k \geq 1.\end{equation}
Let us now estimate inductively $Z_{k}(t)$. Assume $t > \delta$. One has, as before, splitting $H$ as $H=H\chi_{\delta}+H(1-\chi_{\delta})$:
$$Z_{k}(t) \leq \mathcal{J}_{1}+\mathcal{J}_{2}$$
with
\begin{equation*}\begin{split}
\mathcal{J}_{1}^{p}&=\int_{0}^{t}\left(\int_{\Gamma_{+}^{\delta}}\left|\left[H\chi_{\delta}\left(\B^{+}U_{k-1}(s-\t_{-}(\z))\right)f\right](\Phi(\z,-\t_{-}(\z)))\right|^{p}\chi_{(0,s)}(\t_{-}(\z))\d\mu_{+}(\z)\right)\d s\\
\mathcal{J}_{2}^{p}&=\int_{0}^{t}\left(\int_{\Gamma_{+}^{\delta}}\left|\left[H(1-\chi_{\delta})\left(\B^{+}U_{k-1}(s-\t_{-}(\z))\right)f\right](\Phi(\z,-\t_{-}(\z)))\right|^{p}\chi_{(0,s)}(\t_{-}(\z))\d\mu_{+}(\z)\right)\d s.\end{split}\end{equation*}
Now, as in the previous computation
\begin{equation*}\begin{split}
\mathcal{J}_{1}^{p}&=\int_{\Gamma_{+}^{\delta}}\left(\int_{0}^{\max(0,t-\t_{-}(\z))}\left|\left[H\chi_{\delta}\left(\B^{+}U_{k-1}(s)\right)f\right](\Phi(\z,-\t_{-}(\z)))\right|^{p}\d s \right)\d\mu_{+}(\z)\\
&\leq \int_{0}^{t-\delta}\left(\int_{\Gamma_{+}}\left|\left[H\chi_{\delta}\left(\B^{+}U_{k-1}(s)\right)f\right](\Phi(\z,-\t_{-}(\z)))\right|^{p}\d \mu_{+}(\z) \right)\d s\end{split}\end{equation*}
where we used that $\t_{-}(\cdot) >\delta$ on $\Gamma_{+}^{\delta}.$ We obtain then easily that
$$\mathcal{J}_{1}^{p} \leq C_{\delta}^{p}\,S_{k-1}^{p}(t-\delta).$$
In the same way $\mathcal{J}_{2}^{p} \leq A^{p}\,Z_{k-1}^{p}(t-\delta)$ 
which results in 
\begin{equation}\label{eq:itera1}
Z_{k}(t) \leq C_{\delta}\,S_{k-1}(t-\delta) + A\,Z_{k-1}(t-\delta), \qquad \forall t \geq \delta.\end{equation}
Combining this with \eqref{Eq:itera}, one obtains easily by induction that
\begin{equation*}
S_{k}(t) \leq \sum_{j=0}^{k-1}\left(\begin{array}{c}k-1 \\j\end{array}\right)C_{\delta}^{k-1-j}\,A^{j}\left(C_{\delta}\,S_{0}(t-j\delta) + A\,Z_{0}(t-j\delta)\right)\end{equation*}
and
\begin{equation*}
Z_{k}(t) \leq \sum_{j=0}^{k-1}\left(\begin{array}{c}k-1 \\j\end{array}\right)C_{\delta}^{k-1-j}\,A^{j}\left(C_{\delta}\,S_{0}(t-(j+1)\delta) + A\,Z_{0}(t-(j+1)\delta)\right)\end{equation*}
with the convention that $S_{k}(r)=Z_{k}(r)=0$ for $r <0$. Since $Z_{0}(t) \leq S_{0}(t) \leq \|f\|_{p}$ (see Prop. \ref{prop:B+U0}) and setting $k_{\delta}(t)=\min(k-1,[\frac{t}{\delta}])$ we get
$$S_{k}(t) \leq \|f\|_{p}\sum_{j=0}^{k_{\delta}(t)}\left(\begin{array}{c}k-1 \\j\end{array}\right)C_{\delta}^{k-1-j}\,A^{j}(C_{\delta}+A)$$
since $Z_{0}(t-j\delta)=S_{0}(t-j\delta)=0$ for $j \geq t/\delta$. Now, it is not difficult to check that
$$\sum_{j=0}^{k_{\delta}(t)}\left(\begin{array}{c}k-1 \\j\end{array}\right)C_{\delta}^{k-1-j}\,A^{j}\left(C_{\delta}+A\right) \leq \sum_{j=0}^{k_{\delta}(t)+1}\left(\begin{array}{c}k \\j\end{array}\right)C_{\delta}^{k-j}\,A^{j}$$ which gives \eqref{eq:normb+uk}.   Now, from the definition of $U_{k}(t)$, one has
\begin{equation*}
\|U_{k}(t)f\|_{p}^{p}=\int_{\Gamma_{-}}\left(\int_{0}^{\t_{+}(\y)}\left|\left[H\left(\B^{+}U_{k-1}(t-s)\right)f\right](\y)\right|^{p}\d s \right)\d\mu_{-}(\y)\end{equation*}
and, writing again $H=H\chi_{\delta}+H(1-\chi_{\delta})$ one arrives without difficulty to
$$\|U_{k}(t)f\|_{p} \leq C_{\delta} S_{k-1}(t) + AZ_{k-1}(t)$$
and, as before, this gives 
$$\|U_{k}(t)f\|_{p} \leq \displaystyle\sum_{j=0}^{\min(k,[t/\delta]+1)}\left(\begin{array}{c}k \\j\end{array}\right)\verti{H\chi_{\delta}}^{k-j}\verti{H}^{j}\,\|f\|_{p}$$
for any $f \in \mathscr{D}_{0}$ and we obtain the result by density.\end{proof}

This allows to prove the following
\begin{theo}
Assume that $H \in \mathscr{B}(\lp,\lm)$ is such that
\begin{equation}\label{eq:trunc}
\limsup_{\delta\to0^{+}}\verti{H\chi_{\delta}} < 1.\end{equation}
Then, the series $\sum_{k=0}^{\infty}\|U_{k}(t)\|_{\mathscr{B}(X)}$ is convergent for any $t \geq 0$ and, setting 
$$U_{H}(t)=\sum_{k=0}^{\infty}U_{k}(t), \qquad t \geq 0$$
it holds that $(U_{H}(t))_{t\geq 0}$ is a $C_{0}$-semigroup in $X$ with generator $\left(\T_{H,\,p},\D(\T_{H,\,p})\right).$
\end{theo}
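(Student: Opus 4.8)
The plan is to establish successively: (i) that the Dyson--Phillips series $\sum_k U_k(t)$ converges in $\mathscr{B}(X)$, locally uniformly in $t$; (ii) that the resulting family is a $C_0$-semigroup $(U_H(t))_{t\ge 0}$; and (iii) that its generator coincides with $\T_{H,\,p}$, by proving the two inclusions — ``generator $\subseteq\T_{H,\,p}$'' from the resolvent of $U_H$, and the reverse inclusion from injectivity of $\lambda-\T_{H,\,p}$. By hypothesis \eqref{eq:trunc} fix $\delta_0>0$ with $q:=\verti{H\chi_{\delta_0}}<1$. For fixed $t>0$ set $m:=[t/\delta_0]+1$; then \eqref{eq:normUk} gives, for $k\ge m$,
\[
\|U_k(t)\|_{\mathscr{B}(X)}\le\sum_{j=0}^{m}\binom{k}{j}q^{k-j}\verti{H}^{\,j}\le (m+1)\max(1,\verti{H})^{m}\binom{k}{m}q^{k-m}\le C(t)\,k^{m}q^{k},
\]
which is summable in $k$ since $q<1$, with a bound locally uniform in $t$. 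Hence $U_H(t):=\sum_k U_k(t)\in\mathscr{B}(X)$ is well defined and $t\mapsto\|U_H(t)\|_{\mathscr{B}(X)}$ is locally bounded. Moreover $U_H(0)=U_0(0)=I$; for $t,s\ge 0$, Theorem~\ref{theo:UKT}(2) yields $U_k(t+s)=\sum_{j=0}^{k}U_j(t)U_{k-j}(s)$, and since $\sum_j\|U_j(t)\|_{\mathscr{B}(X)}<\infty$ and $\sum_l\|U_l(s)\|_{\mathscr{B}(X)}<\infty$ the Cauchy product rearranges absolutely to give $U_H(t+s)=U_H(t)U_H(s)$. Each $t\mapsto U_k(t)f$ is continuous by Theorem~\ref{theo:UKT}(1), and $\sum_k U_k(\cdot)f$ converges uniformly on bounded time intervals, so $t\mapsto U_H(t)f$ is continuous; together with $U_H(0)=I$ this shows $(U_H(t))_{t\ge0}$ is a $C_0$-semigroup, whose generator we denote $(A,\D(A))$.

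To prove $A\subseteq\T_{H,\,p}$, choose $\lambda>0$ large enough that $\Lambda:=\sum_k\int_0^\infty e^{-\lambda t}\|U_k(t)\|_{\mathscr{B}(X)}\,\d t<\infty$; such $\lambda$ exists because, splitting $[0,\infty)$ into the intervals $[(N-1)\delta_0,N\delta_0)$, on each interval $\|U_k(t)\|_{\mathscr{B}(X)}\le a_{k,N}:=\sum_{j=0}^{\min(k,N)}\binom{k}{j}q^{k-j}\verti{H}^{\,j}$ by \eqref{eq:normUk}, this is $\le C_N k^{N}q^{k}$ for fixed $N$, hence $\sum_k a_{k,N}<\infty$, while $N\mapsto\sum_k a_{k,N}$ grows only geometrically and is therefore absorbed by $e^{-\lambda(N-1)\delta_0}$ once $\lambda$ is large. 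In particular $\lambda$ exceeds the growth bound of $U_H$, so $(\lambda-A)^{-1}f=\int_0^\infty e^{-\lambda t}U_H(t)f\,\d t=\sum_k F_k=:S_\lambda f$ with $F_k:=\int_0^\infty e^{-\lambda t}U_k(t)f\,\d t$ and $\sum_k\|F_k\|_p\le\Lambda\|f\|_p$. By Theorem~\ref{theo:UKT}(8), $F_k\in\D(\T_{\mathrm{max},\,p})$ with $\T_{\mathrm{max},\,p}F_k=\lambda F_k$ for $k\ge1$, while $F_0=C_\lambda f=(\lambda-\T_{0,\,p})^{-1}f$ satisfies $\T_{\mathrm{max},\,p}F_0=\lambda F_0-f$; since $\T_{\mathrm{max},\,p}$ is closed (Remark~\ref{rem:closed}) and $\sum_k\|F_k\|_p<\infty$, we get $S_\lambda f\in\D(\T_{\mathrm{max},\,p})$ and $(\lambda-\T_{\mathrm{max},\,p})S_\lambda f=f$. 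For traces, $\B^+F_k=(M_\lambda H)^kG_\lambda f\in\lp$ by Theorem~\ref{theo:UKT}(8); using for $f\in\D_0$ the identity $\B^+F_k=\int_0^\infty e^{-\lambda t}\B^+U_k(t)f\,\d t$ (legitimate since $s\mapsto U_k(s)f$ is continuous into $\D(\T_{\mathrm{max},\,p})$ with the graph norm), H\"older's inequality, \eqref{eq:normb+uk} and an integration by parts in time, one obtains $\|\B^+F_k\|_{\lp}\le b_k(\lambda)\|f\|_p$ with $\sum_k b_k(\lambda)<\infty$ for $\lambda$ large, by the same geometric-growth-versus-exponential-decay mechanism; this bound passes to all $f\in X$ by the closed graph theorem, since $\D_0$ is dense in $X$. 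Hence $\sum_k\B^+F_k$ converges in $\lp$ and, by continuity of $\B^+\colon\D(\T_{\mathrm{max},\,p})\to Y_p^+$ (Theorem~\ref{theotrace}) together with the contractive embedding $\lp\hookrightarrow Y_p^+$, $\B^+S_\lambda f=\sum_k\B^+F_k\in\lp$; likewise $\B^-S_\lambda f=\sum_{k\ge1}H\B^+F_{k-1}=H\B^+S_\lambda f$ because $\B^-F_0=0$ and $H$ is bounded on $\lp$. Thus $S_\lambda f\in\D(\T_{H,\,p})$ and $(\lambda-\T_{H,\,p})S_\lambda f=f$, so $(\lambda-A)^{-1}=S_\lambda$ maps $X$ into $\D(\T_{H,\,p})$; since $(\lambda-A)^{-1}X=\D(A)$, this yields $\D(A)\subseteq\D(\T_{H,\,p})$ and $\T_{H,\,p}|_{\D(A)}=A$.

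For the reverse inclusion it suffices to check that $\lambda-\T_{H,\,p}$ is injective: given $f\in\D(\T_{H,\,p})$, the element $(\lambda-A)^{-1}(\lambda-\T_{H,\,p})f\in\D(A)\subseteq\D(\T_{H,\,p})$ has the same image under $\lambda-\T_{H,\,p}$ as $f$ (using $A\subseteq\T_{H,\,p}$), so injectivity forces $f\in\D(A)$ with $Af=\T_{H,\,p}f$, whence $A=\T_{H,\,p}$. So let $v\in\D(\T_{H,\,p})$ with $(\lambda-\T_{\mathrm{max},\,p})v=0$, $\B^-v=H\B^+v$ and $\B^+v\in\lp$. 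By the uniqueness statement in Theorem~\ref{Theo10.43} (with $g=0$), $v=\Xi_\lambda\B^-v$, so by the computation of the $X$-norm of $\Xi_\lambda$ (Prop.~\ref{prointegra}, as in Lemma~\ref{lemmaML}), $\|v\|_p^{p}=\tfrac1{p\lambda}\int_{\Gamma_-}\big(1-e^{-p\lambda\t_+(\y)}\big)|\B^-v(\y)|^{p}\,\d\mu_-(\y)$. Integrating $\tfrac{\d}{\d s}v^{\sharp}(\Phi(\x,s))=\lambda v^{\sharp}(\Phi(\x,s))$ (a consequence of \eqref{integralTM-}) gives $\B^+v(\z)=\B^-v(\Phi(\z,-\t_-(\z)))e^{\lambda\t_-(\z)}$ for a.e. $\z\in\Gamma_+\setminus\Gamma_{+\infty}$, while $\B^+v=0$ on $\Gamma_{+\infty}$ (there $v=\Xi_\lambda\B^-v$ vanishes near $\Gamma_{+\infty}$), so by the change of variables \eqref{10.51},
\[
\|\B^+v\|_{\lp}^{p}=\int_{\Gamma_-\setminus\Gamma_{-\infty}}|\B^-v(\y)|^{p}e^{p\lambda\t_+(\y)}\,\d\mu_-(\y).
\]
On the other hand, Green's formula \eqref{greenform} applied to $v$ reads $\|\B^-v\|_{\lm}^{p}-\|\B^+v\|_{\lp}^{p}=p\lambda\|v\|_p^{p}$, which combined with the norm formula for $v$ gives $\|\B^+v\|_{\lp}^{p}=\int_{\Gamma_-\setminus\Gamma_{-\infty}}e^{-p\lambda\t_+(\y)}|\B^-v(\y)|^{p}\,\d\mu_-(\y)$. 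Comparing with the previous display forces $\int_{\Gamma_-\setminus\Gamma_{-\infty}}\big(e^{p\lambda\t_+}-e^{-p\lambda\t_+}\big)|\B^-v|^{p}\,\d\mu_-=0$; since $\t_+>0$ on $\Gamma_-\setminus\Gamma_{-\infty}$, this yields $\B^-v=0$ a.e. there, hence $\B^+v=0$, hence $\B^-v=H\B^+v=0$, and finally $\|v\|_p=0$. Thus $\lambda-\T_{H,\,p}$ is injective (for every $\lambda>0$), completing the proof.

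The step I expect to be the main obstacle is the summability of the Dyson--Phillips series, and especially of its Laplace transform $\sum_k\int_0^\infty e^{-\lambda t}\|U_k(t)\|_{\mathscr{B}(X)}\,\d t$ and of $\sum_k\|\B^+F_k\|_{\lp}$: these all hinge on Lemma~\ref{lem:trunc} and on the fact that the partial binomial sums $\sum_{j\le N}\binom{k}{j}q^{k-j}\verti{H}^{\,j}$, once summed over $k$, grow only geometrically in $N$ and are therefore absorbed by the weight $e^{-\lambda N\delta_0}$ for $\lambda$ large — which is exactly where the hypothesis $\limsup_{\delta\to 0^+}\verti{H\chi_\delta}<1$ is used (it secures $q<1$). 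A secondary, more bookkeeping-type difficulty is carrying the trace-level identities through the infinite sums, for which one needs the closedness of $\T_{\mathrm{max},\,p}$, the continuity of the trace maps into $Y_p^{\pm}$, the boundedness of $H$ on $\lp$, and the density of $\D_0$ in $X$ (to upgrade the $\lp$-bounds from $\D_0$ to all of $X$).
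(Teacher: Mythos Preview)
Your overall architecture is right and matches the paper's, but the injectivity step (``$\T_{H,\,p}\subseteq A$'') contains a genuine error that breaks the argument. From \eqref{integralTM-} one has $\frac{\d}{\d s}v^{\sharp}(\Phi(\x,s))=-[\T_{\mathrm{max},\,p}v](\Phi(\x,s))$, so for $v$ with $\T_{\mathrm{max},\,p}v=\lambda v$ you get $v^{\sharp}(\Phi(\y,s))=\B^{-}v(\y)\,e^{-\lambda s}$ and hence
\[
\B^{+}v(\z)=\B^{-}v\big(\Phi(\z,-\t_{-}(\z))\big)\,e^{-\lambda\t_{-}(\z)},
\]
i.e.\ $\B^{+}v=M_{\lambda}\B^{-}v$, not the growing exponential you wrote. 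With the correct sign, the change of variables \eqref{10.51} gives $\|\B^{+}v\|_{\lp}^{p}=\int_{\Gamma_{-}\setminus\Gamma_{-\infty}}e^{-p\lambda\t_{+}(\y)}|\B^{-}v(\y)|^{p}\,\d\mu_{-}(\y)$, which is \emph{exactly} the expression you obtained from Green's formula combined with the norm of $\Xi_{\lambda}\B^{-}v$. The two displays agree identically, so no contradiction follows and you cannot conclude $\B^{-}v=0$. In fact your injectivity argument never invokes $H$ or the truncation hypothesis \eqref{eq:trunc}; had it worked, it would prove $\lambda-\T_{H,\,p}$ injective for every bounded $H$ and every $\lambda>0$, which is false in general.

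The paper closes this gap with the estimate
\[
\|HM_{\lambda}\|_{\mathscr{B}(\lm)}\le \verti{H\chi_{\delta}}+\verti{H}\,e^{-\lambda\delta}<1
\]
for $\lambda$ large (splitting $H=H\chi_{\delta}+H(1-\chi_{\delta})$ and using $\|(1-\chi_{\delta})M_{\lambda}\|\le e^{-\lambda\delta}$). Since $v=\Xi_{\lambda}\B^{-}v$ implies $\B^{-}v=H\B^{+}v=HM_{\lambda}\B^{-}v$, this forces $\B^{-}v=0$ and hence $v=0$. The same inequality also gives, for free, the convergence of $\sum_{k}\|\B^{+}F_{k}\|_{\lp}$ via $\B^{+}F_{k}=(M_{\lambda}H)^{k}G_{\lambda}f$ and $\|(M_{\lambda}H)^{k}\|\le\|HM_{\lambda}\|^{k-1}\verti{H}$, which is considerably cleaner than your route through \eqref{eq:normb+uk}, H\"older, and integration by parts. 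Your treatment of the series convergence and the semigroup property is correct and essentially the paper's; only the injectivity argument needs to be replaced by the $\|HM_{\lambda}\|<1$ mechanism, which is precisely where \eqref{eq:trunc} enters on the generator side.
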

\begin{proof} Let $\delta_{0} >0$ be such that $C:=\sup_{\delta \in (0,\delta_{0})}\verti{H\chi_{\delta}} < 1$ and let us consider from now on $\delta < \delta_{0}.$ Fix $t \geq 0$ and, with the notations of the above Lemma, let 
$$\bm{u}_{k}=\sum_{j=0}^{\min(k,[t/\delta]+1)} \left(\begin{array}{c}k \\j\end{array}\right)\verti{H\chi_{\delta}}^{k-j}\verti{H}^{j}, \qquad k \geq 0.$$
The series $\sum_{k}\bm{u}_{k}$ is convergent. Indeed, setting $A:=\verti{H}$, one has
$$\sum_{k=0}^{\infty}\bm{u}_{k} \leq \sum_{j=0}^{[t/\delta]+1}A^{j}\sum_{k=j}^{\infty}\left(\begin{array}{c}k \\j\end{array}\right)C^{k-j}.$$
Using the well-known identity, valid for $0\leq C< 1$:
$$\sum_{k=j}^{\infty}\left(\begin{array}{c}k \\j\end{array}\right)C^{k-j}=\dfrac{1}{(1-C)^{j+1}}$$
we obtain that 
\begin{equation}\label{eq:ukconv}
\sum_{k=0}^{\infty}\bm{u}_{k} \leq \frac{1}{1-C}\sum_{j=0}^{[t/\delta]+1}\left(\frac{A}{1-C}\right)^{j} \leq M\exp(\omega\,t)\end{equation}
with
\begin{align}\label{eq:Momega}
M&=\frac{A^2}{(1-C)^{2}(A+C-1)} \quad &\text{ and }  \quad \omega&=\frac{1}{\delta} \log\left(\frac{A}{1-C}\right) \qquad &\text{ if } \quad A > 1-C,\nonumber\\
M&=\frac{2}{1-C} \quad &\text{ and } \quad \omega&=\frac{1}{\delta} \qquad &\text{ if } \quad A=1-C\nonumber\\
M&=\frac{1}{1-A-C} \quad &\text{ and }  \quad \omega&=0 \qquad  &\text{ if }\quad A < 1-C  \end{align}
\medskip

According to Lemma \ref{lem:trunc}, one sees that, for any $\delta \in (0,\delta_{0})$, 
$$\sum_{k=0}^{\infty}\left\|U_{k}(t)\right\|_{\mathscr{B}(X)} \leq M\exp(\omega\,t), \qquad \forall t \geq 0.$$
This proves that, for any $t \geq 0$, the series $\sum_{k=0}^{\infty}U_{k}(t)$ converges in $\mathscr{B}(X)$ and denote its sum by $U_{H}(t)$. One has
$$\|U_{H}(t)\|_{\mathscr{B}(X)} \leq M\exp(\omega\,t), \qquad t \geq 0$$
and, according to Theorem \ref{theo:UKT}, $(U_{H}(t))_{t\geq 0}$ is a strongly continuous family of $\mathscr{B}(X)$ with 
$$\lim_{t\to 0^{+}}U_{H}(t)f=\lim_{t\to 0^{+}}U_{0}(t)f=f$$ for any $f \in X$. Finally, using point (2) of Theorem \ref{theo:UKT}, one sees that $(U_{H}(t))_{t\geq 0}$ is a $C_{0}$-semigroup in $X$. Let us denote by $\mathcal{A}$ its generator. We prove exactly as in \cite[Theorem 4.1]{arlo11} that $\mathcal{A}=\T_{H,\,p}$.

\noindent \emph{First Step: $\T_{H,\,p}$ is an extension of $\A$.} Let $g \in \D(\A)$ and $\lambda >\omega$ be given. Set $f=(\lambda-\A)g$. As known, and using the notations of Theorem \ref{theo:UKT}:
$$g=\int_{0}^{\infty}\exp(-\lambda\,t)U_{H}(t)f\d t=\sum_{k=0}^{\infty}\int_{0}^{\infty}\exp(-\lambda\,t)U_{k}(t)f\d t=\sum_{k=0}^{\infty}F_{k}.$$
According to \textit{(8)} in Theorem \ref{theo:UKT}, one has $g \in \D(\T_{\mathrm{max},\,p})$ with $\T_{\mathrm{max},\,p}g=\lambda\,g-f$, i.e. $\T_{\mathrm{max},\,p}g=\A\,g$. Moreover, $\B^{+}g=\B^{+}(\sum_{k=0}^{\infty}F_{k})$. By virtue of \textit{(8)} of Theorem \ref{theo:UKT},
\begin{equation}\begin{split}\label{eq:b+fkco}
\|\B^{+}F_{k}\|_{\lp}&=\|\left(M_{\lambda}H\right)^{k}G_{\lambda}f\|_{\lp}=\|M_{\lambda}(HM_{\lambda})^{k-1}HG_{\lambda}f\|_{\lp}\\
& \leq \|(HM_{\lambda})^{k-1}HG_{\lambda}f\|_{\lm}\end{split}\end{equation}
Now,
\begin{equation*}\begin{split}
\|HM_{\lambda}\|_{\mathscr{B}(\lm)} &\leq \|H\chi_{\delta}M_{\lambda}\|_{\mathscr{B}(\lm)} + \|H(1-\chi_{\delta})M_{\lambda}\|_{\mathscr{B}(\lm)}\\
&\leq \|H\chi_{\delta}\|_{\mathscr{B}(\lp,\lm)}+ \verti{H}\exp(-\lambda\delta)\end{split}\end{equation*}
where we used that, as in Eq. \eqref{eq:mllm}, for all $u \in \lm$
$$\|(1-\chi_{\delta})M_{\lambda}\|_{\lp}^{p}=\int_{\Gamma_{+}^{\delta}}|[M_{\lambda}u](\z)|^{p}\d\mu_{+}(\z) \leq \exp(-p\lambda\delta)\|u\|_{\lm}^{p}.$$
In other words, with the above notations, for $\lambda > \delta$
\begin{equation}\label{eq:HML}
\|HM_{\lambda}\|_{\mathscr{B}(\lm)}  \leq C + A \exp(-\lambda\delta) < C+A\exp(-\omega\delta) \leq1\end{equation}
by definition of $\omega$ (see Eq. \eqref{eq:Momega}). This, together with \eqref{eq:b+fkco} shows that the series
$\sum_{k=0}^{\infty}\|\B^{+}F_{k}\|_{\lp}$ converges and therefore, 
$$\B^{+}g=\sum_{k=0}^{\infty}\B^{+}F_{k} \in \lp$$
and, being $H$ continuous, using again \textit{(8)} of Theorem \ref{theo:UKT}
$$\B^{-}g=\sum_{k=0}^{\infty}H\B^{+}F_{k}=H\B^{+}g.$$
This proves that $g \in \D(\T_{H,\,p})$, i.e.
$$\D(\A) \subset \D(\T_{H,\,p}) \qquad \text{ and } \quad \A\,g=\T_{\mathrm{max},\,p}g=\T_{H,\,p}g, \qquad \forall g \in \D(\A).$$

\noindent \emph{Second step: $\A$ is an extension of $\T_{H,\,p}$.} Conversely, let $g \in \D(\T_{H,\,p})$ and $\lambda >\omega$ be given. Set $f=(\lambda-\T_{H,\,p})g$. We define 
$$F=\int_{0}^{\infty}\exp(-\lambda\,t)U_{H}(t)f\d t=(\lambda-\A)^{-1}f, \qquad \text{ and } \quad G=g-F.$$
From the first point, $G \in \D(\T_{H,\,p})$ with $\T_{H,\,p}G=\lambda\,G$. In particular, $G=\Xi_{\lambda}\B^{-}G$ while
$$\B^{-}G=H\B^{+}G=HM_{\lambda}\B^{-}G.$$
But, from \eqref{eq:HML}, one has $\|\B^{-}G\|_{\lm} < \|\B^{-}G\|_{\lm}$, i.e. $\B^{-}G=0.$ Since $G=\Xi_{\lambda}\B^{-}G$, we get $G=0$ and $g=F$ and this proves $\A=\T_{H,\,p}.$
\end{proof}
\begin{nb} Notice that \eqref{eq:HML} with Theorem \ref{TheoDautray} already ensured that $(\T_{H,\,p},\D(\T_{H,\,p}))$ generates a $C_{0}$-semigroup in $X$. The interest of the above lies of course in the fact that it allows to deal with multiplicative boundary operator for which $\verti{H}\geq 1.$ The novelty of the above approach, with respect to \cite[Theorem 5.1]{mjm2}, is that the above proof is \emph{constructive} and we give a precise and explicit expression of the semigroup $(U_{H}(t))_{t\geq 0}.$ The proof presented here is similar to the one in \cite{arlo11} and differs from the one in our previous contribution \cite{mjm2}. Notice however that it would be possible to adapt in a simple way the proof given in \cite[Section 5]{mjm2} which is based on some suitable change of variables.\end{nb}

\begin{nb} The above estimate \eqref{eq:ukconv} with $M,\omega$ given by \eqref{eq:Momega} allowed us to prove the convergence (in $\mathscr{B}(X)$) of the series $\sum_{k=0}^{\infty}U_{k}(t)$ but does not yield the optimal estimate for the limit $\|U_{H}(t)\|_{\mathscr{B}(X)}.$ For instance, if $A=\verti{H} < 1$ then the semigroup $(U_{H}(t))_{t \geq 0}$ is a contraction semigroup while the above estimate yields $\|U_{H}(t)\|_{\mathscr{B}(X)} \leq \frac{1}{1-A-C}$ with $\frac{1}{1-A-C} >1$.\end{nb}

 An useful consequence of the above is the following more tractable expression of the semigroup $(U_{H}(t))_{t\geq0}$:
\begin{cor}\label{cor:semi} For any $f \in \D(\T_{H,\,p})$ and any $t \geq 0$, the following holds for $\mu$-a.e. $\x \in \O$:
$$U_{H}(t)f(\x)=\begin{cases} U_{0}(t)f(\x)=f(\Phi(\x,-t)) \qquad &\text{ if } t < \t_{-}(\x)\\
\left[H\left(\B^{+}U_{H}(t-\t_{-}(\x)) f \right )\right](\Phi(\x,-\t_{-}(\x))) \qquad &\text{ if } t \geq \t_{-}(\x).\end{cases}
$$\end{cor}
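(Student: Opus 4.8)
The plan is to use the series representation $U_{H}(t)=\sum_{k\geq 0}U_{k}(t)$ established just above and to reduce the pointwise formula to an equality of Laplace transforms in $t$. Two cases are immediate. If $t<\t_{-}(\x)$, then Definition \ref{defi:Uk} gives $U_{k}(t)f(\x)=0$ for every $k\geq 1$, so $U_{H}(t)f(\x)=U_{0}(t)f(\x)=f(\Phi(\x,-t))$ by \eqref{eq:Uot}. On the complementary set $\{\t_{-}(\x)\leq t\}$ one has $U_{0}(t)f(\x)=0$ (the set $\{\t_{-}(\x)=t\}$ being $\mu$-null for fixed $t$, by \eqref{10.47} applied to $\chi_{\{\t_{-}=t\}}$), and by Definition \ref{defi:Uk}, $U_{k}(t)f(\x)=[H\B^{+}U_{k-1}(t-\t_{-}(\x))f](\Phi(\x,-\t_{-}(\x)))$ for $k\geq 1$. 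So the whole matter is to show that, writing $S(t,\cdot)$ for the function on the right-hand side of the claimed identity,
\[
S(t,\x)=U_{0}(t)f(\x)+\chi_{\{\t_{-}(\x)\leq t\}}\,\bigl[H\B^{+}U_{H}(t-\t_{-}(\x))f\bigr](\Phi(\x,-\t_{-}(\x))),
\]
one has $S(t,\cdot)=U_{H}(t)f$ in $X$ for every $t\geq 0$.

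First I would record the key analytic fact that $\B^{+}$ restricts to a bounded operator from $(\D(\T_{H,\,p}),\|\cdot\|_{\D})$ into $\lp$. For $\lambda$ large one has $\|HM_{\lambda}\|_{\mathscr{B}(\lm)}<1$ (by \eqref{eq:HML}) and, since $\|(M_{\lambda}H)^{k}\|_{\mathscr{B}(\lp)}\leq\|M_{\lambda}\|_{\mathscr{B}(\lm,\lp)}\,\verti{H}\,\|HM_{\lambda}\|_{\mathscr{B}(\lm)}^{k-1}$ with $\|M_{\lambda}\|_{\mathscr{B}(\lm,\lp)}\leq 1$ by Lemma \ref{lemmaML}, the operator $I-M_{\lambda}H\in\mathscr{B}(\lp)$ is invertible; Proposition \ref{prop:inverse} then gives $(\lambda-\T_{H,\,p})^{-1}=C_{\lambda}+\Xi_{\lambda}H(I-M_{\lambda}H)^{-1}G_{\lambda}$ and $\B^{+}(\lambda-\T_{H,\,p})^{-1}=(I-M_{\lambda}H)^{-1}G_{\lambda}\in\mathscr{B}(X,\lp)$. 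Composing with the bounded bijection $(\lambda-\T_{H,\,p})\colon(\D(\T_{H,\,p}),\|\cdot\|_{\D})\to X$ yields the claimed bound. Consequently, for $f\in\D(\T_{H,\,p})$ the orbit $\sigma\mapsto U_{H}(\sigma)f$ is continuous into $(\D(\T_{H,\,p}),\|\cdot\|_{\D})$ and so $\sigma\mapsto\B^{+}U_{H}(\sigma)f$ is continuous into $\lp$ with at most exponential growth. Parametrising $\x=\Phi(\y,s)$, $\y\in\Gamma_{-}$, $0<s<\t_{+}(\y)$, via Proposition \ref{prointegra} and changing variables $\sigma=t-s$, one gets
\[
\|S(t,\cdot)-U_{0}(t)f\|_{p}^{p}\leq\int_{0}^{t}\|H\B^{+}U_{H}(\sigma)f\|_{\lm}^{p}\,\d\sigma\leq\verti{H}^{p}\int_{0}^{t}\|\B^{+}U_{H}(\sigma)f\|_{\lp}^{p}\,\d\sigma,
\]
so $S(t,\cdot)\in X$; estimating increments in the same way shows $t\mapsto S(t,\cdot)$ is continuous from $[0,\infty)$ into $X$ with at most exponential growth. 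Since $t\mapsto U_{H}(t)f$ has the same properties, it suffices to match Laplace transforms.

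For $\lambda$ large, $\int_{0}^{\infty}e^{-\lambda t}U_{H}(t)f\,\d t=(\lambda-\T_{H,\,p})^{-1}f=:g\in\D(\T_{H,\,p})$. On the other hand $\int_{0}^{\infty}e^{-\lambda t}U_{0}(t)f\,\d t=C_{\lambda}f$, while Fubini's theorem (legitimate by the integrability just established) gives, for $\mu$-a.e. $\x$,
\[
\int_{0}^{\infty}e^{-\lambda t}\chi_{\{\t_{-}(\x)\leq t\}}\bigl[H\B^{+}U_{H}(t-\t_{-}(\x))f\bigr](\Phi(\x,-\t_{-}(\x)))\,\d t=\chi_{\{\t_{-}(\x)<\infty\}}e^{-\lambda\t_{-}(\x)}\,\Bigl[\,\int_{0}^{\infty}e^{-\lambda\sigma}H\B^{+}U_{H}(\sigma)f\,\d\sigma\,\Bigr](\Phi(\x,-\t_{-}(\x))).
\]
Because $\B^{+}$ and $H$ are bounded and $\int_{0}^{\infty}e^{-\lambda\sigma}U_{H}(\sigma)f\,\d\sigma=g$ as a Bochner integral in $(\D(\T_{H,\,p}),\|\cdot\|_{\D})$, the bracketed integral equals $H\B^{+}g=\B^{-}g$, the last equality because $g\in\D(\T_{H,\,p})$. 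Hence $\int_{0}^{\infty}e^{-\lambda t}S(t,\cdot)\,\d t=C_{\lambda}f+\Xi_{\lambda}\B^{-}g$. Since $g$ solves $(\lambda-\T_{\mathrm{max},\,p})g=f$ with $\B^{-}g\in\lm\subset Y_{p}^{-}$, the uniqueness statement in Theorem \ref{Theo10.43} forces $g=C_{\lambda}f+\Xi_{\lambda}\B^{-}g$. Thus $U_{H}(\cdot)f$ and $S(\cdot,\cdot)$ have identical Laplace transforms for all large $\lambda$ and, being continuous with exponential growth, agree for every $t\geq 0$ (for $\mu$-a.e. $\x$), which is the assertion.

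The only delicate point is the boundedness $\B^{+}\in\mathscr{B}\bigl((\D(\T_{H,\,p}),\|\cdot\|_{\D}),\lp\bigr)$ and the ensuing $\lp$-valued continuity and exponential growth of $\sigma\mapsto\B^{+}U_{H}(\sigma)f$; the two elementary cases, the Fubini exchanges and the Laplace-uniqueness step are routine. For this I rely on Proposition \ref{prop:inverse}, hence on the invertibility of $I-M_{\lambda}H$ on $\lp$ for $\lambda$ large and on the fact that $\T_{H,\,p}$ already generates a $C_{0}$-semigroup under \eqref{eq:trunc}, so that $(\lambda-\T_{H,\,p})^{-1}\in\mathscr{B}(X)$. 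A more ``constructive'' variant would instead reorganise the series pointwise, $\sum_{k\geq 1}U_{k}(t)f(\x)=\sum_{j\geq 0}[H\B^{+}U_{j}(t-\t_{-}(\x))f](\Phi(\x,-\t_{-}(\x)))$ on $\{\t_{-}(\x)\leq t\}$, pulling $H$ and $\B^{+}$ through the sum by means of the $L^{p}((0,t);\lp)$-convergence of $\sum_{j}\B^{+}U_{j}(\cdot)f$ provided by Lemma \ref{lem:trunc} together with point (7) of Theorem \ref{theo:UKT}; but identifying the limit with $\B^{+}U_{H}(\cdot)f$ still requires an extra density argument, so I would present the Laplace-transform route above.
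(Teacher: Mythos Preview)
Your proof is correct and, in spirit, follows the same Laplace-transform route as the paper's argument for general $f\in\D(\T_{H,\,p})$, but your organisation is cleaner. The paper proceeds in two stages: first it treats $f\in\D_{0}$ by summing the series $\sum_{k}\B^{+}U_{k}(t)f$ in $\lp$ pointwise (using the bounds of Lemma~\ref{lem:trunc}), and only then computes Laplace transforms for $f\in\D(\T_{H,\,p})$, evaluating $\B^{+}\int_{0}^{\infty}e^{-\lambda t}U_{H}(t)f\,\d t$ via point~\textit{(8)} of Theorem~\ref{theo:UKT} and matching with $\Xi_{\lambda}H(I-M_{\lambda}H)^{-1}G_{\lambda}f$. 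You bypass the $\D_{0}$ step entirely by first isolating the structural fact $\B^{+}\in\mathscr{B}\bigl((\D(\T_{H,\,p}),\|\cdot\|_{\D}),\lp\bigr)$ (via $\B^{+}(\lambda-\T_{H,\,p})^{-1}=(I-M_{\lambda}H)^{-1}G_{\lambda}$ from Proposition~\ref{prop:inverse}); this immediately gives the $\lp$-continuity and exponential growth of $\sigma\mapsto\B^{+}U_{H}(\sigma)f$, hence the $X$-continuity of $t\mapsto S(t,\cdot)$, which the paper leaves implicit. Your identification of the Laplace transform of $S$ with $C_{\lambda}f+\Xi_{\lambda}\B^{-}g$ and the appeal to the uniqueness part of Theorem~\ref{Theo10.43} is also a nice touch that replaces the paper's series computation. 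The only cosmetic slip is in your opening paragraph: invoking Definition~\ref{defi:Uk} directly for $f\in\D(\T_{H,\,p})$ is not literally licit (the definition is stated for $f\in\D_{0}$), but this is harmless since the property $U_{k}(t)f=0$ on $\{\t_{-}>t\}$ passes to the closure by density, and in any case your Laplace argument does not rely on that paragraph.
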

\begin{nb} Notice that, if $f \in \D(T_{H,\,p})$, for any $t \geq 0$, $\psi(t)=U_{H}(t)f$ is the unique classical solution (see \cite{pazy}) to the Cauchy problem
$$\dfrac{\d}{\d t}\psi(t)=\T_{H,\,p}\psi(t), \qquad \psi(0)=f.$$
The above provides therefore the (semi)-explicit expression of the solution to the above.\end{nb}
\begin{proof}
Let us consider first $f \in \D_{0}$. Then, for all $k \geq 0$, $t >0$
$$\B^{+}U_{k}(t)f=\B^{+}\left(\int_{0}^{t}U_{k}(s)\T_{\mathrm{max},\,p}f\d s\right).$$
In particular, from the previous Theorem, the series $\sum_{k=0}^{\infty}\|\B^{+}U_{k}(t)f\|_{\lp}$ is convergent and therefore 
$$\B^{+}U_{H}(t)f=\sum_{k=0}^{\infty}\B^{+}U_{k}(t)f$$
where the series converge in $\lp$. Given $0 \leq s < t$, we get then 
$$HB^{+}U_{H}(t-s)f=\sum_{k=0}^{\infty}H\B^{+}U_{k}(t-s)f.$$
Pick then $\x \in \O$ and $\y \in \Gamma_{-}$ such that $\x=\Phi(\y,s)$. We have, by definition, 
$$\left[H\B^{+}U_{k}(t-s)f\right](\y)=[U_{k+1}(t)f](\x).$$
Therefore, 
$$[H\B^{+}U_{H}(t-s)f](\y)=\sum_{k=1}^{\infty}\left[U_{k+1}(t)f\right](\x)=[U_{H}(t)f](\x)-[U_{0}(t)f](\x).$$
To summarize, for almost every $\x \in \O$, there exists a unique $\y \in \Gamma_{-}$ and a unique 
$0 < s < \t_{+}(\y)$ such that $\x=\Phi(\y,s)$ and  we proved that
$$[U_{H}(t)f](\x)=[U_{0}(t)f](\x) + [H\B^{+}U_{H}(t-s)f](\y)$$
which proves the result for $f \in \D_{0}.$

Consider now $f \in \D(\T_{H,\,p})$. Then, for any $t >0$, $U_{H}(t)f \in \D(\T_{H,\,p})$. Introduce then the mapping
$$\x \in \O_{-} \longmapsto \left[H\B^{+}U_{H}(t-s)f\right](\y)=g(t,\x)$$
where $\x=\Phi(\y,s)$ for some unique $\y \in \Gamma_{-}$ and $s \in (0,\t_{+}(\y)).$ It holds, for any $\lambda >\omega$ (with $\omega$ introduced in the proof of the previous Theorem):
\begin{multline*}
\int_{0}^{\infty}\exp(-\lambda\,t)g(t,\x)\d t=\int_{0}^{\infty}\exp(-\lambda\,t)\left[H\B^{+}U_{H}(t-s)f\right](\y)\d t\\
=\exp(-\lambda\,s)\int_{0}^{\infty}\exp(-\lambda\,t)\left[H\B^{+}U_{H}(t)f\right](\y)\d t\\=\exp(-\lambda\,s)\left[H\B^{+}\int_{0}^{\infty}\exp(-\lambda\,t)U_{H}(t)f\d t\right](\y)\end{multline*}
Therefore, using point \textit{(8)} of Theorem \ref{theo:UKT}
\begin{multline*}
\int_{0}^{\infty}\exp(-\lambda\,t)g(t,\x)\d t=\exp(-\lambda\,s)\left[H\sum_{k=0}^{\infty}\left(M_{\lambda}H\right)^{k}G_{\lambda}f\right](\y)\\
=\exp(-\lambda\,s)\left[H(I-M_{\lambda}H)^{-1}G_{\lambda}f\right](\y).\end{multline*}
Since moreover
$$\int_{0}^{\infty}\exp(-\lambda\,t)U_{H}(t)f\d t=\int_{0}^{\infty}\exp(-\lambda\,t)U_{0}(t)f\d t + \Xi_{\lambda}H(I-M_{\lambda}H)^{-1}G_{\lambda}f$$
the result follows.
\end{proof}

\appendix

\section{Proof  of Theorem \ref{representation}}\label{app:techn}

The scope here is to prove Theorem \ref{representation} in Section \ref{sec:maxi}. The difficult
 part of the proof is the  implication $(2) \implies (1)$. It is carried out through
several technical lemmas based
 upon \textit{\textbf{mollification along the characteristic curves}}  (recall
 that, whenever $\mu$ is not absolutely continuous with respect to
 the Lebesgue measure, no global convolution argument is available). Let us make precise what this is all about. Consider
a sequence $(\varrho_n)_n$  of one dimensional mollifiers supported
in $[0,1]$, i.e. for any $n \in \mathbb{N}$, $\varrho_n \in
\Con^{\infty}_0(\mathbb{R})$, $\varrho_n(s)=0$ if $s \notin
[0,1/n]$, $\varrho_n(s) \geq 0$  and $\int_0^{1/n}\varrho_n(s)\d
s=1.$ Then, for any $f \in L^1(\O,\d\mu)$, define the (extended)  mollification:
$$\varrho_n \diamond f(\x)=\int_0^{\t_-(\x)}
\varrho_n(s)f({{\Phi}}(\x,-s))\d s.$$

Note that, with such a
definition, it is not clear {\it a priori} that $\varrho_n \diamond
f$ defines a measurable function, finite almost everywhere. It is
proved in the following that actually such a function does actually belong to $L^{p}(\O,\d\mu)$.
\begin{lemme} Given $f \in L^p(\O,\d\mu)$, $\varrho_n \diamond f \in
  L^p(\O ,\d\mu)$ for any $n \in \mathbb{N}$. Moreover,
\begin{equation}\label{contrac}
\|\varrho_n \diamond f\|_{p} \leq \|f\|_{p},\qquad \forall f \in
L^p(\O,\d\mu), n \in \mathbb{N}.\end{equation}
\end{lemme}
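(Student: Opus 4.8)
The plan is to reduce the whole statement to one pointwise convexity estimate followed by a measure-preserving change of variables along the flow. First I would note that, since $\varrho_{n}\geq 0$ and $\int_{0}^{1/n}\varrho_{n}(s)\,\d s=1$, the measure $\varrho_{n}(s)\,\d s$ restricted to any interval $(0,\t_{-}(\x))$ has total mass at most $1$; hence Jensen's inequality (equivalently H\"older with exponents $p$ and $q=p/(p-1)$, using $\left(\int\varrho_{n}\right)^{p/q}\leq 1$) gives, for every $\x\in\O$,
\[
|\varrho_{n}\diamond f(\x)|^{p}\leq \int_{0}^{\t_{-}(\x)}\varrho_{n}(s)\,|f({\Phi}(\x,-s))|^{p}\,\d s=\int_{0}^{\infty}\varrho_{n}(s)\,|f({\Phi}(\x,-s))|^{p}\,\chi_{\{s<\t_{-}(\x)\}}\,\d s,
\]
the right-hand side being nothing but $\varrho_{n}\diamond(|f|^{p})(\x)$.

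Next I would integrate this inequality over $\O$ against $\d\mu$ and use Tonelli's theorem to exchange the integrations in $s$ and $\x$, reducing matters to the bound $\int_{\O}|f({\Phi}(\x,-s))|^{p}\,\chi_{\{s<\t_{-}(\x)\}}\,\d\mu(\x)\leq\|f\|_{p}^{p}$ for each fixed $s>0$; integrating this against $\varrho_{n}(s)\,\d s$, of total mass $1$, then gives \eqref{contrac}. For the fixed-$s$ estimate the point is that $s<\t_{-}(\x)$ forces ${\Phi}(\x,-s)=T_{-s}\x\in\O$, so, writing $g$ for the extension of $|f|^{p}$ by zero outside $\O$, the substitution $\y=T_{-s}\x$ — which preserves $\mu$ by assumption \eqref{ass:h2} — yields $\int_{\O}|f({\Phi}(\x,-s))|^{p}\chi_{\{s<\t_{-}(\x)\}}\,\d\mu(\x)\leq\int_{\mathbb{R}^{N}}g(T_{-s}\x)\,\d\mu(\x)=\int_{\mathbb{R}^{N}}g\,\d\mu=\|f\|_{p}^{p}$. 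The integration formula \eqref{10.47} along the characteristics could be used here just as well.

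The only genuinely delicate point — already flagged by the authors just before the statement — is the measurability of $\varrho_{n}\diamond f$ itself, because $f$ is merely an equivalence class and no global convolution is available. Here I would argue exactly as in \cite{mjm1}: the map $(\x,s)\in\O\times(0,\infty)\mapsto f({\Phi}(\x,-s))$ is jointly measurable, and, thanks again to \eqref{ass:h2}, modifying $f$ on a $\mu$-null set modifies $f\circ{\Phi}$ only on a $(\mu\otimes\d s)$-null set, so $\varrho_{n}\diamond f$ is well defined independently of the chosen representative; the pointwise bound above then shows that, for $\mu$-a.e.\ $\x$, the $s$-integrand defining $\varrho_{n}\diamond f(\x)$ is absolutely integrable, and Fubini's theorem yields measurability of $\x\mapsto\varrho_{n}\diamond f(\x)$, while \eqref{contrac} gives $\varrho_{n}\diamond f\in L^{p}(\O,\d\mu)$. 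A shortcut that avoids any fresh measurability discussion is to apply the $p=1$ version of this lemma, already available from \cite{mjm1}, to $|f|^{p}\in L^{1}(\O,\d\mu)$ and combine it with the pointwise domination $|\varrho_{n}\diamond f|^{p}\leq\varrho_{n}\diamond(|f|^{p})$ from the first step.
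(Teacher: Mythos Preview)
Your proposal is correct and follows essentially the same approach as the paper: the key pointwise estimate $|\varrho_{n}\diamond f|^{p}\leq \varrho_{n}\diamond(|f|^{p})$ via H\"older (writing $\varrho_{n}=\varrho_{n}^{1/q}\varrho_{n}^{1/p}$) is exactly what the paper does, and the paper then concludes precisely by your ``shortcut'', invoking the $L^{1}$ version from \cite{mjm1} applied to $|f|^{p}$. The paper's measurability discussion is a bit more explicit --- it spells out the measure-preserving transformation $\Upsilon(\x,s)=(\Phi(\x,-s),-s)$ and cites \cite{Hal} to get joint measurability of $(\x,s)\mapsto \overline{f}(\Phi(\x,-s))$ --- but your sketch captures the same idea.
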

\begin{proof} One considers, for a given $f \in
L^p(\O,\d\mu)$,  the extension of $f$ by zero outside $\O$:
$$\overline{f}(\x)=f(\x),\qquad \forall \x \in \O,\qquad
\overline{f}(\x)=0 \quad \forall \x \in \mathbb{R}^N \setminus \O.$$
Then $\overline{f} \in L^p(\mathbb{R}^N,\d\mu).$ Let us consider the
 transformation:
$$\Upsilon\::\:(\x,s) \in \mathbb{R}^N \times \mathbb{R} \mapsto
\Upsilon(\x,s)=({{\Phi}}(\x,-s),-s) \in \mathbb{R}^N \times
\mathbb{R}.$$ As a homeomorphism, $\Upsilon$ is measure preserving
for  pure Borel measures. It is also measure preserving for
completions of Borel measures (such as a Lebesgue measure) since
it is measure-preserving on Borel sets and the completion of a
measure is obtained by adding to the Borel $\sigma$-algebra all
sets contained in measure-zero Borel sets, see \cite[Theorem
13.B, p.~55]{Hal}. Then, according to \cite[Theorem 39.B,
p.~162]{Hal}, the mapping
$$(\x,s) \in \mathbb{R}^N \times \mathbb{R} \mapsto
\overline{f}({{\Phi}}(\x,-s))$$ is measurable as the
composition of $\Upsilon$ with the measurable function $(\x,s)
\mapsto \overline{f}(\x)$. Define now $\Lambda=\{(\x,s)\,;\,\x \in
\O,\,0 < s < \t_-(\x)\},$ $\Lambda$ is a measurable subset of
$\mathbb{R}^N \times \mathbb{R}$. Therefore, the mapping
$$(\x,s) \in \mathbb{R}^N \times \mathbb{R} \longmapsto
\overline{f}({{\Phi}}(\x,-s))\chi_{\Lambda}(\x,s)
\varrho_n(s)$$ is measurable. In the same way
$$(\x,s) \in \mathbb{R}^N \times \mathbb{R} \longmapsto
\left|\overline{f}({{\Phi}}(\x,-s))\right|^{p}\chi_{\Lambda}(\x,s)
\varrho_n(s)$$ is measurable. For almost every $\x \in \O$, it holds
$$\int_{0}^{\infty}\varrho_{n}(s)\left|\overline{f}({{\Phi}}(\x,-s))\right|^{p}\chi_{\Lambda}(\x,s)\d s=\int_{0}^{\min(\tau_{-}(\x),1/n)}\varrho_{n}(s)\left|{f}({{\Phi}}(\x,-s))\right|^{p}\d s.$$
Setting $q=\frac{p}{p-1}$, observe now that $\varrho_{n}^{1/q} \in L^{q}(0,1/n)$ while, for a.e. $\x\in \O$, 
$$s \mapsto \varrho_{n}(s)^{1/p}f({\Phi}(\x,-s)) \in L^{p}(0,\min(\tau_{-}(\x),1/n)).$$  Therefore, for almost every $\x \in \O$
$$\varrho_{n}(s)f({\Phi}(\x,-s)) \in L^{1}(0,\min(\tau_{-}(\x),1/n))$$
thanks to Holder's inequality. Thus,
\begin{equation*}
[\varrho_n \diamond f](\x):= \ds
\int_0^{\tau_-(\x)}\varrho_n(s)f({{\Phi}}(\x,-s))\d
s\end{equation*} is finite for almost every $\x \in \O$ with
\begin{multline*}
\left|[\varrho_n \diamond f](\x)\right| \leq \left(\int_{0}^{1/n}\varrho_{n}(s)\d s\right)^{1/q}\,\left(\int_{0}^{\tau_{-}(\x)}\varrho_{n}(s)\left|{f}({{\Phi}}(\x,-s))\right|^{p}\d s\right)^{1/p}\\
=\left(\int_{0}^{\tau_{-}(\x)}\varrho_{n}(s)\left|{f}({{\Phi}}(\x,-s))\right|^{p}\d s\right)^{1/p}\end{multline*}
From this one sees that
\begin{equation}\label{eq:convo}
\left|[\varrho_n \diamond f](\x)\right|^{p} \leq [\varrho_{n}\diamond |f|^{p}](\x).\end{equation}
Now, since $|f|^{p} \in L^{1}(\O,\d\mu)$, one can use \cite[Theorem 3.7]{mjm1} to get first that $\varrho_{n}\diamond |f|^{p} \in L^{1}(\O,\d\mu)$ with
$$\|[\varrho_n \diamond |f|^{p}\|_{1} \leq \|\,|f|^{p}\|_{1}=\|f\|_{p}^{p}.$$
One deduces from this and \eqref{eq:convo} that  $[\varrho_n \diamond f] \in L^{p}(\O,\d\mu)$ and \eqref{contrac} holds true.\end{proof}

As it is the case for classical convolution, the family $(\varrho_n
\diamond f)_n$ approximates $f$ in $L^p$-norm:
\begin{propo}\label{approx} Given $f \in L^p(\O,\d\mu)$,
\begin{equation}\label{convergconv}
\lim_{n \to \infty}\int_{\O }\bigg|(\varrho_n \diamond f)(\x)
-f(\x)\bigg|^{p}\d\mu(\x)=0.\end{equation}
\end{propo}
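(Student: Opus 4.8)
The plan is to combine the uniform $L^{p}$-bound \eqref{contrac} just obtained with a density argument, exactly as in the classical proof that convolution with an approximate identity converges in $L^{p}$. Recall (as already used earlier in the paper) that $\mathcal{C}_{0}^{1}(\O)$ is dense in $X=L^{p}(\O,\d\mu)$. Since each operator $h \mapsto \varrho_{n}\diamond h$ is a contraction of $L^{p}(\O,\d\mu)$, for $f \in L^{p}(\O,\d\mu)$ and any $g \in \mathcal{C}_{0}^{1}(\O)$ one has
\[
\|\varrho_{n}\diamond f - f\|_{p} \leq \|\varrho_{n}\diamond(f-g)\|_{p} + \|\varrho_{n}\diamond g - g\|_{p} + \|g-f\|_{p} \leq 2\|f-g\|_{p} + \|\varrho_{n}\diamond g - g\|_{p},
\]
so it suffices to prove \eqref{convergconv} for $g \in \mathcal{C}_{0}^{1}(\O)$.

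For such a $g$, with compact support $K \subset \O$, I would argue by dominated convergence. Since the flow $\Phi$ is continuous and $\O$ is open, $\t_{-}(\x)>0$ for \emph{every} $\x \in \O$; hence, fixing $\x \in \O$ and taking $n$ so large that $1/n<\t_{-}(\x)$, the support condition $\mathrm{supp}\,\varrho_{n}\subset[0,1/n]$ together with $\int_{0}^{1/n}\varrho_{n}(s)\,\d s=1$ give
\[
[\varrho_{n}\diamond g](\x) - g(\x) = \int_{0}^{1/n}\varrho_{n}(s)\bigl(g(\Phi(\x,-s)) - g(\x)\bigr)\,\d s,
\]
whence $|[\varrho_{n}\diamond g](\x)-g(\x)| \leq \sup_{0\leq s\leq 1/n}|g(\Phi(\x,-s))-g(\x)| \to 0$ by continuity of $s\mapsto g(\Phi(\x,-s))$ at $s=0$. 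Thus $\varrho_{n}\diamond g \to g$ pointwise on $\O$. For the domination, note first that $|[\varrho_{n}\diamond g](\x)| \leq \|g\|_{\infty}$ for all $\x$ and $n$; moreover, for $n\geq 1$, $[\varrho_{n}\diamond g](\x)$ can be nonzero only if $\Phi(\x,-s)\in K$ for some $s\in[0,1]$, i.e.\ only if $\x$ lies in the compact set $K_{1}:=\{\Phi(\y,s)\,;\,\y\in K,\ 0\leq s\leq 1\}$, and $\mu(K_{1})<\infty$ since $\mu$ is Radon. As $g$ itself vanishes outside $K\subset K_{1}$, we obtain $|[\varrho_{n}\diamond g](\x)-g(\x)|^{p} \leq (2\|g\|_{\infty})^{p}\chi_{K_{1}}(\x) \in L^{1}(\O,\d\mu)$, and Lebesgue's dominated convergence theorem yields $\|\varrho_{n}\diamond g - g\|_{p}\to 0$. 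Inserting this into the previous display gives $\limsup_{n}\|\varrho_{n}\diamond f - f\|_{p} \leq 2\|f-g\|_{p}$ for every $g \in \mathcal{C}_{0}^{1}(\O)$, and letting $g\to f$ in $L^{p}$ concludes.

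I do not anticipate any genuine obstacle; this is the verbatim analogue of the Euclidean statement. The two points that deserve an explicit word are: (i) for $n\geq 1$, $\varrho_{n}\diamond g$ is supported in one fixed compact set, which is what legitimises dominated convergence even when $\mu(\O)=\infty$; and (ii) $\t_{-}(\x)>0$ holds at \emph{every} point of $\O$ (and not merely $\mu$-a.e.), so that the pointwise convergence above is truly pointwise on $\O$. One could instead deduce the statement from the $L^{1}$-version of \cite[Section 3]{mjm1} applied to $|f|^{p}$ through \eqref{eq:convo}, but that route needs an additional uniform-convexity argument and is less direct than the one above.
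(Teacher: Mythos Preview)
Your argument is correct. The reduction to compactly supported smooth functions via the contraction bound \eqref{contrac} is exactly what is needed, and your pointwise-convergence-plus-domination package is clean.

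The paper's proof reaches the same conclusion by a slightly more explicit route: it fixes a continuous compactly supported $f$, sets $\mathcal{O}_n=\mathrm{supp}(\varrho_n\diamond f)\cup\mathrm{supp}(f)$ and splits $\mathcal{O}_n$ into the ``boundary layer'' $\mathcal{O}_n^{-}=\{\x\in\mathcal{O}_n:\t_-(\x)<1/n\}$ and its complement. On $\mathcal{O}_n\setminus\mathcal{O}_n^{-}$ the full mollifier applies and \emph{uniform} continuity of $f$ on the fixed compact $\mathcal{O}_1$ gives a uniform $\varepsilon$-bound; on $\mathcal{O}_n^{-}$ both $|f|^p$ and $|\varrho_n\diamond f|^p$ are bounded and $\mu(\mathcal{O}_n^{-})\to 0$. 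Your dominated-convergence argument absorbs both pieces at once: the boundary layer disappears because $\t_-(\x)>0$ at every $\x\in\O$ forces pointwise convergence everywhere, and your fixed compact $K_1$ plays the role of the paper's $\mathcal{O}_1$. Either way the crux is the same (fixed compact support independent of $n$, plus continuity along the flow); your packaging is just more economical.
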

\begin{proof} The proof is very similar to that of \cite[Proposition 3.8]{mjm1}. Let us fix a nonnegative $f$ continuous over $\O$ and
compactly supported. We introduce, for any $n \in \mathbb{N}$, $\mathcal{O}_{n}:=\mathrm{supp}(\varrho_n \diamond f) \cup \mathrm{supp}(f)$ and 
$\mathcal{O}_{n}^{-}=\{\x \in \mathcal{O}_{n}\;;\;\t_{-}(\x) <1/n\}.$  Since  $\sup_{\x \in \O}
|\varrho_n \diamond f(\x)| \leq \sup_{\x \in \O} |f(\x)|,$ for any
$\varepsilon > 0$, there exists $n_0 \geq 1$ such that
$$
\int_{\mathcal{O}_n^-}|f(\x)|^{p}\d\mu(\x) \leq \varepsilon, \quad \text{ and }
\quad \int_{\mathcal{O}_n^-}|\varrho_n \diamond f(\x)|^{p}\d\mu(\x) \leq
\varepsilon \qquad \forall n \geq n_0.$$
Now, noticing that $
\mathrm{Supp}(\varrho_n \diamond f -f) \subset \mathcal{O}_n $, one
has for any $n \geq n_0$,
\begin{equation}
\label{eq:on-}\int_{\O}|\varrho_n \diamond f-f|^{p}\d\mu=\int_{\mathcal{O}_n}|\varrho_n
\diamond f-f|^{p}d\mu \leq 2\varepsilon + \int_{\mathcal{O}_n \setminus
 \mathcal{O}_n^-}|\varrho_n \diamond f-f|^{p}\d\mu.\end{equation}
For any $\x \in \mathcal{O}_n \setminus \mathcal{O}_n^-$, since
$\varrho$ is supported in $[0,1/n]$, one has
\begin{equation*}\begin{split}
[\varrho_n \diamond f](\x)-f(\x) &=\int_0^{1/n}\varrho_n(s)
 f({{\Phi}}(\x, -s))\d
s-f(\x)\\
&=\int_0^{1/n}\varrho_n(s)\left(f({{\Phi}}(\x,
-s))-f(\x)\right)\d s.\end{split}\end{equation*} 
Then, as in the previous Lemma, one deduces from Holder inequality that
$$\left|[\varrho_n \diamond f](\x)-f(\x)\right|^{p} \leq \int_{0}^{1/n}\varrho_{n}(s)\left|f({\Phi}(\x,-s)-f(\x)\right|^{p}\d s.$$
As in the proof of \cite[Proposition 3.8]{mjm1}, one sees that, because $f$ is uniformly continuous on $\mathcal{O}_{1}$, there exists some $n_1 \geq 0$, such that 
$$\underset{\x \in \mathcal{O}_{1}}{\sup_{s \in (0,1/n)}}\left|f({\Phi}(\x,-s)-f(\x)\right|^{p} \leq \varepsilon \qquad \forall n \geq n_{1}$$
which results in 
$\left|[\varrho_n \diamond f](\x)-f(\x)\right|^{p} \leq \varepsilon$ for any $\x \in \mathcal{O}_n
\setminus \mathcal{O}_n^-$ and any $n \geq n_1$. One obtains then,
for any $n \geq n_1$,
$$\int_{\O}|\varrho_n \diamond f-f|^{p}\d\mu \leq 2\varepsilon +\varepsilon
\mu(\mathcal{O}_n \setminus \mathcal{O}_n^-) \leq 2\varepsilon
+\varepsilon \mu(\mathcal{O}_1)$$ which proves the
result.\end{proof}
As in \cite[Lemma 3.9, Proposition 3.11]{mjm1}, one has the following
\begin{lemme}\label{moll} Given $f \in L^p(\O,\d\mu)$, set $f_n=\varrho_n \diamond f$, $n \in \mathbb{N}$.
Then, $f_n \in \D(\T_{\mathrm{max},\,p})$ with
\begin{equation}
[\T_{\mathrm{max},\,p}
f_n](\x)=-\int_0^{\t_-(\x)}\varrho_n'(s)f({{\Phi}}(\x,-s))\d
s, \qquad \x \in \O.\end{equation}

Moreover,  for $f \in \D(\T_{\mathrm{max},\,p}  )$, then
\begin{equation}\label{claimdiamond}
[\T_{\mathrm{max},\,p}  (\varrho_n \diamond f)](\x)=[\varrho_n \diamond
\T_{\mathrm{max},\,p} f](\x),\qquad \qquad (\x \in \O\,,\,n \in
\mathbb{N}).\end{equation}
\end{lemme}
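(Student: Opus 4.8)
The plan is to imitate the $L^1$ argument of \cite[Lemma 3.9, Proposition 3.11]{mjm1}, using the test-function characterization of $\D(\T_{\mathrm{max},\,p})$ together with the $L^p$-integrability already established for $\varrho_n\diamond f$. First I would verify that $f_n:=\varrho_n\diamond f$ lies in $\D(\T_{\mathrm{max},\,p})$ with the claimed derivative. Fix $\psi\in\mathfrak{Y}$. The key computation is to evaluate $\int_{\O}f_n(\x)\,\tfrac{\d}{\d s}\psi(\Phi(\x,s))\big|_{s=0}\,\d\mu(\x)$ by inserting the definition of $f_n$ and using the flow property $\Phi(\Phi(\x,s),-\sigma)=\Phi(\x,s-\sigma)$. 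Writing $g_{\sigma}(\x):=f(\Phi(\x,-\sigma))\chi_{\{\sigma<\t_-(\x)\}}$, one has $f_n(\x)=\int_0^{1/n}\varrho_n(\sigma)g_\sigma(\x)\,\d\sigma$, and a Fubini exchange (justified by the $L^p\times L^q$ estimate from the previous lemma and compact support of $\psi$) reduces the integral to $\int_0^{1/n}\varrho_n(\sigma)\big[\int_\O g_\sigma(\x)\tfrac{\d}{\d s}\psi(\Phi(\x,s))\big|_{s=0}\d\mu(\x)\big]\d\sigma$. On the inner integral I would change variables along the characteristic curves — formally $\x\mapsto\Phi(\x,\sigma)$, which is $\mu$-preserving by \eqref{ass:h2} — so that $g_\sigma(\x)\rightsquigarrow f(\x)$ and the derivative term becomes $\tfrac{\d}{\d s}\psi(\Phi(\x,s+\sigma))\big|_{s=0}=\tfrac{\d}{\d \sigma}\psi(\Phi(\x,\sigma))$. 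Integrating by parts in $\sigma$ against $\varrho_n$ (the boundary terms vanish since $\varrho_n$ is supported in $(0,1/n)$) transfers the $\sigma$-derivative onto $\varrho_n$, producing $-\int_0^{\t_-(\x)}\varrho_n'(\sigma)f(\Phi(\x,-\sigma))\d\sigma$ as the function $g$ paired with $\psi$; since this is in $L^p(\O,\d\mu)$ by the same Hölder argument as before, we conclude $f_n\in\D(\T_{\mathrm{max},\,p})$ with the stated formula for $\T_{\mathrm{max},\,p}f_n$.

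For the second identity \eqref{claimdiamond}, assume now $f\in\D(\T_{\mathrm{max},\,p})$. I would use Theorem \ref{representation} to pick the representative $f^\sharp$ satisfying \eqref{integralTM-}, so that along $\mu$-a.e.\ characteristic curve $\sigma\mapsto f^\sharp(\Phi(\x,-\sigma))$ is absolutely continuous with derivative $-[\T_{\mathrm{max},\,p}f](\Phi(\x,-\sigma))$. Then in the expression $[\T_{\mathrm{max},\,p}f_n](\x)=-\int_0^{\t_-(\x)}\varrho_n'(\sigma)f^\sharp(\Phi(\x,-\sigma))\d\sigma$ one integrates by parts in $\sigma$: the boundary terms again vanish (support of $\varrho_n$ is interior to $(0,\t_-(\x))$ once $\t_-(\x)>1/n$, and for $\t_-(\x)\le 1/n$ a small separate check using $f^\sharp\to\B^-f$-type limits — or simply that the contribution is on a set handled as in \cite[Proposition 3.11]{mjm1} — closes the gap), leaving $\int_0^{\t_-(\x)}\varrho_n(\sigma)\big(-\tfrac{\d}{\d\sigma}f^\sharp(\Phi(\x,-\sigma))\big)\d\sigma=\int_0^{\t_-(\x)}\varrho_n(\sigma)[\T_{\mathrm{max},\,p}f](\Phi(\x,-\sigma))\d\sigma=[\varrho_n\diamond\T_{\mathrm{max},\,p}f](\x)$, which is exactly \eqref{claimdiamond}.

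The main obstacle I anticipate is the careful justification of the change of variables $\x\mapsto\Phi(\x,\sigma)$ inside the first computation: this is where measurability along characteristics and the invariance assumption \eqref{ass:h2} must be invoked precisely, and where the $L^1$-proof of \cite{mjm1} does the real work. In the $L^p$ setting the only additional point is integrability of the various double integrals, which however is already guaranteed by the Hölder estimate $|\varrho_n\diamond f|^p\le\varrho_n\diamond|f|^p$ and inequality \eqref{contrac} from the preceding lemma, together with the compact support and boundedness of $\psi$ and of $\tfrac{\d}{\d s}\psi(\Phi(\cdot,s))|_{s=0}$ built into the definition of $\mathfrak{Y}$. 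Thus the extension from $p=1$ to general $p$ is genuinely routine once these integrability bounds are in place, and I would simply refer to \cite[Lemma 3.9, Proposition 3.11]{mjm1} for the structural parts of the argument while highlighting the Hölder-type replacements.
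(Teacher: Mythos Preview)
Your argument for the first half --- showing $f_n\in\D(\T_{\mathrm{max},\,p})$ with the displayed formula for $\T_{\mathrm{max},\,p}f_n$ --- is correct and is exactly the route of \cite[Lemma 3.9]{mjm1}; the only $L^p$-specific addition is the integrability of $g(\x)=-\int_0^{\t_-(\x)}\varrho_n'(s)f(\Phi(\x,-s))\,\d s$, which follows from the same H\"older estimate as \eqref{contrac}.

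There is, however, a genuine circularity in your proof of \eqref{claimdiamond}. You invoke Theorem~\ref{representation} to obtain the representative $f^\sharp$ and then integrate by parts along characteristics. But Lemma~\ref{moll} lives inside Appendix~\ref{app:techn}, whose sole purpose is to \emph{prove} Theorem~\ref{representation}: the identity \eqref{claimdiamond} is precisely what Proposition~\ref{prop312} uses to establish the implication $(2)\Rightarrow(1)$ of that theorem. So Theorem~\ref{representation} is not yet available when you prove Lemma~\ref{moll}.

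The non-circular route --- the one \cite[Proposition 3.11]{mjm1} follows and the paper is citing --- never leaves the weak definition. For $\psi\in\mathfrak{Y}$, compute $\int_\O(\varrho_n\diamond\T_{\mathrm{max},\,p}f)\,\psi\,\d\mu$ by the same Fubini and flow-change-of-variables you already used in the first half, arriving at
\[
\int_0^{1/n}\varrho_n(\sigma)\int_\O[\T_{\mathrm{max},\,p}f](\x)\,\psi(\Phi(\x,\sigma))\,\d\mu(\x)\,\d\sigma.
\]
The point is that $\psi_\sigma:=\psi(\Phi(\cdot,\sigma))$ is again a test function in $\mathfrak{Y}$ (for $\sigma$ small relative to the compact support of $\psi$), so the \emph{definition} of $\T_{\mathrm{max},\,p}f$ turns the inner integral into $\int_\O f(\x)\,\tfrac{\d}{\d\sigma}\psi(\Phi(\x,\sigma))\,\d\mu(\x)$. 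But this is exactly the expression you obtained in the first half for $\int_\O f_n\,\tfrac{\d}{\d s}\psi(\Phi(\cdot,s))|_{s=0}\,\d\mu=\int_\O[\T_{\mathrm{max},\,p}f_n]\,\psi\,\d\mu$ \emph{before} you integrated by parts in $\sigma$. Hence both sides of \eqref{claimdiamond} pair identically with every such $\psi$, and one concludes without any appeal to a pointwise representative. The handling of the restriction ``$\sigma$ small relative to $\mathrm{supp}\,\psi$'' and the measurability issues are the structural parts already carried out in \cite{mjm1}.
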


We are in position to prove the following
\begin{propo}\label{prop312} Let $f \in L^p(\O,\d\mu)$ and $f_n=\varrho_n \diamond f$, $n \in \mathbb{N}$. Then, for $\mu_-$-- a. e.
$\y \in \Gamma_-$,
\begin{equation}\label{fny-}
f_n({{\Phi}}(\y,s))-f_n({{\Phi}}(\y,t))=\int_s^t
[\T_{\mathrm{max},\,p} f_n]({{\Phi}}(\y,r))\d r \qquad \forall 0 <
s < t < \t_+(\y).\end{equation} In the same way, for almost every
$\z \in \Gamma_+$,
\begin{equation}\label{fny+}
f_n({{\Phi}}(\z,-s))-f_n({{\Phi}}(\z,-t))=-\int_s^t [\T_{\mathrm{max},\,p}
f_n]({{\Phi}}(\z,-r))\d r, \qquad \forall 0 < s < t <
\t_-(\z).\end{equation}
Moreover, for any $f \in \D(\T_{\mathrm{max},\,p})$, there exist some functions
$\widetilde{f}_{\pm} \in L^p(\O_\pm,\d\mu)$ such that
$\widetilde{f}_\pm(\x)=f(\x)$ for $\mu$- almost every $\x \in
\O_\pm$ and, for $\mu_-$--almost every $\y \in \Gamma_-$:
\begin{equation}\label{fy-}
\widetilde{f }_-({{\Phi}}(\y,s))-\widetilde{f
}_-({{\Phi}}(\y,t))=\int_s^t [\T_{\mathrm{max},\,p} f]
({{\Phi}}(\y,r))\d r \qquad \forall 0 < s < t <
\t_+(\y),\end{equation} while, for $\mu_+$--almost every $\z \in
\Gamma_+$:
\begin{equation}\label{fy+}
\widetilde{f }_+({{\Phi}}(\z,-s))-\widetilde{f
}_+({{\Phi}}(\z,-t))=-\int_s^t [\T_{\mathrm{max},\,p} f ]
({{\Phi}}(\z,-r))\d r \qquad \forall 0 < s < t < \t_-(\z).
\end{equation}
\end{propo}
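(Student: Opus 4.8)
The plan is to treat the mollified functions first and then pass to the limit. For the mollified part, I would fix $n$ and exploit Lemma \ref{moll}: we know $f_n = \varrho_n \diamond f \in \D(\T_{\mathrm{max},\,p})$ with an explicit integral formula for $\T_{\mathrm{max},\,p}f_n$. Since $\varrho_n \in \Con_0^\infty$ and is supported in $[0,1/n]$, for each fixed $\x \in \O$ the map $s \mapsto f_n(\Phi(\x,s))$ is actually differentiable in $s$ with derivative equal to $(\T_{\mathrm{max},\,p}f_n)(\Phi(\x,s))$ whenever the characteristic segment stays in $\O$ — this is a direct computation: differentiate under the integral sign using the group property $\Phi(\Phi(\x,s),-r) = \Phi(\x,s-r)$. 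Once this pointwise differentiability along characteristics is established, \eqref{fny-} and \eqref{fny+} follow by the fundamental theorem of calculus applied along the curve $r \mapsto \Phi(\y,r)$ for $\y \in \Gamma_-$ (resp. $r \mapsto \Phi(\z,-r)$ for $\z \in \Gamma_+$), with the parametrization \eqref{10.47} ensuring this holds for $\mu_\mp$-a.e.\ point of the boundary.

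For the second part, given $f \in \D(\T_{\mathrm{max},\,p})$, I would set $f_n = \varrho_n \diamond f$ and $g = \T_{\mathrm{max},\,p}f$, so that by \eqref{claimdiamond} we have $\T_{\mathrm{max},\,p}f_n = \varrho_n \diamond g$. By Proposition \ref{approx}, $f_n \to f$ and $\varrho_n \diamond g \to g$ in $L^p(\O,\d\mu)$. Using the parametrization \eqref{10.47} over $\Gamma_-$, convergence in $L^p(\O,\d\mu)$ means $\int_{\Gamma_-}\d\mu_-(\y)\int_0^{\t_+(\y)} |f_n(\Phi(\y,t)) - f(\Phi(\y,t))|^p\,\d t \to 0$, hence along a subsequence, for $\mu_-$-a.e.\ $\y \in \Gamma_-$, the functions $t \mapsto f_n(\Phi(\y,t))$ converge to $t \mapsto f(\Phi(\y,t))$ in $L^p_{\mathrm{loc}}(0,\t_+(\y))$, and similarly $t \mapsto [\T_{\mathrm{max},\,p}f_n](\Phi(\y,t)) \to t \mapsto g(\Phi(\y,t))$ in $L^p_{\mathrm{loc}}$. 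Passing to the limit in \eqref{fny-} along this subsequence: the right-hand side converges because $L^p_{\mathrm{loc}}$-convergence on a bounded interval implies $L^1$-convergence, while the left-hand side, being a difference of a uniformly-$L^p_{\mathrm{loc}}$-Cauchy sequence evaluated at fixed points, converges for a.e.\ pair $(s,t)$. This identifies a representative $\widetilde f_-$ of $f$ on $\O_-$, absolutely continuous along a.e.\ characteristic, satisfying \eqref{fy-}; the argument on $\Gamma_+$ giving $\widetilde f_+$ and \eqref{fy+} is entirely symmetric, using the parametrization \eqref{10.49}–\eqref{10.51}.

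The main obstacle I anticipate is the subsequence-extraction and pointwise-limit bookkeeping: one must be careful that the exceptional $\mu_\mp$-null sets of boundary points (coming from the Fubini/parametrization arguments, and from extracting subsequences converging a.e.\ on each fiber) can be chosen uniformly, and that the identity \eqref{fy-} — which must hold for \emph{all} $0 < s < t < \t_+(\y)$, not just a.e.\ pair — survives the limit. The resolution is that once we know the limiting identity holds for a.e.\ pair $(s,t)$ with an $L^1_{\mathrm{loc}}$ right-hand side, the map $t \mapsto f(\Phi(\y,t))$ agrees a.e.\ with an absolutely continuous function on $(0,\t_+(\y))$; redefining $\widetilde f_-$ to be that continuous representative along each characteristic (which changes $f$ only on a $\mu$-null set, by \eqref{10.47} again, since a set meeting each characteristic in a Lebesgue-null set is $\mu$-null) upgrades the a.e.\ identity to the claimed identity for all $s < t$. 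This is exactly the same mechanism used in \cite[Lemma 3.9, Proposition 3.11]{mjm1} and needs only the $L^p$ replacements already recorded in the preceding lemmas.
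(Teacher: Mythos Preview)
Your proposal is correct and follows essentially the same route as the paper: mollify, use Lemma~\ref{moll} to get the explicit derivative along characteristics and hence \eqref{fny-}--\eqref{fny+} by the fundamental theorem of calculus, then pass to the limit using Proposition~\ref{approx}, \eqref{claimdiamond}, and the fiberwise parametrization \eqref{10.47}. The only minor difference is in the endgame: the paper fixes a single reference point $s_0$ on the fiber where $f_n(\Phi(\y,s_0))$ converges pointwise, and then uses the convergence of the integral $\int_{s_0}^s [\T_{\mathrm{max},\,p}f_n](\Phi(\y,r))\,\d r$ for \emph{every} $s$ (via H\"older) to force $f_n(\Phi(\y,s))$ to converge for \emph{every} $s$, defining $\widetilde f_-$ directly as that limit --- so the identity \eqref{fy-} holds for all $s<t$ without a separate ``upgrade from a.e.\ pairs to all pairs'' step.
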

 
\begin{proof} The proof is very similar to that of \cite[Propositions 3.13 \& 3.14]{mjm1}. Because $f \in L^{p}(\O,\d\mu)$, the integral
$$\int_{0}^{\t_+(\y)}|f({{\Phi}}(\y,r))|^{p}\d r$$
exists and is finite for  $\mu_-$-almost every $\y \in \Gamma_-$. As such, for
$\mu_-$-almost every $\y \in \Gamma_-$ and  any $0 < t < \t_+(\y)$, one has
\begin{multline*}
\left|\int_{0}^{t}\varrho_{n}(t-s)f({\Phi}(\y,s))\d s\right| \leq \int_{0}^{t}\varrho_{n}(t-s)|f({\Phi}(\y,s))|\d s\\
\leq \left(\int_{0}^{t}\varrho_{n}(t-s)\d s\right)^{1/q}\,\left(\int_{0}^{t}\varrho_{n}(t-s)|f({\Phi}(\y,s))|^{p}\d s\right)^{1/p} < \infty\end{multline*}
This shows that, for $\mu_{-}$-almost every $\y \in \Gamma_{-}$ and any $0<t<\t_{+}(\y)$, the quantity $\int_{0}^{t}\varrho_{n}(t-s)f({\Phi}(\y,s))\d s$ is well-defined and finite. The same argument shows that also $\int_0^t \varrho_n'(t-s) f({{\Phi}}(\y,s))\d s$ is
well-defined and finite for $\mu_{-}$-almost every $\y \in \Gamma_{-}$ and any $0<t<\t_{+}(\y)$. The rest of the proof is exactly as in \cite[Proposition 3.13]{mjm1} by virtue of Lemma \ref{moll} yielding to \eqref{fny-}--\eqref{fny+}. 

The proof of \eqref{fy-}--\eqref{fy+} is deduced from \cite[Proposition 3.14]{mjm1}. Namely, for any $n \geq 1$, set $f_n=\varrho_n \diamond
f$, so that, from Proposition  \ref{approx} and \eqref{claimdiamond},
$\lim_{n \to \infty}\|f_n -f\|_{p}  + \|\T_{\mathrm{max},\,p} f_n
-\T_{\mathrm{max},\,p} f\|_{p} =0.$ In particular,  from
Eq. \eqref{10.47}
\begin{multline*}
\int_{\Gamma_-}\d\mu_-(\y)\int_0^{\t_+(\y)}\left|f_n({{\Phi}}(\y,s))-f({{\Phi}}(\y,s))\right|^{p}\d
s \\
+ \int_{\Gamma_-}\d\mu_-(\y)\int_0^{\t_+(\y)}\left|[\T_{\mathrm{max},\,p}
f_n]({\Phi}(\y,s))- [\T_{\mathrm{max},\,p}
f]({{\Phi}}(\y,s))\right|^{p}\d s \underset{n\to
\infty}{\longrightarrow} 0\end{multline*} since $\T_{\mathrm{max},\,p} f$
and $\T_{\mathrm{max},\,p} f_n$ both belong to $L^p(\O,\d\mu)$.
Consequently, for almost every $\y \in \Gamma_-$ (up to a
subsequence, still denoted by $f_n$) we get
\begin{equation}\label{eq:conv}
\begin{cases}
f_n({{\Phi}}(\y,\cdot)) \longrightarrow f({{\Phi}}(\y,\cdot))\\
[\T_{\mathrm{max},\,p} f_n ]({{\Phi}}(\y,\cdot )) \longrightarrow
[\T_{\mathrm{max},\,p} f ]({{\Phi}}(\y,\cdot )) \quad \text{ in }
\quad L^p((0,\t_+(\y))\,,\d s)\end{cases}\end{equation} as $n \to
\infty$. Let us fix $\y \in \Gamma_-$ for which this holds. Passing
again to a subsequence, we may assume that $f_n({{\Phi}}(\y,s
))$ converges (pointwise) to $f({{\Phi}}(\y,s ))$ for almost
every $s \in (0,\t_+(\y))$. Let us fix such a $s_0$. From \eqref{fny-} 
\begin{equation}\label{eq:fnTfn}
f_n({{\Phi}}(\y,s_0))-f_n({{\Phi}}(\y,s ))=\int_{s_0}^s[\T_{\mathrm{max},\,p} f_n]({{\Phi}}(\y,r ))\d
r \qquad \forall s \in (0,\t_+(\y)).\end{equation} Now, from  H\"older inequality,
\begin{multline*}
\left|\int_{s_0}^s[\T_{\mathrm{max},\,p} f_n]({{\Phi}}(\y,r ))\d
r - \int_{s_0}^s[\T_{\mathrm{max},\,p} f]({{\Phi}}(\y,r ))\d
r\right| \leq |s-s_{0}|^{1/q}\\
\left(\int_{s_{0}}^{s}\left|[\T_{\mathrm{max},\,p} f_n]({{\Phi}}(\y,r ))-[\T_{\mathrm{max},\,p} f]({{\Phi}}(\y,r ))\right|^{p}\d r\right)^{1/p}\end{multline*}
and the last term goes to zero as $n \to \infty$ from \eqref{eq:conv}. Hence,  one sees that the right-hand-side of \eqref{eq:fnTfn} converges  for any $s \in (0,\t_{+}(\y))$ as $n \to \infty$. 
Therefore,  the second term on the left-hand
side also must converge as $n \to \infty.$ Thus, for any $s \in
(0,\t_+(\y))$, the limit
$$\lim_{n \to \infty}f_n({{\Phi}}(\y,s))=\widetilde{f}_-({{\Phi}}(\y,s ))$$
exists and, for any $0 < s < \t_+(\y)$
\begin{equation*}\label{d/ds}
\widetilde{f}_-({{\Phi}}(\y,s
))=\widetilde{f}_-({{\Phi}}(\y,s_0 ))-\int_{s_0}^s
[\T_{\mathrm{max},\,p} f]({{\Phi}}(\y,r ))\d r.\end{equation*}  It
is easy to check then that $\widetilde{f}_-(\x)=f(\x)$ for almost
every $\x \in \O_-$. The same arguments lead to the existence of
$\widetilde{f}_+$.
\end{proof}
The above result shows that the mild formulation of Theorem
\ref{representation} is fulfilled for any $\x \in \O_- \cup \O_+$.
It remains to deal with $\O_\infty:=\O_{-\infty}\cap \O_{+\infty}$ and one has the following whose proof is a slight modification of the one of \cite[Proposition 3.15]{mjm1} as in the above proof:
\begin{propo}\label{f+-infinity} Let $f \in \D(\T_{\mathrm{max},\,p})$. Then, there exists a set $\mathcal{O}
\subset \O_\infty$ with $\mu(\mathcal{O})=0$ and a function
$\widetilde{f}$ defined on $\{\z={{\Phi}}(\x,t),\,\x \in
\O_\infty \setminus \mathcal{O},\,t \in \mathbb{R}\}$ such that
$f(\x)=\widetilde{f}(\x)$ $\mu$-almost every $\x \in \O_\infty$ and
$$\widetilde{f}({{\Phi}}(\x,s))-\widetilde{f}({{\Phi}}(\x,t))=\int_s^t
[\T_{\mathrm{max},\,p} f ]({{\Phi}}(\x, r))\d r, \qquad \forall\,
\x \in \O_\infty \setminus \mathcal{O},\: s < t.$$
\end{propo}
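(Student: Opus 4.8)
The plan is to repeat the proof of Proposition~\ref{prop312} (that is, of \cite[Proposition 3.15]{mjm1}), the only real novelty being that the characteristic curves lying in $\O_{\infty}=\O_{-\infty}\cap\O_{+\infty}$ meet neither $\Gamma_{-}$ nor $\Gamma_{+}$, so one cannot anchor a parametrisation to a boundary and the integration formula \eqref{10.47} must be replaced by an argument resting on the flow-invariance \eqref{ass:h2}. Note first that $\O_{\infty}$ is a union of complete orbits, hence invariant under the flow; together with \eqref{ass:h2} this gives, for every $g\in L^{1}(\O,\d\mu)$ and every $T>0$,
$$\int_{-T}^{T}\left(\int_{\O_{\infty}}\left|g({\Phi}(\x,t))\right|\d\mu(\x)\right)\d t=2T\int_{\O_{\infty}}|g(\x)|\,\d\mu(\x)<\infty,$$
so that, by Fubini, for $\mu$-a.e. $\x\in\O_{\infty}$ the map $t\mapsto g({\Phi}(\x,t))$ belongs to $L^{1}(-T,T)$ for every $T$. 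Applied with $g=|f|^{p}$ and $g=|\T_{\mathrm{max},\,p}f|^{p}$, this shows that for $\mu$-a.e. $\x\in\O_{\infty}$ both $r\mapsto f({\Phi}(\x,r))$ and $r\mapsto[\T_{\mathrm{max},\,p}f]({\Phi}(\x,r))$ lie in $L^{p}_{\mathrm{loc}}(\mathbb{R})$, which makes the asserted identity meaningful there.

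Next, with $f_{n}=\varrho_{n}\diamond f$, Lemma~\ref{moll} gives $f_{n}\in\D(\T_{\mathrm{max},\,p})$ and $\T_{\mathrm{max},\,p}f_{n}=\varrho_{n}\diamond\T_{\mathrm{max},\,p}f$. Since $\t_{-}(\x)=\infty$ on $\O_{\infty}$ and $\varrho_{n}$ is supported in $[0,1/n]$, for every $\x\in\O_{\infty}$ one has $f_{n}({\Phi}(\x,t))=\int_{0}^{1/n}\varrho_{n}(s)f({\Phi}(\x,t-s))\,\d s$, which --- whenever $r\mapsto f({\Phi}(\x,r))$ is locally integrable --- is the ordinary one-dimensional mollification of this map, hence of class $C^{1}$ in $t$ with $\frac{\d}{\d t}f_{n}({\Phi}(\x,t))=\int_{0}^{1/n}\varrho_{n}'(s)f({\Phi}(\x,t-s))\,\d s=-[\T_{\mathrm{max},\,p}f_{n}]({\Phi}(\x,t))$ by Lemma~\ref{moll}. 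Integrating, for $\mu$-a.e. $\x\in\O_{\infty}$,
$$f_{n}({\Phi}(\x,s))-f_{n}({\Phi}(\x,t))=\int_{s}^{t}[\T_{\mathrm{max},\,p}f_{n}]({\Phi}(\x,r))\,\d r,\qquad s<t.$$

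Finally I would pass to the limit exactly as in Proposition~\ref{prop312}. As $f_{n}\to f$ and, by Proposition~\ref{approx}, $\T_{\mathrm{max},\,p}f_{n}=\varrho_{n}\diamond\T_{\mathrm{max},\,p}f\to\T_{\mathrm{max},\,p}f$ in $L^{p}(\O,\d\mu)$, applying the invariance identity with $g=|f_{n}-f|^{p}$ and with $g=|\T_{\mathrm{max},\,p}f_{n}-\T_{\mathrm{max},\,p}f|^{p}$, and using Fubini together with a diagonal extraction over $T\in\mathbb{N}$, yields one subsequence along which, for $\mu$-a.e. $\x\in\O_{\infty}$, $f_{n}({\Phi}(\x,\cdot))\to f({\Phi}(\x,\cdot))$ and $[\T_{\mathrm{max},\,p}f_{n}]({\Phi}(\x,\cdot))\to[\T_{\mathrm{max},\,p}f]({\Phi}(\x,\cdot))$ in $L^{p}_{\mathrm{loc}}(\mathbb{R})$. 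Fixing such an $\x$ outside a $\mu$-null set $\mathcal{O}$, and a point $s_{0}$ at which a further subsequence of $f_{n}({\Phi}(\x,s_{0}))$ converges, H\"older's inequality shows $\int_{s_{0}}^{s}[\T_{\mathrm{max},\,p}f_{n}]({\Phi}(\x,r))\,\d r$ converges for every $s$; hence $f_{n}({\Phi}(\x,s))$ converges (along that subsequence) for every $s$ to a limit $\widetilde f({\Phi}(\x,s))$ which is continuous in $s$, inherits the mild identity, and --- being the continuous representative of $s\mapsto f({\Phi}(\x,s))$ --- is independent of the choices made, so in particular the value of $\widetilde f$ at a point of the orbit of $\x$ does not depend on $\x$. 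Evaluating at $s=0$ yields $\widetilde f=f$ $\mu$-a.e. on $\O_{\infty}$, and $\widetilde f$ is then consistently defined on the flow-saturation of $\O_{\infty}\setminus\mathcal{O}$. The main obstacle is exactly this replacement of the parametrisation formula \eqref{10.47}: without a boundary cross-section one must use the invariance of $\mu$ both to give meaning to ``$\mu$-almost every characteristic curve'' in $\O_{\infty}$ and to transfer $L^{p}$-convergence on $\O$ into $L^{p}_{\mathrm{loc}}$-convergence along orbits; once this is in place the argument is a verbatim repetition of the one used for $\O_{\pm}$.
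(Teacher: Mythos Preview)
Your proof is correct and follows essentially the same approach as the paper, which itself only states that the proof is ``a slight modification of the one of \cite[Proposition 3.15]{mjm1} as in the above proof'' (i.e., the $L^{p}$-adaptation via H\"older already carried out in Proposition~\ref{prop312}). You have correctly identified the one genuine difference from the $\O_{\pm}$ case---the absence of a boundary parametrisation on $\O_{\infty}$ and the consequent need to invoke the flow-invariance \eqref{ass:h2} directly to transfer $L^{p}(\O)$-convergence into $L^{p}_{\mathrm{loc}}$-convergence along orbits---and handled it properly.
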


Combining all the above results, the proof of Theorem \ref{representation} becomes exactly the same as that of \cite[Theorem 3.6]{mjm1}.

\section{Additional properties of $\T_{\mathrm{max},\,p}$.}

We establish here several results, reminiscent of \cite{arlo11} about how $\T_{\mathrm{max},\,p}$ and some strongly continuous family of operators can interplay. We start with the following where we recall that $\D_{0}$ has been defined in the beginning of Section \ref{sec:explicit}
\begin{propo}\label{prop:Ut}
Let $(U(t))_{t\geq0}$ be a strongly continuous family of $\mathscr{B}(X)$. For any $f \in X$, set
$$I_{t}[f]=\int_{0}^{t}U(s)f\d s, \qquad \forall t \geq 0.$$
Assume that
\begin{enumerate}[i)]
\item For any $f \in \D_{0},$ the mapping $t \in [0,\infty) \mapsto U(t)f \in X$ is differentiable with 
$$\dfrac{\d}{\d t}U(t)f=U(t)\T_{\mathrm{max},\,p}f, \qquad t \geq 0.$$
\item For any $f \in \D_{0}$ and any $t \geq 0$, it holds that $U(t)f \in \D(\T_{\mathrm{max},\,p})$ with $\T_{\mathrm{max},\,p}U(t)f=U(t)\T_{\mathrm{max},\,p}f.$
\end{enumerate}
Then, the following holds
\begin{enumerate}
\item for any $f \in X$ and $t >0$, $I_{t}[f] \in \D(\T_{\mathrm{max},\,p})$ with
$$\T_{\mathrm{max},\,p}I_{t}[f]=U(t)f-U(0)f.$$
\item for any $f \in \D_{0}$ the mapping $t \in [0,\infty) \mapsto \B^{\pm}U(t)f \in Y_{p}^{\pm}$ is continuous and, 
$$\B^{\pm}I_{t}[f]=\int_{0}^{t}\B^{\pm}U(s)f \d s \qquad \forall t >0.$$
\end{enumerate}

Let now $f \in X$ be such that  $\B^{-}I_{t}[f] \in \lm$ for all $t >0$ with $t \in [0,\infty) \mapsto \B^{-}I_{t}[f] \in \lm \text{ continuous,}$
then,
$$\B^{+}I_{t}[f] \in \lp
\qquad \text{ and } \quad t \in [0,\infty) \mapsto \B^{+}I_{t}[f] \in \lp \text{ continuous.}$$
\end{propo}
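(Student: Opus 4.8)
The plan is to establish the three assertions by reducing everything to the dense subspace $\D_{0}$, using the closedness of $\T_{\mathrm{max},\,p}$ (Remark \ref{rem:closed}), the continuity of the trace operators (Theorem \ref{theotrace}, Remark \ref{nb:graph-norm}), and Green's formula on $\mathscr{W}$ (Proposition \ref{propgreen1}). For assertion (1), I first note that for $f\in\D_{0}$ assumptions (i)--(ii) say precisely that $s\mapsto U(s)f$ is a continuous map from $[0,t]$ into $\D(\T_{\mathrm{max},\,p})$ equipped with the graph norm, since both $s\mapsto U(s)f$ and $s\mapsto\T_{\mathrm{max},\,p}U(s)f=U(s)\T_{\mathrm{max},\,p}f$ are continuous in $X$. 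Hence the Bochner integral $J_{t}:=\int_{0}^{t}U(s)f\,\d s$ exists in $\D(\T_{\mathrm{max},\,p})$, it equals $I_{t}[f]$ under the continuous embedding $\D(\T_{\mathrm{max},\,p})\hookrightarrow X$, and (since bounded operators commute with Bochner integrals) $\T_{\mathrm{max},\,p}I_{t}[f]=\int_{0}^{t}\T_{\mathrm{max},\,p}U(s)f\,\d s=\int_{0}^{t}\frac{\d}{\d s}U(s)f\,\d s=U(t)f-U(0)f$ by (i). For general $f\in X$ I approximate by $f_{n}\in\D_{0}$ with $f_{n}\to f$ (recall $\mathcal{C}_{0}^{1}(\O)\subset\D_{0}$ is dense in $X$); since $M_{t}:=\sup_{s\in[0,t]}\|U(s)\|_{\mathscr{B}(X)}<\infty$ by the uniform boundedness principle, one has $I_{t}[f_{n}]\to I_{t}[f]$ and $\T_{\mathrm{max},\,p}I_{t}[f_{n}]=U(t)f_{n}-U(0)f_{n}\to U(t)f-U(0)f$ in $X$, and the closedness of $\T_{\mathrm{max},\,p}$ finishes the proof. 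For assertion (2), with $f\in\D_{0}$ the same continuity of $s\mapsto U(s)f$ into $\D(\T_{\mathrm{max},\,p})$ composed with the bounded operator $\B^{\pm}\colon\D(\T_{\mathrm{max},\,p})\to Y_{p}^{\pm}$ shows that $t\mapsto\B^{\pm}U(t)f$ is continuous in $Y_{p}^{\pm}$, and commuting $\B^{\pm}$ with the Bochner integral $J_{t}=I_{t}[f]$ gives $\B^{\pm}I_{t}[f]=\int_{0}^{t}\B^{\pm}U(s)f\,\d s$.

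For the final statement, let $f\in X$ satisfy the hypothesis. By assertion (1) we already know $I_{t}[f]\in\D(\T_{\mathrm{max},\,p})$ for every $t\geq0$, so from $\B^{-}I_{t}[f]\in\lm$ and Proposition \ref{propgreen1} we get $I_{t}[f]\in\mathscr{W}$, that is $\B^{+}I_{t}[f]\in\lp$; this takes care of membership. For continuity, fix $t\geq0$ and take $h$ small of either sign. Since $\mathscr{W}$ is a linear subspace and $\B^{\pm}$ is linear on it, $g:=I_{t+h}[f]-I_{t}[f]\in\mathscr{W}$ and Green's formula \eqref{greenform} yields
$$\|\B^{+}I_{t+h}[f]-\B^{+}I_{t}[f]\|_{\lp}^{p}=\|\B^{-}I_{t+h}[f]-\B^{-}I_{t}[f]\|_{\lm}^{p}-p\int_{\O}\mathrm{sign}(g)\,|g|^{p-1}\,\T_{\mathrm{max},\,p}g\,\d\mu.$$
Here $\T_{\mathrm{max},\,p}g=U(t+h)f-U(t)f$ by assertion (1), so H\"older's inequality bounds the last integral by $\|g\|_{p}^{p-1}\|U(t+h)f-U(t)f\|_{p}$; since $\|g\|_{p}\leq|h|\,M_{T}\,\|f\|_{p}\to0$ and $\|U(t+h)f-U(t)f\|_{p}$ stays bounded, that term vanishes as $h\to0$, while the first term on the right vanishes by the assumed continuity of $t\mapsto\B^{-}I_{t}[f]$ in $\lm$. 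Hence $t\mapsto\B^{+}I_{t}[f]\in\lp$ is continuous (the case $t=0$ being included since $I_{0}[f]=0$).

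The only genuinely delicate point is the interplay of the trace operators with integration in time: everything rests on viewing $s\mapsto U(s)f$ as continuous into the graph-norm space $\D(\T_{\mathrm{max},\,p})$ for $f\in\D_{0}$, so that Bochner integration, the closedness of $\T_{\mathrm{max},\,p}$ and the continuity of $\B^{\pm}$ can all be applied consistently; and, for the final statement, on the fact that Green's formula \eqref{greenform} is valid on all of $\mathscr{W}$ (Proposition \ref{propgreen1}) — this is exactly what turns $L^{p}$-control of the incoming trace of $I_{t}[f]$ into $L^{p}$-control, and then continuity, of the outgoing trace.
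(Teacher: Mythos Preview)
Your proof is correct and follows essentially the same route as the paper's own argument: for (1) you use closedness of $\T_{\mathrm{max},\,p}$ together with continuity of $s\mapsto U(s)f$ in the graph norm on $\D_{0}$ and then pass to general $f$ by density, for (2) you compose with the continuous trace operators, and for the final statement you apply Green's formula to the difference $I_{t+h}[f]-I_{t}[f]$ and estimate with H\"older. The only cosmetic difference is that you make the uniform-boundedness step explicit when extending (1) from $\D_{0}$ to $X$, which the paper leaves implicit.
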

\begin{proof} Under assumptions $i)-ii)$, for any $f \in \D_{0}$, since both the mappings $t \mapsto U(t)f$ and $t \mapsto \T_{\mathrm{max},\,p}U(t)f$ are continuous and $\T_{\mathrm{max},\,p}$ is closed one has $I_{t}[f] \in \D(\T_{\mathrm{max},\,p})$ with 
$$\T_{\mathrm{max},\,p}I_{t}[f]=\int_{0}^{t}\T_{\mathrm{max},\,p}U(s)f\d s=\int_{0}^{t}\dfrac{\d}{\d s}U(s)f \d s=U(t)f-U(0)f.$$
This proves that \textit{(1)} holds for $f \in \D_{0}$ and, since $\D_{0}$ is dense in $X$, the result holds for any $f \in X$. 

Let us prove \textit{(2)}. Pick $f \in \D_{0}$. Since the mapping $t \geq 0\mapsto U(t)f \in \D(\T_{\mathrm{max},\,p})$ is continuous for the graph norm on $\D(\T_{\mathrm{max},\,p})$ while $\B^{\pm}\::\:\D(\T_{\mathrm{max},\,p}) \mapsto Y_{p}^{\pm}$ is continuous (see Remark \ref{nb:graph-norm}), the mapping $t \geq 0 \mapsto \B^{\pm}U(t)f \in Y_{p}^{\pm}$ is continuous. Moreover, $\B^{\pm}I_{t}[f]=\int_{0}^{t}\B^{\pm}U(s)f \d s$ still thanks to the continuity on $\D(\T_{\mathrm{max},\,p})$ with the graph norm and point \textit{(1)}.

Let now $f \in X$ be given such that $\B^{-}I_{t}[f] \in \lm$ for all $t >0$ with $t \geq 0 \mapsto \B^{-}I_{t}[f] \in \lm$ continuous. 
For all $t \geq 0,$ $h \geq -t$, denote now
$$I_{t,t+h}[f]=\int_{t}^{t+h}U(s)f\d s=I_{t+h}[f]-I_{t}[f].$$
Since $I_{t}[f] \in \D(\T_{\mathrm{max},\,p})$ and $\B^{-}I_{t}[f] \in \lm$, one has clearly $I_{t,t+h}[f] \in \D(\T_{\mathrm{max},\,p})$ and $\B^{-}I_{t,t+h}[f] \in \lm$ and Green's formula \eqref{greenform} yields
\begin{multline*}
\|\B^{+}I_{t,t+h}[f]\|_{\lp}^{p}=\|\B^{-}I_{t,t+h}[f]\|_{\lm}^{p}\\
-p\int_{\O}\left|I_{t,t+h}[f]\right|^{p-1}\mathrm{sign}(I_{t,t+h}[f])\T_{\mathrm{max},\,p}I_{t,t+h}[f]\d\mu\\
\leq \|\B^{-}I_{t,t+h}[f]\|_{\lm}^{p}+p\|I_{t,t+h}[f]\|_{p}^{p-1}\,\|\T_{\mathrm{max},\,p}I_{t,t+h}[f]\|_{p}.\end{multline*}
Since $\T_{\mathrm{max},\,p}I_{t,t+h}[f]=U(t+h)f-U(t)f$, one gets
\begin{multline}\label{eq:bith}
\|\B^{+}\left(I_{t+h}[f]-I_{t}[f]\right)\|_{\lp}^{p}\leq \|\B^{-}\left(I_{t+h}[f]-I_{t}[f]\right)\|_{\lm}^{p}\\
+p\|I_{t+h}[f]-I_{t}[f]\|_{p}^{p-1}\,\|U(t+h)f-U(t)f\|_{p}.\end{multline}
The continuity of $s \geq 0\mapsto U(s)f \in X$ together with the one of $s \geq 0 \mapsto \B^{-}I_{s}[f] \in \lm$ gives then that 
$$\lim_{h \to 0}\|\B^{+}\left(I_{t+h}[f]-I_{t}[f]\right)\|_{\lp}=0$$
i.e. $t \geq 0 \mapsto \B^{+}I_{t}[f] \in \lp$ is continuous.

\end{proof}

We can complement the above with the following whose proof is exactly as that of \cite[Proposition 3]{arlo11} and is omitted here:
\begin{propo} \label{propo:11-3} Let $(U(t))_{t\geq0}$ be a strongly continuous family of $\mathscr{B}(X)$ satisfying the following, for any $f \in \D_{0}$:
\begin{enumerate}[i)]
\item For any $t \geq 0$, 
$$[U(t)f](\x)=0 \qquad \forall \x \in \O \text{ such that } \t_{-}(\x) \geq t.$$
\item For any  $\y \in \Gamma_{-}$, $t >0$, $0 < r < s < \t_{+}(\y)$, it holds
$$\left[U(t)f\right](\Phi(\y,s))=\left[U(t-s+r)f\right](\Phi(\y,r)).$$
\item the mapping $t \geq 0 \mapsto U(t)f \in X$ is differentiable with $\frac{\d}{\d t}U(t)f=U(t)\T_{\mathrm{max},\,p}f$ for any $t \geq 0.$
\end{enumerate}
Then, the following properties hold
\begin{enumerate}
\item For any $f \in X$ and any $t \geq 0$ and $\mu_{-}$-a.e. $\y \in \Gamma_{-}$, given $0 < s_{1} < s_{2} < \t_{+}(\y)$, there exists $0 < r < s_{1}$ such that
$$\int_{s_{1}}^{s_{2}}\left[U(t)f\right](\Phi(\y,s))\d s=\int_{t-s_{2}+r}^{t-s_{1}+r}\left[U(\t)f\right](\Phi(\y,r))\d\t.$$
\item For any $f \in \D_{0}$ and $t \geq 0$, one has $U(t)f \in \D(\T_{\mathrm{max},\,p})$ with $\T_{\mathrm{max},\,p}U(t)f=U(t)\T_{\mathrm{max},\,p}f.$
\end{enumerate}\end{propo}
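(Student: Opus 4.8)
The plan is to establish \textit{(1)} first --- it is essentially the integrated form of hypothesis \textit{ii)} --- and then to deduce \textit{(2)} from \textit{(1)} together with the differentiability hypothesis \textit{iii)}, through the ``mild formulation'' of Theorem \ref{representation}. Two remarks will be used repeatedly. First, hypothesis \textit{i)} at $t=0$ forces $U(0)=0$ (since $\t_-(\x)>0$ for every $\x\in\O$), and more generally $U(\t)g$ is supported, for any $g\in X$, in $\{\x\in\O\,;\,\t_-(\x)<\t\}\subset\O_-$; thus everything takes place along characteristic curves emanating from $\Gamma_-$, and the integration formula \eqref{10.47} is the natural bookkeeping device. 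Second, strong continuity of $(U(\t))_{\t\geq0}$ gives $\sup_{\t\in[0,T]}\|U(\t)\|_{\mathscr{B}(X)}<\infty$ for every $T$, so that $X$-convergence of a sequence translates, via \eqref{10.47}, into $L^p$-convergence along $\mu_-$-a.e.\ characteristic curve --- the mechanism already used in Appendix \ref{app:techn}.

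For \textit{(1)}, I would first take $f\in\D_{0}$, for which \textit{ii)} holds pointwise for every $\y\in\Gamma_-$. Fixing $\y$, $0<s_1<s_2<\t_+(\y)$ and any $r\in(0,s_1)$, one has $r<s$ throughout $[s_1,s_2]$, hence $[U(t)f](\Phi(\y,s))=[U(t-s+r)f](\Phi(\y,r))$ there, and the substitution $\t=t-s+r$ produces the claimed identity (with the harmless convention $U(\t)\equiv0$ for $\t\leq0$, consistent with \textit{i)} because $\t_-(\Phi(\y,r))=r$). For a general $f\in X$, approximate by $f_n\in\mathcal{C}^1_0(\O)\subset\D_{0}$ with $f_n\to f$ in $X$; then $\sup_{\t\in[0,T]}\|U(\t)f_n-U(\t)f\|_p\to0$, and \eqref{10.47} yields, after extracting a subsequence, convergence of $(\t,s)\mapsto[U(\t)f_n](\Phi(\y,s))$ to $(\t,s)\mapsto[U(\t)f](\Phi(\y,s))$ in $L^p((0,T)\times(0,\t_+(\y)))$ for $\mu_-$-a.e.\ $\y$. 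Since the map $(t,s,r)\mapsto(t-s+r,s,r)$ is measure preserving, hypothesis \textit{ii)} for $f_n$ passes to the limit, giving, for $\mu_-$-a.e.\ $\y$ and every $g\in X$, the a.e.\ identity $[U(t)g](\Phi(\y,s))=[U(t-s+r)g](\Phi(\y,r))$ (and, incidentally, the persistence of \textit{i)} for all $g\in X$); integrating this in $s$ over $(s_1,s_2)$ and changing variables gives \textit{(1)} for a.e.\ --- hence for some --- $r\in(0,s_1)$, first for a.e.\ $t$ and then, since both sides are continuous in $t$ (write them as integrals of $\t\mapsto[U(\t)g](\Phi(\y,r))$ over intervals with continuously moving endpoints), for every $t\geq0$.

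For \textit{(2)}, the crux is the auxiliary statement that, for every $g\in X$ and $t>0$, $\I_t[g]:=\int_0^tU(\sigma)g\,\d\sigma\in\D(\T_{\mathrm{max},\,p})$ with $\T_{\mathrm{max},\,p}\I_t[g]=U(t)g$ (recall $U(0)=0$). To prove it I would verify condition \textit{(1)} of Theorem \ref{representation} directly. For $\mu_-$-a.e.\ $\y$ put $w_{\y,r}(\t):=[U(\t)g](\Phi(\y,r))$, which lies in $L^1_{\mathrm{loc}}$ (by \eqref{10.47} and the local boundedness above) and vanishes for $\t\leq r$ (by \textit{i)}); set $W_{\y,r}(x):=\int_0^xw_{\y,r}(\t)\,\d\t$ and define $[\I_t[g]]^\sharp(\Phi(\y,s)):=W_{\y,r}(\max(0,t-s+r))$ for any $r\in(0,s)$. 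The a.e.\ form of \textit{ii)} gives the translation relation $w_{\y,r}(\t)=w_{\y,r'}(\t-r+r')$, from which one checks that $[\I_t[g]]^\sharp$ is independent of the chosen $r$, continuous along the curve, and coincides a.e.\ with $[\I_t[g]](\Phi(\y,\cdot))$ --- hence is a genuine representative; moreover its increment between $s_1$ and $s_2$ equals $\int_{s_1}^{s_2}[U(t)g](\Phi(\y,s))\,\d s$ by a change of variables and \textit{(1)}. Since $\I_t[g]$ and $U(t)g$ vanish $\mu$-a.e.\ on $\O_{-\infty}$, where the curve-wise identity reduces to $0=0$, Theorem \ref{representation} yields the auxiliary statement. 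Finally, for $f\in\D_{0}$, hypothesis \textit{iii)} together with $U(0)f=0$ gives $U(t)f=\int_0^tU(\sigma)\T_{\mathrm{max},\,p}f\,\d\sigma=\I_t[\T_{\mathrm{max},\,p}f]$, so the auxiliary statement with $g=\T_{\mathrm{max},\,p}f$ gives $U(t)f\in\D(\T_{\mathrm{max},\,p})$ with $\T_{\mathrm{max},\,p}U(t)f=U(t)\T_{\mathrm{max},\,p}f$, i.e.\ \textit{(2)}.

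The main obstacle is not conceptual but the bookkeeping of null sets: one must pass from $X=L^p(\O,\d\mu)$-level statements to identities valid along $\mu_-$-a.e.\ characteristic curve \emph{and} for all relevant values of the time parameters, which forces the combined use of \eqref{10.47}, Fubini's theorem and subsequence extraction exactly as in the mollification-along-characteristics lemmas of Appendix \ref{app:techn}. The delicate (but routine) point is to check that the curve-wise representative of $\I_t[g]$ above really is independent of the auxiliary parameter $r$ and extends continuously down to the entry point $s\to0^+$ of each curve. In fact the whole argument runs exactly as for \cite[Proposition~3]{arlo11}.
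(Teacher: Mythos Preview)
Your proposal is correct and, as you yourself note in the last line, follows exactly the route of \cite[Proposition~3]{arlo11}, which is precisely what the paper invokes (the proof is omitted there with a reference to \cite{arlo11}). Your reconstruction --- establishing \textit{(1)} by density from hypothesis \textit{ii)}, then proving the auxiliary fact $\I_t[g]\in\D(\T_{\mathrm{max},\,p})$ with $\T_{\mathrm{max},\,p}\I_t[g]=U(t)g$ via the mild formulation of Theorem~\ref{representation}, and finally deducing \textit{(2)} from $U(t)f=\I_t[\T_{\mathrm{max},\,p}f]$ --- is exactly the intended argument.
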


\section{On the family $(U_{k}(t))_{t \geq 0}$}

We prove that the family of operators $(U_{k}(t))_{t \geq 0}$ introduced in Definition \ref{defi:Uk} is well-defined and satisfies Theorem \ref{theo:UKT}.  The proof is made by induction and we start with a series of Lemmas (one for each of the above properties in Theorem \ref{theo:UKT}) showing that $U_{1}(t)$ enjoys all the listed properties.

As already mentioned, the fact that the mapping
$$\Phi\::\:\{(\y,s) \in \Gamma_{-}\times (0,\infty)\;;\;0 < s < \t_{+}(\y) \} \to \O_{-}$$
is a measure isomorphism, for any $f \in \D_{0}$ and $t >0$, the function $U_{1}(t)f$ is well-defined and measurable on $\O$. Moreover, using Proposition \ref{prointegra} and Fubini's Theorem:
\begin{equation*}\begin{split}
\|U_{1}(t)f\|_{p}^{p}&=\int_{\Gamma_{-}}\d\mu_{-}(\y)\int_{0}^{\t_{+}(\y)}\left|[U_{1}(t)f](\Phi(\y,s))\right|^{p}\d s\\
&=\int_{\Gamma_{-}}\d\mu_{-}(\y)\int_{0}^{\min(t,\t_{+}(\y))}\left|\left[H(\B^{+}U_{0}(t-s)f)\right](\y)\right|^{p}\d s\\
&\leq \int_{0}^{t} \|H(\B^{+}U_{0}(t-s)f)\|_{\lm}^{p}\d s. 
\end{split}
\end{equation*}
Therefore, 
$$\|U_{1}(t)f\|_{p}^{p} \leq \verti{H}^{p}\int_{0}^{t}\|\B^{+}U_{0}(t-s)f\|_{\lp}^{p}\d s=\verti{H}^{p}\left(\|f\|_{p}^{p}-\|U_{0}(t)f\|_{p}^{p}\right)$$
thanks to Proposition \ref{prop:B+U0}. Therefore $\|U_{1}(t)f\|_{p} \leq\verti{H}\,\|f\|_{p}$ for all $f \in \D_{0}$ with moreover
\begin{equation}\label{eq:limU1}
\lim_{t\to0^{+}}\|U_{1}(t)f\|_{p}=0 \qquad \forall f \in \D_{0}.\end{equation} 
Since $\D_{0}$ is dense in $X$, this allows to define a unique extension operator, still denoted by $U_{1}(t) \in \mathscr{B}(X)$ with 
$$\|U_{1}(t)\|_{\mathscr{B}(X)} \leq \verti{H}, \qquad \forall t \geq 0.$$
Now, one has the following
\begin{lemme}\label{lem:U1}
The family $(U_{1}(t))_{t\geq0}$ is strongly continuous on $X$.\end{lemme}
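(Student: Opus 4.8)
\textbf{Plan for the proof of Lemma~\ref{lem:U1}.} The claim is that the family $(U_{1}(t))_{t\geq 0}$ is strongly continuous on $X$. Since each $U_{1}(t)$ is a bounded operator with $\|U_{1}(t)\|_{\mathscr{B}(X)}\leq \verti{H}$ uniformly in $t$, and $\D_{0}$ is dense in $X$, a standard $\frac{\varepsilon}{3}$-argument reduces the problem to two facts: continuity at $t=0$ for $f\in\D_{0}$ — which is precisely \eqref{eq:limU1}, already established — and continuity at an arbitrary $t_{0}>0$ for $f\in\D_{0}$. So the real content is proving $\lim_{h\to 0}\|U_{1}(t_{0}+h)f-U_{1}(t_{0})f\|_{p}=0$ for $f\in\D_{0}$.

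First I would write out, using Proposition \ref{prointegra} and the parametrization of $\O_{-}$ by $\Gamma_{-}\times(0,\infty)$, the explicit formula
$$\|U_{1}(t_{0}+h)f-U_{1}(t_{0})f\|_{p}^{p}=\int_{\Gamma_{-}}\d\mu_{-}(\y)\int_{0}^{\t_{+}(\y)}\left|[H(\B^{+}U_{0}(t_{0}+h-s)f)](\y)\chi_{(0,t_{0}+h)}(s)-[H(\B^{+}U_{0}(t_{0}-s)f)](\y)\chi_{(0,t_{0})}(s)\right|^{p}\d s.$$
Assume $h>0$ (the case $h<0$ is symmetric). Split the inner integral over $s$ into $(0,t_{0})$ and $[t_{0},t_{0}+h)$. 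On the second piece the first term vanishes outside $(0,t_{0}+h)$ only at the endpoint, so it contributes at most $\verti{H}^{p}\int_{t_{0}}^{t_{0}+h}\|\B^{+}U_{0}(t_{0}+h-s)f\|_{\lp}^{p}\d s=\verti{H}^{p}\int_{0}^{h}\|\B^{+}U_{0}(\sigma)f\|_{\lp}^{p}\d\sigma$, which tends to $0$ as $h\to 0$ because $\sigma\mapsto\|\B^{+}U_{0}(\sigma)f\|_{\lp}^{p}$ is continuous (Proposition \ref{prop:B+U0}) hence locally integrable. On the first piece $(0,t_{0})$, after integrating in $s$ and then in $\y$ (Fubini), I bound the $L^{p}$-norm by $\verti{H}$ times $\left(\int_{0}^{t_{0}}\|\B^{+}U_{0}(t_{0}+h-s)f-\B^{+}U_{0}(t_{0}-s)f\|_{\lp}^{p}\d s\right)^{1/p}$, i.e. after the substitution $\sigma=t_{0}-s$, by $\verti{H}\left(\int_{0}^{t_{0}}\|\B^{+}U_{0}(\sigma+h)f-\B^{+}U_{0}(\sigma)f\|_{\lp}^{p}\d\sigma\right)^{1/p}$.

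It remains to show this last integral tends to $0$ as $h\to 0$. For $f\in\D_{0}$ we have $U_{0}(t)f-f=\I^{0}_{t}[\T_{0,\,p}f]$, hence $\B^{+}U_{0}(t)f=\B^{+}\I^{0}_{t}[\T_{0,\,p}f]$ and Proposition \ref{prop:B+U01} gives a quantitative modulus of continuity: applying \eqref{eq:b+I} with $f$ replaced by $\T_{0,\,p}f$,
$$\|\B^{+}U_{0}(\sigma+h)f-\B^{+}U_{0}(\sigma)f\|_{\lp}^{p}\leq h^{p-1}\left|\,\|U_{0}(\sigma+h)\T_{0,\,p}f\|_{p}^{p}-\|U_{0}(\sigma)\T_{0,\,p}f\|_{p}^{p}\,\right|\leq h^{p-1}\cdot 2\|\T_{0,\,p}f\|_{p}^{p}.$$
Integrating over $\sigma\in(0,t_{0})$ yields a bound $\leq 2\,t_{0}\,\|\T_{0,\,p}f\|_{p}^{p}\,h^{p-1}$, which vanishes as $h\to 0^{+}$ since $p>1$. (Alternatively, and without invoking $p>1$, one may note that $\sigma\mapsto\B^{+}U_{0}(\sigma)f\in\lp$ is continuous by Proposition \ref{prop:B+U0}, hence uniformly continuous on the compact interval $[0,t_{0}+1]$, so the integrand converges to $0$ uniformly in $\sigma$ and dominated convergence applies.) Combining the two pieces gives $\lim_{h\to 0^{+}}\|U_{1}(t_{0}+h)f-U_{1}(t_{0})f\|_{p}=0$; the same computation with the roles of $t_{0}$ and $t_{0}+h$ interchanged handles $h<0$, and the density argument then extends strong continuity from $\D_{0}$ to all of $X$. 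The only mildly delicate point is the bookkeeping of the characteristic-function cutoffs $\chi_{(0,t_{0}+h)}$ versus $\chi_{(0,t_{0})}$ when splitting the $s$-integral, but this is exactly the ``boundary layer'' term $\int_{0}^{h}\|\B^{+}U_{0}(\sigma)f\|_{\lp}^{p}\d\sigma$ already controlled above, so no real obstacle remains.
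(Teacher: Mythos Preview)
Your argument is correct and follows the same overall architecture as the paper's proof: reduce to $f\in\D_{0}$ by density and uniform boundedness, then split $\|U_{1}(t_{0}+h)f-U_{1}(t_{0})f\|_{p}^{p}$ into a ``bulk'' piece (corresponding to $s\in(0,t_{0})$) and a ``boundary-layer'' piece (corresponding to $s\in[t_{0},t_{0}+h)$). The paper organizes the same split spatially, writing $\O_{t}=\{\x:\tau_{-}(\x)\leq t\}$ and decomposing the integral over $\O_{t}$ versus $\O_{t+h}\setminus\O_{t}$, which is equivalent after the $\Gamma_{-}$-parametrization.

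The only genuine technical difference is in how the bulk piece $\int_{0}^{t_{0}}\|\B^{+}U_{0}(\sigma+h)f-\B^{+}U_{0}(\sigma)f\|_{\lp}^{p}\d\sigma$ is shown to vanish. You invoke the quantitative estimate \eqref{eq:b+I} (applied to $\T_{0,\,p}f$) to get an explicit $O(h^{p-1})$ bound, whereas the paper writes $U_{0}(\sigma+h)f-U_{0}(\sigma)f=U_{0}(\sigma)\bigl(U_{0}(h)f-f\bigr)$ and then applies the identity of Proposition~\ref{prop:B+U0} to the function $U_{0}(h)f-f$, obtaining the bound $\verti{H}^{p}\|U_{0}(h)f-f\|_{p}^{p}\to 0$. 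The paper's route avoids the dependence on $p>1$ and does not need the derivative $\T_{0,\,p}f$ explicitly; your route gives a sharper quantitative rate. For the boundary-layer piece, the paper identifies it exactly with $\|U_{0}(t)U_{1}(h)f\|_{p}^{p}$ via the pointwise identity \eqref{eq:Uoth} and then uses \eqref{eq:limU1}, while you bound it directly by $\verti{H}^{p}\int_{0}^{h}\|\B^{+}U_{0}(\sigma)f\|_{\lp}^{p}\d\sigma$; both are equally valid. In short: same decomposition, slightly different bookkeeping for each piece.
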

\begin{proof} Let $t >0$ be fixed. Set $\O_{t}=\{\x \in \O_{-}\:;\;\t_{-}(\x) \leq t \}.$ One has $[U_{1}(t)f](\x)=0$ for any $\x \in \O\setminus \O_{t}$ and any $f \in X.$ Let us fix $f \in \D_{0}$ and $h >0.$ One has
\begin{equation}\label{U1t+h}
\|U_{1}(t+h)f-U_{1}(t)f\|_{p}^{p}=\int_{\O_{t}}\left|U_{1}(t+h)f-U_{1}(t)f\right|^{p}\d\mu + \int_{\O_{t+h}\setminus\O_{t}}|U_{1}(t+h)f|^{p}\d\mu.\end{equation}
Now,
$$\int_{\O_{t}}\left|U_{1}(t+h)f-U_{1}(t)f\right|^{p}\d\mu=\int_{\Gamma_{-}}\d\mu_{-}(\y)\int_{0}^{\t_{+}(\y)}\left|\left[U_{1}(t+h)f-U_{1}(t)f\right](\Phi(\y,s))\right|^{p}\d s$$
and, repeating the reasoning before Lemma \ref{lem:U1} one gets
$$\int_{\O_{t}}\left|U_{1}(t+h)f-U_{1}(t)f\right|^{p}\d\mu \leq \verti{H}^{p}\int_{0}^{t}\|\B^{+}\left(U_{0}(s+h)f-U_{0}(s)f\right)\|_{\lp}^{p}\d s.$$
Since $U_{0}(s+h)f-U_{0}(s)f=U_{0}(s)\left(U_{0}(h)f-f\right)$ one gets from Proposition \ref{prop:B+U0} that
$$\int_{\O_{t}}\left|U_{1}(t+h)f-U_{1}(t)f\right|^{p}\d\mu \leq \verti{H}^{p}\left(\|U_{0}(h)f-f\|_{p}^{p}-\|U_{0}(t)\left(U_{0}(h)f-f\right)\|_{p}^{p}\right).$$
This proves that 
$$\lim_{h\to 0^{+}}\int_{\O_{t}}\left|U_{1}(t+h)f-U_{1}(t)f\right|^{p}\d\mu =0.$$
Let us investigate the second integral in \eqref{U1t+h}. One first notices that,  given $\x=\Phi(\y,s)$ with $\y \in \Gamma_{-}$, $0 < s < \min(t,\t_{+}(\y))$, it holds
\begin{equation}\label{eq:Uoth}\begin{split}
\left[U_{0}(t)U_{1}(h)f\right](\x)&=\chi_{\{t < \t_{-}(\x)\}}\left[U_{1}(h)f\right](\Phi(\x,-t))=\chi_{(t,\infty)}(s)\,\left[U_{1}(h)f\right](\Phi(\y,s-t))\\
&=\chi_{(t,t+h]}(s)\left[H\left(\B^{+}U_{0}(t+h-s)f\right)\right](\y)\\
&=\chi_{(t,t+h]}(s)\left[U_{1}(t+h)f\right](\Phi(\y,s))=\chi_{\{t < \t_{-}(\x)\}}\left[U_{1}(t+h)f\right](\x).\end{split}\end{equation}
Therefore
$$\int_{\O_{t+h}\setminus \O_{t}}|U_{1}(t+h)f|^{p}\d\mu=\|U_{0}(t)U_{1}(h)f\|_{p}^{p}$$
and, since $(U_{0}(t))_{t\geq0}$ is a contraction semigroup, we get
$$\int_{\O_{t+h}\setminus \O_{t}}|U_{1}(t+h)f|^{p}\d\mu \leq \|U_{1}(h)f\|_{p}^{p}.$$
Using \eqref{eq:limU1}, we get $\lim_{h\to 0^{+}}\int_{\O_{t+h}\setminus \O_{t}}|U_{1}(t+h)f|^{p}\d\mu=0$ and we obtain finally that
$\lim_{h \to 0^{+}}\|U_{1}(t+h)f-U_{1}(t)f\|_{p}^{p}=0.$
One argues in a similar way for negative $h$ and gets
$$\lim_{h \to 0}\|U_{1}(t+h)f-U_{1}(t)f\|_{p}=0, \qquad \forall f \in \D_{0}.$$
Since $\D_{0}$ is dense in $X$ and $\|U_{1}(t)\|_{\mathscr{B}(X)} \leq \verti{H}$ we deduce that above limit vanishes for all $f \in X$. This proves the result.
\end{proof}
One has also the following
\begin{lemme}
For all $t \geq 0,$ $h \geq 0$ and $f \in X$ it holds
$$U_{1}(t+h)f=U_{0}(t)U_{1}(h)f+U_{1}(t)U_{0}(h)f.$$
\end{lemme}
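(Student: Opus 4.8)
The plan is to reduce to $f$ in the core $\mathscr{D}_{0}$ and then verify the identity pointwise on $\O$; the only genuine work is to make sense of the pointwise prescription of Definition \ref{defi:Uk} for the argument $U_{0}(h)f$, which does \emph{not} lie in $\mathscr{D}_{0}$. Since $U_{0}(t),U_{1}(t)\in\mathscr{B}(X)$, both sides of the claimed identity depend $\|\cdot\|_{p}$-continuously on $f$, so by density of $\mathscr{D}_{0}$ in $X$ it suffices to treat $f\in\mathscr{D}_{0}$; fix such an $f$ and $t,h\geq 0$. First I would record the extension of the pointwise formula for $U_{1}$: for every $g\in\D(\T_{0,\,p})$ and $\mu$-a.e. $\x\in\O$,
$$[U_{1}(t)g](\x)=\big[H\B^{+}U_{0}(t-\t_{-}(\x))g\big]\big(\Phi(\x,-\t_{-}(\x))\big)\,\chi_{\{\t_{-}(\x)\leq t\}}(\x).$$

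To justify this formula: its right-hand side is well defined because $s\mapsto\B^{+}U_{0}(s)g\in\lp$ is continuous when $\B^{-}g=0$ (combine Green's formula, Proposition \ref{propgreen1}, with Proposition \ref{prop:B+U01} applied to $U_{0}(s)g-g=\I^{0}_{s}[\T_{0,\,p}g]$); it defines a linear map bounded by $\verti{H}$ on $\D(\T_{0,\,p})$, since the argument of Proposition \ref{prop:B+U0} in fact only uses $\B^{-}g=0$ and yields $\int_{0}^{t}\|\B^{+}U_{0}(s)g\|_{\lp}^{p}\,\d s=\|g\|_{p}^{p}-\|U_{0}(t)g\|_{p}^{p}$; and it coincides with $U_{1}(t)$ on $\mathscr{D}_{0}$ by Definition \ref{defi:Uk}. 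As $\mathscr{D}_{0}$ is dense in $X$, hence a fortiori dense in $(\D(\T_{0,\,p}),\|\cdot\|_{p})$, the formula propagates from $\mathscr{D}_{0}$ to all of $\D(\T_{0,\,p})$.

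With this available, I would split $\O$ according to the value of $\t_{-}(\x)$ into $\{\t_{-}(\x)>t+h\}$, $\{t<\t_{-}(\x)\leq t+h\}$ and $\{\t_{-}(\x)\leq t\}$, and evaluate $U_{1}(t+h)f$, $U_{0}(t)U_{1}(h)f$ and $U_{1}(t)U_{0}(h)f$ on each region, using Definition \ref{defi:Uk}, the formula \eqref{eq:Uot} for $U_{0}(t)$, the displayed formula above with $g=U_{0}(h)f\in\D(\T_{0,\,p})$, and the uniqueness of the decomposition $\x=\Phi(\y,\t_{-}(\x))$, $\y=\Phi(\x,-\t_{-}(\x))\in\Gamma_{-}$. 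On the first region all three functions vanish ($U_{1}(\cdot)$ being supported on $\{\t_{-}\leq\cdot\}$, while $\t_{-}(\Phi(\x,-t))=\t_{-}(\x)-t>h$ kills the middle term). On the second, the third term vanishes; for the middle one, with $\x'=\Phi(\x,-t)$ one has $\t_{-}(\x')=\t_{-}(\x)-t\in(0,h]$ and $\Phi(\x',-\t_{-}(\x'))=\y$, hence $[U_{0}(t)U_{1}(h)f](\x)=[U_{1}(h)f](\x')=[H\B^{+}U_{0}(t+h-\t_{-}(\x))f](\y)=[U_{1}(t+h)f](\x)$. On the third, the middle term vanishes and $[U_{1}(t)U_{0}(h)f](\x)=[H\B^{+}U_{0}(t-\t_{-}(\x))U_{0}(h)f](\y)=[H\B^{+}U_{0}(t+h-\t_{-}(\x))f](\y)=[U_{1}(t+h)f](\x)$ by the semigroup identity $U_{0}(t-\t_{-}(\x))U_{0}(h)=U_{0}(t+h-\t_{-}(\x))$. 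Collecting the three regions gives $U_{1}(t+h)f=U_{0}(t)U_{1}(h)f+U_{1}(t)U_{0}(h)f$ on $\mathscr{D}_{0}$, hence on $X$.

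The main obstacle is the extension of the pointwise formula for $U_{1}(t)$ from $\mathscr{D}_{0}$ to $\D(\T_{0,\,p})$ (equivalently, checking that the energy identity of Proposition \ref{prop:B+U0} survives under the mere hypothesis $\B^{-}g=0$); once that is in place the region-by-region check is routine. A slicker alternative, available if the $k=1$ instances of parts (5) and (6) of Theorem \ref{theo:UKT} are already at hand, avoids all pointwise considerations: set $\Psi(t)=\int_{0}^{t}\big(U_{1}(r+h)f-U_{0}(r)U_{1}(h)f-U_{1}(r)U_{0}(h)f\big)\d r=\I^{1}_{t+h}[f]-\I^{1}_{h}[f]-\I^{0}_{t}[U_{1}(h)f]-\I^{1}_{t}[U_{0}(h)f]$; using $\T_{\mathrm{max},\,p}\I^{1}_{\tau}[g]=U_{1}(\tau)g$, $\T_{0,\,p}\I^{0}_{\tau}[g]=U_{0}(\tau)g-g$, $\B^{-}\I^{0}_{\tau}[g]=0$, $\B^{-}\I^{1}_{\tau}[g]=H\B^{+}\I^{0}_{\tau}[g]$, together with the elementary identity $\I^{0}_{t+h}[f]-\I^{0}_{h}[f]=\I^{0}_{t}[U_{0}(h)f]$, one finds $\T_{\mathrm{max},\,p}\Psi(t)=\Psi'(t)$ and $\B^{-}\Psi(t)=0$, so that $\Psi$ is a classical solution of $u'=\T_{0,\,p}u$, $u(0)=0$, whence $\Psi\equiv 0$ by uniqueness (\cite{pazy}) and differentiation in $t$ gives the claim.
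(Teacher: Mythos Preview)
Your proof is correct and follows the same overall strategy as the paper: both split $\O$ according to whether $\t_{-}(\x)\leq t$ or $t<\t_{-}(\x)\leq t+h$, handle the latter region via the computation you call \eqref{eq:Uoth} (which the paper establishes in the preceding lemma), and need an extension device for the former region because $U_{0}(h)f\notin\mathscr{D}_{0}$.

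The technical difference lies in that extension device. You extend the \emph{pointwise} formula for $U_{1}(t)g$ from $\mathscr{D}_{0}$ to $g\in\D(\T_{0,\,p})$, arguing that both sides are $\|\cdot\|_{p}$-bounded and agree on the dense subset $\mathscr{D}_{0}$; then you apply it to $g=U_{0}(h)f$. The paper instead extends the \emph{time-integrated} identity
$$\int_{t_{1}}^{t_{2}}[U_{1}(\tau)f](\x)\,\d\tau=\bigl[H\B^{+}\!\!\int_{t_{1}-s}^{t_{2}-s}U_{0}(\tau)f\,\d\tau\bigr](\y),\qquad \x=\Phi(\y,s)\in\O_{t_{1}},$$
from $\mathscr{D}_{0}$ to all $f\in X$ (using continuity of $\B^{+}\I^{0}_{\cdot}$), applies it with the semigroup law $U_{0}(r+h)=U_{0}(r)U_{0}(h)$, and recovers the pointwise statement by letting the integration window shrink. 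Your route is slightly more direct and avoids the $\int_{t}^{t+\delta}$ trick, at the cost of checking that Proposition \ref{prop:B+U0} goes through for all $g\in\D(\T_{0,\,p})$; the paper's route trades that check for a soft density argument on integrals. Both are perfectly sound.

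Your alternative $\Psi$-argument is also correct, but note that in the paper's logical order it would be circular: the lemma in question is proved \emph{before} the $k=1$ instances of parts (5) and (6) of Theorem \ref{theo:UKT} (these are Lemma \ref{lem:prop6} in the appendix), so you cannot invoke them here without reorganizing the exposition.
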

\begin{proof} It is clearly enough to consider $t >0$, $h >0$ since $U_{1}(0)f=0$ while $U_{0}(0)$ is the identity operator. Notice that, for any $f \in \D_{0}$ and any $0 \leq t_{1} \leq t_{2}$, for $\x=\Phi(\y,s) \in\O_{t_{1}}$ we  have
$$\int_{t_1}^{t_2}[U_1(\tau)f](\x)\d \tau=\left[H\B^{+}\int_{t_{1}-s}^{t_{2}-s}U_0(\tau)f\d \tau\right](\y).$$
Now, given $f \in X$ and $0 \leq t_{1} \leq t_{2}$ the above formula is true for almost every $\x=\Phi(\y,s) \in\O_{t_{1}}$ by a density argument. Therefore, for almost every $\x=\Phi(\y,s) \in \O_{t}$ and any $\delta >0$ it holds
$$\int_{t}^{t+\delta}[U_1(r+h)f](\x)\d r=\left[H\B^+\int_{t-s}^{t+\delta-s}U_0(r+h)f\d r\right](\y)=\left[H\B^+\int_{t-s}^{t+\delta-s}U_0(r)U_0(h)f\d r\right](\y)$$
so that, using the definition of $U_{1}(r)$ again
$$\int_{t}^{t+\delta}[U_1(r+h)f](\x)\d r=\int_{t}^{t+\delta}[U_1(r)U_0(h)f](\x)\d r \qquad \forall \delta >0$$
from which we deduce that $U_1(t+h)f(\x)=U_1(t)U_0(h)f(\x)$ for almost any $\x \in \O_{t}.$With the notations of the previous proof, one has from \eqref{eq:Uoth} that
$$U_{1}(t+h)f(\x)=\left[U_{0}(t)U_{1}(h)f\right](\x) \qquad \text{ for a.e. } \x \in \O_{t+h}\setminus \O_{t}$$
This  proves the result, since $U_{1}(t)f$ vanishes on $\O_{t+h}\setminus\O_{t}$ while $U_{0}(t)f$ vanishes on $\O_{t}.$
\end{proof}

One has now the following
\begin{lemme}
For any $f \in \D_{0}$, the mapping $t \geq 0 \mapsto U_{1}(t)f \in X$ is differentiable with 
$\frac{\d}{\d t}U_{1}(t)f=U_{1}(t)\T_{\mathrm{max},\,p}f$ for any $t \geq 0.$
\end{lemme}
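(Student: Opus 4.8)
The plan is to establish differentiability of $t\mapsto U_1(t)f$ for $f\in\D_0$ by combining the semigroup-type decomposition of the previous lemma with the known differentiability of $U_0(t)$ on $\D_0$. First I would fix $f\in\D_0$ and $t\geq0$, and use the identity $U_1(t+h)f=U_0(t)U_1(h)f+U_1(t)U_0(h)f$ proved just above. Writing the difference quotient
$$\frac{U_1(t+h)f-U_1(t)f}{h}=U_0(t)\frac{U_1(h)f}{h}+U_1(t)\frac{U_0(h)f-f}{h},$$
I would treat the two terms separately. For the second term, since $f\in\D_0\subset\D(\T_{0,p})$, we have $\frac{1}{h}(U_0(h)f-f)\to\T_{0,p}f=\T_{\mathrm{max},p}f$ in $X$ as $h\to0^{+}$; boundedness of $U_1(t)$ (with $\|U_1(t)\|_{\mathscr{B}(X)}\leq\verti{H}$) and its strong continuity then give that $U_1(t)\frac{1}{h}(U_0(h)f-f)\to U_1(t)\T_{\mathrm{max},p}f$.

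The first term is the delicate one: I need to show $\frac{1}{h}U_0(t)U_1(h)f\to 0$ in $X$ as $h\to0^{+}$. The natural route is to obtain a bound of the form $\|U_1(h)f\|_p\leq C_f\,h$ for $f\in\D_0$, which would immediately kill this term since $U_0(t)$ is a contraction. To get such a bound, recall from the estimates preceding Lemma \ref{lem:U1} that
$$\|U_1(h)f\|_p^p\leq\verti{H}^p\int_0^h\|\B^{+}U_0(s)f\|_{\lp}^p\,\d s,$$
and now I would invoke \eqref{eq:b0?} in Proposition \ref{prop:B+U0}, which for $f\in\D_0$ gives
$$\int_0^h\|\B^{+}U_0(s)f\|_{\lp}^p\,\d s\leq\frac{h^p}{p}\int_0^h\left(\int_{\Gamma_+}|[\T_{\mathrm{max},p}f](\Phi(\z,-s))|^p\chi_{\{s<\t_-(\z)\}}\,\d\mu_+(\z)\right)\d s\leq\frac{h^p}{p}\|\T_{\mathrm{max},p}f\|_p^p,$$
where the last step uses \eqref{10.47} applied to $|\T_{\mathrm{max},p}f|^p\in L^1(\O,\d\mu)$ (extending the inner time-integral to $(0,\t_-(\z))$). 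Hence $\|U_1(h)f\|_p\leq\verti{H}\,h\,p^{-1/p}\,\|\T_{\mathrm{max},p}f\|_p$, so $\frac{1}{h}\|U_0(t)U_1(h)f\|_p\leq\verti{H}\,p^{-1/p}\,\|\T_{\mathrm{max},p}f\|_p$ stays bounded — but for the limit to be \emph{zero} I actually need a little more than linear growth, or I should argue differently.

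The cleanest fix, and the one I would carry out, is to handle the right derivative via the integral form and then upgrade. Namely, from $U_1(t)f=\int$-type manipulations one checks $U_1(t)f=\I^{0}_{\cdot}$-compositions are $C^1$; more directly, since $t\mapsto U_0(t)g$ is continuously differentiable on $\D_0$ with derivative $U_0(t)\T_{\mathrm{max},p}g$, and $U_1(h)f$ for $f\in\D_0$ can be written through $H\B^{+}$ applied to an integral of $U_0$, I would differentiate under the composition: for almost every $\x=\Phi(\y,s)\in\O_t$ one has $[U_1(t)f](\x)=[H\B^{+}U_0(t-s)f](\y)$, and since $t\mapsto U_0(t-s)f$ is differentiable into $\D(\T_{\mathrm{max},p})$ (graph norm) while $\B^{+}$ is continuous there (Remark \ref{nb:graph-norm}) and $H$ is bounded, the pointwise right-derivative equals $[H\B^{+}U_0(t-s)\T_{\mathrm{max},p}f](\y)=[U_1(t)\T_{\mathrm{max},p}f](\x)$; uniform integrable domination (again via \eqref{eq:b0?} and boundedness of $H$) then turns this into an $X$-valued derivative. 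Finally, combining the two terms gives $\frac{\d}{\d t}U_1(t)f=U_1(t)\T_{\mathrm{max},p}f$, with the left derivative handled symmetrically using $h<0$ and strong continuity. The main obstacle is precisely the first term $\frac1hU_0(t)U_1(h)f$: it requires the sharpened $O(h)$ control on $\|U_1(h)f\|_p$ that is \emph{not} contained in the mere contraction estimate, and which must be extracted from the trace estimate \eqref{eq:b0?} for $f\in\D_0$.
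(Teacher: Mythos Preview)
Your initial approach is exactly the paper's: use the decomposition $U_1(t+h)f=U_0(t)U_1(h)f+U_1(t)U_0(h)f$, reduce to showing $\tfrac{1}{h}U_1(h)f\to0$ in $X$, and bound $\|U_1(h)f\|_p^p$ via \eqref{eq:b0?}. You even write down the correct inequality
\[
\frac{\|U_1(h)f\|_p^p}{h^p}\leq\frac{\verti{H}^p}{p}\int_0^h\left(\int_{\Gamma_+}\left|[\T_{\mathrm{max},p}f](\Phi(\z,-s))\right|^p\chi_{\{s<\t_-(\z)\}}\,\d\mu_+(\z)\right)\d s.
\]
The gap is that you then immediately bound the right-hand side by $\tfrac{\verti{H}^p}{p}\|\T_{\mathrm{max},p}f\|_p^p$, conclude only boundedness of $\tfrac{1}{h}\|U_1(h)f\|_p$, and declare the argument insufficient. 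But you threw away exactly the piece that finishes the proof: the inner $s$-integral is $\int_0^h g(s)\,\d s$ with $g\in L^1(0,\infty)$ (by \eqref{10.47} its total integral is $\|\T_{\mathrm{max},p}f\|_p^p$), so $\int_0^h g(s)\,\d s\to0$ as $h\to0^+$. Hence $\tfrac{1}{h}\|U_1(h)f\|_p\to0$, which is precisely what you needed. This is how the paper concludes.

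Your fallback ``pointwise differentiation plus dominated convergence'' route is unnecessary, and as written it is not a complete argument: differentiability of $t\mapsto U_0(t-s)f$ in the graph norm together with continuity of $\B^+$ into $Y_p^+$ and boundedness of $H$ on $L_+^p$ does not by itself give an $X$-valued derivative of $U_1(t)f$ without further work on the $s$-dependence near $s=t$. Stick with your first approach and simply keep the $\int_0^h$ rather than bounding it away.
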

\begin{proof} In virtue of the previous Lemma, it is enough to prove that $t \geq 0 \mapsto U_{1}(t)f \in X$ is differentiable at $t=0$ with 
$$\frac{\d}{\d t}U_{1}(t)f\vert_{t=0}=U_{1}(0)\T_{\mathrm{max},\,p}f=0.$$ Consider $t >0$. One has
$$\|U_{1}(t)f\|_{p}^{p}\leq \verti{H}^{p}\,\int_{0}^{t}\|\B^{+}U_{0}(s)f\|_{\lp}^{p}\d s.$$
Now, since $f \in \D(\T_{0,\,p})$, one has from \eqref{eq:b0?}
$$\int_{0}^{t}\|\B^{+}U_{0}(s)f\|_{\lp}^{p}\d s
\leq \frac{t^{p}}{p}\int_{0}^{t}\left(\int_{\Gamma_{+}}\left|[\T_{\mathrm{max},\,p}f](\Phi(\z,-s))\right|^{p}
\chi_{\{s < \t_{-}(\z)\}}\d\mu_{+}(\z)\right)\d s$$
so that
\begin{equation}\label{eq:uot/t}
\frac{\|U_{1}(t)f\|_{p}^{p}}{t^{p}} \leq \frac{ \verti{H}^{p}}{p}\int_{0}^{t}\left(\int_{\Gamma_{+}}\left|[\T_{\mathrm{max},\,p}f](\Phi(\z,-s))\right|^{p}
\chi_{\{s < \t_{-}(\z)\}}\d\mu_{+}(\z)\right)\d s.\end{equation}
Using Proposition \ref{prointegra}, one has
$$\int_{0}^{\infty}\left(\int_{\Gamma_{+}}\left|[\T_{\mathrm{max},\,p}f](\Phi(\z,-s))\right|^{p}
\chi_{\{s < \t_{-}(\z)\}}\d\mu_{+}(\z)\right)\d s=\|\T_{\mathrm{max},\,p}f\|_{p}^{p} < \infty$$
so that \eqref{eq:uot/t} yields
$$\lim_{t \to 0^{+}}\frac{\|U_{1}(t)f\|_{p}}{t}=0.$$
This proves the result.\end{proof}

\begin{lemme}
For any $f \in \D_{0}$ and any $t >0$, one has $U_{1}(t)f \in \D(\T_{\mathrm{max},\,p})$ with $\T_{\mathrm{max},\,p}U_{1}(t)f=U_{1}(t)\T_{\mathrm{max},\,p}f$.
\end{lemme}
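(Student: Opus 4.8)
The plan is to deduce this lemma directly from Proposition \ref{propo:11-3}: once we know that $(U_{1}(t))_{t\geq 0}$ satisfies hypotheses i)--iii) of that proposition, its assertion (2) is precisely the present statement. So the task reduces to checking those three hypotheses for $U_{1}$ and every $f\in\D_{0}$, strong continuity of $(U_{1}(t))_{t\geq0}$ being already available from Lemma \ref{lem:U1}.

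Hypothesis i) is read off from Definition \ref{defi:Uk}: $[U_{1}(t)f](\x)=0$ by construction whenever $\t_{-}(\x)>t$, and also when $\t_{-}(\x)=t$, since in the representation $\x=\Phi(\y,s)$ used to define $U_{1}(t)f$ one necessarily has $\t_{-}(\x)=s<t$. For hypothesis ii), I would fix $\y\in\Gamma_{-}$ and $0<r<s<\t_{+}(\y)$, observe that $\t_{-}(\Phi(\y,s))=s$ and $\t_{-}(\Phi(\y,r))=r$, and simply substitute in Definition \ref{defi:Uk}:
$$[U_{1}(t)f](\Phi(\y,s))=[H\B^{+}U_{0}(t-s)f](\y)\,\chi_{\{s<t\}}=[H\B^{+}U_{0}((t-s+r)-r)f](\y)\,\chi_{\{s<t\}}=[U_{1}(t-s+r)f](\Phi(\y,r)),$$
the truncation condition $s<t$ being the same on both ends, while for $s\geq t$ both sides vanish in view of i). Hypothesis iii) is nothing but the previous lemma, namely that for $f\in\D_{0}$ the map $t\mapsto U_{1}(t)f$ is differentiable with $\tfrac{\d}{\d t}U_{1}(t)f=U_{1}(t)\T_{\mathrm{max},\,p}f$.

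With i)--iii) verified, Proposition \ref{propo:11-3}(2) yields $U_{1}(t)f\in\D(\T_{\mathrm{max},\,p})$ and $\T_{\mathrm{max},\,p}U_{1}(t)f=U_{1}(t)\T_{\mathrm{max},\,p}f$ for every $t\geq0$, in particular for $t>0$. The only point that needs a little care is the bookkeeping in ii): one has to handle the boundary value $s=t$ and the degenerate case in which the time argument $t-s+r$ is nonpositive, but in all these situations both sides are simply zero, so no genuine difficulty arises. I note in passing that one could also argue without Proposition \ref{propo:11-3}: integrating the identity of the previous lemma gives $U_{1}(t)f=\int_{0}^{t}U_{1}(s)\T_{\mathrm{max},\,p}f\,\d s$, after which one checks directly, using the integration formula \eqref{10.47} and the closedness of $\T_{\mathrm{max},\,p}$ (Remark \ref{rem:closed}), that this primitive lies in $\D(\T_{\mathrm{max},\,p})$ with the asserted image; invoking Proposition \ref{propo:11-3} is, however, the shortest route.
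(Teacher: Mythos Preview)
Your proof is correct and follows exactly the same route as the paper: the paper's proof consists of a single sentence stating that the result follows from Proposition \ref{propo:11-3} once assumptions i)--iii) are verified via the previous lemmas. You simply spell out the verification of i)--iii) in more detail than the paper does, and your side remark about an alternative direct argument is extraneous but harmless.
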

\begin{proof} The proof follows from a simple application of Proposition \ref{propo:11-3} where the assumptions \textit{i)--iii)} are met thanks to the previous Lemmas.
\end{proof}

Let us now establish the following
\begin{lemme}\label{lem:prop6} For any $f \in X$ and any $t >0$, one has $\I^{1}_{t}[f]:=\int_{0}^{t}U_{1}(s)f \d s \in \D(\T_{\mathrm{max},\,p})$ with 
$$\T_{\mathrm{max},\,p}\I^{1}_{t}[f]=U_{1}(t)f,$$
and $\B^{\pm} \I_{t}^{1}[f] \d s \in L^{p}_{\pm}$,
\begin{equation}\label{eq:bI1t}
\B^{-}\I_{t}^{1}[f]=H\B^{+}\int_{0}^{t}U_{0}(s)f \d s.\end{equation}
Moreover the mappings $t \geq 0\mapsto \B^{\pm} \I_{t}^{1}[f] \d s \in L^{p}_{\pm}$ are continuous.
Finally, for any $f \in \D_{0}$ and any $t \geq 0$, the traces $\B^{\pm}U_{1}(t)f \in L^{p}_{\pm}$ and the mappings $t \geq 0\mapsto \B^{\pm}U_{1}(t)f \in L^{p}_{\pm}$ are continuous. 
\end{lemme}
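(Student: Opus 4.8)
The plan is to apply Proposition \ref{prop:Ut} to the family $(U_{1}(t))_{t\geq0}$. By the previous lemmas this family is strongly continuous on $X$, satisfies $U_{1}(0)=0$, and fulfils hypotheses \textit{i)} and \textit{ii)} of Proposition \ref{prop:Ut}: for $f\in\D_{0}$ the map $t\mapsto U_{1}(t)f$ is differentiable with derivative $U_{1}(t)\T_{\mathrm{max},\,p}f$, and $U_{1}(t)f\in\D(\T_{\mathrm{max},\,p})$ with $\T_{\mathrm{max},\,p}U_{1}(t)f=U_{1}(t)\T_{\mathrm{max},\,p}f$. Point \textit{(1)} of Proposition \ref{prop:Ut} then gives at once $\I^{1}_{t}[f]\in\D(\T_{\mathrm{max},\,p})$ with $\T_{\mathrm{max},\,p}\I^{1}_{t}[f]=U_{1}(t)f-U_{1}(0)f=U_{1}(t)f$ for all $f\in X$, $t>0$.

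Next I would identify the trace $\B^{-}\I^{1}_{t}[f]$. Using that $\Phi$ is a measure isomorphism from $\{(\y,\sigma)\,;\,\y\in\Gamma_{-},\,0<\sigma<\t_{+}(\y)\}$ onto $\O_{-}$ and that $[U_{1}(s)f](\x)=0$ whenever $\t_{-}(\x)\geq s$, one computes, for $f\in\D_{0}$ and a.e.\ $\x=\Phi(\y,\sigma)\in\O_{-}$,
$$[\I^{1}_{t}[f]](\Phi(\y,\sigma))=\int_{\sigma}^{t}\left[H\bigl(\B^{+}U_{0}(s-\sigma)f\bigr)\right](\y)\,\d s=\left[H\,\B^{+}\I^{0}_{t-\sigma}[f]\right](\y),$$
the second equality using continuity of $r\mapsto\B^{+}U_{0}(r)f\in\lp$, the fact that $\B^{+}$ commutes with the Bochner integral (Proposition \ref{prop:B+U0}), and $H\in\mathscr{B}(\lp,\lm)$. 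Letting $\sigma\to0^{+}$ and invoking continuity of $r\mapsto\B^{+}\I^{0}_{r}[f]\in\lp$ (Proposition \ref{prop:B+U01}) yields $\B^{-}\I^{1}_{t}[f]=H\,\B^{+}\I^{0}_{t}[f]$ for $f\in\D_{0}$. This extends to all $f\in X$ by density: if $\D_{0}\ni f_{n}\to f$ in $X$ then $\I^{1}_{t}[f_{n}]\to\I^{1}_{t}[f]$ and $\T_{\mathrm{max},\,p}\I^{1}_{t}[f_{n}]=U_{1}(t)f_{n}\to U_{1}(t)f$ in $X$, so by Remark \ref{nb:graph-norm} $\B^{-}\I^{1}_{t}[f_{n}]\to\B^{-}\I^{1}_{t}[f]$ in $Y^{-}_{p}$, while $H\,\B^{+}\I^{0}_{t}[f_{n}]\to H\,\B^{+}\I^{0}_{t}[f]$ in $\lm\subset Y^{-}_{p}$ by Proposition \ref{prop:B+U01} and boundedness of $H$. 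This proves \eqref{eq:bI1t}, gives $\B^{-}\I^{1}_{t}[f]\in\lm$, and continuity of $t\mapsto\B^{-}\I^{1}_{t}[f]\in\lm$ follows from continuity of $t\mapsto\B^{+}\I^{0}_{t}[f]\in\lp$. The last assertion of Proposition \ref{prop:Ut} then delivers $\B^{+}\I^{1}_{t}[f]\in\lp$ with $t\mapsto\B^{+}\I^{1}_{t}[f]\in\lp$ continuous.

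Finally, for $f\in\D_{0}$ the same parametrization (without the $\d s$-integration) gives $[U_{1}(t)f](\Phi(\y,\sigma))=[H(\B^{+}U_{0}(t-\sigma)f)](\y)$, so letting $\sigma\to0^{+}$ yields $\B^{-}U_{1}(t)f=H\,\B^{+}U_{0}(t)f\in\lm$, with $t\mapsto\B^{-}U_{1}(t)f\in\lm$ continuous (Proposition \ref{prop:B+U0}, $H$ bounded). Since $U_{1}(t)f\in\D(\T_{\mathrm{max},\,p})$, Proposition \ref{propgreen1} then gives $\B^{+}U_{1}(t)f\in\lp$, and for its continuity I would apply Green's formula \eqref{greenform} to $U_{1}(t+h)f-U_{1}(t)f\in\mathscr{W}$ (its $\B^{-}$-trace being $H(\B^{+}U_{0}(t+h)f-\B^{+}U_{0}(t)f)\in\lm$), obtaining
$$\|\B^{+}(U_{1}(t+h)f-U_{1}(t)f)\|_{\lp}^{p}\leq\|\B^{-}(U_{1}(t+h)f-U_{1}(t)f)\|_{\lm}^{p}+p\,\|U_{1}(t+h)f-U_{1}(t)f\|_{p}^{p-1}\,\|U_{1}(t+h)\T_{\mathrm{max},\,p}f-U_{1}(t)\T_{\mathrm{max},\,p}f\|_{p},$$
whose right-hand side tends to $0$ as $h\to0$ by strong continuity of $(U_{1}(t))_{t\geq0}$, the identity $\T_{\mathrm{max},\,p}U_{1}(t)f=U_{1}(t)\T_{\mathrm{max},\,p}f$, and $\B^{-}U_{1}(t)f=H\B^{+}U_{0}(t)f$ with $r\mapsto\B^{+}U_{0}(r)f$ continuous in $\lp$. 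The main obstacle is the explicit identification of the traces of $U_{1}(t)f$ and of $\I^{1}_{t}[f]$ through the parametrization by characteristic curves and the justification that $\B^{+}$ commutes with the Bochner integrals involved; once this is in place, everything else is a routine bookkeeping use of Green's formula and the cited propositions.
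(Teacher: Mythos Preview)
Your proof is correct and largely parallels the paper's, but diverges in two places worth noting. For the identification of $\B^{-}\I^{1}_{t}[f]$ on $\D_{0}$, the paper avoids your pointwise parametrization-plus-limit argument by invoking point \textit{(2)} of Proposition~\ref{prop:Ut} directly: since $\B^{-}\I^{1}_{t}[f]=\int_{0}^{t}\B^{-}U_{1}(s)f\,\d s$ as a Bochner integral in $Y^{-}_{p}$, and $\B^{-}U_{1}(s)f=H\B^{+}U_{0}(s)f$ by definition of $U_{1}$, the identity $\B^{-}\I^{1}_{t}[f]=H\B^{+}\I^{0}_{t}[f]$ follows without any $\sigma\to0^{+}$ limit. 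This sidesteps the (minor) technicality in your argument that $\lm$-convergence of $H\B^{+}\I^{0}_{t-\sigma}[f]$ must be upgraded to a.e.\ convergence to match the pointwise definition of the trace (a subsequence fixes this, but it is cleaner to avoid it). For the final statement on $\B^{\pm}U_{1}(t)f$, the paper takes a notably shorter route: since $f\in\D_{0}$ implies $\T_{\mathrm{max},\,p}f\in X$ and $U_{1}(t)f=\I^{1}_{t}[\T_{\mathrm{max},\,p}f]$, the already-established properties of $\I^{1}_{t}[\cdot]$ immediately give $\B^{\pm}U_{1}(t)f\in L^{p}_{\pm}$ and continuity in $t$, with no need for your direct Green's-formula estimate. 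Your approach works but recomputes what the integral identity gives for free.
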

\begin{proof} Thanks to the  previous Lemmas, the family $(U_{1}(t))_{t\geq0}$ satisfies assumptions \textit{(i)--(ii)} of Proposition \ref{prop:Ut}. One deduces then from the same Proposition (point \textit{(1)}) that, for any $f \in X$ and any $t >0$, $\I^{1}_{t}[f] \in \D(\T_{\mathrm{max},\,p})$ with $\T_{\mathrm{max},\,p}\I_{t}^{1}[f]=U_{1}(t)f-U_{1}(0)f=U_{1}(t)f.$ 

In order to show that $\B^{-}\I_{t}^{1}[f] $ can be expressed through formula \eqref{eq:bI1t} we first suppose $f \in \D_{0}$. For such an $f $  both $U_{0}(t)f $ and $U_{1}(t)f$ belong to  $\D(\T_{\mathrm{max},\,p})$ for any $t\geq 0$ with $\B^{-}U_{1}(t)f=H\B^{+}U_{0}(t)f \in \lm$.
Using this equality, the continuity of $H$ and Prop. \ref{prop:Ut} (point 2) applied both to $(U_{1}(t))_{t\geq 0}$ and  $(U_{0}(t))_{t\geq 0}$  one gets
$$\B^{-}\I^{1}_{t}[f]=\int_{0}^{t}\B^{-}U_{1}(s)f \d s=\int_{0}^{t}H\B^{+}U_{0}(s)f\d s=H\left(\int_{0}^{t}\B^{+}U_{0}(s)f\d s\right)=H\B^{+}\I_{t}^{0}[f]$$
i.e.  \eqref{eq:bI1t} for $f \in \D_{0}.$

Consider now $f \in X$ and let $(f_{n})_{n} \in \D_{0}$ be such that $\lim_{n}\|f_{n}-f\|_{p}=0.$ According to Eq. \eqref{eq:b+I2}, the sequence $(\B^{+}\I_{t}^{0}[f_{n}])_{n}$ converges in $\lp$ towards $\B^{+}\I_{t}^{0}[f]$. Since \eqref{eq:bI1t} holds true for $f_{n}$, and $H$ is continuous, then the sequence $(\B^{-}\I_{t}^{1}[f_{n}])_{n}$ converges in $\lm$ to $H\B^{+}\I_{t}^{0}[f].$ One deduces from this that $\B^{-}\I_{t}^{1}[f] \in \lm$ with \eqref{eq:bI1t}. 

Moreover the mapping $t\geq0 \mapsto \B^{-}I_{t}^{1}[f] \in \lm$ is continuous since both $H$  and  the mapping $t \geq 0\mapsto \B^{+}\I_{t}^{0}[f] \in \lp$ are continuous (see Proposition \ref{prop:B+U01}).  This property and Proposition \ref{prop:Ut} imply that $ \B^{+}\I_{t}^{1}[f] \in \lp$ and that the mapping $t \mapsto \B^{+}\I_{t}^{1}[f] \in \lp$ is continuous too. 

Finally observe that, if  $f \in \D_{0}$, then $f \in \D(\T_{\mathrm{max},\,p})$ and for any $t \geq 0$ one has 
$$\I_{t}^{1}\left[\T_{\mathrm{max},\,p}f\right] =U_{1}(t)f.$$
Thus one can state that for any $f \in \D_{0}$ and any $t \geq 0$, the traces $\B^{\pm}U_{1}(t)f \in L^{p}_{\pm}$ and the mappings $t \geq 0\mapsto \B^{\pm}U_{1}(t)f \in L^{p}_{\pm}$ are continuous.  \end{proof}

Let us now investigate Property (7):
\begin{lemme}\label{lem:C6} One has
$$\int_{0}^{t}\|\B^{+}U_{1}(s)f\|_{\lp}^{p}\d s \leq \verti{H}^{p}\,\int_{0}^{t}\|\B^{+}U_{0}(s)f\|_{\lp}^{p}\d s, \qquad \forall t \geq 0, \forall f \in \D_{0}.$$
\end{lemme}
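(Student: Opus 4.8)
The plan is to unfold the trace $\B^{+}U_{1}(s)f$ by means of Definition \ref{defi:Uk}, then to interchange integrations and change variables along the characteristic curves, and finally to transport the resulting integral from $\Gamma_{+}$ to $\Gamma_{-}$ via the change-of-variables formula \eqref{10.51}. First I would use the fact, already recorded in the proof of Lemma \ref{lem:trunc}, that for $f \in \D_{0}$, $s \geq 0$ and $\mu_{+}$-a.e. $\z \in \Gamma_{+}$
$$\left[\B^{+}U_{1}(s)f\right](\z)=\left[H\left(\B^{+}U_{0}(s-\t_{-}(\z))f\right)\right]\left(\Phi(\z,-\t_{-}(\z))\right)\chi_{(0,s)}(\t_{-}(\z)),$$
which follows from Definition \ref{defi:Uk} and the behaviour of $\t_{-}$ along the flow. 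Hence
$$\int_{0}^{t}\|\B^{+}U_{1}(s)f\|_{\lp}^{p}\d s=\int_{0}^{t}\int_{\Gamma_{+}}\left|\left[H\left(\B^{+}U_{0}(s-\t_{-}(\z))f\right)\right]\left(\Phi(\z,-\t_{-}(\z))\right)\right|^{p}\chi_{(0,s)}(\t_{-}(\z))\,\d\mu_{+}(\z)\,\d s.$$
Since the integrand is nonnegative and $\int_{0}^{t}\|\B^{+}U_{0}(\sigma)f\|_{\lp}^{p}\d\sigma=\|f\|_{p}^{p}-\|U_{0}(t)f\|_{p}^{p}<\infty$ by Proposition \ref{prop:B+U0}, Tonelli's theorem applies and all the manipulations below are licit.

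I would then swap the $s$- and $\z$-integrals and, in the inner integral, substitute $\sigma=s-\t_{-}(\z)$ (only those $\z$ with $\t_{-}(\z)<t$, hence $\z\in\Gamma_{+}\setminus\Gamma_{+\infty}$, contribute), so that the $s$-range becomes $\sigma\in(0,\max(0,t-\t_{-}(\z)))$. Writing $\y=\Phi(\z,-\t_{-}(\z))\in\Gamma_{-}$ and using $\t_{+}(\y)=\t_{-}(\z)$ together with \eqref{10.51}, the integral over $\Gamma_{+}\setminus\Gamma_{+\infty}$ becomes an integral over $\Gamma_{-}\setminus\Gamma_{-\infty}$, namely
$$\int_{0}^{t}\|\B^{+}U_{1}(s)f\|_{\lp}^{p}\d s=\int_{\Gamma_{-}\setminus\Gamma_{-\infty}}\left(\int_{0}^{\max(0,\,t-\t_{+}(\y))}\left|\left[H\left(\B^{+}U_{0}(\sigma)f\right)\right](\y)\right|^{p}\d\sigma\right)\d\mu_{-}(\y).$$
Enlarging the inner range to $(0,t)$ and the outer domain to all of $\Gamma_{-}$ (the integrand being nonnegative), and swapping back with Tonelli, the right-hand side is bounded by $\int_{0}^{t}\|H(\B^{+}U_{0}(\sigma)f)\|_{\lm}^{p}\,\d\sigma$. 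Finally, since $\B^{+}U_{0}(\sigma)f\in\lp$ for $f\in\D_{0}$ (Proposition \ref{prop:B+U0}) and $\|H\psi\|_{\lm}\leq\verti{H}\,\|\psi\|_{\lp}$, this is at most $\verti{H}^{p}\int_{0}^{t}\|\B^{+}U_{0}(\sigma)f\|_{\lp}^{p}\d\sigma$, which is the asserted inequality.

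This computation is a simplified version of the estimate carried out for the term $J_{2}$ in the proof of Lemma \ref{lem:trunc} (formally the case $\chi_{\delta}=0$ there), so I do not expect any serious obstacle. The only points deserving care are the justification of the pointwise formula for $\B^{+}U_{1}(s)f$ and the joint measurability of the integrand needed to invoke Tonelli; both are already supplied by the construction of $(U_{k}(t))_{t\geq0}$ in Section \ref{sec:explicit} and Appendix \ref{app:techn}, in particular by Lemma \ref{lem:prop6} and Proposition \ref{prop:Ut}.
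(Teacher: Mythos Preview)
Your proof is correct and follows essentially the same route as the paper's: unfold the trace via Definition \ref{defi:Uk}, apply Fubini and the shift $s\mapsto s-\t_{-}(\z)$, transport to $\Gamma_{-}$ via \eqref{10.51}, enlarge the integration domain and use the boundedness of $H$. The only cosmetic difference is that the paper first enlarges the inner range to $(0,t)$ and then applies \eqref{10.51}, whereas you apply \eqref{10.51} first and then enlarge; the two orderings are equivalent.
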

\begin{proof} Given $f \in \D_{0}$, for any $s >0$ and $\mu_{+}$-a.e. $\z \in \Gamma_{+}$:
$$\left[\B^{+}U_{1}(s)f\right](\z)=\left[H\left(\B^{+}U_{0}(s-\t_{-}(\z))f\right)\right](\Phi(\z,-\t_{-}(\z))\chi_{(0,s)}(\t_{-}(\z))$$
Thus, 
\begin{multline*}
J:=\int_{0}^{t}\|\B^{+}U_{1}(s)f\|_{\lp}^{p}\d s\\
=\int_{0}^{t}\left(\int_{\Gamma_{+}}\left|\left[H\left(\B^{+}U_{0}(s-\t_{-}(\z))\right)f\right](\Phi(\z,-\t_{-}(\z)))\right|^{p}\chi_{(0,s)}(\t_{-}(\z))\d \mu_{+}(\z)\right)\d s.\end{multline*}
Now, using Fubini's Theorem and, for a given $\z \in \Gamma_{+}$, the change of variable $s \mapsto s-\t_{-}(\z)$, we get
$$J=\int_{\Gamma_{+}}\left(\int_{0}^{\max(0,t-\t_{-}(\z))}\left|\left[H\left(\B^{+}U_{0}(s)\right)f\right](\Phi(\z,-\t_{-}(\z)))\right|^{p}\d s\right)\d\mu_{+}(\z).$$
Using Fubini's Theorem again
\begin{equation*}\begin{split}
J &\leq \int_{0}^{t}\left(\int_{\Gamma_{+}}\left|\left[H \left(\B^{+}U_{0}(s)\right)f\right](\Phi(\z,-\t_{-}(\z)))\right|^{p}\d \mu_{+}(\z)\right)\d s\\
&\leq \int_{0}^{t}\left(\int_{\Gamma_{-}}\left|\left[H \left(\B^{+}U_{0}(s)\right)f\right](\y)\right|^{p}\d\mu_{-}(\y)\right)\d s\end{split}\end{equation*}
where we used \eqref{10.51}. Therefore, it is easy to check that
$$J \leq \left\|H \right\|_{\mathscr{B}(\lp,\lp)}^{p}\int_{0}^{t}\|\B^{+}U_{0}(s)f\|_{\lp}^{p}\d s.$$
which is the desired result.
\end{proof}
We finally have the following
\begin{lemme} Given $\lambda >0$ and $f \in X$, set $F_{1}=\int_{0}^{\infty}\exp(-\lambda t)U_{1}(t)f\d t.$ Then $F_{1} \in \D(\T_{\mathrm{max},\,p})$  with $\T_{\mathrm{max},\,p}F_{1}=\lambda\,F_{1}$ 
and  $\B^{\pm}F_{1} \in L^{p}_{\pm}$ with 
$$\B^{-}F_{1}=H\B^{+}C_{\lambda}f = HG_{\lambda}f\qquad \B^{+}F_{1}=(M_{\lambda}H)G_{\lambda}f.$$
\end{lemme}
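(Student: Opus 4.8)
The plan is to treat $F_1$ as the Laplace transform of $U_1(t)f$ and to identify it with an expression already analysed in Section~\ref{sec:bvp}, namely $\Xi_\lambda H G_\lambda f$. First I would recall that, by Lemma~\ref{lem:prop6} (more precisely the last Lemma established for $U_1$), for any $f\in X$ and $t>0$ one has $\I^1_t[f]=\int_0^t U_1(s)f\,\d s\in\D(\T_{\mathrm{max},\,p})$ with $\T_{\mathrm{max},\,p}\I^1_t[f]=U_1(t)f$, together with $\B^-\I^1_t[f]=H\B^+\I^0_t[f]$ and the continuity of $t\mapsto\B^\pm\I^1_t[f]\in L^p_\pm$. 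Since $\|U_1(t)\|_{\mathscr{B}(X)}\le\verti{H}$ uniformly in $t$, the integral defining $F_1$ converges absolutely in $X$ for every $\lambda>0$, and the elementary integration-by-parts identity $F_1=\lambda\int_0^\infty e^{-\lambda t}\I^1_t[f]\,\d t$ holds (this uses only $\I^1_0[f]=0$ and $e^{-\lambda t}\I^1_t[f]\to 0$ in $X$, which follows from the uniform bound).

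Next I would invoke the closedness of $\T_{\mathrm{max},\,p}$ (Remark~\ref{rem:closed}) applied to the $X$-valued integral $\lambda\int_0^\infty e^{-\lambda t}\I^1_t[f]\,\d t$: since $\I^1_t[f]\in\D(\T_{\mathrm{max},\,p})$ with $\T_{\mathrm{max},\,p}\I^1_t[f]=U_1(t)f$, and both $t\mapsto e^{-\lambda t}\I^1_t[f]$ and $t\mapsto e^{-\lambda t}U_1(t)f$ are Bochner-integrable in $X$, we get $F_1\in\D(\T_{\mathrm{max},\,p})$ with $\T_{\mathrm{max},\,p}F_1=\lambda\int_0^\infty e^{-\lambda t}U_1(t)f\,\d t=\lambda F_1$ — here the standard Laplace computation $\int_0^\infty e^{-\lambda t}\I^1_t[f]\,\d t=\lambda^{-1}\int_0^\infty e^{-\lambda t}U_1(t)f\,\d t$ is used again. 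For the trace on $\Gamma_-$, I would use continuity of $\B^-$ on $\D(\T_{\mathrm{max},\,p})$ for the graph norm (Remark~\ref{nb:graph-norm}) combined with the identity $\B^-\I^1_t[f]=H\B^+\I^0_t[f]=H\B^+C_\lambda\!\big(\lambda\I^0_t[f]\big)$-type manipulations; more cleanly, since $\B^-$ commutes with the $X$-valued Bochner integral against $e^{-\lambda t}\,\d t$ on $\D(\T_{\mathrm{max},\,p})$-valued paths (by Remark~\ref{nb:graph-norm} and the fact that $\I^1_t[f]$ stays in the larger class $\mathscr{W}$), we obtain $\B^-F_1=\lambda\int_0^\infty e^{-\lambda t}\B^-\I^1_t[f]\,\d t=\lambda\int_0^\infty e^{-\lambda t}H\B^+\I^0_t[f]\,\d t=H\Big(\lambda\int_0^\infty e^{-\lambda t}\B^+\I^0_t[f]\,\d t\Big)=H\B^+C_\lambda f=HG_\lambda f$, using continuity of $H$ to pull it out of the integral.

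Finally, for $\B^+F_1$: from $\T_{\mathrm{max},\,p}F_1=\lambda F_1$ and $\B^-F_1=HG_\lambda f\in L^p_-$, Lemma~\ref{lemmaML} applied with $u=\B^-F_1$ gives that $\Xi_\lambda(HG_\lambda f)\in\D(\T_{\mathrm{max},\,p})$ solves exactly the same problem $(\lambda-\T_{\mathrm{max},\,p})g=0$, $\B^-g=HG_\lambda f$; by the uniqueness part of Theorem~\ref{Theo10.43} (with $g=0$, $u=HG_\lambda f$) we conclude $F_1=\Xi_\lambda H G_\lambda f$, whence $\B^+F_1=\B^+\Xi_\lambda(HG_\lambda f)=M_\lambda H G_\lambda f$ by \eqref{propxil1}, and in particular $\B^+F_1\in L^p_+$. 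The main obstacle I anticipate is the careful justification that $\B^-$ and $H$ genuinely commute with the Laplace/Bochner integral — this requires that the integrand path $t\mapsto\I^1_t[f]$ be continuous into $\D(\T_{\mathrm{max},\,p})$ with the graph norm (so that Remark~\ref{nb:graph-norm} applies to exchange limits and traces) and suitable integrability of $t\mapsto\|\B^+\I^0_t[f]\|_{L^p_+}$; for $f\in\D_0$ this is immediate from the Lemmas of Appendix~C, and for general $f\in X$ one passes to the limit via \eqref{eq:b+I2} and the uniform bound $\|U_1(t)\|_{\mathscr{B}(X)}\le\verti{H}$, exactly as in the density argument of Lemma~\ref{lem:prop6}.
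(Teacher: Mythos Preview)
Your proof is correct but takes a more operator-theoretic route than the paper. The paper proceeds by a direct pointwise calculation: for $f\in\D_0$ it evaluates $F_1$ along a characteristic, using the very definition of $U_1(t)$, to obtain
\[
F_1(\Phi(\y,s))=\int_s^\infty e^{-\lambda t}\big[H\B^+U_0(t-s)f\big](\y)\,\d t=e^{-\lambda s}\big[H\B^+C_\lambda f\big](\y),
\]
which is precisely the statement $F_1=\Xi_\lambda H G_\lambda f$; density extends this to $f\in X$, and then all the claimed properties are read off from Lemma~\ref{lemmaML}. Your argument instead establishes $\T_{\mathrm{max},\,p}F_1=\lambda F_1$ and $\B^-F_1=HG_\lambda f$ abstractly (via closedness of $\T_{\mathrm{max},\,p}$, the identity $F_1=\lambda\int_0^\infty e^{-\lambda t}\I^1_t[f]\,\d t$, and commuting traces with the Bochner integral), and only then identifies $F_1=\Xi_\lambda H G_\lambda f$ through the uniqueness in Theorem~\ref{Theo10.43}.

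Both approaches land on the same identification $F_1=\Xi_\lambda H G_\lambda f$. The paper's computation is shorter and avoids the delicate justification of exchanging $\B^\pm$ and $H$ with the Laplace integral that you correctly flag as the main obstacle (and correctly resolve via Remark~\ref{nb:graph-norm}, the graph-norm continuity of $t\mapsto\I^1_t[f]$, and the $L^p_\pm$-integrability coming from \eqref{eq:b+I2}). On the other hand, your abstract route has the advantage of being essentially insensitive to the explicit form of $U_1$: the same scheme carries through the induction to all $U_k$ once Lemma~\ref{lem:prop6} is available at each step, whereas the paper's pointwise computation exploits the concrete structure of $U_1$ and then defers the general $k$ to the reader.
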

\begin{proof}  
Let us first assume $f \in \D_{0}$. Then, for any $\y \in \Gamma_{-}$, $s \in (0,\t_{+}(\y))$:
\begin{equation*}
\begin{split}
F_{1}(\Phi(\y,s))&=\int_{s}^{\infty}\exp(-\lambda t)\left[H\B^{+}U_{0}(t-s)f\right](\y)\d t\\
&=\exp(-\lambda s)\,\int_{0}^{\infty}\exp(-\lambda t)\left[H\B^{+}U_{0}(t)f\right](\y)\d t\\
&=\exp(-\lambda s)\left[H\B^{+}\left(\int_{0}^{\infty}\exp(-\lambda t)U_{0}(t)f\d t\right)\right](\y)\end{split}\end{equation*}
i.e. $F_{1}(\Phi(\y,s))=\exp(-\lambda s)\left[H\B^{+}C_{\lambda}f\right](\y).$ This exactly means that $F_{1}=\Xi_{\lambda}HG_{\lambda}f$. By a density argument, this still holds for $f \in X$ and we get the desired result easily using the properties of $\Xi_{\lambda}$ and $G_{\lambda}.$
\end{proof}

The above lemmas prove that the conclusion of Theorem \ref{theo:UKT} is true for $k=1.$ One proves then by induction that the conclusion is true for any $k \geq 1$ exactly as above. Details are left to the reader.

\end{document}